\ifdefined\pdfoutput\pdfoutput=1\fi
\documentclass[11pt]{amsart}
\usepackage{amsmath,amssymb,amsthm}
\usepackage[all]{xy}
\usepackage{iftex}
\ifpdf\else
  
\fi
\usepackage{tikz}
\usetikzlibrary{arrows.meta}

\newtheorem{theorem}{Theorem}[section]
\newtheorem{lemma}[theorem]{Lemma}
\newtheorem{proposition}[theorem]{Proposition}
\newtheorem{corollary}[theorem]{Corollary}
\newtheorem{maintheorem}{Theorem}

\theoremstyle{definition}
\newtheorem{definition}[theorem]{Definition}
\newtheorem{remark}[theorem]{Remark}
\newtheorem{example}[theorem]{Example}
\newtheorem{setting}[theorem]{Setting}

\numberwithin{equation}{section}

\renewcommand{\phi}{\varphi}

\newcommand{\ep}{\varepsilon}

\newcommand{\Coker}{\operatorname{Coker}}
\newcommand{\Fix}{\operatorname{Fix}}

\newcommand{\Homeo}{\operatorname{Homeo}}
\newcommand{\id}{\operatorname{id}}
\newcommand{\Ima}{\operatorname{Im}}
\newcommand{\Iso}{\operatorname{Iso}}
\newcommand{\Ker}{\operatorname{Ker}}
\newcommand{\Nor}{\operatorname{N}}
\newcommand{\St}{\operatorname{St}}
\newcommand{\supp}{\operatorname{supp}}

\newcommand{\N}{\mathbb{N}}
\newcommand{\Z}{\mathbb{Z}}
\newcommand{\Q}{\mathbb{Q}}

\newcommand{\cCO}{\mathcal{CO}}
\newcommand{\cG}{\mathcal{G}}
\newcommand{\cH}{\mathcal{H}}
\newcommand{\cK}{\mathcal{K}}

\newcommand{\sD}{\mathsf{D}}
\newcommand{\sF}{\mathsf{F}}
\newcommand{\sA}{\mathsf{A}}

\title[Maximal subgroups of topological full groups]
{Maximal subgroups of topological full groups \\
arising from factor maps and subgroupoids}
\author{Hiroki Matui}
\thanks{The author was supported by JSPS KAKENHI Grant Number JP23K22397.}
\address{Graduate School of Mathematical Sciences, 
The University of Tokyo, Japan}
\email{hiroki@ms.u-tokyo.ac.jp}
\date{}

\subjclass[2020]{Primary 20E28; Secondary 22A22, 37B05, 37B10, 20E32}
\keywords{topological full group, maximal subgroup, ample groupoid}

\begin{document}

\begin{abstract}
We construct maximal subgroups of topological full groups
and their commutator subgroups using factor maps
and wide open subgroupoids of minimal effective ample groupoids
with Cantor unit spaces, assuming almost finiteness or pure infiniteness.
For two-to-one coverings and prime factor maps with principal target
and fibers of cardinality at most two,
surjectivity and injectivity of the same induced map
on full-group abelianizations characterize maximality
of the factor full group and factor commutator subgroup, respectively.
For subgroupoids arising from prime-order cyclic actions
free on the unit space, the normalizer of the smaller commutator subgroup
in the ambient full group is a semidirect product,
maximal exactly when the inclusion induces a surjection
on full-group abelianizations.
In both constructions, the embedded commutator subgroup
has a unique maximal overgroup in the ambient commutator subgroup,
namely its normalizer.
For Cantor minimal systems, the factor construction yields
partition stabilizers of the type in the Grigorchuk--Vorobets conjecture.
The subgroupoid normalizers act topologically primitively.
Groupoid homology makes the criteria computable
for these systems and shifts of finite type.
Examples include finitely generated nonsimple and locally finite simple
maximal subgroups of infinite index
in finitely generated infinite simple amenable groups.
\end{abstract}

\maketitle

\section{Introduction}

Topological full groups encode dynamical information 
in the algebraic structure of groups. 
For Cantor minimal systems, Giordano, Putnam and Skau showed that 
the abstract isomorphism class of the topological full group 
determines the system up to flip conjugacy \cite{GPS99Israel}. 
Reconstruction results for ample groupoids extend this principle 
to a wider class of dynamical systems \cite{Ma15crelle}. 
The study of these groups has also revealed connections 
between dynamical and group-theoretic properties. 
For example, the simplicity and finite generation results 
for commutator subgroups in \cite{Ma06IJM}, 
together with the amenability theorem of Juschenko and Monod 
\cite{JM13Annals}, yield finitely generated infinite simple amenable groups. 
Nekrashevych's alternating full groups relate minimality and expansiveness 
to simplicity and finite generation in the groupoid setting 
\cite{Ne19ETDS}. 
Groupoid homology provides another connection: 
the index map and the description of abelianizations 
relate algebraic invariants of full groups 
to homological invariants of the underlying groupoids 
\cite{Ma12PLMS,Ma15crelle,Ma16Adv,Li25ForumPi}. 

These developments lead to the question of how natural relations 
between dynamical systems are reflected in the subgroup structure 
of their topological full groups. 
Factor maps and inclusions of subgroupoids both induce 
inclusions of full groups. 
When do these constructions give maximal subgroups? 
More specifically, how is the absence of intermediate dynamical objects 
related to the absence of intermediate subgroups? 
The present work was motivated by the study of maximal subgroups 
of ample groups by Grigorchuk and Vorobets \cite{GV26ETDS}. 
We investigate the questions above using ample groupoids, 
with particular emphasis on the role of abelianization 
and groupoid homology. 

For an effective ample groupoid $\cG$ with Cantor unit space, 
we write $\sF(\cG)$ for its topological full group 
and $\sD(\cG)$ for the commutator subgroup of $\sF(\cG)$. 
All groupoids in this paper are second countable and Hausdorff. 
Our main results concern minimal groupoids 
that are either almost finite or purely infinite. 
These classes include groupoids of Cantor minimal $\Z$ actions 
and groupoids of irreducible shifts of finite type (SFTs) 
with non-permutation adjacency matrices, respectively. 
In both classes, $\sD(\cG)$ is simple. 

We first consider factor maps. 
A factor map $\pi:\cG\to\cH$ is a proper surjective 
continuous homomorphism that is bijective on each range fiber 
(Definition~\ref{def:factor}). 
Pullback of compact open bisections gives an embedding 
\[
\pi^*:\sF(\cH)\longrightarrow\sF(\cG). 
\]
We call $\pi$ prime if it is not an isomorphism 
and has no nontrivial intermediate ample groupoid. 
Primeness is a natural necessary condition for maximality, 
but it does not by itself imply maximality of the induced subgroups. 
For the factor maps below, the remaining obstruction 
is precisely a condition on the induced map on abelianizations. 
Here $H_1(\Gamma)$ denotes the abelianization of a group $\Gamma$. 

\begin{maintheorem}\label{intro:factor}
Let $\pi:\cG\to\cH$ be a factor map between 
minimal effective ample groupoids with Cantor unit spaces, 
each of which is either almost finite or purely infinite. 
Suppose that either 
\begin{itemize}
\item $\pi$ is a covering map of degree two, or 
\item $\pi$ is prime, at most two-to-one, and $\cH$ is principal. 
\end{itemize}
Then the following hold. 
\begin{enumerate}
\item $\pi^*(\sF(\cH))$ is maximal in $\sF(\cG)$ 
if and only if 
\[
H_1(\pi^*):H_1(\sF(\cH))\longrightarrow H_1(\sF(\cG))
\]
is surjective. 
\item $\pi^*(\sD(\cH))$ is maximal in $\sD(\cG)$ 
if and only if $H_1(\pi^*)$ is injective. 
\end{enumerate}
\end{maintheorem}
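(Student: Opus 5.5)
The plan is to reduce both statements to one structural fact: every subgroup strictly between the factor group and the ambient group contains the ambient commutator subgroup. We write $\Gamma=\pi^*(\sF(\cH))$ and $\Delta=\pi^*(\sD(\cH))$, and take the core claim to be the following.
\begin{itemize}
\item[(a)] If $\Lambda$ is a subgroup of $\sF(\cG)$ containing $\Delta$ but not contained in $\Gamma$, then $\Lambda\supseteq\sD(\cG)$.
\item[(b)] The normalizer of $\Delta$ in $\sF(\cG)$ is $\Gamma$.
\end{itemize}

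Given (a) and (b), both parts of the theorem are formal consequences.

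For (1), suppose $\Lambda$ lies strictly between $\Gamma$ and $\sF(\cG)$. Then $\Lambda\supseteq\sD(\cG)$ by (a), so $\Lambda$ is the preimage of a subgroup of $H_1(\sF(\cG))$ lying between $\Ima H_1(\pi^*)$ and $H_1(\sF(\cG))$. It follows that $\Gamma$ is maximal if and only if $\Gamma\sD(\cG)=\sF(\cG)$, which is exactly surjectivity of $H_1(\pi^*)$. Strictly, one also needs that $\Gamma\sD(\cG)$ is proper whenever $H_1(\pi^*)$ is not surjective; this is immediate.

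For (2), $\Delta$ is maximal in $\sD(\cG)$ if and only if $\Gamma\cap\sD(\cG)=\Delta$. Indeed, if the intersection is larger, it is a proper intermediate subgroup: it is proper because $\Gamma\ne\sF(\cG)$ and (a) applies. Conversely, any intermediate subgroup $\Lambda\subseteq\sD(\cG)$ that is not contained in $\Gamma$ is all of $\sD(\cG)$ by (a), and if $\Lambda\subseteq\Gamma$ then $\Lambda\subseteq\Gamma\cap\sD(\cG)=\Delta$. Finally, $\Gamma\cap\sD(\cG)=\Delta$ is precisely injectivity of $H_1(\pi^*)$, since the kernel of $\sF(\cH)\to H_1(\sF(\cG))$ is the preimage of $\Gamma\cap\sD(\cG)$. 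By (b), this intersection is also the unique maximal overgroup.

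The main work is (a). Take $g\in\Lambda\setminus\Gamma$. Since $g$ is not a pullback, there is a point $x$ whose image under $g$ is not compatible with $\pi$. Concretely, either $g$ maps some $\pi$-fiber $\{x,x'\}$ to non-fiber points, or it acts on the fiber structure incorrectly.

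In the covering case, the fiber involution $\sigma$ on $\cG^{(0)}$ commutes with $\Gamma$. We can then find a small clopen set $U$ such that $g(U)$, $U$ and $\sigma(U)$ are disjoint. In the prime case, we use instead that the fibers of size two form a closed relation.

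Conjugating and commutating $g$ with elements of $\Delta$ supported near $U$, we produce elements of $\Lambda$ supported on small clopen sets. These elements act on $U$ differently from the copies on $\sigma(U)$, and so they give elements of $\sF(\cG)$ that are nontrivial and "unpaired" locally. Minimality and the covering property of $\Delta$ (its orbits move small clopen sets of $\cH^{(0)}$ freely) then let us generate every $3$-cycle-type element of $\sD(\cG)$. Here we invoke the fact from earlier in the paper that in the almost finite or purely infinite case $\sD(\cG)$ is generated by such elements, for instance the alternating group $\sA(\cG)$.

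Primeness enters exactly at this point. The subgroupoid of $\cG$ generated by $\ker\pi$-compatible bisections together with those of $g$ must be all of $\cG$, because any intermediate ample groupoid would contradict primeness. Principality of $\cH$ ensures that isotropy does not spoil the fiber identification.

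For (b), the argument is as follows. An element normalizing $\Delta$ preserves the equivalence relation on units defined by "the same $\Delta$-stabilizer germs". Because $\Delta$ is large (it is simple and acts minimally on $\cH^{(0)}$), that relation is exactly the fiber relation of $\pi$. Hence any normalizing element descends to a homeomorphism of $\cH^{(0)}$ that lies in $\sF(\cH)$.

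I expect the main obstacle to be the generation step in (a). The difficulty is passing from a single incompatible element $g$ to all of $\sD(\cG)$ while controlling supports, which requires a careful local analysis near a point where $g$ breaks fibers, especially in the prime non-covering case where fibers have sizes one and two mixed. My first attempt would be to work near a point whose fiber has size two and use commutators $[g,h]$ with $h\in\Delta$ supported in a tiny neighborhood, obtaining elements supported on one sheet only.
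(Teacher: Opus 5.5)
\textbf{Where your proposal and the paper agree.} Your reduction of (1) and (2) to (a) is exactly the paper's route: (a) is Proposition~\ref{prop:factor}, and Theorems~\ref{thm1:factor} and \ref{thm2:factor} are deduced from it as you do. One correction there. The ``only if'' directions need $\sD(\cG)\not\subset\Gamma$. This does not follow from (a) and $\Gamma\neq\sF(\cG)$. It follows from (b), or directly as in Lemma~\ref{strictness}, from a $3$-cycle that moves one point of a fiber and fixes the other. The paper gets (b) afterwards from (a) and simplicity (Corollary~\ref{cor:factornormalizer}).

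\textbf{The gap in the propagation step.} The real gap is that (a), which carries all the content, is only gestured at, and the mechanism you give for spreading a local element cannot work. Note first that (a) is false without primeness. If $\pi=\pi_2\circ\pi_1$ through a nontrivial intermediate factor $\cK$, then any $g\in\pi_1^*(\sD(\cK))\setminus\Gamma$ gives $\langle\Delta,g\rangle\subset\pi_1^*(\sF(\cK))$, and this does not contain $\sD(\cG)$. So ``minimality and the covering property of $\Delta$'' cannot suffice. Conjugating by $\Delta$ only moves a support $P_0$ to the sets $\pi^{-1}(V)P_0\pi^{-1}(V)^{-1}$, and these could all be pulled back from $\cK^{(0)}$.

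Your account of where primeness enters does not address this. The bisections $\pi^{-1}(V)$ already cover $\cG$, so the subgroupoid they generate is $\cG$ in any case. Primeness excludes intermediate quotients, not subgroupoids.

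The missing idea is Lemma~\ref{Boolean}: an $\cH$-invariant Boolean algebra of clopen subsets of $\cG^{(0)}$ containing the saturated ones is the Stone dual of an intermediate factor. Primeness therefore forces the $\cH$-translates of a nonsaturated clopen $P_0$, together with those of its complement in a saturated set, to form a subbase of $\cG^{(0)}$ (Lemma~\ref{subbase}). Turning this into generation still needs the bookkeeping of Lemmas~\ref{propertyofP}--\ref{IfHcontainsA}:
\begin{enumerate}
\item degree-$5$ multisections, so that perfectness lets you intersect supports via \cite[Lemma 3.4]{Ne19ETDS};
\item transport of local alternating groups by elements of $\sD(\cH)$ supplied by Lemma~\ref{elementofDG}~(2);
\item auxiliary components (Lemma~\ref{alternating9}) for multisections of $\cG$ whose components collide under $\pi$.
\end{enumerate}
In the covering case primeness is not a hypothesis and must be proved (Lemma~\ref{pto1}). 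The deck involution alone does not give the propagation.

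\textbf{The local step.} Your idea of using commutators of $g$ with lifted elements of small support is the right one, but Lemma~\ref{technical} needs a specific configuration. Work over a point $y$ with trivial isotropy and fiber $\{x,z\}$ at which $g=\theta_U$ breaks the fiber.
\begin{enumerate}
\item Multiply $g$ by elements of $\Delta$ so that $g$ fixes $z$ and $g^{-1}(x)\notin\pi^{-1}(y)$.
\item After a further correction by $\Delta$, pick a second such fiber $\{x',z'\}$ over some $y'$ in the orbit of $y$, with $z'$ fixed.
\item Take $\tau_{V_2}\in\sD(\cH)$ exchanging neighborhoods of $y$ and $y'$.
\end{enumerate}
Then $\theta_U^{-1}\pi^*(\tau_{V_2})\theta_U\pi^*(\tau_{V_2})$ is a $3$-cycle or a double transposition in $\sA(\pi^*(v)|P_0)$ for a nonsaturated $P_0\ni x$. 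There are three cases, according to $\theta_U^{-1}(x')$. Its normal closure under $\sA(\pi^*(v))\subset\Delta$ is the whole local alternating group, which is exactly the input the propagation step needs.

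Trivial isotropy at $y$ is what makes the lifts unique in this computation. In the covering case such a $y$ exists because the set of fiber-breaking points is open and $\cH$ is effective. In the prime case it comes from principality, and the fiber has exactly two points because $U$ contains two arrows with ranges in $\pi^{-1}(y)$ and distinct images under $\pi$.
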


This is proved in Theorems~\ref{thm1:factor} and \ref{thm2:factor}. 
The same argument identifies the maximal subgroup 
containing $\pi^*(\sD(\cH))$ even when this subgroup is not maximal. 
Writing $\Nor(\Gamma,S)$ for the normalizer of $S$ in $\Gamma$, 
we show that 
\[
\Nor(\sF(\cG),\pi^*(\sD(\cH)))=\pi^*(\sF(\cH)). 
\]
Moreover, 
\[
M:=\Nor(\sD(\cG),\pi^*(\sD(\cH)))
=\pi^*(\sF(\cH))\cap\sD(\cG)
\]
is the unique maximal subgroup of $\sD(\cG)$ 
containing $\pi^*(\sD(\cH))$, and 
\[
M/\pi^*(\sD(\cH))\cong\Ker H_1(\pi^*)
\]
(Corollary~\ref{cor:factornormalizer}). 
Thus the kernel of the map on abelianizations 
measures the enlargement needed to obtain a maximal subgroup. 

Our second construction starts with a wide open subgroupoid. 
Such an inclusion $\cG\subset\cH$ is called prime 
if it has no proper intermediate wide open subgroupoid. 
We study the case in which $\cH$ is a semidirect product 
of $\cG$ by a cyclic group of prime order. 
Under the hypotheses below, the subgroupoid inclusion is prime, 
but $\sF(\cG)$ is not maximal in $\sF(\cH)$ 
and $\sD(\cG)$ is not maximal in $\sD(\cH)$. 
The normalizer of $\sD(\cG)$ in $\sF(\cH)$ is 
$\sF(\cG)\rtimes_\lambda\Z/p$. 
This normalizer is the natural candidate for a maximal subgroup. 

\begin{maintheorem}\label{intro:subgroupoid}
Let $p$ be a prime number, 
let $\lambda:\Z/p\curvearrowright\cG$ be an action 
on an ample groupoid with Cantor unit space, 
and put $\cH:=\cG\rtimes_\lambda\Z/p$. 
Assume that $\lambda$ is free on $\cG^{(0)}$ 
and that $\cG$ and $\cH$ are effective and minimal, 
each being either almost finite or purely infinite. 
Let $\iota:\cG\to\cH$ and 
$\iota_*:\sF(\cG)\to\sF(\cH)$ be the inclusions. 
Then the following hold. 
\begin{enumerate}
\item $\sF(\cG)\rtimes_\lambda\Z/p$ is maximal in $\sF(\cH)$ 
if and only if $H_1(\iota):H_1(\cG)\longrightarrow H_1(\cH)$ is surjective, 
or equivalently, if and only if $H_1(\iota_*)$ is surjective. 
\item $\Nor(\sD(\cH),\sD(\cG))$ is the unique maximal subgroup 
of $\sD(\cH)$ containing $\sD(\cG)$. 
\item If every $\lambda_a$ belongs to $\sD(\cH)$, 
then $\sD(\cG)\rtimes_\lambda\Z/p$ is maximal in $\sD(\cH)$ 
if and only if $H_1(\iota_*)$ is injective. 
The additional hypothesis holds automatically when $p$ is odd. 
\end{enumerate}
\end{maintheorem}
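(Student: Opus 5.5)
The plan is to treat $\Gamma:=\sF(\cG)\rtimes_\lambda\Z/p$ as a subgroup of $\sF(\cH)$ and determine all overgroups by a dynamical classification. Here $\lambda_a$ is the full-group element given by the compact open bisection $\cG^{(0)}\times\{a\}\subset\cH$, and it normalizes $\iota_*(\sF(\cG))$. The first step is to show that every subgroup $\Lambda$ with $\sD(\cG)\subsetneq\Lambda\le\sF(\cH)$ that is normalized by $\sD(\cG)$ (in particular every overgroup of $\Gamma$) is controlled by the union of supports of bisections of its elements. Write $\cH=\bigsqcup_{a\in\Z/p}\cG\lambda_a$. For $\gamma\in\Lambda$, decompose $\cG^{(0)}=\bigsqcup_a U_a(\gamma)$ according to which $\cG\lambda_a$-component the bisection of $\gamma$ lies in. Using simplicity of $\sD(\cG)$, minimality, and the standard commutator tricks (compressibility in the purely infinite case, almost finiteness via the comparison/Kakutani–Rokhlin arguments assumed earlier), I show the following. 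If some $\gamma\in\Lambda$ has $U_a(\gamma)\neq\emptyset$ for some $a\ne0$ and also $U_b(\gamma)\neq\emptyset$ with $b\ne a$, then $\Lambda$ contains elements of $\sF(\cH)$ supported in small clopen sets with ``mixed'' component patterns. Conjugating these by $\sD(\cG)$ and taking commutators then yields all of $\sD(\cH)$, because freeness of $\lambda$ on $\cG^{(0)}$ makes $\cH$ locally look like $\cG\times$ (the full pair groupoid on $p$ points), and $\sD(\cH)$ is generated by elements supported on arbitrarily small sets. Otherwise every element of $\Lambda$ lies in a single component $\cG\lambda_a$, i.e.\ $\Lambda\le\Gamma$. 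This dichotomy is the main obstacle. The first thing I would try is to reduce to elements of order three/transpositions in the alternating-type generating sets, relative to a clopen set $V$ with $V,\lambda_1V,\dots$ disjoint. Freeness gives such $V$ near every point.

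With the dichotomy in hand, (2) follows. $\Nor(\sD(\cH),\sD(\cG))$ consists of elements normalizing $\sD(\cG)$, and these lie in a single component: if an element's bisection has two components over an open set, conjugating a small element of $\sD(\cG)$ leaves $\cG$. Hence $\Nor(\sD(\cH),\sD(\cG))=\Gamma\cap\sD(\cH)$. Any proper subgroup of $\sD(\cH)$ containing $\sD(\cG)$ is normalized by $\sD(\cG)$, so by the dichotomy it lies in $\Gamma\cap\sD(\cH)$. Moreover $\Gamma\cap\sD(\cH)$ is proper, since $\sD(\cG)$ lies in a single component while $\sD(\cH)$ does not. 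This gives uniqueness and maximality.

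For (1), the dichotomy shows that the overgroups of $\Gamma$ are exactly the subgroups $\Lambda$ with $\Gamma\le\Lambda$ and either $\Lambda=\Gamma$ or $\Lambda\supseteq\sD(\cH)$. So $\Gamma$ is maximal iff $\Gamma\sD(\cH)=\sF(\cH)$, i.e.\ iff $\Gamma\to H_1(\sF(\cH))$ is surjective. I would then show that the image of $\lambda_a$ in $H_1(\sF(\cH))$ already lies in the image of $H_1(\iota_*)$, or equivalently that surjectivity of $H_1(\iota_*)$ and of $\Gamma\to H_1(\sF(\cH))$ coincide. To do this, cut $\cG^{(0)}$ into clopens $V_i$ with $\lambda$-translates disjoint, and write $\lambda_1$ as a product of $p$-cycles on towers. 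Each such cycle is a product of transpositions, whose classes come from $\sF(\cG)$ modulo $\sD(\cH)$ after using minimality of $\cG$ to move them. The equivalence with $H_1(\iota)$ surjective uses the earlier exact-sequence description of $H_1(\sF(\cdot))$ through $H_0\otimes\Z/2$ and $H_1$ of the groupoid (the AH-type results assumed in the paper), together with naturality in $\iota$. Since $H_0(\cG)\to H_0(\cH)$ is surjective, the $H_0\otimes\Z/2$ parts match, so surjectivity reduces to $H_1(\iota)$ by a five-lemma style argument.

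For (3), if each $\lambda_a\in\sD(\cH)$, then by (2) the unique maximal overgroup is $\Gamma\cap\sD(\cH)=(\iota_*^{-1}(\sD(\cH)))\rtimes\Z/p$. This equals $\sD(\cG)\rtimes\Z/p$ iff $\sF(\cG)\cap\sD(\cH)=\sD(\cG)$, i.e.\ iff $H_1(\iota_*)$ is injective. For $p$ odd, $\lambda_a$ has order $p$ and is a product of $p$-cycles on towers. A $p$-cycle with $p$ odd is a product of $3$-cycles, which are commutators in $\sD(\cH)$, hence $\lambda_a\in\sD(\cH)$.
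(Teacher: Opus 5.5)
Your reductions of (1)--(3) to the dichotomy are sound and follow the paper closely. Strictness plus normality of $\sD(\cH)$ turns maximality of $\sF(\cG)\rtimes_\lambda\Z/p$ into $\sF(\cG)\sD(\cH)=\sF(\cH)$, once you know $[\lambda_a]\in\Ima H_1(\iota_*)$. The diagram chase with Li's sequence matches $H_1(\iota_*)$ with $H_1(\iota)$, and (3) comes from $\Nor(\sD(\cH),\sD(\cG))=(\sF(\cG)\cap\sD(\cH))\rtimes_\lambda\Z/p$. In your normalizer argument for (2), pick the moved point with trivial $\cH$-isotropy, which effectiveness provides; this guarantees that $\lambda_a(z)$ and $\lambda_b(z)$ lie in different $\cG$-orbits.

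The gap is the dichotomy itself. It says that a subgroup containing $\sD(\cG)$ and an element outside $\sF(\cG)\rtimes_\lambda\Z/p$ must contain $\sD(\cH)$. This carries essentially all the content of the theorem, and you only describe it as something you would try. As sketched it cannot be right, because it never uses that $p$ is prime, and the analogous claim fails for composite orders. Replace $\Z/p$ by $\Z/4$. Then $\cG\times\{0,2\}$ is a wide open subgroupoid of $\cH$. Its full group contains $\sD(\cG)$ and elements whose bisections meet both $\cG\times\{0\}$ and $\cG\times\{2\}$. Yet it misses the $3$-cycles built from $A\times\{1\}$ as in Lemma~\ref{strictness:subgroupoid}. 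Your local picture, ``$\cG$ times the pair groupoid on $p$ points'', is equally valid there. Moreover, $H$ need not contain any $\lambda_a$, so the local picture does not supply the missing connections between layers.

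Two concrete steps are needed.

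\emph{First}, you must manufacture a controlled layer-crossing element.
\begin{enumerate}
\item Choose $z$ with trivial $\cH$-isotropy in one component of $\gamma$, and $z'$ in its $\cG$-orbit lying in another component.
\item Take $\delta\in\sD(\cG)$ with proper support and $\delta(z)=z'$. Then $\beta:=\gamma\delta\gamma^{-1}\in H$ has proper support and is not in $\sF(\cG)$.
\item Correct $\beta$ by elements $\theta_V\in\sD(\cG)$ from Lemma~\ref{elementofDG}~(2). Afterwards $\beta$ agrees with $\lambda_b$ near $\lambda_{-b}(x)$, and with the identity near a point $x'$ in the $\cG$-orbit of $x$ but not in its $\Z/p$-orbit.
\item Let $\tau_{W_1}\in\sD(\cG)$ be a transposition joining $x'$ to $x$. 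The commutator $\tau_{W_1}\beta^{-1}\tau_{W_1}\beta$ is then a $3$-cycle linking layers $0$ and $b$ of the equivariant extension of a small multisection $v$. Together with $\sD(\cG)$ this gives $\sA(\bar v^{(b)})\subset H$.
\end{enumerate}

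\emph{Second}, you must propagate this group to all of $\sD(\cH)$.
\begin{enumerate}
\item Transport $\sA(\bar v^{(b)})$ near any point of any small multisection by conjugating with full bisections $\theta_W\in\sD(\cG)$.
\item Apply this to the translates $\lambda_{-a}(u)$ to obtain the alternating groups on the layers $\{a,a+b\}$.
\item Join these along the graph on $\Z/p$ with edges $\{a,a+b\}$. This graph is connected precisely because $p$ is prime; this is where primality enters.
\item Pass to arbitrary degree-three multisections of $\cH$, using auxiliary components when the chosen points share a $\Z/p$-orbit.
\item Conclude with $\sA_3(\cH)=\sD(\cH)$ (Theorem~\ref{A=D}).
\end{enumerate}

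Without an argument of this kind, parts (1)--(3) rest on an unproved claim.
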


These assertions follow from Corollary~\ref{cor:normalizermaximal} 
and Theorems~\ref{thm1:subgroupoid} and \ref{thm2:subgroupoid}. 
When $p=2$ and the nontrivial element of the action 
does not belong to $\sD(\cH)$, 
we give a separate description of the normalizer 
in Theorem~\ref{thm3:subgroupoid}. 
Under the injectivity condition, it is an extension of $\sD(\cG)$ 
by $\Z/2$, and examples show that this extension need not split 
(Remark~\ref{rem:nonsplit:subgroupoid}). 

The proofs of Theorems~\ref{intro:factor} and \ref{intro:subgroupoid} 
have a common structure. 
Starting with the embedded commutator subgroup, 
we show that adjoining an element outside its normalizer 
produces a group containing the ambient commutator subgroup. 
The main step is to construct local alternating groups 
using multisections and then propagate them throughout the unit space. 
For factor maps, primeness is used through invariant Boolean algebras 
and the topology of the unit space. 
For subgroupoids, the cyclic action allows local permutations 
to connect the different layers of the semidirect product. 
Once the ambient commutator subgroup has been obtained, 
maximality reduces to the induced map on abelianizations. 

To make these criteria computable, we use the exact sequence 
\[
H_0(\cG,\Z/2)\longrightarrow H_1(\sF(\cG))
\longrightarrow H_1(\cG)\longrightarrow0,
\]
which is part of Li's exact sequence \cite[Corollary E]{Li25ForumPi}. 
The maps induced by factor maps and subgroupoid inclusions 
are compatible with this sequence. 
Consequently, groupoid homology can be used 
to test the surjectivity and injectivity conditions above. 
The distinction between the two conditions is essential: 
maximality for full groups and for their commutator subgroups 
can behave differently even in elementary examples. 

We apply these results to several classes of dynamical systems. 
For almost one-to-one extensions of Cantor minimal $\Z$ systems, 
the realization theorem of Giordano, Putnam and Skau 
\cite{GPS01MathScand} provides extensions 
with prescribed inclusions of dimension groups. 
We show that their construction yields prime factor maps 
to which Theorem~\ref{intro:factor} applies. 
The resulting factor commutator subgroups are always maximal, 
while maximality of the factor full groups 
is equivalent to $Q\otimes\Z/2=0$, 
where $Q$ is the prescribed torsion-free quotient 
of the dimension group of the extension 
by the pullback of the dimension group of the base 
(Theorem~\ref{almost1to1:Z}). 
For two-to-one coverings of Cantor minimal $\Z$ systems, 
both factor subgroups fail to be maximal, 
and the unique maximal subgroup containing the factor commutator subgroup 
is a split extension by $\Z/2$ 
(Proposition~\ref{prop:Zcovering}). 
For the Thue--Morse system, this gives a finitely generated, nonsimple 
maximal subgroup of a finitely generated infinite simple amenable group; 
the maximal subgroup contains a simple subgroup of index two 
(Example~\ref{ex:ThueMorse}). 
Finite order automorphisms of Cantor minimal systems 
provide examples for the subgroupoid construction as well. 
For involutions acting trivially on the dimension group $(D,D^+,u)$, 
the resulting maximal subgroup of $\sD(\cH)$ 
is an extension of $\sD(\cG)$ by $\Z/2$. 
This extension splits exactly when $u\in4D$ 
(Corollary~\ref{cor:finiteorderZ} and 
Remark~\ref{rem:nonsplit:subgroupoid}). 

For SFT groupoids, we express the induced homology maps 
in terms of finite integer matrices. 
This gives explicit maximality criteria for both constructions 
(Propositions~\ref{SFTcovering} and \ref{SFTsemidirect}). 
Already for graphs with one vertex, 
the criteria exhibit different maximality phenomena 
for full groups and commutator subgroups. 
We also consider orbit-breaking subgroupoids of Cantor minimal $\Z$ systems. 
A direct argument shows that the associated locally finite full group 
is maximal in the kernel of the index map, 
and its commutator subgroup is maximal 
in the ambient commutator subgroup 
(Theorem~\ref{thm:orbitbreakingmaximal}). 
For minimal subshifts, this yields locally finite simple maximal subgroups 
of finitely generated infinite simple amenable groups. 
These maximal subgroups and those in the Thue--Morse example 
have infinite index, since an infinite simple group 
has no proper subgroup of finite index. 

Our results can be viewed in terms of stabilizers of partitions.
Grigorchuk and Vorobets \cite[Section 1]{GV26ETDS} develop an analogy
with the intransitive, imprimitive and primitive cases for finite
symmetric groups, the last of which is governed by the
O'Nan--Scott theorem.
In their topological setting, minimality replaces transitivity,
and only partitions into closed sets are considered.
They characterize maximal subgroups arising from
closed sets and clopen partitions in \cite{GV26ETDS}.
Their bold conjecture for Cantor minimal $\Z$ systems 
\cite[Conjecture 1.8]{GV26ETDS} predicts that every maximal subgroup 
is a stabilizer of a nontrivial closed set, 
a stabilizer of a nontrivial partition into closed sets, 
or a normal subgroup of prime index, 
with additional conditions in each case. 
For an equivariant factor map $\pi:X\to Y$ 
between Cantor minimal $\Z$ systems, 
the embedded factor full group is precisely 
the stabilizer of the partition $\{\pi^{-1}(y)\mid y\in Y\}$. 
Indeed, an element preserving this partition 
has an orbit cocycle constant on each fiber, 
since the action on $Y$ is free, and this cocycle descends 
to a continuous function on $Y$. 
Its induced action on the quotient $Y$ is minimal. 
The finite-fiber extensions considered here give infinite partitions 
into finite closed sets, beyond the clopen partitions 
covered by their theorems. 
In particular, Theorem~\ref{almost1to1:Z} gives a maximality criterion 
for stabilizers of this kind in the ambient full group, 
as contemplated in part~(ii) of their conjecture. 

In the subgroupoid setting of Theorem~\ref{intro:subgroupoid}, 
the groups $\sF(\cG)\rtimes_\lambda\Z/p$ and $\Nor(\sD(\cH),\sD(\cG))$ 
are the stabilizers of the partition into $\cG$-orbits 
in $\sF(\cH)$ and $\sD(\cH)$, respectively 
(Lemma~\ref{normalizer=semidirectproduct}). 
They permute these dense, nonclosed orbit classes, 
yet act topologically primitively: they are minimal and preserve 
no nontrivial partition into closed sets 
(Proposition~\ref{prop:topologicalprimitivity}). 
This distinguishes them from the closed-fiber partition stabilizers 
in the factor construction. 

Belk, Bleak, Quick and Skipper \cite{BBQS25TAMS} use type systems 
to construct maximal subgroups of Thompson's group $V$, 
including an uncountable family of pairwise nonisomorphic examples. 
The subgroup $V_2\rtimes_\lambda\Z/3$ 
in Example~\ref{SFTsemidirect3} is both a type-system stabilizer 
from their construction and a normalizer 
arising from our subgroupoid construction. 

In the companion paper \cite{Ma26maximal2}, 
we study maximal stabilizers of closed sets, their isomorphism invariants 
and realization results for general ample groupoids. 
The companion paper \cite{Ma26maximal3} develops subgroupoid graphs, 
a trichotomy for prime subgroupoids and 
maximality criteria for SFT groupoids. 
It also extends the type-system constructions of \cite{BBQS25TAMS} 
to general irreducible SFT groupoids, including uncountable families 
of pairwise nonisomorphic maximal subgroups 
acting minimally on the unit space. 

Suzuki's correspondence \cite{Su20CMP} 
between intermediate dynamical extensions and intermediate $C^*$-algebras 
of reduced crossed products implies that the prime finite-fiber 
extensions of Cantor minimal $\Z$ systems considered here 
give maximal crossed-product inclusions. 
Every intermediate space is again a Cantor set, 
by finite fibers and minimality, 
so primeness excludes nontrivial intermediate extensions. 
The full groups retain an additional obstruction 
in the induced map on abelianizations: 
the two-to-one coverings in Proposition~\ref{prop:Zcovering} 
give maximal crossed-product inclusions although neither 
the factor full group nor the factor commutator subgroup 
is maximal in its respective ambient group. 

The paper is organized as follows. 
Section~\ref{sec:preliminaries} collects the tools 
for groupoids, full groups and homology used in the proofs. 
Section~\ref{sec:factor} establishes the criteria for factor maps, 
and Section~\ref{sec:examplesI} gives their applications 
to Cantor minimal $\Z$ systems and SFT groupoids. 
Section~\ref{sec:subgroupoid} proves the subgroupoid results. 
Section~\ref{sec:examplesII} presents examples from finite cyclic actions 
and concludes with the orbit-breaking construction.

\section{Preliminaries}\label{sec:preliminaries}

We collect the tools used in the proofs of maximality. 
For minimal almost finite and purely infinite groupoids, 
we record simplicity and local extension properties of full groups. 
We then describe the maps induced by factor maps and 
the multisections used to generate commutator subgroups.

\subsection{Ample groupoids}\label{subsec:amplegroupoids}

We fix notation and recall the basic terminology for ample groupoids. 

The cardinality of a set $A$ is written $\#A$ and 
the characteristic function of $A$ is written $1_A$. 
The finite cyclic group of order $p$ is denoted by $\Z/p$. 
We say that a subset of a topological space $X$ is clopen 
if it is both closed and open. 
We let $\cCO(X)$ denote 
the set of all nonempty compact open subsets of $X$. 
A topological space is said to be totally disconnected 
if its connected components are singletons. 
By a Cantor set, 
we mean a nonempty compact, metrizable, totally disconnected space 
with no isolated points. 
It is known that any two such spaces are homeomorphic. 

In this article, by an \'etale groupoid 
we mean a second countable locally compact Hausdorff groupoid 
such that the range map is a local homeomorphism. 
(We emphasize that our \'etale groupoids are always assumed to be Hausdorff, 
while non-Hausdorff groupoids are also studied actively.) 
We refer the reader to \cite{Re_text,Re08Irish} 
for background material on \'etale groupoids. 
For an \'etale groupoid $\cG$, 
we let $\cG^{(0)}$ denote the unit space and 
let $s$ and $r$ denote the source and range maps, 
i.e.\ $s(g)=g^{-1}g$, $r(g)=gg^{-1}$. 
A subset $U\subset\cG$ is called a bisection 
if $r|U$ and $s|U$ are injective. 
The \'etale groupoid $\cG$ has a basis for its topology 
consisting of open bisections. 
For $x\in\cG^{(0)}$, 
the set $r(s^{-1}(x))$ is called the orbit 
(or $\cG$-orbit) of $x$. 
When every orbit is dense in $\cG^{(0)}$, $\cG$ is said to be minimal. 
An \'etale groupoid $\cG$ is called ample 
if its unit space $\cG^{(0)}$ is totally disconnected. 
An \'etale groupoid is ample if and only if 
it has a basis for its topology consisting of compact open bisections. 

For $x\in\cG^{(0)}$, 
we write $\cG_x=r^{-1}(x)\cap s^{-1}(x)$ and 
call it the isotropy group of $x$. 
The isotropy subgroupoid of $\cG$ is 
$\Iso(\cG):=\{g\in\cG\mid r(g)=s(g)\}=\bigsqcup_{x\in\cG^{(0)}}\cG_x$. 
The interior $\Iso(\cG)^\circ$ is an open subgroupoid of $\cG$. 
We say that $\cG$ is principal if $\Iso(\cG)=\cG^{(0)}$. 
We say that $\cG$ is effective 
if the interior of $\Iso(\cG)$ is $\cG^{(0)}$. 
The term essentially principal is also used for this condition.
In our second countable Hausdorff setting, effectiveness implies that
the units with trivial isotropy are dense
(see \cite[Proposition 3.6]{Re08Irish}). 
In this paper, unless specified otherwise, 
an ample groupoid is always assumed to be effective. 
A subgroupoid $\cH\subset\cG$ is said to be wide if $\cH^{(0)}=\cG^{(0)}$. 
A wide open subgroupoid of an ample groupoid is again ample. 
If $\cG$ is effective, so is every wide open subgroupoid $\cH$,
since $\Iso(\cH)^\circ\subset\Iso(\cG)^\circ=\cG^{(0)}$. 

For an ample groupoid $\cG$ and an open subset $Y\subset\cG^{(0)}$, 
we write
\[
\cG|Y:=r^{-1}(Y)\cap s^{-1}(Y), 
\]
which is again an ample groupoid, with unit space $Y$. 
Since $\cG$ is effective, so is $\cG|Y$. 
We say that $Y$ is full if $Y$ meets every $\cG$-orbit. 
Two ample groupoids $\cG_1$ and $\cG_2$ are said to be 
Kakutani equivalent 
if there exist full clopen subsets $Y_i\subset\cG_i^{(0)}$ 
such that $\cG_1|Y_1$ and $\cG_2|Y_2$ are isomorphic. 
In particular, $\cG$ and $\cG|Y$ are Kakutani equivalent 
for every full clopen subset $Y\subset\cG^{(0)}$.

We use the following conventions for semidirect products and skew products 
(see \cite{Re_text}). 
Let $\Gamma$ be a countable discrete group. 
For an action $\lambda:\Gamma\curvearrowright\cG$, 
the semidirect product $\cG\rtimes_\lambda\Gamma$ is $\cG\times\Gamma$ 
with the product topology and unit space identified with $\cG^{(0)}$. 
Its range, source and multiplication are given by 
\[
r(g,\gamma)=r(g),\qquad s(g,\gamma)=\lambda_\gamma^{-1}(s(g)),\qquad
(g,\gamma)(h,\delta)=(g\lambda_\gamma(h),\gamma\delta). 
\]
When $\cG$ is a unit groupoid $X$, this is the transformation groupoid 
$X\rtimes_\lambda\Gamma$. 

For a continuous homomorphism $\xi:\cG\to\Gamma$, 
the skew product $\cG\times_\xi\Gamma$ is $\cG\times\Gamma$ 
with the product topology and unit space $\cG^{(0)}\times\Gamma$, where 
\[
r(g,\gamma)=(r(g),\gamma),\qquad
s(g,\gamma)=(s(g),\gamma\xi(g)),\qquad
(g,\gamma)(h,\gamma\xi(g))=(gh,\gamma). 
\]
The action $\delta\cdot(g,\gamma)=(g,\delta\gamma)$ is by automorphisms. 
When $\Gamma$ is finite, the projection $\cG\times_\xi\Gamma\to\cG$ 
is a covering map, and these automorphisms are called deck transformations.

\subsection{Topological full groups, homology and the index map}
\label{subsec:homology}

The index map relates topological full groups to groupoid homology. 
We introduce these objects together, 
fixing the conventions for the induced maps used below. 

A bisection $U\in\cCO(\cG)$ is said to be full when $r(U)=\cG^{(0)}=s(U)$. 
For a full bisection $U$, we define $\theta_U\in\Homeo(\cG^{(0)})$ 
by $\theta_U:=r\circ(s|U)^{-1}$. 

\begin{definition}[{\cite[Definition 2.3]{Ma12PLMS}}]
\label{def:topologicalfullgroup}
Let $\cG$ be an effective ample groupoid 
whose unit space $\cG^{(0)}$ is compact. 
The set of all $\alpha\in\Homeo(\cG^{(0)})$, 
for which there exists a full bisection $U\subset\cG$ 
such that $\alpha=\theta_U$, 
is called the topological full group of $\cG$ 
and is denoted by $\sF(\cG)$. 
\end{definition}

The full bisection implementing $\alpha\in\sF(\cG)$ is unique
by effectiveness.
The group $\sF(\cG)$ is a countable subgroup of $\Homeo(\cG^{(0)})$,
since $\cG$ has countably many compact open subsets. 
A homeomorphism $\alpha:\cG^{(0)}\to\cG^{(0)}$ belongs to $\sF(\cG)$ 
if and only if for any $x\in\cG^{(0)}$ 
there exists a bisection $V\in\cCO(\cG)$ 
such that $x$ is in $s(V)$ and 
$\alpha$ equals $r\circ(s|V)^{-1}$ on a neighborhood of $x$. 
We let $\sD(\cG)$ denote the derived subgroup (commutator subgroup) 
of $\sF(\cG)$. 

When $\Gamma$ is a group and $S\subset\Gamma$ is a subset, 
we denote the normalizer of $S$ in $\Gamma$ by 
\[
\Nor(\Gamma,S):=\{\alpha\in\Gamma\mid\alpha S\alpha^{-1}=S\}. 
\]

For $\alpha\in\Homeo(\cG^{(0)})$ we write 
\[
\supp(\alpha):=\overline{\{x\in\cG^{(0)}\mid\alpha(x)\neq x\}}. 
\]
When $\alpha=\theta_U$ belongs to $\sF(\cG)$, 
one has $\supp(\alpha)=s(U\setminus\cG^{(0)})$, 
and so $\supp(\alpha)$ is a clopen subset of $\cG^{(0)}$ 
by \cite[Lemma 2.2]{Ma15crelle}. 

For a clopen set $V\subset\cG^{(0)}$, we write 
\[
\sF(\cG)_V:=\{\alpha\in\sF(\cG)\mid\supp(\alpha)\subset V\}. 
\]
Extending homeomorphisms by the identity outside $V$ identifies 
$\sF(\cG|V)$ with $\sF(\cG)_V$.

For groupoid homology, we follow \cite[Section 3]{Ma12PLMS}. 
Let $A$ be a topological abelian group. 
For a locally compact Hausdorff totally disconnected space $X$, 
we let $C_c(X,A)$ denote the abelian group of 
compactly supported continuous functions from $X$ to $A$. 
When $\rho:X\to Y$ is a local homeomorphism between such spaces, 
one obtains a homomorphism $\rho_*:C_c(X,A)\to C_c(Y,A)$ by 
\[
\rho_*f(y):=\sum_{x\in\rho^{-1}(y)}f(x), 
\]
only finitely many terms being nonzero, 
since $\rho^{-1}(y)\cap\supp f$ is compact and discrete. 
For an ample groupoid $\cG$ and $n\geq1$, 
we let $\cG^{(n)}$ denote the space of composable $n$-strings 
\[
\cG^{(n)}:=\left\{(g_1,g_2,\dots,g_n)\in\cG^n\mid
s(g_i)=r(g_{i+1})\ \text{for}\ i=1,2,\dots,n{-}1\right\}, 
\]
which is a closed subset of $\cG^n$, and hence is again 
a second countable locally compact Hausdorff totally disconnected space. 
The face maps $d_i:\cG^{(n)}\to\cG^{(n-1)}$ are given by 
\[
d_i(g_1,\dots,g_n):=
\begin{cases}
(g_2,\dots,g_n) & i=0, \\
(g_1,\dots,g_ig_{i+1},\dots,g_n) & 1\leq i\leq n{-}1, \\
(g_1,\dots,g_{n-1}) & i=n, 
\end{cases}
\]
with the convention $d_0(g)=s(g)$ and $d_1(g)=r(g)$ for $n=1$. 
Each face map is a local homeomorphism. 

\begin{definition}\label{def:groupoidhomology}
The groupoid homology $H_*(\cG,A)$ is the homology of the chain complex 
\[
(C_c(\cG^{(n)},A),\partial_n)_{n\geq0}, 
\]
where $\partial_0:=0$ and
$\partial_n:=\sum_{i=0}^n(-1)^id_{i*}$ for $n\geq1$. 
As usual we write $H_*(\cG):=H_*(\cG,\Z)$. 
\end{definition}

As explained in the discussion preceding \cite[Definition 3.4]{Ma12PLMS}, 
when a homomorphism $\rho:\cG\to\cH$ between ample groupoids 
is a local homeomorphism, 
it gives rise to well-defined homomorphisms 
\[
H_i(\rho):H_i(\cG,A)\longrightarrow H_i(\cH,A),\qquad i\geq0, 
\]
via the pushforwards $C_c(\cG^{(n)},A)\to C_c(\cH^{(n)},A)$. 
This applies to the inclusion $\iota:\cG\to\cH$ 
of an open subgroupoid $\cG\subset\cH$, 
for which the pushforward is the extension by zero.

We can now define the index map \cite[Section 7]{Ma12PLMS}. 
Let $\cG$ be an effective ample groupoid with compact unit space. 
For $\alpha\in\sF(\cG)$, let $U$ be the unique full bisection 
such that $\alpha=\theta_U$.
Since $\partial_1(1_U)=1_{s(U)}-1_{r(U)}=0$, we may define 
\[
I:\sF(\cG)\longrightarrow H_1(\cG),\qquad I(\alpha):=[1_U]. 
\]
For two such bisections $U,V$, 
the boundary of the characteristic function 
of $(U\times V)\cap\cG^{(2)}$ is $1_V-1_{UV}+1_U$. 
Thus $[1_{UV}]=[1_U]+[1_V]$ in $H_1(\cG)$,
so $I$ is a homomorphism, called the index map.

\subsection{SFT groupoids}\label{subsec:graphgroupoid}

Groupoids of one-sided shifts of finite type (SFT) will provide 
the purely infinite examples in this paper. 
Let $(W,F)$ be a finite directed graph without sinks, 
where $W$ is the vertex set and $F$ is the edge set. 
For $f\in F$, let $i(f)$ and $t(f)$ denote its initial and terminal vertices. 
For $w\in W$, put $wF:=\{f\in F\mid i(f)=w\}$; 
the absence of sinks means that $wF\neq\emptyset$ for every $w\in W$. 
A finite path is a word $\mu=\mu_1\cdots\mu_n$ with 
$t(\mu_j)=i(\mu_{j+1})$ for $1\leq j<n$. 
We write $\lvert\mu\rvert=n$, $i(\mu)=i(\mu_1)$ and $t(\mu)=t(\mu_n)$. 
Paths of length zero are vertices, with $i(w)=t(w)=w$. 
Concatenation $\mu\nu$ is defined when $t(\mu)=i(\nu)$. 

Let 
\[
X_{(W,F)}:=\left\{(x_k)_{k\in\N}\in F^\N
\mid t(x_j)=i(x_{j+1})\quad\forall j\in\N\right\}
\]
be the infinite path space with the relative product topology, 
and let $\sigma$ be the shift. 
For a finite path $\mu$, let $Z(\mu)$ be the set of infinite paths 
beginning with $\mu$; for a vertex $w$, 
put $Z(w):=\{x\in X_{(W,F)}\mid i(x_1)=w\}$. 
These nonempty clopen cylinder sets form a basis for the topology of 
the compact, metrizable, totally disconnected space $X_{(W,F)}$. 
A finite set of finite paths is called a complete prefix set 
if every infinite path has exactly one member of the set as a prefix. 

The shift $\sigma$ is a local homeomorphism. 
The associated SFT groupoid is 
\[
\cG_{(W,F)}:=\left\{(x,n,y)\in X_{(W,F)}\times\Z\times X_{(W,F)}\mid 
\exists k,l\geq0,\ n=k{-}l,\ \sigma^k(x)=\sigma^l(y)\right\}. 
\]
Its range and source are $r(x,n,y)=x$ and $s(x,n,y)=y$,
and multiplication is given by $(x,n,y)(y,m,z)=(x,n+m,z)$.
For finite paths $\mu,\nu$ with $t(\mu)=t(\nu)$, the sets 
\[
Z(\nu,\mu):=\left\{(\nu z,\,\lvert\nu\rvert-\lvert\mu\rvert,\,\mu z)
\mid z\in Z(t(\mu))\right\}
\]
form a basis of compact open bisections. 
The bisection $Z(\nu,\mu)$ has range $Z(\nu)$ and source $Z(\mu)$, 
and implements the prefix replacement $\mu z\mapsto\nu z$. 

The adjacency matrix $A=(A(u,v))_{u,v\in W}$ is defined by 
\[
A(u,v):=\#\{f\in F\mid i(f)=u,\ t(f)=v\}. 
\]
It is irreducible if for all $u,v\in W$ there exists $n\in\N$ 
with $A^n(u,v)>0$. 
If $A$ is irreducible and is not a permutation matrix, 
then $X_{(W,F)}$ is a Cantor set and $\cG_{(W,F)}$ is a minimal 
effective ample groupoid, by \cite[Lemma 6.1]{Ma15crelle}. 
See also \cite{Ma17Abel} for background on SFT groupoids. 

For the covering constructions below, we also fix the following terminology.
For finite directed graphs $(W,F)$ and $(W',F')$ without sinks, 
a graph homomorphism $\rho:(W,F)\to(W',F')$ consists of maps 
on vertices and edges preserving $i$ and $t$. 
It is right-covering if $\rho:wF\to\rho(w)F'$ is bijective 
for every $w\in W$.

\subsection{Two classes of ample groupoids}

Almost finite and purely infinite groupoids are the two classes 
to which our maximality results apply. 
Although their definitions are quite different, 
their full groups share the simplicity and 
local extension properties needed in the proofs. 

A subgroupoid $\cK\subset\cG$ is elementary 
if it is compact, open and principal, with $\cK^{(0)}=\cG^{(0)}$. 
We write $\cK x:=\{g\in\cK\mid s(g)=x\}$. 

\begin{definition}[{\cite[Definition 6.2]{Ma12PLMS}}]\label{af/def}
Let $\cG$ be an ample groupoid with $\cG^{(0)}$ compact. 
We say that $\cG$ is almost finite 
if for any compact subset $C\subset\cG$ and $\ep>0$ 
there exists an elementary subgroupoid $\cK\subset\cG$ such that 
\[
\frac{\#(C\cK x\setminus\cK x)}{\#(\cK x)}<\ep
\]
for all $x\in\cG^{(0)}$. 
\end{definition}

Thus the finite fibers $\cK x$ are 
approximately invariant under multiplication by $C$. 

\begin{definition}[{\cite[Definition 4.9]{Ma15crelle}}]\label{pi/def}
Let $\cG$ be an ample groupoid with $\cG^{(0)}$ compact. 
We say that $\cG$ is purely infinite 
if for every clopen set $A\subset\cG^{(0)}$, 
there exist compact open bisections $U,V\subset\cG$ 
such that $s(U)=s(V)=A$, $r(U)\sqcup r(V)\subset A$. 
\end{definition}

By \cite[Lemma 6.1]{Ma15crelle}, 
the SFT groupoid of a finite directed graph 
with irreducible non-permutation adjacency matrix is purely infinite. 

Almost finiteness is invariant under Kakutani equivalence 
for ample groupoids with compact unit spaces by \cite{ABBL23ETDS}. 
In particular, 
if $\cG$ is minimal and either almost finite or purely infinite, 
then so is $\cG|W$ for every $W\in\cCO(\cG^{(0)})$: 
in the almost finite case $W$ is full, and 
in the purely infinite case this follows directly from the definition.

The following theorem collects the simplicity, index surjectivity 
and cancellation results used throughout the paper. 

\begin{theorem}\label{niceproperties}
Let $\cG$ be an ample groupoid with $\cG^{(0)}$ a Cantor set. 
Suppose that $\cG$ is either almost finite or purely infinite. 
\begin{enumerate}
\item If $\cG$ is minimal, then 
any non-trivial subgroup of $\sF(\cG)$ normalized 
by the commutator subgroup $\sD(\cG)$ contains $\sD(\cG)$. 
In particular, $\sD(\cG)$ is simple. 
\item The index map $I:\sF(\cG)\to H_1(\cG)$ is surjective. 
\item If $\cG$ is minimal, then $\cG$ has cancellation, i.e. 
for any $A,B\in\cCO(\cG^{(0)})$ with $[1_A]=[1_B]$ in $H_0(\cG)$, 
there exists a bisection $U\in\cCO(\cG)$ such that $r(U)=A$ and $s(U)=B$. 
\end{enumerate}
\end{theorem}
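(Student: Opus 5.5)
This theorem collects known results, so the plan is to derive each part from the literature, treating the almost finite and purely infinite cases separately.

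For (1), in the almost finite case I will invoke \cite[Theorem 4.16]{Ma15crelle}, which builds on the techniques of \cite{Ma12PLMS}. For minimal effective ample groupoids with Cantor unit space that are almost finite or purely infinite, it states that every nontrivial subgroup of $\sF(\cG)$ normalized by $\sD(\cG)$ contains $\sD(\cG)$. The same theorem covers the purely infinite case.

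The underlying mechanism is as follows. Take a nontrivial $\beta$ in a subgroup normalized by $\sD(\cG)$. Choose a small clopen $A$ with $\beta(A)\cap A=\emptyset$. Commutators $[\beta,\gamma]$ with $\gamma\in\sD(\cG)_A$ then produce elements of the form $\gamma^{-1}\cdot\beta\gamma\beta^{-1}$. This yields all of $\sD(\cG)_A$ inside the subgroup up to conjugation. Minimality then spreads this to generators of $\sD(\cG)$, namely the elements of order three supported on small sets, which generate $\sD(\cG)$ under either hypothesis. Simplicity of $\sD(\cG)$ follows by applying the statement to a nontrivial normal subgroup of $\sD(\cG)$: such a subgroup is normalized by $\sD(\cG)$, hence contains $\sD(\cG)$.

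For (2), surjectivity of the index map is \cite[Theorem 7.5]{Ma12PLMS} in the almost finite case and \cite[Theorem 5.2]{Ma15crelle} in the purely infinite case. The idea is to represent a class in $H_1(\cG)$ by $1_U$ for a compact open bisection $U$ with $r(U)=s(U)$ up to homology, and then complete $U$ to a full bisection. The completion uses elementary subgroupoids in the almost finite case, and the absorption bisections of Definition~\ref{pi/def} in the purely infinite case.

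For (3), cancellation in the purely infinite case follows from \cite[Proposition 4.11]{Ma15crelle}, which says $[1_A]=[1_B]$ implies $A\sim B$ for minimal purely infinite groupoids. In the almost finite case it is \cite[Lemma 6.7]{Ma12PLMS}: equality in $H_0$ yields equal measures under all invariant measures. With almost finiteness one then builds the bisection by exhausting $A$ and $B$ through towers of an elementary subgroupoid.

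The main obstacle is that these sources state slightly different hypotheses, for instance whether $\cG$ is minimal in (2). I will check the hypotheses of each cited statement against ours and, where needed, reduce to the minimal case or note that the stated proofs do not use minimality. No new argument is needed.
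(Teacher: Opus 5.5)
Your overall strategy of citing the literature case by case is the same as the paper's, and for (2) you use exactly the same references. The genuine problem is the almost finite case of (3). From $[1_A]=[1_B]$ you extract only that $\mu(A)=\mu(B)$ for every invariant probability measure, and then propose to build the bisection from towers of an elementary subgroupoid.

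No argument of that shape can work. A bisection $U$ with $r(U)=A$ and $s(U)=B$ forces $[1_A]=[1_B]$ exactly, since $\partial_1(1_U)=1_B-1_A$. Invariant measures, however, do not detect infinitesimals of $H_0(\cG)$. Concretely, take the Denjoy system of type $(\alpha/2;0,1/2)$ in Example~\ref{Denjoy1}. There $H_0(\cG)\cong\Z^3$ with basis $[1_X]$, $[1_{[0,\alpha/2)}]$, $[1_{[0,1/2)}]$. The clopen sets $A=[0,1/2)$ and $B=[1/2,1)$ both have measure $1/2$ for the unique invariant measure. Yet $[1_A]-[1_B]=2[1_{[0,1/2)}]-[1_X]\neq0$, so no bisection between them exists.

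Equal measures only give approximately equal counts on the towers of an elementary subgroupoid. Removing the remaining discrepancy exactly requires the homological relation itself, not just its image under states. That is the content of \cite[Theorem 6.12]{Ma12PLMS}, which is the reference the paper uses for this case.

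Your references for the remaining cases also differ from the paper's. For cancellation in the purely infinite case, the paper uses \cite[Theorem 4.2 (3)]{Ma16Adv}. Check that the proposition of \cite{Ma15crelle} you cite really asserts $r(U)=A$ and $s(U)=B$, and not merely that one nonempty clopen set embeds into another. Pure infiniteness and minimality give the embedding statement directly. Passing from it to exact equivalence under $[1_A]=[1_B]$ is a separate Cuntz-type argument, not a formality. For (1), the paper cites both Theorems 4.7 and 4.16 of \cite{Ma15crelle}, not Theorem 4.16 alone for both classes.
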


\begin{proof}
Assertion (1) follows from \cite[Theorems 4.7 and 4.16]{Ma15crelle};
(2) follows from \cite[Theorem 7.5]{Ma12PLMS} and
\cite[Theorem 5.2]{Ma15crelle};
and (3) follows from \cite[Theorem 6.12]{Ma12PLMS} and
\cite[Theorem 4.2 (3)]{Ma16Adv}. 
\end{proof}

The term cancellation in Theorem~\ref{niceproperties} (3) is motivated 
by cancellation of projections in $C^*$-algebra $K$-theory. 
For a $C^*$-algebra $D$, 
this means that projections $p,q$ in matrix algebras over $D$ 
with the same class in $K_0(D)$ are Murray--von Neumann equivalent. 
The word refers to cancelling a common direct summand: 
if $p\oplus r$ and $q\oplus r$ are equivalent, 
then $p$ and $q$ are equivalent. 
In our setting, nonempty clopen sets play the role of projections, 
and a compact open bisection with source $B$ and range $A$ 
implements an equivalence between $A$ and $B$. 
Thus cancellation says that equality of their homology classes is 
realized by such a concrete equivalence.

To control the abelianization of the full group, 
we use Li's exact sequence. 
Its hypothesis is comparison: 
for nonempty clopen sets $A,B\subset\cG^{(0)}$ 
with $\mu(A)<\mu(B)$ for every invariant probability measure $\mu$, 
there is a compact open bisection $U$ with $s(U)=A$ and $r(U)\subset B$. 
Here invariance means that $\mu(s(V))=\mu(r(V))$ 
for every compact open bisection $V$. 
Minimal almost finite and minimal purely infinite groupoids have comparison; 
see \cite[Section 2.1]{Li25ForumPi}. 

\begin{theorem}[{\cite[Corollary E]{Li25ForumPi}}]\label{Li}
Let $\cG$ be an ample groupoid with unit space a Cantor set. 
Assume that $\cG$ is minimal and has comparison. 
Then there is an exact sequence 
\[
\xymatrix@M=8pt{
H_2(\sD(\cG)) \ar[r] & H_2(\cG) \ar[r] & H_0(\cG,\Z/2) \ar[r]^-\zeta & 
H_1(\sF(\cG)) \ar[r]^-\eta & H_1(\cG) \ar[r] & 0. 
}
\]
The map $\zeta$ coincides with the one in \cite[Section 7]{Ne19ETDS} 
and the map $\eta$ is induced by the index map $I:\sF(\cG)\to H_1(\cG)$. 
\end{theorem}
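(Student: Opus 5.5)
This theorem is quoted from \cite[Corollary E]{Li25ForumPi}, so the plan is mainly to check that Li's hypotheses hold and to identify the maps; I would not reprove the exact sequence from scratch. Li's main theorem computes the homology of the topological full group in terms of groupoid homology, for minimal ample groupoids with comparison. It is obtained from a spectral sequence relating $H_*(\sF(\cG))$ (or $H_*(\sD(\cG))$) to $H_*(\cG)$, whose degeneration relies on homological stability arguments. Corollary E is the low-degree part of this computation.

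\textbf{Step 1: the hypotheses.} I would verify that minimality together with comparison is exactly the input Li requires. In particular, comparison supplies the ``enough bisections'' property that makes the stability argument work.

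\textbf{Step 2: the five-term sequence.} Next I would extract it from the spectral sequence in low degrees, with the $E^2$-page involving $H_p(\cG,H_q(\text{stable alternating-type groups}))$. The relevant coefficient groups are
\[
H_1(A_\infty)=0,\qquad H_2(A_\infty)=\Z/2,
\]
where $A_\infty$ denotes the infinite alternating group. This produces:
\begin{itemize}
\item the $H_0(\cG,\Z/2)$ term, coming from $H_0(\cG,H_1(S_\infty))$ with $H_1(S_\infty)=\Z/2$;
\item the $H_1(\cG)$ and $H_2(\cG)$ terms, coming from the bottom row.
\end{itemize}
The differential $d_2\colon H_2(\cG)\to H_0(\cG,\Z/2)$ appears as the middle map, and the surjection onto $H_1(\cG)$ is the edge map. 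The $H_2(\sD(\cG))$ term arises by comparing with the corresponding sequence for $\sD(\cG)$, which has vanishing $H_1$.

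\textbf{Step 3: identifying the maps.} The edge map $H_1(\sF(\cG))\to H_1(\cG)$ has to be matched with the map induced by the index map $I$. This is a direct chain-level check: a full bisection $U$ is sent to the class $[1_U]$. For $\zeta$, I would check that it sends $[1_A]$ to the class of a transposition-type element supported on $A\sqcup r(V)$ for a bisection $V$ with source $A$. This is Nekrashevych's definition in \cite[Section 7]{Ne19ETDS}, and the identification again follows by comparing generators.

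Surjectivity of $\eta$ is also consistent with Theorem~\ref{niceproperties} (2) in the almost finite and purely infinite cases.

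\textbf{Main obstacle.} The hard part is the homological stability and degeneration argument underlying Li's theorem. I would not reproduce it; I would simply cite it.
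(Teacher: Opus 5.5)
Your plan is the paper's plan. The statement is quoted from Li's Corollary E, and its hypotheses (Cantor unit space, minimality, comparison) are Li's. The paper also takes the identifications of $\zeta$ with Nekrashevych's map and of $\eta$ with the index-induced map from Li, and gives no argument beyond the citation. Where the paper applies the result, comparison for minimal almost finite and purely infinite groupoids is supplied by Li's Section~2.1. So Steps~1 and~3 amount to reading off the cited statement. Step~2, however, should be dropped rather than offered as an explanation: it misdescribes where the sequence comes from, and carried out as written it would not produce it.

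Concretely, Li's spectral sequence neither abuts to $H_*(\sF(\cG))$ nor has homology of alternating groups as coefficients.

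\begin{itemize}
\item \textbf{What the spectral sequence is.} Li identifies $B\sF(\cG)^+$ with the base-point component of the infinite loop space of his algebraic $K$-theory spectrum, say $\mathbb{A}(\cG)$. Homological stability and group completion are used here, not for any degeneration. He identifies $\mathbb{A}(\cG)$ with groupoid homology with coefficients in the sphere spectrum. This gives $E^2_{p,q}=H_p(\cG,\pi^s_q)\Rightarrow\pi_{p+q}\mathbb{A}(\cG)$, with $\pi^s_0=\Z$ and $\pi^s_1=\pi^s_2=\Z/2$.
\item \textbf{Why your coefficients fail.} With coefficients $H_q(A_\infty)$, the row $q=1$ vanishes because $H_1(A_\infty)=0$, so $H_0(\cG,\Z/2)$ could not appear in total degree one. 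Your bullet silently switches to $H_1(S_\infty)$. Your numbers come out right only because $\pi^s_1=H_1(S_\infty)$ and $\pi^s_2=H_2(A_\infty)$ via Barratt--Priddy--Quillen.
\item \textbf{How Corollary E arises.} It is just the low-degree exact sequence of this spectral sequence, and no degeneration is needed. The middle map is $d_2\colon E^2_{2,0}\to E^2_{0,1}$. One has $\pi_1\mathbb{A}(\cG)\cong\pi_1(B\sF(\cG)^+)=\sF(\cG)/\sD(\cG)=H_1(\sF(\cG))$.
\item \textbf{Where $H_2(\sD(\cG))$ comes from.} It enters as $\pi_2\mathbb{A}(\cG)\cong\pi_2(B\sF(\cG)^+)$, computed by the Hurewicz theorem on the universal cover $B\sD(\cG)^+$, using that $\sD(\cG)$ is perfect. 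Its image in $H_2(\cG)$ is $E^\infty_{2,0}=\Ker d_2$. It does not come from comparing with a sequence for $\sD(\cG)$.
\end{itemize}
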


\begin{remark}
The last three nonzero terms form the AH exact sequence conjectured 
in \cite{Ma16Adv}. 
In particular, Li's theorem establishes this conjecture 
for all minimal almost finite and minimal purely infinite groupoids. 
\end{remark}

The map $\zeta$ in Theorem~\ref{Li} is described by transpositions. 
When $U\in\cCO(\cG)$ is a bisection satisfying $r(U)\cap s(U)=\emptyset$, 
one can define $\tau_U\in\sF(\cG)$ by $\tau_U:=\theta_W$, 
where 
\[
W:=U\sqcup U^{-1}\sqcup\left(\cG^{(0)}\setminus(r(U)\cup s(U))\right). 
\]
The element $\tau_U$ is a transposition, i.e. $\tau_U^2=1$. 
Under the hypotheses of Theorem~\ref{Li}, 
$\zeta$ sends $[1_{r(U)}]\in H_0(\cG,\Z/2)$ to 
the class of $\tau_U$ in $H_1(\sF(\cG))$. 
In particular, if $[1_{r(U)}]=0$ in $H_0(\cG,\Z/2)$, 
then $\tau_U\in\sD(\cG)$.

The following elementary observation will be used repeatedly. 
It provides elements of commutator subgroups 
with prescribed local action and small support. 

\begin{lemma}\label{threecycle}
Let $\cH$ be an ample groupoid with compact unit space 
and let $U,V\in\cCO(\cH)$ be bisections with $s(U)=s(V)$ 
such that $s(U)$, $r(U)$ and $r(V)$ are mutually disjoint. 
\begin{enumerate}
\item The element $\tau_V\tau_U$ equals the commutator $[\tau_U,\tau_V]$ 
and has order three, with 
\[
\supp(\tau_V\tau_U)=s(U)\cup r(U)\cup r(V). 
\]
In particular $\tau_V\tau_U\in\sD(\cH)$. 
\item The element $\tau_V\tau_U$ agrees with $\theta_U$ on $s(U)$. 
\end{enumerate}
\end{lemma}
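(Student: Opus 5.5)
The plan is a direct pointwise computation on the three clopen sets $A:=s(U)=s(V)$, $B:=r(U)$ and $C:=r(V)$, which are mutually disjoint by hypothesis. By definition, $\tau_U$ is the identity off $A\cup B$. On $A$ it acts as $\theta_U$, namely $x\mapsto r((s|U)^{-1}(x))$, and on $B$ it acts as the inverse map $B\to A$. Likewise, $\tau_V$ is the identity off $A\cup C$, acts as $\theta_V:A\to C$, and acts as its inverse $C\to A$. Write $\phi:=r\circ(s|U)^{-1}:A\to B$ and $\psi:=r\circ(s|V)^{-1}:A\to C$; both are homeomorphisms.

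First we compute $\tau_V\tau_U$ on each piece.
\begin{itemize}
\item For $x\in A$: $\tau_U(x)=\phi(x)\in B$, and $\tau_V$ fixes $B$, so $\tau_V\tau_U(x)=\phi(x)$. This already gives (2).
\item For $y\in B$: $\tau_U(y)=\phi^{-1}(y)\in A$, so $\tau_V\tau_U(y)=\psi\phi^{-1}(y)\in C$.
\item For $z\in C$: $\tau_U$ fixes $z$, and $\tau_V(z)=\psi^{-1}(z)\in A$.
\item Off $A\cup B\cup C$: both elements act trivially.
\end{itemize}
Thus $\tau_V\tau_U$ maps $A\to B\to C\to A$ via $\phi$, $\psi\phi^{-1}$ and $\psi^{-1}$. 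The composite around the cycle is $\psi^{-1}\psi\phi^{-1}\phi=\id_A$, and similarly on $B$ and on $C$. Hence $(\tau_V\tau_U)^3=1$.

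Next, the order is exactly three provided $A\neq\emptyset$, which holds since $A$ is a nonempty compact open set (the bisections lie in $\cCO$). Indeed, every point of $A\cup B\cup C$ is moved, because it is sent into a different, disjoint piece. The same observation shows that the set of moved points is exactly $A\cup B\cup C$. This set is clopen, so it equals its closure, and therefore $\supp(\tau_V\tau_U)=s(U)\cup r(U)\cup r(V)$.

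Finally, for the commutator identity, we use that $\tau_U^2=\tau_V^2=1$. With the convention $[a,b]=aba^{-1}b^{-1}$, we get $[\tau_U,\tau_V]=\tau_U\tau_V\tau_U\tau_V=(\tau_U\tau_V)^2$. Since $\tau_U\tau_V=(\tau_V\tau_U)^{-1}$ and $\tau_V\tau_U$ has order three, $(\tau_U\tau_V)^2=\tau_V\tau_U$. Because $\tau_U,\tau_V\in\sF(\cH)$, this exhibits $\tau_V\tau_U$ as a commutator, so it lies in $\sD(\cH)$.

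The only point needing care is matching the commutator convention. With the other convention, $[a,b]=a^{-1}b^{-1}ab$, the computation is the same up to inverses and squares, because both generators are involutions; so the identity holds either way. There is no real obstacle here, as the statement is elementary.
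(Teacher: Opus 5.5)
Your proof is correct and takes the same approach as the paper, whose proof just says that both assertions follow directly from the definitions. You have written out that direct computation on $s(U)$, $r(U)$ and $r(V)$ in full, including the check that the two commutator conventions agree because $\tau_U$ and $\tau_V$ are involutions.
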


\begin{proof}
Both assertions follow directly from the definitions. 
\end{proof}

The next lemma says that the abelianization of $\sF(\cG)$ is already 
realized by elements supported in an arbitrarily small clopen set. 
This allows extensions of partial homeomorphisms 
to be adjusted to lie in the commutator subgroup. 

\begin{lemma}\label{localabelianization}
Let $\cG$ be a minimal ample groupoid with Cantor unit space 
which is either almost finite or purely infinite. 
For any $W\in\cCO(\cG^{(0)})$, 
the canonical embedding $\sF(\cG|W)\to\sF(\cG)$ 
induces a surjection $H_1(\sF(\cG|W))\to H_1(\sF(\cG))$. 
Equivalently, $\sF(\cG)=\sF(\cG)_W\cdot\sD(\cG)$. 
\end{lemma}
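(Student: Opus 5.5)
We will use Li's exact sequence (Theorem~\ref{Li}) for both $\cG$ and $\cG|W$, together with naturality with respect to the inclusion $\cG|W\subset\cG$. Since $\cG$ is minimal, $W$ is full. Moreover, $\cG|W$ is again minimal and either almost finite or purely infinite, as noted before Theorem~\ref{niceproperties}, so it has comparison and Theorem~\ref{Li} applies to it. The inclusion $j:\cG|W\to\cG$ is an open subgroupoid inclusion, hence induces maps $H_i(j)$ on homology with any coefficients. Since $W$ is full clopen, $H_i(j)$ is an isomorphism for all $i$ (Kakutani equivalence invariance of groupoid homology, \cite{Ma12PLMS}).

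The argument has three steps. First, I will check that the square formed by the index maps commutes:
\[
\eta_{\cG}\circ H_1(\sF(j)) = H_1(j)\circ\eta_{\cG|W}.
\]
This is immediate, because for $\alpha=\theta_U\in\sF(\cG|W)$ the extended element is implemented by $U\sqcup(\cG^{(0)}\setminus W)$. The unit part of this bisection contributes $0$ in $H_1$, since the class of a unit-space function in $C_c(\cG^{(1)})$ is the boundary of its degenerate $2$-chain.

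Second, I will check the analogous compatibility for $\zeta$. A transposition $\tau_U$ with $U\subset\cG|W$ extends to the transposition $\tau_U$ in $\sF(\cG)$, and $[1_{r(U)}]$ maps to itself under $H_0(j,\Z/2)$.

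Third, since $H_1(j)$ is surjective, the image of $H_1(\sF(\cG|W))$ surjects onto $H_1(\cG)$ via $\eta_\cG$. It therefore suffices to show that $\Ker\eta_\cG=\Ima\zeta_\cG$ lies in the image, and for this it suffices that every class in $H_0(\cG,\Z/2)$ comes from $H_0(\cG|W,\Z/2)$. That holds because $H_0(j,\Z/2)$ is an isomorphism (again by fullness), combined with the commutativity from the second step.

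The main obstacle is making sure $\zeta$ is natural. The map $\zeta$ is defined on generators $[1_{r(U)}]$ only for bisections $U$ with disjoint range and source, so one needs every class of $H_0(\cG|W,\Z/2)$ to be a sum of such classes realized inside $W$. I would handle this directly. A class is represented by $1_A$ for clopen $A\subset W$ (any $\Z/2$-valued function is a sum of characteristic functions of disjoint clopens). Splitting $A$ into small pieces, minimality together with comparison or pure infiniteness gives, for each piece $A'$, a bisection $U\subset\cG|W$ with $r(U)=A'$ and $s(U)\subset W\setminus A'$. Then $\tau_U\in\sF(\cG)_W$ realizes $\zeta([1_{A'}])$.

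Alternatively, one can bypass Li's sequence at this step. Every element of $\Ker\eta_\cG$ is, modulo $\sD(\cG)$, a product of transpositions $\tau_U$. Conjugating each by an element of $\sF(\cG)$, using cancellation (Theorem~\ref{niceproperties}(3)) after subdividing so the pieces fit in $W$, moves its support into $W$ without changing its class in $H_1(\sF(\cG))$.

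The equivalent formulation $\sF(\cG)=\sF(\cG)_W\cdot\sD(\cG)$ is then just a restatement of the surjectivity of $H_1(\sF(\cG)_W)\to H_1(\sF(\cG))$.
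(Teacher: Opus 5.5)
Your argument is correct and is essentially the paper's proof. Naturality of the index map, the isomorphism $H_1(\cG|W)\to H_1(\cG)$ coming from fullness of $W$, and surjectivity of the index map for $\cG|W$ give $\eta_\cG(\Lambda)=H_1(\cG)$, where $\Lambda$ is the image of $\sF(\cG)_W$. Exactness of Li's sequence then reduces everything to showing $\Ima\zeta_\cG\subset\Lambda$ using transpositions supported in $W$.

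The only difference is in that last step. The paper fixes one bisection $U\subset\cG|W$ with disjoint range and source, and uses \cite[Lemma 3.5 or 4.3]{Ma16Adv} to represent every class of $H_0(\cG,\Z/2)$ by $1_B$ with $B\subset s(U)$, so a single transposition $\tau_{UB}$ suffices. You instead use surjectivity of $H_0(j,\Z/2)$ and a partition into several transpositions, which needs only minimality. Note also that your ``alternative'' still relies on the exactness $\Ker\eta_\cG=\Ima\zeta_\cG$, so it does not actually bypass Li's sequence.
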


\begin{proof}
Let $\pi:\sF(\cG)\to H_1(\sF(\cG))$ be the quotient map and 
put $\Lambda:=\pi(\sF(\cG)_W)$.
Let $\zeta$ and $\eta$ be as in Theorem~\ref{Li}. 

Since $W$ is full, the inclusion $\cG|W\to\cG$ induces 
an isomorphism $H_1(\cG|W)\to H_1(\cG)$ 
by \cite[Proposition 3.5 and Theorem 3.6]{Ma12PLMS}. 
The groupoid $\cG|W$ is 
again minimal and either almost finite or purely infinite, 
so its index map is surjective by Theorem~\ref{niceproperties} (2). 
Naturality of the index map therefore gives $\eta(\Lambda)=H_1(\cG)$. 

Choose a bisection $U\in\cCO(\cG|W)$ with $s(U)\cap r(U)=\emptyset$. 
By \cite[Lemma 3.5]{Ma16Adv} in the almost finite case, 
or \cite[Lemma 4.3]{Ma16Adv} in the purely infinite case, 
every class in $H_0(\cG,\Z/2)$ is represented by $1_B$ 
for some clopen subset $B\subset s(U)$. 
Thus $\Ima\zeta$ is generated by the classes of transpositions 
$\tau_{UB}\in\sF(\cG)_W$, and hence $\Ima\zeta\subset\Lambda$. 
Exactness in Theorem~\ref{Li} now gives $\Lambda=H_1(\sF(\cG))$. 
\end{proof}

We now obtain two forms of local control: 
transpositions near a given arrow and extensions of bisections, 
both lying in the commutator subgroup. 

\begin{lemma}\label{elementofDG}
Let $\cG$ be a minimal ample groupoid with Cantor unit space 
which is either almost finite or purely infinite. 
\begin{enumerate}
\item If $g\in\cG$ satisfies $r(g)\neq s(g)$ and 
$A\in\cCO(\cG^{(0)})$ is a neighborhood of $r(g)$, then 
there exists a bisection $U\in\cCO(\cG)$ such that 
$g\in U$, $r(U)\cap s(U)=\emptyset$, $r(U)\subset A$ and 
$\tau_U\in\sD(\cG)$. 
\item If a bisection $U\in\cCO(\cG)$ satisfies 
$r(U)\neq\cG^{(0)}$ and $s(U)\neq\cG^{(0)}$, then 
there exists a full bisection $V$ such that 
$U\subset V$ and $\theta_V\in\sD(\cG)$. 
\end{enumerate}
\end{lemma}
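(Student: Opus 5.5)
For (1), we start from an arbitrary compact open bisection $U_0$ containing $g$. Since $r(g)\neq s(g)$ and $\cG^{(0)}$ is Hausdorff and totally disconnected, we can shrink $U_0$ to a compact open bisection $U_1\ni g$ with $r(U_1)\cap s(U_1)=\emptyset$ and $r(U_1)\subset A$. The transposition $\tau_{U_1}$ need not lie in $\sD(\cG)$, because its class in $H_1(\sF(\cG))$ is $\zeta([1_{r(U_1)}])$. So we shrink $U_1$ further to $U\subset U_1$ with $g\in U$ and $[1_{r(U)}]=0$ in $H_0(\cG,\Z/2)$; the discussion after Theorem~\ref{Li} then gives $\tau_U\in\sD(\cG)$.

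To find such a $U$, we want a clopen neighbourhood $C\subset r(U_1)$ of $r(g)$ whose class in $H_0(\cG,\Z/2)$ is zero, that is, $[1_C]\in 2H_0(\cG)+$(the appropriate image). We proceed as follows. Pick a small clopen neighbourhood $C_0$ of $r(g)$ inside $r(U_1)$. By minimality, and either pure infiniteness or almost finiteness with $C_0$ full, the set $r(U_1)\setminus C_0$ is large enough to contain a clopen $D$ disjoint from $C_0$ with $[1_D]=[1_{C_0}]$ in $H_0(\cG)$; hence $[1_{C_0\sqcup D}]=2[1_{C_0}]$, which is zero mod $2$. Here we use cancellation and comparison (Theorem~\ref{niceproperties}(3)), or more simply a compact open bisection $V$ with $s(V)=C_0$ and $r(V)\subset r(U_1)\setminus C_0$. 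Such a $V$ exists once $C_0$ is small, because minimality lets us move a small neighbourhood of $r(g)$ into the nonempty open set $r(U_1)\setminus\{r(g)\}$, and compactness gives the bisection after shrinking. Setting $C=C_0\sqcup r(V)$ and $U=U_1\cap r^{-1}(C)$, we get $g\in U$, $r(U)=C\subset A$ and $[1_C]=2[1_{C_0}]$, so $\tau_U\in\sD(\cG)$. The one point to verify is that $r(U_1)\setminus C_0$ is nonempty open; this holds because $\cG^{(0)}$ has no isolated points.

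For (2), the main obstacle is making the extension lie in $\sD(\cG)$ rather than merely in $\sF(\cG)$. First we extend $U$ to some full bisection. The clopen sets $r(U)^c$ and $s(U)^c$ are nonempty, and $[1_{r(U)^c}]=[1_{\cG^{(0)}}]-[1_{r(U)}]=[1_{s(U)^c}]$ in $H_0(\cG)$. Cancellation (Theorem~\ref{niceproperties}(3)) then gives a bisection $U'$ from $s(U)^c$ onto $r(U)^c$, and $V_0=U\sqcup U'$ is a full bisection. To correct $\theta_{V_0}$ into $\sD(\cG)$, we apply Lemma~\ref{localabelianization} with $W$ a nonempty clopen subset of $r(U)^c$. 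This gives $\beta\in\sF(\cG)_W$ with $\beta\theta_{V_0}\in\sD(\cG)$, namely $\beta$ representing the inverse class. Since $\beta$ is the identity on $r(U)$, the full bisection $V$ implementing $\beta\theta_{V_0}$ still contains $U$. This completes the argument.
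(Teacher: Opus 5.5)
Your proof is correct. Part (2) is the paper's argument in mirror image. The paper extends $U$ by cancellation exactly as you do, but then corrects on the right by some $\beta\in\sF(\cG)_{\cG^{(0)}\setminus s(U)}$, so that the new full bisection is $U\sqcup WW_0$. You correct on the left by $\beta$ supported in $\cG^{(0)}\setminus r(U)$, and either side works.

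Part (1) takes a genuinely different route. The paper starts from a bisection $V\ni g$ and uses \cite[Lemma 3.5 or Lemma 4.3]{Ma16Adv} to find a clopen $B\subset r(V)$ avoiding $r(g)$ with $[1_B]=[1_{r(V)}]$ in $H_0(\cG,\Z/2)$, and then deletes $B$. You instead enlarge a small neighbourhood $C_0$ of $r(g)$ by a disjoint translate $r(V)\subset r(U_1)$, so that $[1_C]=2[1_{C_0}]$.

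Your construction uses only minimality and the absence of isolated points. The aside about comparison and cancellation in your second paragraph is not needed: take an arrow from $r(g)$ to a point of $r(U_1)\setminus\{r(g)\}$, choose a small bisection around it with disjoint source and range inside $r(U_1)$, and let $C_0$ be its source.

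Your construction also makes the appeal to Li's description of $\zeta$ unnecessary. With your $U$ one has $\tau_U=\tau_{C_0U}\tau_{r(V)U}$. Put $Y:=V\sqcup(r(V)U)^{-1}V(C_0U)$; then $\tau_Y$ conjugates the first factor to the second, so $\tau_U=[\tau_{C_0U},\tau_Y]$ directly.

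The paper's route, in turn, reuses the representation lemma that Lemma~\ref{localabelianization} needs anyway, so within the paper it costs nothing extra. In both versions, part (2) still genuinely requires cancellation and Lemma~\ref{localabelianization}.
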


\begin{proof}
(1)\:
Take a bisection $V\in\cCO(\cG)$ such that 
$g\in V$, $r(V)\cap s(V)=\emptyset$, $r(V)\subset A$. 
Apply \cite[Lemma 3.5 or Lemma 4.3]{Ma16Adv} to a nonempty clopen
subset of $r(V)\setminus\{r(g)\}$.
This gives a clopen subset $B\subset r(V)$ such that
$r(g)\notin B$ and $[1_B]=[1_{r(V)}]$ in $H_0(\cG,\Z/2)$. 
Then $U:=(r(V)\setminus B)V$ has the required properties, 
as $[1_{r(U)}]=[1_{r(V)}]-[1_B]=0$ in $H_0(\cG,\Z/2)$. 

(2)\:
Put $A:=\cG^{(0)}\setminus r(U)$ and $B:=\cG^{(0)}\setminus s(U)$.
Since $[1_{r(U)}]=[1_{s(U)}]$, we have $[1_A]=[1_B]$ in $H_0(\cG)$.
By Theorem~\ref{niceproperties} (3), 
there exists a bisection $W\in\cCO(\cG)$ 
such that $A=r(W)$ and $B=s(W)$. 
Then $U\sqcup W$ is a full bisection. 
By Lemma~\ref{localabelianization}, 
there exists $\beta\in\sF(\cG)_B$ such that
$\theta_{U\sqcup W}\beta\in\sD(\cG)$.
Writing $\beta=\theta_{s(U)\sqcup W_0}$
for a full bisection $W_0\in\cCO(\cG|B)$, we conclude that
\[
V:=U\sqcup WW_0\subset\cG
\]
is the required bisection. 
\end{proof}

Under the assumptions of Lemma~\ref{elementofDG}, 
every $A\in\cCO(\cG^{(0)})$ admits 
a finite clopen partition into sets of the form $r(U)$ 
with $U\in\cCO(\cG)$ a bisection satisfying $r(U)\cap s(U)=\emptyset$. 
Hence the classes $[1_{r(U)}]$ generate $H_0(\cG,\Z/2)$.

\subsection{Factor maps}\label{subsec:factormap}

In this subsection, 
we introduce factor maps between ample groupoids and 
describe their induced maps on homology and topological full groups. 

\begin{definition}[{\cite[Setting 4.10]{Ma22DCDS}}]\label{def:factor}
Let $\pi:\cG\to\cH$ be a continuous homomorphism 
between ample groupoids $\cG$ and $\cH$. 
We call $\pi$ a factor map if the following conditions hold. 
\begin{itemize}
\item $\pi$ is surjective. 
\item $\pi$ is proper, 
i.e.  the preimage of every compact subset of $\cH$ is compact in $\cG$. 
\item For any $x\in\cG^{(0)}$, $\pi$ induces a bijection 
from $r^{-1}(x)$ onto $r^{-1}(\pi(x))$. 
\end{itemize}
\end{definition}

We observe that $\pi^{-1}(\cH^{(0)})=\cG^{(0)}$. 
Indeed, suppose that $g\in\cG$ satisfies $\pi(g)\in\cH^{(0)}$. 
Then $\pi(g)=\pi(r(g))$, and both $g$ and $r(g)$ belong to $r^{-1}(r(g))$. 
The injectivity of $\pi:r^{-1}(r(g))\to r^{-1}(\pi(r(g)))$ 
therefore implies $g=r(g)$. 
It follows that the restriction 
\[
\pi^{(0)}:=\pi|{\cG^{(0)}}:\cG^{(0)}\longrightarrow\cH^{(0)}
\]
is a proper surjection. 

A factor map $\pi:\cG\to\cH$ is an isomorphism 
if and only if its restriction to the unit spaces 
$\cG^{(0)}\to\cH^{(0)}$ is a homeomorphism.

\begin{example}
Let $\Gamma$ be a countable discrete group. 
Let $\phi:\Gamma\curvearrowright X$ and $\psi:\Gamma\curvearrowright Y$ 
be minimal topologically free actions of $\Gamma$ 
on Cantor sets $X$ and $Y$. 
Suppose that $\pi:X\to Y$ is a $\Gamma$-equivariant continuous map, 
that is, 
$\pi\circ\phi_\gamma=\psi_\gamma\circ\pi$ holds for all $\gamma\in\Gamma$. 
Minimality of $\psi$ implies that $\pi$ is surjective.
It therefore gives rise to a factor map between the transformation groupoids 
$X\rtimes_\phi\Gamma$ and $Y\rtimes_\psi\Gamma$. 
\end{example}

Let $\pi:\cG\to\cH$ be a factor map. 
Properness and bijectivity on range and source fibers imply that 
$\pi^{-1}(V)$ is a compact open bisection 
whenever $V\in\cCO(\cH)$ is a bisection. 
The next lemma characterizes the sets and bisections that arise in this way: 
they are precisely those 
that are saturated with respect to the fibers of $\pi$. 

\begin{lemma}\label{saturatedness}
\begin{enumerate}
\item Let $A\in\cCO(\cG^{(0)})$. 
There exists a unique $B\in\cCO(\cH^{(0)})$ such that $A=\pi^{-1}(B)$ 
if and only if $\pi^{-1}(\pi(x))\subset A$ holds for every $x\in A$. 
\item Let $U\in\cCO(\cG)$ be a bisection. 
There exists a unique bisection $V\in\cCO(\cH)$ such that $U=\pi^{-1}(V)$ 
if and only if $\pi^{-1}(\pi(g))\subset U$ holds for every $g\in U$. 
\end{enumerate}
\end{lemma}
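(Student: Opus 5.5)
Both parts follow the same pattern: the "only if" direction is immediate, and for the "if" direction we construct $B$ (resp.\ $V$) as an image under $\pi$ and then check that it is compact open and that its preimage is the given set. Uniqueness is automatic because $\pi$ is surjective, so $\pi(\pi^{-1}(S))=S$ for every $S\subset\cH$. For the necessity in (1): if $A=\pi^{-1}(B)$ and $x\in A$, then $\pi(x)\in B$, hence $\pi^{-1}(\pi(x))\subset A$. The same argument gives necessity in (2).

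For sufficiency in (1), put $B:=\pi^{(0)}(A)$. Then $B$ is compact, being the continuous image of a compact set. Saturation gives $\pi^{-1}(B)\cap\cG^{(0)}=A$, and we also have $\pi^{-1}(\cH^{(0)})=\cG^{(0)}$, so $\pi^{-1}(B)=A$. It remains to see that $B$ is open. Its complement in $\cH^{(0)}$ is $\pi^{(0)}(\cG^{(0)}\setminus A)$, because $\pi^{(0)}$ is surjective and $A$ is saturated. We claim this complement is closed. Since $\pi^{(0)}$ is a proper continuous map into a locally compact Hausdorff space, it is a closed map. As $\cG^{(0)}\setminus A$ is closed, its image is closed, and hence $B$ is open.

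For sufficiency in (2), put $V:=\pi(U)$, which is compact. The heart of the argument is showing that $V$ is an open bisection. First, $V$ is a bisection. Suppose $h_1=\pi(g_1)$ and $h_2=\pi(g_2)$ lie in $V$ with $r(h_1)=r(h_2)=y$. Choose $x\in\pi^{-1}(y)\cap r(U)$; this is possible because $\pi(r(g_1))=y$. Since $\pi$ maps $r^{-1}(x)$ bijectively onto $r^{-1}(y)$, there are lifts $g_1',g_2'\in r^{-1}(x)$ of $h_1,h_2$. By saturation both $g_1'$ and $g_2'$ lie in $U$. Because $U$ is a bisection, $g_1'=g_2'$, and therefore $h_1=h_2$. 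The source side is handled the same way, using the source-fiber bijectivity (bijectivity on $s^{-1}$ follows from that on $r^{-1}$ via inversion). Second, $V$ is open. As in (1), $\pi$ is proper into a locally compact Hausdorff space, so it is closed. Saturation gives $\cH\setminus V=\pi(\cG\setminus U)$, which is therefore closed. Finally, saturation again yields $\pi^{-1}(V)=U$.

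The only steps that need care are the closedness of proper maps and the use of fiber-bijectivity to transfer the bisection property to $V$. Of these, the bisection argument is the main point: it is where saturation of $U$ is used essentially.
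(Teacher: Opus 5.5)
Your proof is correct and takes essentially the same route as the paper. You set $B=\pi(A)$ and $V=\pi(U)$, get openness from the fact that the proper maps $\pi^{(0)}$ and $\pi$ are closed, and transfer the bisection property using the fiber bijectivity of $\pi$ together with saturation; the paper phrases the openness step through $\pi$ being a quotient map, whereas you write the complement as the image of a closed saturated set, which is the same argument unpacked.
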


\begin{proof}
We have seen that $\pi^{(0)}=\pi|{\cG^{(0)}}$ is a proper surjection. 
Since proper maps between locally compact Hausdorff spaces are closed, 
both $\pi$ and $\pi^{(0)}$ are quotient maps. 

(1)\:
The `only if' part is clear. 
Conversely, suppose that 
$\pi^{-1}(\pi(x))\subset A$ holds for every $x\in A$. 
Then $A=(\pi^{(0)})^{-1}(\pi(A))$. 
Put $B:=\pi(A)$. 
Since $A$ is compact, $B$ is compact. 
Moreover, since $\pi^{(0)}$ is a quotient map and 
$(\pi^{(0)})^{-1}(B)=A$ is open, $B$ is open. 
Thus $B\in\cCO(\cH^{(0)})$ and $A=\pi^{-1}(B)$. 
The uniqueness of $B$ follows from the surjectivity of $\pi^{(0)}$. 

(2)\:
Again, the `only if' part is clear. 
Conversely, suppose that 
$\pi^{-1}(\pi(g))\subset U$ holds for every $g\in U$. 
Then $U=\pi^{-1}(\pi(U))$. 
Put $V:=\pi(U)$. 
Since $U$ is compact, $V$ is compact. 
Since $\pi$ is a quotient map and $\pi^{-1}(V)=U$ is open, 
$V$ is open. 

We show that $V$ is a bisection. 
Let $h_1,h_2\in V$ satisfy $r(h_1)=r(h_2)=y$. 
Choose $x\in\cG^{(0)}$ such that $\pi(x)=y$. 
By the defining property of a factor map, 
there exist unique $g_1,g_2\in r^{-1}(x)$ such that 
$\pi(g_1)=h_1$ and $\pi(g_2)=h_2$. 
Since $U=\pi^{-1}(V)$, we have $g_1,g_2\in U$. 
As $U$ is a bisection and $r(g_1)=r(g_2)=x$, 
we obtain $g_1=g_2$, and hence $h_1=h_2$. 
Thus $r|V$ is injective. 

By applying the range-fiber condition to inverses, 
$\pi$ also induces a bijection $s^{-1}(x)\to s^{-1}(\pi(x))$
for every $x\in\cG^{(0)}$. 
The same argument therefore shows that $s|V$ is injective. 
Hence $V$ is a compact open bisection and $U=\pi^{-1}(V)$. 
The uniqueness of $V$ follows from the surjectivity of $\pi$. 
\end{proof}

We next show that a factor map $\pi$ induces 
pullback homomorphisms $H_i^*(\pi)$ on groupoid homology. 
We use the notation of Definition~\ref{def:groupoidhomology}. 
A factor map $\pi:\cG\to\cH$ induces maps 
\[
\pi^{(n)}:\cG^{(n)}\to\cH^{(n)},\qquad 
\pi^{(n)}(g_1,\dots,g_n):=(\pi(g_1),\dots,\pi(g_n)), 
\]
and we write $\pi^{(0)}:=\pi|\cG^{(0)}$. 

\begin{lemma}\label{lem:pullback}
Let $\pi:\cG\to\cH$ be a factor map 
and let $A$ be a topological abelian group. 
For every $n\geq0$, the map $\pi^{(n)}$ is proper, and the maps 
\[
\pi^\#:C_c(\cH^{(n)},A)\to C_c(\cG^{(n)},A),\qquad 
\pi^\#(f):=f\circ\pi^{(n)}, 
\]
define a homomorphism of chain complexes. 
Consequently $\pi$ induces homomorphisms 
\[
H_i^*(\pi):H_i(\cH,A)\to H_i(\cG,A),\qquad i\geq0. 
\]
\end{lemma}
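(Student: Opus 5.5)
Three things have to be checked: that each $\pi^{(n)}$ is proper, that $f\circ\pi^{(n)}$ is again compactly supported and continuous, and that $\pi^\#$ commutes with the boundary maps. Properness comes first. For $n=0$ it was observed above: $\pi^{(0)}$ is the restriction of the proper map $\pi$ to the closed set $\cG^{(0)}=\pi^{-1}(\cH^{(0)})$. For $n\geq1$, let $K\subset\cH^{(n)}$ be compact and let $K_i$ be its projection to the $i$-th coordinate. Then
\[
(\pi^{(n)})^{-1}(K)\subset \pi^{-1}(K_1)\times\cdots\times\pi^{-1}(K_n).
\]
The left side is closed, since it is the preimage of a closed set intersected with the closed set $\cG^{(n)}$, and the right side is compact. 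Hence $(\pi^{(n)})^{-1}(K)$ is compact. It follows that for $f\in C_c(\cH^{(n)},A)$ the function $f\circ\pi^{(n)}$ is continuous with support contained in $(\pi^{(n)})^{-1}(\supp f)$, which is compact. So $\pi^\#$ is a well-defined homomorphism on each chain group.

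The main step is the chain map identity $\pi^\#\circ d_{i*}=d_{i*}\circ\pi^\#$ for each face map. Since $\pi$ is a homomorphism, $\pi^{(n-1)}\circ d_i=d_i\circ\pi^{(n)}$. Evaluating both sides at $\mathbf g=(g_1,\dots,g_n)\in\cG^{(n)}$ gives
\[
(\pi^\# d_{i*}f)(\mathbf g)=\sum_{\mathbf h\in d_i^{-1}(\pi^{(n-1)}\mathbf g)}f(\mathbf h),\qquad
(d_{i*}\pi^\# f)(\mathbf g)=\sum_{\mathbf k\in d_i^{-1}(\mathbf g)}f(\pi^{(n)}\mathbf k).
\]
It therefore suffices to show that $\pi^{(n)}$ restricts to a bijection $d_i^{-1}(\mathbf g)\to d_i^{-1}(\pi^{(n-1)}\mathbf g)$.

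This fibre bijectivity is where the third axiom of a factor map is used. I do it case by case.
\begin{itemize}
\item Case $i=0$. Elements of $d_0^{-1}(\mathbf g)$ are $(k,g_1,\dots,g_n)$ with $s(k)=r(g_1)$; for $n=1$ read $s(k)=x$ when $\mathbf g=x\in\cG^{(0)}$. This case is exactly the bijection $s^{-1}(x)\to s^{-1}(\pi(x))$ established in the proof of Lemma~\ref{saturatedness}.
\item Case $i=n$. Here the fibre is parametrised by $r^{-1}(s(g_n))$, and the bijection is the defining range-fibre condition.
\item Case $1\leq i\leq n-1$. The fibre consists of factorizations $g_i=ab$ with all other entries fixed, parametrised by $a\in r^{-1}(r(g_i))$ via $b=a^{-1}g_i$. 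Since $\pi$ is a homomorphism, this parametrisation is compatible with $\pi$, so bijectivity again reduces to the range-fibre bijection at $r(g_i)$.
\item Case $n=1$, $i=1$. Here $d_1=r$ and $\mathbf g=x$, so this is the range-fibre bijection at $x$.
\end{itemize}
Summing the resulting identities with signs gives $\pi^\#\partial_n=\partial_n\pi^\#$. Hence $\pi^\#$ is a chain map, and it induces the homomorphisms $H_i^*(\pi)$ on homology.

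The middle case $1\leq i\leq n-1$ is the step I expect to need the most care. There one must make sure that the parametrisation by $r^{-1}(r(g_i))$ genuinely commutes with $\pi$; once that is checked, the rest is formal.
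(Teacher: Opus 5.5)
Your proposal is correct and follows the paper's proof essentially step for step: properness via the product of the preimages $\pi^{-1}(K_i)$, then reduction of the chain-map identity to bijectivity of $\pi^{(n)}$ on the fibres of each face map $d_i$, checked through the same parametrisations by source and range fibres. The only blemish is an off-by-one in your notation. Since $d_i:\cG^{(n)}\to\cG^{(n-1)}$ and you apply $\pi^{(n-1)}$ to the evaluation point, that point should be $\mathbf{g}=(g_1,\dots,g_{n-1})\in\cG^{(n-1)}$. Then the $d_0$-fibre consists of $(k,g_1,\dots,g_{n-1})$ and the $d_n$-fibre is parametrised by $r^{-1}(s(g_{n-1}))$, and with this correction your cases cover all face maps.
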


\begin{proof}
We first check that $\pi^{(n)}$ is proper. 
For $n=0$, this was already observed. 
Suppose that $n\geq1$ and let $K\subset\cH^{(n)}$ be compact and 
let $K_i\subset\cH$ be the image of $K$ 
under the $i$-th coordinate projection. 
Then 
\[
(\pi^{(n)})^{-1}(K)\subset
\pi^{-1}(K_1)\times\pi^{-1}(K_2)\times\dots\times\pi^{-1}(K_n), 
\]
and the right-hand side is compact, because $\pi$ is proper. 
Since $\cH^{(n)}$ is Hausdorff and $\cG^{(n)}$ is closed in $\cG^n$, 
the set $(\pi^{(n)})^{-1}(K)$ is closed in $\cG^n$, and hence is compact. 
It follows that, for every $f\in C_c(\cH^{(n)},A)$, 
the function $\pi^\#(f)$ is continuous and 
its support is contained in the compact set 
$(\pi^{(n)})^{-1}(\supp f)$. 
Thus $\pi^\#$ is a well-defined homomorphism. 

Since $\pi$ is a homomorphism, 
we have $d_i\circ\pi^{(n)}=\pi^{(n-1)}\circ d_i$ for every $i$. 
Hence, by the definition of $d_{i*}$, 
the identity $\pi^\#\circ d_{i*}=d_{i*}\circ\pi^\#$ holds 
as soon as $\pi^{(n)}$ restricts to a bijection 
from $d_i^{-1}(\gamma)$ onto $d_i^{-1}(\pi^{(n-1)}(\gamma))$ 
for every $\gamma\in\cG^{(n-1)}$. 
By the definition of a factor map, 
$\pi$ induces a bijection from $r^{-1}(x)$ onto $r^{-1}(\pi(x))$ 
for every $x\in\cG^{(0)}$, and, by applying this to inverses, 
also a bijection from $s^{-1}(x)$ onto $s^{-1}(\pi(x))$. 
When $n=1$, we have $d_0^{-1}(x)=s^{-1}(x)$ and $d_1^{-1}(x)=r^{-1}(x)$, 
and so the claim is precisely these two statements. 
Let $n\geq2$ and $\gamma=(g_1,\dots,g_{n-1})\in\cG^{(n-1)}$. 
The maps 
\[
\begin{aligned}
s^{-1}(r(g_1))\ni g&\longmapsto(g,g_1,\dots,g_{n-1})\in d_0^{-1}(\gamma), \\
r^{-1}(r(g_i))\ni g&\longmapsto
(g_1,\dots,g_{i-1},g,g^{-1}g_i,g_{i+1},\dots,g_{n-1})\in d_i^{-1}(\gamma), \\
r^{-1}(s(g_{n-1}))\ni g&\longmapsto(g_1,\dots,g_{n-1},g)\in d_n^{-1}(\gamma)
\end{aligned}
\]
are bijections, where $1\leq i\leq n{-}1$ in the second line. 
The same formulas describe the fibers of the face maps 
over $\pi^{(n-1)}(\gamma)$, and they are compatible with $\pi$. 
The desired bijectivity is now immediate. 

Therefore $\pi^\#\circ\partial_n=\partial_n\circ\pi^\#$, 
and $\pi^\#$ induces the homomorphisms $H_i^*(\pi)$. 
\end{proof}

\begin{remark}\label{rem:pullback-correction}
In the discussion immediately preceding Definition~3.4 of \cite{Ma12PLMS}, 
the author asserted that a proper homomorphism between \'etale groupoids 
induces a homomorphism on homology by pullback. 
This assertion is incorrect in general. 
Properness ensures that pullback is well-defined on compactly supported 
continuous functions in each degree, 
but does not by itself ensure commutation with the boundary operators. 
Indeed, if $\pi$ denotes the homomorphism from the group $\Z/2$ 
onto the trivial group, both regarded as discrete \'etale groupoids 
with a single unit, then $\pi$ is a proper surjection, 
but every face map of $\Z/2$ has two-point fibers, so that 
$\partial_2\circ\pi^\#=2\,\pi^\#\circ\partial_2\neq\pi^\#\circ\partial_2$ 
with $\Z$ coefficients. 
Lemma~\ref{lem:pullback} establishes the assertion for factor maps: 
the bijectivity on range fibers (and hence on source fibers) guarantees 
the required compatibility with the face-map pushforwards. 
\end{remark}

Let $\pi:\cG\to\cH$ be a factor map. 
Assume further that 
the unit spaces $\cG^{(0)}$ and $\cH^{(0)}$ are Cantor sets. 
Then $\pi$ induces a homomorphism $\pi^*:\sF(\cH)\to\sF(\cG)$ 
between the topological full groups defined 
in Definition~\ref{def:topologicalfullgroup} via 
\[
\pi^*(\theta_V):=\theta_{\pi^{-1}(V)}, 
\]
where $V\in\cCO(\cH)$ is a full bisection. 
Surjectivity of $\pi$ implies that $\pi^*$ is injective.

\subsection{Generalized alternating groups}

We conclude with Nekrashevych's multisections and 
generalized alternating groups \cite{Ne19ETDS}. 
These describe the finite permutation groups inside a full group 
and will be used to generate its commutator subgroup. 

Let $\cG$ be an ample groupoid with compact unit space. 
For $d\geq2$, 
a family $u=(U_{i,j})_{i,j=0}^{d-1}$ of nonempty compact open bisections 
is called a multisection of degree $d$ in $\cG$ 
if the following conditions hold: 
\begin{itemize}
\item The sets $U_{i,i}\subset\cG^{(0)}$ are mutually disjoint. 
\item $U_{i,j}U_{j,k}=U_{i,k}$ holds for all $0\leq i,j,k\leq d{-}1$. 
\end{itemize}
We call the union of the sets $U_{i,i}$ the support of $u$ 
and denote it by $\supp(u)$. 
Suppose that 
we are given bisections $V_1,V_2,\dots,V_{d-1}\in\cCO(\cG)$ such that 
\begin{itemize}
\item $r(V_1)=r(V_2)=\dots=r(V_{d-1})$, 
\item $r(V_1),s(V_1),s(V_2),\dots,s(V_{d-1})$ are mutually disjoint. 
\end{itemize}
Put $V_0:=r(V_1)$. 
Then the bisections 
\[
U_{i,j}:=V_i^{-1}V_j,\ i,j=0,1,2,\dots,d{-}1
\]
form a multisection of degree $d$, 
which we call the multisection generated by $V_1,V_2,\dots,V_{d-1}$. 

When $u=(U_{i,j})_{i,j=0}^{d-1}$ is a multisection of degree $d$ 
and $P\subset U_{0,0}$ is a nonempty clopen subset, 
the family 
\[
U_{i,0}PU_{0,j},\ i,j=0,1,2,\dots,d{-}1
\]
is again a multisection of degree $d$, 
and we write it as $u|P$. 

Let $\mathsf{S}_d$ denote the symmetric group of degree $d$ 
acting on $\{0,1,2,\dots,d{-}1\}$. 
For any $\pi\in\mathsf{S}_d$, 
\[
U_\pi:=U_{\pi(0),0}\sqcup U_{\pi(1),1}\sqcup\dots
\sqcup U_{\pi(d-1),d-1}
\sqcup\left(\cG^{(0)}\setminus\bigcup_{i=0}^{d-1}U_{i,i}\right)
\]
is a full bisection. 
The map $\pi\mapsto\theta_{U_{\pi}}$ is 
an embedding of $\mathsf{S}_d$ into $\sF(\cG)$. 
We denote by $\sA(u)$ the image of the alternating subgroup 
under this embedding.

\begin{definition}[{\cite[Definition 3.7]{Ne19ETDS}}]
Let $\cG$ be an ample groupoid with compact unit space. 
For $d\geq3$, denote by $\sA_d(\cG)$ the subgroup of $\sF(\cG)$ 
generated by the union of the subgroups $\sA(u)$ 
for all multisections $u$ of degree $d$. 
We call $\sA_d(\cG)$ the generalized alternating group of $\cG$. 
\end{definition}

Since the alternating group is the commutator subgroup of the symmetric group, 
$\sA_d(\cG)\subset\sD(\cG)$.
For the two classes considered here, these groups coincide. 

\begin{theorem}\label{A=D}
Let $\cG$ be a minimal ample groupoid with Cantor unit space 
which is either almost finite or purely infinite. 
For any $d\geq3$, the generalized alternating group $\sA_d(\cG)$ 
equals the commutator subgroup $\sD(\cG)$. 
\end{theorem}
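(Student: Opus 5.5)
We have the inclusion $\sA_d(\cG)\subset\sD(\cG)$, so we only need the reverse. Since $\sA_d(\cG)$ is generated by the subgroups $\sA(u)$, it is a normal subgroup of $\sF(\cG)$: for $\alpha=\theta_W\in\sF(\cG)$ and a multisection $u=(U_{i,j})$, the family $(WU_{i,j}W^{-1})$ is again a multisection and $\alpha\sA(u)\alpha^{-1}=\sA(WuW^{-1})$. The subgroup is also nontrivial. Minimality and the Cantor assumption give an arrow $g$ with $r(g)\neq s(g)$, hence a bisection $V_1$ with $r(V_1)\cap s(V_1)=\emptyset$. Shrinking and choosing further arrows, which is possible because orbits are dense and infinite, yields $V_2,\dots,V_{d-1}$ as in the construction of generated multisections, and hence a multisection of degree $d$. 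Theorem~\ref{niceproperties}~(1) then gives $\sD(\cG)\subset\sA_d(\cG)$ directly, since $\sA_d(\cG)$ is a nontrivial subgroup normalized by $\sD(\cG)$.

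In more detail, I will carry out the steps in this order.

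\textbf{Normality.} For a full bisection $W$, conjugating $\theta_{U_\pi}$ by $\theta_W$ gives $\theta_{WU_\pi W^{-1}}$. The set $WU_\pi W^{-1}$ is exactly the bisection $(WuW^{-1})_\pi$ attached to the conjugated multisection. The multisection axioms $U_{i,j}U_{j,k}=U_{i,k}$ and disjointness of the diagonal sets are preserved under conjugation.

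\textbf{Existence of a degree-$d$ multisection.} Choose a unit $x$ with trivial isotropy. Such units are dense by effectiveness. Its orbit is infinite: it is dense in a Cantor set, and a finite dense orbit is impossible without isolated points. Pick arrows $g_1,\dots,g_{d-1}$ with $r(g_i)=x$ and distinct sources $s(g_i)\neq x$. Take compact open bisections $V_i\ni g_i$ and shrink them so that their ranges all equal a common small clopen neighbourhood of $x$. The sets $r(V_1),s(V_1),\dots,s(V_{d-1})$ can then be made pairwise disjoint by Hausdorffness, using continuity of $s\circ(r|V_i)^{-1}$. The generated multisection is nondegenerate, so $\sA(u)\cong\mathsf{A}_d$ is nontrivial for $d\geq3$.

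\textbf{Conclusion.} By Theorem~\ref{niceproperties}~(1), $\sD(\cG)\subset\sA_d(\cG)$, and hence equality holds.

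I expect no real obstacle, since the heavy lifting is already contained in Theorem~\ref{niceproperties}~(1). The only point requiring care is the common-range shrinking in the second step. I will handle it by intersecting the ranges, setting $V_i:=PV_i$ with $P=\bigcap_i r(V_i)$ made smaller if necessary. Alternatively, the statement could be cited from \cite{Ne19ETDS} together with \cite{Ma15crelle}, but the argument above is self-contained given the earlier results.
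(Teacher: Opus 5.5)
Your proposal is correct and follows essentially the same route as the paper. It establishes normality of $\sA_d(\cG)$ by conjugating multisections, builds a degree-$d$ multisection from an infinite orbit, and then applies Theorem~\ref{niceproperties}~(1). The paper invokes that theorem through the simplicity of $\sD(\cG)$ rather than the ``normalized by $\sD(\cG)$'' form, which is the same input.
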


\begin{proof}
Conjugation by a full bisection sends a multisection of degree $d$ 
to another such multisection. 
Hence $\sA_d(\cG)$ is normal in $\sF(\cG)$. 
Minimality and the infinitude of the unit space provide 
an orbit with $d$ distinct points, and thus a multisection of degree $d$. 
Therefore $\sA_d(\cG)$ is a nontrivial normal subgroup of $\sD(\cG)$. 
The latter is simple by Theorem~\ref{niceproperties} (1), 
so $\sA_d(\cG)=\sD(\cG)$. 
\end{proof}

\section{Maximal subgroups arising from factor groupoids}\label{sec:factor}

In this section, 
we investigate maximal subgroups of topological full groups 
arising from factor maps between ample groupoids.
Theorems~\ref{thm1:factor} and \ref{thm2:factor} reduce maximality 
to surjectivity and injectivity, respectively, of the map on abelianizations. 
The main step is Proposition~\ref{prop:factor}: 
an element outside the factor full group, 
together with the factor commutator subgroup, 
generates a group containing $\sD(\cG)$. 
We prove this by first using primeness 
to propagate a local alternating group, 
and then constructing such a group near a two-point fiber. 
Throughout this section, the unit spaces of $\cG$ and $\cH$ 
are assumed to be Cantor sets. 
Let $\pi:\cG\to\cH$ be a factor map (Definition~\ref{def:factor}). 
By Section~\ref{subsec:factormap}, 
$\pi$ induces an injective homomorphism $\pi^*:\sF(\cH)\to\sF(\cG)$. 
For a multisection $v=(V_{i,j})_{i,j=0}^{d-1}$ of degree $d$ in $\cH$, 
its pullback is the multisection in $\cG$ defined by 
\[
\pi^*(v):=(\pi^{-1}(V_{i,j}))_{i,j=0}^{d-1}. 
\]

\begin{lemma}\label{strictness}
Suppose that $\cG$ is minimal. 
If $\pi$ is not an isomorphism, 
then $\sD(\cG)$ is not contained in $\pi^*(\sF(\cH))$. 
\end{lemma}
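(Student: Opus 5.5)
Since $\pi$ is not an isomorphism, the restriction $\pi^{(0)}:\cG^{(0)}\to\cH^{(0)}$ is not injective. Choose distinct points $x\neq x'$ in $\cG^{(0)}$ with $\pi(x)=\pi(x')$. Every element $\alpha=\pi^*(\theta_V)\in\pi^*(\sF(\cH))$ preserves the partition of $\cG^{(0)}$ into fibers of $\pi^{(0)}$: if $\pi(y)=\pi(y')$, then $\pi(\alpha(y))=\theta_V(\pi(y))=\pi(\alpha(y'))$. It therefore suffices to produce $\beta\in\sD(\cG)$ with $\pi(\beta(x))\neq\pi(\beta(x'))$.

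By minimality, the orbit of $x$ is dense and $\cG^{(0)}$ has no isolated points, so there is $g\in\cG$ with $s(g)=x$ and $z:=r(g)$ satisfying $z\neq x$, $z\neq x'$ and $\pi(z)\neq\pi(x)$. To find such $g$, note that the fiber $(\pi^{(0)})^{-1}(\pi(x))$ is closed and proper in $\cG^{(0)}$, because $\pi^{(0)}$ is surjective onto the Cantor set $\cH^{(0)}$, which has more than one point. Its complement is then a nonempty open set, and the orbit of $x$ meets it.

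Choose a bisection $U\in\cCO(\cG)$ containing $g$, and shrink it so that $s(U)$, $r(U)$ are disjoint, $x'\notin s(U)\cup r(U)$, and $\pi(r(U))\cap\pi(s(U))=\emptyset$. The last condition can be arranged because $\pi(z)\neq\pi(x)$ and $\pi^{(0)}$ is continuous, so small neighborhoods of $x$ and $z$ have disjoint images in the Hausdorff space $\cH^{(0)}$.

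Next, use Lemma~\ref{elementofDG} (1), which requires $\cG$ to be minimal and either almost finite or purely infinite. This gives a bisection $U'\subset U$ containing $g$ with $\tau_{U'}\in\sD(\cG)$. Then $\beta:=\tau_{U'}$ moves $x$ to $z$ and fixes $x'$. Hence $\pi(\beta(x))=\pi(z)\neq\pi(x)=\pi(x')=\pi(\beta(x'))$, so $\beta$ does not preserve the fiber partition and $\beta\notin\pi^*(\sF(\cH))$.

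If the statement is meant for a general minimal $\cG$, without almost finiteness or pure infiniteness, we avoid the transposition lemma. Instead we use a $3$-cycle from Lemma~\ref{threecycle}. Find a second arrow $h$ with $s(h)=x$ and $r(h)=w$, where $w$ lies outside the fibers through $x$ and $z$; its existence again follows from density of the orbit. Shrinking neighborhoods, we get bisections $U,V$ with $s(U)=s(V)\ni x$ and with $s(U)$, $r(U)$, $r(V)$ pairwise disjoint and avoiding $x'$. Then $\tau_V\tau_U\in\sD(\cG)$ agrees with $\theta_U$ on $s(U)$ and fixes $x'$. The argument then concludes exactly as before.

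The main obstacle is only the bookkeeping of these neighborhoods. We must shrink $s(U)$ and $s(V)$ together so that $s(U)=s(V)$ still holds. This is done by replacing $U$ and $V$ with $Us(W)$ and $Vs(W)$ for a small clopen neighborhood $s(W)$ of $x$.
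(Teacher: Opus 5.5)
Your proposal is correct, and your second argument is the one that proves the lemma as stated. As you suspected, only minimality is assumed here: almost finiteness or pure infiniteness enters with Setting~\ref{factorsetting}, and the lemma is later applied to an arbitrary intermediate factor $\cK$. So the first argument, via Lemma~\ref{elementofDG}~(1), uses hypotheses that are not available. It also needs $U'\subset U$ to keep $x'$ out of $s(U')$. That containment is not in the statement of Lemma~\ref{elementofDG}~(1), but it follows by running its proof starting from your $U$ with $A:=r(U)$.

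Your general-case construction is the paper's. The bisections $U,V$ generate a multisection $u$ of degree $3$ with $x\in U_{0,0}$ and $x'\notin\supp(u)$, and $\tau_V\tau_U$ is a $3$-cycle in $\sA(u)\subset\sD(\cG)$.

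The two arguments differ only in how they certify non-membership in $\pi^*(\sF(\cH))$.
\begin{itemize}
\item The paper notes that the full bisection implementing the $3$-cycle contains the unit $x'$ but not the unit $x$. Since $\pi(x)=\pi(x')$, this bisection is not saturated, and the paper concludes via Lemma~\ref{saturatedness}~(2) and the uniqueness of implementing full bisections. This needs only that the $3$-cycle moves $x$.
\item You use the equivariance $\pi\circ\pi^*(\theta_V)=\theta_V\circ\pi$ on $\cG^{(0)}$, so every element of $\pi^*(\sF(\cH))$ permutes the fibers of $\pi^{(0)}$. This is a test at the level of homeomorphisms that does not invoke uniqueness of implementing bisections. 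The cost is that you must arrange $\pi(\beta(x))\neq\pi(x)$, which is why you need $z$ outside the fiber of $x$.
\end{itemize}

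Your extra requirement that $w$ avoid the fiber through $z$ is harmless but unnecessary; $w\notin\{x,x',z\}$ suffices.
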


\begin{proof}
Since $\pi$ is not an isomorphism, 
we may find two distinct points $x,z\in\cG^{(0)}$ 
such that $\pi(x)=\pi(z)$. 
The minimality of $\cG$ implies that 
there exists a multisection $u=(U_{i,j})_{i,j=0}^{2}$ 
of degree $3$ in $\cG$ 
such that $x\in U_{0,0}$ and $z\notin\supp(u)$. 
Let $\alpha\in\sA(u)$ be a non-trivial $3$-cycle, and 
let $U_\alpha\in\cCO(\cG)$ be the full bisection implementing $\alpha$. 
Since $z\notin\supp(u)$, the unit $z$ belongs to $U_\alpha$, 
whereas the unit $x$ does not belong to $U_\alpha$. 
As $\pi(x)=\pi(z)$, the bisection $U_\alpha$ is not saturated. 
Hence Lemma~\ref{saturatedness}~(2), 
together with the uniqueness of the full bisection 
implementing an element of $\sF(\cG)$, 
implies that $\alpha\notin\pi^*(\sF(\cH))$. 
Therefore $\sA(u)$ is a subgroup of $\sD(\cG)$ 
which is not contained in $\pi^*(\sF(\cH))$. 
\end{proof}

The first obstruction to maximality is 
the existence of nontrivial intermediate factor groupoids. 

\begin{definition}
Let $\pi:\cG\to\cH$ be a factor map which is not an isomorphism. 
We say that $\pi$ is prime 
if it admits no nontrivial intermediate ample groupoids. 
More precisely, 
if there exist an ample groupoid $\cK$ 
and factor maps $\pi_1:\cG\to\cK$, $\pi_2:\cK\to\cH$ 
such that $\pi=\pi_2\circ\pi_1$, 
then either $\pi_1$ or $\pi_2$ is an isomorphism. 
\end{definition}

Skew products by cyclic groups of prime order provide basic examples. 

\begin{lemma}\label{pto1}
Let $\cH$ be an ample groupoid whose unit space is a Cantor set. 
Let $p\in\N$ be a prime number and 
let $\xi:\cH\to\Z/p$ be a continuous homomorphism. 
Suppose that the skew product $\cG:=\cH\times_\xi\Z/p$ is minimal. 
Then the factor map $\pi:\cG\to\cH$ sending $(h,a)$ to $h$ is prime. 
\end{lemma}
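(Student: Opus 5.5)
Suppose $\pi=\pi_2\circ\pi_1$ with factor maps $\pi_1:\cG\to\cK$ and $\pi_2:\cK\to\cH$. The plan is to show that the fibers of $\pi_1^{(0)}$ form a $\Z/p$-invariant equivalence relation on each fiber $\{(x,a)\mid a\in\Z/p\}$ of $\pi^{(0)}$, and that this relation is the same on all fibers. Since $p$ is prime, it is then either trivial or total. The first case makes $\pi_1$ an isomorphism; the second makes $\pi_2$ an isomorphism, by the remark after Definition~\ref{def:factor} that a factor map is an isomorphism exactly when it is a homeomorphism on unit spaces.

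The first step uses the deck transformations. For $b\in\Z/p$ let $\delta_b$ be the automorphism $(h,a)\mapsto(h,b+a)$ of $\cG$. We want $\pi_1\circ\delta_b$ to factor through $\pi_1$. To see this, take $(x,a)\in\cG^{(0)}$ and an arrow $k\in\cK$ with $r(k)=\pi_1(x,a)$. By the fiber bijectivity of $\pi_1$, there is a unique lift $g\in r^{-1}(x,a)$ of $k$, and $\pi(g)=\pi_2(k)$. So $g=(\pi_2(k),a)$. The arrow $k$ is thus determined by $\pi_2(k)$ together with the chosen lift of $r(k)$.

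Now define, for each $y\in\cH^{(0)}$, the relation $R_y$ on $\Z/p$ by $a\sim b$ iff $\pi_1(y,a)=\pi_1(y,b)$. Take $h\in\cH$ with $r(h)=y$ and $s(h)=y'$, and fix $a\sim b$. The arrow $(h,a)$ has source $(y',a+\xi(h))$, and $(h,b)$ has source $(y',b+\xi(h))$. Both arrows have the same range under $\pi_1$ and the same image $h$ under $\pi$. By fiber bijectivity of $\pi_2$ at $\pi_1(y,a)$, they have the same image $k$ in $\cK$. Taking sources gives $\pi_1(y',a+\xi(h))=\pi_1(y',b+\xi(h))$. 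Hence $R_{y'}$ is the translate of $R_y$ by $\xi(h)$. The next task is to prove that $R_y$ is translation invariant and does not depend on $y$.

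The key step uses minimality of $\cG$. Minimality means that for each unit $(y,0)$, the set of values $\xi(h)$ over $h\in\cH$ with $r(h)=s(h)$ near $y$ — more precisely, the cocycle values along returns — is all of $\Z/p$. We also need continuity. The set $\{(y,a,b)\mid a\sim b\}$ is closed, since $\pi_1$ is continuous and $\cK^{(0)}$ is Hausdorff. For fixed $a,b$, the set $E_{a,b}=\{y\mid a\sim_y b\}$ is therefore closed in $\cH^{(0)}$. The translation rule says $E_{a,b}$ is carried to $E_{a+c,b+c}$ along arrows with $\xi=c$. Now take $\cG$-orbits. The set $\{(y,a)\mid y\in E_{a,a+d}\}$ is a closed subset of $\cG^{(0)}$, and it is invariant under $\cG$-arrows, because arrows translate both coordinates by the same $\xi(h)$. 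By minimality it is empty or everything. So for each $d$, either $a\sim_y a+d$ for all $y,a$, or for none. Hence $R_y=R$ is a fixed translation-invariant equivalence relation on $\Z/p$, that is, congruence modulo a subgroup. Since $p$ is prime, that subgroup is trivial or all of $\Z/p$.

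To conclude: if the subgroup is trivial, then $\pi_1^{(0)}$ is injective, since distinct $\pi$-fibers already separate points. It is then a homeomorphism, being a proper continuous bijection, so $\pi_1$ is an isomorphism. If the subgroup is all of $\Z/p$, then $\pi_1^{(0)}$ identifies exactly the $\pi$-fibers, so $\pi_2^{(0)}$ is injective and $\pi_2$ is an isomorphism. Also, $\pi$ itself is not an isomorphism because $p\ge2$. The main obstacle I expect is the closedness and invariance argument in the key step, specifically checking the claim that arrows translate the relation. This relies on the injectivity of $\pi_2$ on range fibers, which must be applied carefully.
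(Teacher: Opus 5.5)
Your proof is correct and is essentially the paper's argument. The paper also uses injectivity of $\pi_2$ on range fibers to push an identification $\pi_1(x)=\pi_1(\lambda_c(x))$ along arrows, then continuity and minimality to make $\pi_1=\pi_1\circ\lambda_c$ hold on all of $\cG^{(0)}$, and finally primality of $p$ to conclude that $\pi_2$ is an isomorphism; you organize the same steps as a dichotomy over the subgroup of differences $d$ rather than by contradiction, and the paragraph on $\pi_1\circ\delta_b$ factoring through $\pi_1$ and the heuristic about cocycle values along returns are never used and can be dropped.
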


\begin{proof}
Let $\lambda:\Z/p\curvearrowright\cG$ be the action by deck transformations, 
that is, 
$\lambda_b(h,a):=(h,a{+}b)$ for $(h,a)\in\cH\times\Z/p$ and $b\in\Z/p$. 
One has $\pi=\pi\circ\lambda_b$. 
Suppose that $\cK$ is an ample groupoid and 
$\pi_1:\cG\to\cK$, $\pi_2:\cK\to\cH$ are factor maps 
satisfying $\pi=\pi_2\circ\pi_1$. 
Assume that $\pi_1$ is not an isomorphism. 
There exists $x\in\cG^{(0)}$ and $c\neq0$ 
such that $\pi_1(x)=\pi_1(\lambda_c(x))$. 
For any $g\in r^{-1}(x)$, we have 
\[
r(\pi_1(g))=\pi_1(x)=\pi_1(\lambda_c(x))=r(\pi_1(\lambda_c(g)))
\]
and 
\[
\pi_2(\pi_1(g))=\pi(g)=\pi(\lambda_c(g))=\pi_2(\pi_1(\lambda_c(g))). 
\]
Since $\pi_2$ is a factor map, we obtain $\pi_1(g)=\pi_1(\lambda_c(g))$, 
which implies $\pi_1(s(g))=\pi_1(\lambda_c(s(g)))$. 
Hence $\pi_1$ equals $\pi_1\circ\lambda_c$ on $\cG^{(0)}$, 
because $\cG$ is minimal. 
As $p$ is a prime number, this means that 
$\pi_1$ equals $\pi_1\circ\lambda_b$ on $\cG^{(0)}$ for every $b\in\Z/p$. 
Therefore $\pi_2|\cK^{(0)}:\cK^{(0)}\to\cH^{(0)}$ is a homeomorphism, 
and so $\pi_2$ is an isomorphism. 
It follows that $\pi$ is prime. 
\end{proof}

Let $\pi:\cG\to\cH$ be a factor map and suppose that $\cG$ is minimal. 
By Lemma~\ref{strictness}, 
if $\pi^*(\sF(\cH))$ (resp. $\pi^*(\sD(\cH))$) is 
a maximal subgroup of $\sF(\cG)$ (resp. $\sD(\cG)$), 
then $\pi$ is prime. 
Indeed, any intermediate factor groupoid $\cK$ is minimal. 
If neither factor map is an isomorphism, 
applying Lemma~\ref{strictness} to both factor maps 
gives strict intermediate subgroups for both the full groups 
and their commutator subgroups. 
We will obtain converse statements 
under additional hypotheses on the fibers 
and the induced map on abelianizations.

\begin{setting}\label{factorsetting}
In the rest of this section, 
we let $\pi:\cG\to\cH$ be a factor map 
satisfying the following conditions. 
\begin{itemize}
\item The unit spaces $\cG^{(0)}$ and $\cH^{(0)}$ are Cantor sets. 
\item $\cG$ and $\cH$ are minimal and 
either almost finite or purely infinite. 
\item $\pi$ is not an isomorphism. 
\end{itemize}
\end{setting}

We first express intermediate factors in terms of invariant Boolean algebras. 
This will turn primeness 
into a statement about generating the topology of $\cG^{(0)}$. 

Let $\mathcal{B}\subset\cCO(\cG^{(0)})\cup\{\emptyset\}$ be 
a Boolean subalgebra containing $\pi^{-1}(\cCO(\cH^{(0)}))$. 
We say that $\mathcal{B}$ is $\cH$-invariant 
if $\pi^{-1}(V)P\pi^{-1}(V)^{-1}\in\mathcal{B}$ 
for any $P\in\mathcal{B}$ and any bisection $V\in\cCO(\cH)$. 

\begin{lemma}\label{Boolean}
Let $\mathcal{B}\subset\cCO(\cG^{(0)})\cup\{\emptyset\}$ be 
a Boolean subalgebra containing $\pi^{-1}(\cCO(\cH^{(0)}))$, 
which is $\cH$-invariant. 
Then the Stone spectrum of $\mathcal B$ gives 
an intermediate factor groupoid between $\cG$ and $\cH$. 
More precisely, 
there exist an ample groupoid $\cK$ 
and factor maps $\pi_1:\cG\to\cK$, $\pi_2:\cK\to\cH$ 
such that $\pi=\pi_2\circ\pi_1$ and 
$\mathcal{B}=\pi_1^{-1}(\cCO(\cK^{(0)}))\cup\{\emptyset\}$. 
\end{lemma}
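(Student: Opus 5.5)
Let $Z$ denote the Stone spectrum of $\mathcal{B}$, i.e.\ the space of ultrafilters on $\mathcal{B}$. It is a compact metrizable totally disconnected space, since $\mathcal{B}$ is a countable Boolean algebra. Because $\mathcal{B}$ consists of clopen subsets of $\cG^{(0)}$, there is a continuous surjection $q:\cG^{(0)}\to Z$ sending $x$ to the ultrafilter $\{P\in\mathcal B\mid x\in P\}$. Because $\mathcal{B}\supset\pi^{-1}(\cCO(\cH^{(0)}))$, the map $\pi^{(0)}$ factors as $p\circ q$ for a continuous surjection $p:Z\to\cH^{(0)}$. Write $x\sim x'$ when $q(x)=q(x')$, that is, when $x$ and $x'$ are not separated by $\mathcal B$. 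The plan is to define $\cK:=\cG/\!\approx$, where $g\approx g'$ if and only if $\pi(g)=\pi(g')$ and $r(g)\sim r(g')$.

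The first step is to show that $\approx$ is compatible with the groupoid operations, so that $\cK$ is a groupoid with unit space $Z$. The key point is that $g\approx g'$ forces $s(g)\sim s(g')$. Suppose $P\in\mathcal B$ separates $s(g)$ from $s(g')$. Choose a compact open bisection $V\subset\cH$ containing $\pi(g)$. Then $\pi^{-1}(V)$ is a bisection containing both $g$ and $g'$, since $\pi(g')=\pi(g)$. The set $\pi^{-1}(V)P\pi^{-1}(V)^{-1}$ lies in $\mathcal B$ by $\cH$-invariance, and it separates $r(g)$ from $r(g')$, a contradiction. The same argument, applied to both factors, shows that the relation respects products and inverses. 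Hence the quotient $\cK$ is a groupoid with $\pi_1:\cG\to\cK$ and $\pi_2:\cK\to\cH$ homomorphisms and $\pi=\pi_2\circ\pi_1$.

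The second step is the topology. I would describe $\cK$ concretely as the set of pairs $(h,z)\in\cH\times Z$ with $r(h)=p(z)$, with the subspace topology. The map $\pi_1(g)=(\pi(g),q(r(g)))$ is a continuous surjection with exactly the fibers of $\approx$. This is a closed subspace of $\cH\times Z$, hence locally compact Hausdorff and second countable. The operations are determined by transporting $z$ along $h$. To make them continuous, I would verify that for a compact open bisection $V\subset\cH$, the map $\pi^{-1}(V)$-conjugation $P\mapsto\pi^{-1}(V)^{-1}P\pi^{-1}(V)$ induces a homeomorphism between clopen subsets of $Z$. Sets of the form $\{(h,z)\mid h\in V,\ z\in P\}$ are then a basis of compact open bisections. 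With this basis, $r$ is a local homeomorphism and $Z$ is totally disconnected, so $\cK$ is ample.

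The third step is to check the factor-map axioms, which I expect to be routine. For surjectivity and properness: $\pi_1$ and $\pi_2$ are continuous surjections between spaces that are locally compact over $\cH$, and preimages of compact sets are closed in the compact preimages under $\pi$. For bijectivity on range fibers: $r^{-1}(q(x))$ in $\cK$ is identified with $r^{-1}(\pi(x))$ in $\cH$ via $h\mapsto(h,q(x))$. Since $r^{-1}(x)\to r^{-1}(\pi(x))$ is bijective, both maps are bijections. Finally, $\mathcal B=\pi_1^{-1}(\cCO(\cK^{(0)}))\cup\{\emptyset\}$ follows from Stone duality: the clopen sets of $Z$ are exactly the sets $\hat P=\{z\mid P\in z\}$, and $q^{-1}(\hat P)=P$.

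The main obstacle is the second step. One has to verify that the topology on $\cK$ makes it étale, that is, that the basic sets built from $V$ and $P$ are open bisections on which $r$ and $s$ are homeomorphisms onto clopen sets. The $\cH$-invariance of $\mathcal B$ is used exactly there, together with the identification of clopen subsets of $Z$ with elements of $\mathcal B$. An additional point is that effectiveness of $\cK$ is not needed for the statement, although it would follow from effectiveness of $\cG$.
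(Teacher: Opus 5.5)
Your construction matches the paper's. The quotient $\cG/\!\approx$ is another description of the groupoid $\{(h,z)\in\cH\times Z\mid r(h)=p(z)\}$ that the paper builds from the $\cH$-action on the Stone spectrum. Your separation argument with $\pi^{-1}(V)P\pi^{-1}(V)^{-1}$ is exactly how the paper shows that action is well defined. The topology, the factor-map axioms and the Stone-duality identification also go the same way.

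\textbf{The gap: effectiveness of $\cK$.} In this paper an ample groupoid is effective by convention, so effectiveness of $\cK$ is part of the conclusion. It is also genuinely used: Lemma~\ref{subbase} applies primeness of $\pi$ to this $\cK$, and primeness only quantifies over (effective) ample groupoids. Your claim that effectiveness would follow from effectiveness of $\cG$ is false. An arrow of $\Iso(\cK)$ lifts to arrows $g\in\cG$ with $q(r(g))=q(s(g))$, but in general $r(g)\neq s(g)$, so effectiveness of $\cG$ has nothing to contradict. Here is a counterexample, with $\cG$ and $\cH$ effective but not minimal.
\begin{itemize}
\item Let $\Gamma=\Z\times\Z/2=\langle s\rangle\times\langle t\rangle$ act topologically freely on $Y=\{0,1,2,3\}^{\Z}$, with $s$ the shift and $t$ the symbol swap $2\leftrightarrow3$. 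Then $F=\{0,1\}^{\Z}$ is invariant, nowhere dense, and fixed pointwise by $t$.
\item Put $X=Y\sqcup(F\times\Z/2)$, with $s$ acting on the first coordinate and $t$ swapping the two sheets.
\item Put $Z=Y\sqcup F$, with $t$ trivial on the second summand, and let $\mathcal B$ be the pullback of the clopen sets of $Z$.
\end{itemize}
Then $X\rtimes\Gamma\to Y\rtimes\Gamma$ is a factor map between effective groupoids, and $\mathcal B$ satisfies all the hypotheses. Your $\cK$ is isomorphic to $Z\rtimes\Gamma$, which contains the open set $F\times\{t\}$ of nonunit isotropy. Since the fibers of $\pi_1$ are forced by $\mathcal B$, no effective $\cK$ exists for this $\mathcal B$.

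\textbf{The missing argument.} What is missing is the standing hypotheses of Setting~\ref{factorsetting}, which your argument never uses. Write $[V,C]:=\{(h,z)\in\cK\mid h\in V,\ z\in C\}$ for your basic bisections. The paper argues as follows.
\begin{enumerate}
\item Since $q$ maps $\cG$-orbits into $\cK$-orbits, minimality of $\cG$ gives minimality of $\cK$.
\item Let $C\subset Z$ be nonempty and clopen. By compactness there are finitely many $[V_i,C_i]$ with sources in $C$ whose ranges cover $Z$.
\item If $p(C)$ were nowhere dense, each $p(r([V_i,C_i]))$ would be nowhere dense, being the image of a subset of $p(C)$ under the partial homeomorphism of $V_i$. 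Yet these finitely many sets cover $\cH^{(0)}$, a contradiction. So $p(C)$ has nonempty interior.
\item Now suppose $[V,C]\subset\Iso(\cK)\setminus\cK^{(0)}$ with $C\neq\emptyset$. For each $z\in p(C)$, the arrow of $V$ with range $z$ is a nonunit isotropy element of $\cH$. Hence $V\cap r^{-1}(p(C)^\circ)$ is a nonempty open subset of $\Iso(\cH)\setminus\cH^{(0)}$, contradicting effectiveness of $\cH$.
\end{enumerate}
Adding this step completes your proof.
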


\begin{proof}
Let $S(\mathcal{B})$ be the Stone spectrum of $\mathcal{B}$. 
There exist canonical continuous surjections 
$q:\cG^{(0)}\to S(\mathcal{B})$ and 
$p:S(\mathcal{B})\to\cH^{(0)}$ such that 
$\pi|\cG^{(0)}=p\circ q$. 
Notice also that 
$S(\mathcal{B})$ is compact, metrizable and totally disconnected, 
because $\mathcal{B}$ is countable. 

We first define 
an action of $\cH$ on $S(\mathcal{B})$ with anchor map $p$. 
Let $h\in\cH$ and $\omega\in S(\mathcal{B})$ satisfy $s(h)=p(\omega)$. 
Choose $x\in\cG^{(0)}$ such that $q(x)=\omega$. 
Applying the third condition in the definition of a factor map to inverses, 
there exists a unique $g\in\cG$ 
such that $s(g)=x$ and $\pi(g)=h$. 
We set $h\cdot\omega:=q(r(g))$. 
To see that this is well-defined, 
let $x'\in q^{-1}(\omega)$ and let $g'\in\cG$ be the unique element 
such that $s(g')=x'$ and $\pi(g')=h$. 
Choose a compact open bisection $V\subset\cH$ containing $h$. 
For every $P\in\mathcal{B}$, we have 
\[
r(g)\in P
\iff x\in\pi^{-1}(V^{-1})P\pi^{-1}(V)
\iff x'\in\pi^{-1}(V^{-1})P\pi^{-1}(V)
\iff r(g')\in P. 
\]
Indeed, $\pi^{-1}(V^{-1})P\pi^{-1}(V)$ belongs to $\mathcal{B}$ 
by $\cH$-invariance. 
Hence $q(r(g))=q(r(g'))$. 
The groupoid identities follow from the uniqueness of lifts. 
The same argument shows that every compact open bisection $V\subset\cH$ 
induces a homeomorphism from $p^{-1}(s(V))$ onto $p^{-1}(r(V))$. 
It follows that the action is continuous. 

Let 
\[
\cK:=\{(h,\omega)\in\cH\times S(\mathcal{B})\mid r(h)=p(\omega)\}. 
\]
We equip $\cK$ with the groupoid structure determined by 
\[
r(h,\omega)=\omega,\quad s(h,\omega)=h^{-1}\cdot\omega, 
\]
\[
(h,\omega)^{-1}=(h^{-1},h^{-1}\cdot\omega), 
\]
and 
\[
(h,\omega)(k,\eta)=(hk,\omega)
\quad\text{when }\eta=h^{-1}\cdot\omega. 
\]
Since $\cH^{(0)}$ is Hausdorff, its diagonal is closed. 
The map 
\[
\cH\times S(\mathcal{B})\longrightarrow
\cH^{(0)}\times\cH^{(0)},\qquad 
(h,\omega)\longmapsto(r(h),p(\omega))
\]
is continuous, 
and $\cK$ is the inverse image of the diagonal under this map. 
Hence $\cK$ is a closed subset of $\cH\times S(\mathcal{B})$. 
In particular, $\cK$ is locally compact, Hausdorff and second countable. 
If $V\subset\cH$ is a compact open bisection and 
$C\subset p^{-1}(r(V))$ is clopen, then 
\[
[V,C]:=\{(h,\omega)\in\cK\mid h\in V,\ \omega\in C\}
\]
is a compact open bisection of $\cK$. 
Such sets form a basis for the topology of $\cK$. 
Therefore $\cK$ is an ample groupoid 
with unit space naturally identified with $S(\mathcal{B})$, 
with effectiveness still to be verified. 

For $x\in\cG^{(0)}$, the $\cK$-orbit of $q(x)$ contains 
the image under $q$ of the $\cG$-orbit of $x$. 
It follows from the minimality of $\cG$ that $\cK$ is minimal. 
We claim that $p(C)$ has nonempty interior 
for every nonempty clopen subset $C\subset S(\mathcal{B})$. 
Indeed, by minimality and compactness, 
there exist compact open bisections $U_1,\dots,U_n$ of $\cK$ 
such that 
\[
s(U_i)\subset C
\quad\text{and}\quad
S(\mathcal{B})=\bigcup_{i=1}^n r(U_i). 
\]
Refining them if necessary, 
we may assume that $U_i=[V_i,C_i]$ as above. 
If $p(C)$ had empty interior, then, being compact, 
it would be nowhere dense. 
For each $i$, the set $p(r(U_i))$ is the image of 
$p(s(U_i))\subset p(C)$ under the partial homeomorphism 
associated with $V_i$, and hence is nowhere dense. 
This contradicts 
\[
\cH^{(0)}=\bigcup_{i=1}^n p(r(U_i)). 
\]
Suppose now that $\cK$ were not effective. 
Using the basis above, we could find a compact open bisection $[V,C]$ 
such that $C$ is nonempty and
$[V,C]\subset\Iso(\cK)\setminus\cK^{(0)}$. 
For every $z\in p(C)$, the unique element $h\in V$ satisfying 
$r(h)=z$ is a nonunit isotropy element of $\cH$. 
Since $p(C)$ has nonempty interior, 
$V\cap r^{-1}(p(C)^\circ)$ is a nonempty open subset 
of $\Iso(\cH)\setminus\cH^{(0)}$, 
contrary to the effectiveness of $\cH$. 
Thus $\cK$ is effective. 

Define 
\[
\pi_1:\cG\to\cK,
\qquad 
\pi_1(g):=(\pi(g),q(r(g))), 
\]
and 
\[
\pi_2:\cK\to\cH,
\qquad 
\pi_2(h,\omega):=h. 
\]
The definition of the action shows that 
these maps are continuous homomorphisms and that $\pi=\pi_2\circ\pi_1$. 
The map $\pi_2$ is clearly surjective. 
For $(h,\omega)\in\cK$, choose $x\in q^{-1}(\omega)$ 
and lift $h$ uniquely to an element of $r^{-1}(x)$. 
This shows that $\pi_1$ is also surjective. 
The map $\pi_2$ is proper, 
because for every compact subset $L\subset\cH$, 
$\pi_2^{-1}(L)=\cK\cap(L\times S(\mathcal{B}))$ is compact. 
If $L\subset\cK$ is compact, then $\pi_1^{-1}(L)$ is a closed subset of 
the compact set $\pi^{-1}(\pi_2(L))$. 
Hence $\pi_1$ is proper as well. 

For every $x\in\cG^{(0)}$, the restriction of $\pi_1$ gives a bijection 
\[
r_{\cG}^{-1}(x)\longrightarrow r_{\cK}^{-1}(q(x)), 
\]
because $\pi:r_{\cG}^{-1}(x)\to r_{\cH}^{-1}(\pi(x))$ is bijective. 
Likewise, for every $\omega\in S(\mathcal{B})$, 
the restriction of $\pi_2$ gives a bijection 
\[
r_{\cK}^{-1}(\omega)\longrightarrow r_{\cH}^{-1}(p(\omega)). 
\]
Thus both $\pi_1$ and $\pi_2$ are factor maps. 

Finally, Stone duality gives 
\[
\mathcal{B}
=q^{-1}(\cCO(S(\mathcal{B})))\cup\{\emptyset\}
=\pi_1^{-1}(\cCO(\cK^{(0)}))\cup\{\emptyset\}. 
\]
This completes the proof. 
\end{proof}

\begin{lemma}\label{subbase}
Suppose that $P_0,P'_0\in\cCO(\cG^{(0)})$ satisfy 
$P_0\neq\pi^{-1}(\pi(P_0))$, $P_0\cap P'_0=\emptyset$ 
and $P_0\cup P'_0=\pi^{-1}(\pi(P_0\cup P'_0))$. 
If $\pi:\cG\to\cH$ is prime, then
\[
\mathcal{P}
:=\left\{\pi^{-1}(V)P\pi^{-1}(V)^{-1}
\mid P\in\{P_0,P'_0\},\ \text{$V\in\cCO(\cH)$ is a bisection}\right\}
\setminus\{\emptyset\}
\]
is a subbase of $\cG^{(0)}$. 
\end{lemma}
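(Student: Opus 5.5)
Let $\mathcal{B}$ be the Boolean subalgebra of $\cCO(\cG^{(0)})\cup\{\emptyset\}$ generated by $\mathcal{P}$ together with $\pi^{-1}(\cCO(\cH^{(0)}))$. The plan is to show that $\mathcal{B}$ is $\cH$-invariant, apply Lemma~\ref{Boolean}, use primeness to conclude that $\mathcal{B}$ is all of $\cCO(\cG^{(0)})\cup\{\emptyset\}$, and then deduce that $\mathcal{P}$ is a subbase.

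\emph{Invariance.} For a bisection $V\in\cCO(\cH)$, the map $P\mapsto\pi^{-1}(V)P\pi^{-1}(V)^{-1}$ is the restriction to $\pi^{-1}(s(V))$ of the partial homeomorphism of $\pi^{-1}(V)$, followed by taking the image. Taking images under an injective partial map commutes with finite intersections. For complements it sends $\cG^{(0)}\setminus P$ to $\pi^{-1}(r(V))\setminus\pi^{-1}(V)P\pi^{-1}(V)^{-1}$, which lies in $\mathcal{B}$ because $\pi^{-1}(r(V))\in\mathcal{B}$. Saturated sets $\pi^{-1}(B)$ are sent to saturated sets. Finally,
\[
\pi^{-1}(V)\pi^{-1}(W)P\pi^{-1}(W)^{-1}\pi^{-1}(V)^{-1}=\pi^{-1}(VW)P\pi^{-1}(VW)^{-1},
\]
so $\mathcal{P}$ itself is mapped into $\mathcal{P}\cup\{\emptyset\}$. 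Since every element of $\mathcal{B}$ is a finite union of intersections of generators and their complements, $\mathcal{B}$ is $\cH$-invariant.

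\emph{Primeness.} Lemma~\ref{Boolean} gives $\pi=\pi_2\circ\pi_1$ with $\mathcal{B}=\pi_1^{-1}(\cCO(\cK^{(0)}))\cup\{\emptyset\}$. If $\pi_2$ were an isomorphism, $\mathcal{B}$ would consist only of $\pi$-saturated sets. This is impossible: taking $V=\pi(\pi^{-1}(\pi(P_0)))$, a clopen unit bisection, shows $P_0\in\mathcal{P}$, and by hypothesis $P_0\neq\pi^{-1}(\pi(P_0))$. So primeness forces $\pi_1$ to be an isomorphism, that is, $q:\cG^{(0)}\to S(\mathcal{B})$ is a homeomorphism. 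Hence $\mathcal{B}$ separates points and is all of $\cCO(\cG^{(0)})\cup\{\emptyset\}$.

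\emph{From algebra to subbase.} This is the delicate step, because complements of elements of $\mathcal{P}$ and the saturated sets must be eliminated. The hypothesis $P_0\sqcup P'_0=\pi^{-1}(\pi(P_0\cup P'_0))$ is what makes this possible. Write $E:=\pi^{-1}(\pi(P_0\cup P_0'))$. Then $P'_0=E\setminus P_0$, and for each $V$,
\[
\pi^{-1}(V)P'_0\pi^{-1}(V)^{-1}=\pi^{-1}(V)E\pi^{-1}(V)^{-1}\setminus\pi^{-1}(V)P_0\pi^{-1}(V)^{-1}.
\]
So the complement of a $P_0$-type element, relative to the saturated set $\pi^{-1}(V)E\pi^{-1}(V)^{-1}$, is a $P_0'$-type element, and symmetrically.

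Next, minimality of $\cH$ together with compactness covers $\cH^{(0)}$ by finitely many translates $r(V_i)$ of $\pi(E)$, since $\pi(E)$ is nonempty clopen. Given a point $x$ and a saturated clopen neighbourhood $\pi^{-1}(B)$, choose $V$ with $x\in\pi^{-1}(r(V))$ and $r(V)\subset B$. Then $x$ lies in exactly one of the two sets $\pi^{-1}(V)P\pi^{-1}(V)^{-1}$ for $P\in\{P_0,P'_0\}$, and that set is contained in $\pi^{-1}(B)$. Thus saturated sets are unions of finite intersections of elements of $\mathcal{P}$. Combined with the relative-complement identity, every Boolean combination of generators, and hence every element of $\mathcal{B}$, is locally at each of its points a finite intersection of members of $\mathcal{P}$. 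Since $\mathcal{B}$ is a basis of $\cG^{(0)}$, it follows that $\mathcal{P}$ is a subbase.

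The main obstacle is this last reduction. It needs a careful local argument: fix a point $x$, shrink all relevant sets using a small $V$ around $\pi(x)$, and verify that complements of members of $\mathcal{P}$ can be replaced locally by members of $\mathcal{P}$.
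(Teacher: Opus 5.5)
Your proposal follows the paper's proof. You form a Boolean algebra, check $\cH$-invariance, apply Lemma~\ref{Boolean} and primeness to get all of $\cCO(\cG^{(0)})\cup\{\emptyset\}$, and then eliminate complements using the saturation of $E=P_0\sqcup P'_0$ and the minimality of $\cH$. Two small corrections are needed.

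First, $\pi(P_0)$ need not be open when $P_0$ is not saturated, since $\pi^{(0)}$ is closed but not open in general (for instance, for almost one-to-one extensions). So $V=\pi(P_0)$ is not a legitimate choice of bisection. Take $V=\cH^{(0)}$ or $V=\pi(E)$ instead. Writing $V\cdot P:=\pi^{-1}(V)P\pi^{-1}(V)^{-1}$, this gives $V\cdot P_0=P_0$, so $P_0\in\mathcal{P}$.

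Second, the last step is not delicate and needs no local shrinking. Your relative-complement identity holds globally and gives
\[
\cG^{(0)}\setminus(V\cdot P_0)=(V\cdot P'_0)\cup\bigl(\cG^{(0)}\setminus(V\cdot E)\bigr).
\]
Here $V\cdot E=\pi^{-1}\bigl(r(V\cap s^{-1}(\pi(E)))\bigr)$ is saturated, so its complement is a finite union of members of $\mathcal{P}$ by your minimality argument. In that argument, you should state explicitly that $s(V)\subset\pi(E)$, which is what makes ``$x$ lies in exactly one of the two sets'' true, and then use compactness. Hence the complement of every member of $\mathcal{P}$ is a finite union of members of $\mathcal{P}$. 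It follows that the finite unions of finite intersections of members of $\mathcal{P}$ form a Boolean algebra containing $\mathcal{P}$ and all saturated clopen sets. So this algebra equals $\mathcal{B}$, which is everything, and the finite intersections form a basis.

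This is exactly how the paper concludes, with $V_0=V\cap s^{-1}(Q_0)$. The only organizational difference is the ordering. The paper runs the minimality argument first, so that $\mathcal{B}$ can be taken to be generated by $\mathcal{P}$ alone. You instead adjoin the saturated sets as generators, which makes invariance slightly more direct, and recover them from $\mathcal{P}$ at the end. Both orderings work.
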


\begin{proof}
For a bisection $V\in\cCO(\cH)$ and 
$P\in\cCO(\cG^{(0)})\cup\{\emptyset\}$, write 
\[
V\cdot P:=\pi^{-1}(V)P\pi^{-1}(V)^{-1}. 
\]
Put $Q_0:=\pi(P_0\cup P'_0)$. 
By the assumption and Lemma~\ref{saturatedness} (1), 
$Q_0\in\cCO(\cH^{(0)})$.

Let $\mathcal{B}$ be the Boolean subalgebra of 
$\cCO(\cG^{(0)})\cup\{\emptyset\}$ generated by $\mathcal{P}$. 
By Lemma~\ref{saturatedness}~(1), 
in order to show that $\mathcal{B}$ contains every saturated clopen subset, 
it suffices to prove that $\pi^{-1}(Q)\in\mathcal{B}$ 
for every $Q\in\cCO(\cH^{(0)})$. 
Let $y\in\cH^{(0)}$ and let $O$ be an open neighborhood of $y$. 
Since $\cH$ is minimal and $Q_0$ is nonempty, 
there exists $h\in\cH$ such that $r(h)=y$ and $s(h)\in Q_0$. 
We can therefore find a compact open bisection $V\subset\cH$ 
containing $h$ such that $r(V)\subset O$ and $s(V)\subset Q_0$. 
For such a bisection, one has 
\[
\pi^{-1}(r(V))=V\cdot(P_0\cup P'_0)=(V\cdot P_0)\cup(V\cdot P'_0). 
\]
It follows that the sets $r(V)$ arising in this way 
form a basis for the topology of $\cH^{(0)}$, 
and that the inverse image of each of them belongs to $\mathcal{B}$ 
and is a finite union of elements of $\mathcal{P}$. 
By compactness, for every
$Q\in\cCO(\cH^{(0)})$, the set $\pi^{-1}(Q)$ is a finite union
of elements of $\mathcal{P}$. 
In particular, $\pi^{-1}(\cCO(\cH^{(0)}))\subset\mathcal{B}$. 

We next verify that $\mathcal{B}$ is $\cH$-invariant.
For compact open bisections $V,W\subset\cH$ and $P\in\{P_0,P'_0\}$, 
one has $W\cdot(V\cdot P)=(WV)\cdot P$, 
where the right-hand side may be empty. 
Moreover, the partial homeomorphism associated with $W$ induces 
a Boolean isomorphism between the clopen subsets of $\pi^{-1}(s(W))$ 
and those of $\pi^{-1}(r(W))$. 
Since both of these saturated clopen subsets belong to $\mathcal{B}$, 
it follows that $W\cdot B\in\mathcal{B}$ 
for every $B\in\mathcal{B}$. 
Thus, $\mathcal{B}$ is $\cH$-invariant. 

By Lemma~\ref{Boolean}, there exist an ample groupoid $\cK$ and
factor maps $\pi_1:\cG\to\cK$ and $\pi_2:\cK\to\cH$ 
such that $\pi=\pi_2\circ\pi_1$ and 
$\mathcal{B}=\pi_1^{-1}(\cCO(\cK^{(0)}))\cup\{\emptyset\}$. 
Since $\pi$ is prime, either $\pi_1$ or $\pi_2$ is an isomorphism.
As $P_0\in\mathcal{P}\subset\mathcal{B}$ 
and $P_0\neq\pi^{-1}(\pi(P_0))$,  
the map $\pi_2$ cannot be an isomorphism. 
Indeed, if $\pi_2$ were an isomorphism, 
then $\pi_1=\pi_2^{-1}\circ\pi$, 
and hence $\mathcal{B}=\pi^{-1}(\cCO(\cH^{(0)}))\cup\{\emptyset\}$, 
which contradicts the choice of $P_0$. 
Hence $\pi_1$ is an isomorphism, 
and therefore 
$\mathcal{B}=\cCO(\cG^{(0)})\cup\{\emptyset\}$. 

It remains to remove complements from Boolean expressions 
in the generators. 
Let $P\in\mathcal{P}$, and suppose first that
$P=V\cdot P_0$ for a compact open bisection $V\subset\cH$. 
Put $V_0:=V\cap s^{-1}(Q_0)$. 
Then $P=V_0\cdot P_0$ and 
\[
\cG^{(0)}\setminus P=(V_0\cdot P'_0)
\cup\pi^{-1}(\cH^{(0)}\setminus r(V_0)).
\]
The second term on the right-hand side is a finite union 
of elements of $\mathcal{P}$ by the first part of the proof. 
Hence $\cG^{(0)}\setminus P$ is a finite union of elements 
of $\mathcal{P}$. 
The case $P=V\cdot P'_0$ is symmetric. 

Thus every element of the Boolean algebra generated by $\mathcal{P}$ 
is a finite union of finite intersections of elements of $\mathcal{P}$. 
Since this Boolean algebra is $\cCO(\cG^{(0)})\cup\{\emptyset\}$, 
the finite intersections of elements of $\mathcal{P}$ 
form a basis for the topology of $\cG^{(0)}$. 
Therefore $\mathcal{P}$ is a subbase. 
\end{proof}

We next track the clopen sets 
on which a subgroup contains a local alternating group. 
Degree five allows us to use the perfectness 
of the alternating group when intersecting such sets. 

Consider pairs $(P,v)$ consisting of $P\in\cCO(\cG^{(0)})$ and 
a multisection $v=(V_{i,j})_{i,j=0}^4$ of degree $5$ in $\cH$ 
satisfying $P\subset\pi^{-1}(V_{0,0})$ and $\supp(v)\neq\cH^{(0)}$. 
We let $L$ denote the set of all such pairs. 
When $(P,v)$ is in $L$, 
$u:=\pi^*(v)|P$ is a multisection in $\cG$. 

\begin{lemma}\label{propertyofP}
Let $H\subset\sF(\cG)$ be a subgroup containing $\pi^*(\sD(\cH))$. 
Set 
\[
\mathcal{P}:=\{P\in\cCO(\cG^{(0)})\mid
\sA(\pi^*(v)|P)\subset H\text{ for some }(P,v)\in L\}. 
\]
\begin{enumerate}
\item If $v=(V_{i,j})_{i,j=0}^4$ is a multisection in $\cH$ 
with $\supp(v)\neq\cH^{(0)}$, then 
$\pi^{-1}(Q)$ is in $\mathcal{P}$ 
for any nonempty clopen subset $Q\subset V_{0,0}$. 
\item If $(P,v)\in L$ and $\sA(\pi^*(v)|P)\subset H$, then 
$\sA(\pi^*(v')|P)\subset H$ for any $(P,v')\in L$. 
\item For any $P,P'\in\mathcal{P}$, 
we have $P\cap P'\in\mathcal{P}$ whenever $P\cap P'$ is nonempty. 
\item For any $P\in\mathcal{P}$ and 
a bisection $V\in\cCO(\cH)$ with $r(V)\neq\cH^{(0)}$, 
the set $\pi^{-1}(V)P\pi^{-1}(V^{-1})$ belongs to $\mathcal{P}$ 
whenever it is nonempty. 
\end{enumerate}
\end{lemma}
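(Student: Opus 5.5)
The four parts are essentially independent and each is a direct manipulation of multisections. The main tool throughout is the identity, for $(P,v)\in L$,
\[
\sA(\pi^*(v)|P)\ \text{is the image of } \sA_5 \text{ under } \sigma\mapsto\theta_{U_\sigma},\quad U_\sigma \text{ built from } \pi^{-1}(V_{i,0})P\pi^{-1}(V_{0,j}),
\]
together with the fact that $\sA_5$ is perfect and is generated by $3$-cycles.

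\emph{Part (1).} Take $P=\pi^{-1}(Q)$, so that $\pi^*(v)|P=\pi^*(v|Q)$. Then $\sA(\pi^*(v|Q))=\pi^*(\sA(v|Q))$. Since $\sA(v|Q)\subset\sD(\cH)$, this group lies in $\pi^*(\sD(\cH))\subset H$. The condition $\supp(v)\neq\cH^{(0)}$ gives $(P,v)\in L$.

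\emph{Part (2).} This is the main step. Given $v,v'$ with $P\subset\pi^{-1}(V_{0,0})\cap\pi^{-1}(V'_{0,0})$, I would transfer via elements of $\pi^*(\sD(\cH))$. Put $Q:=\pi(P)$; it is compact, but I only need saturated clopen neighbourhoods. By compactness, partition $P$ into finitely many clopen pieces $P_k$ and refine $Q$ so that each index-$i$ piece of $v|Q$ and of $v'|Q$ is small. The set $\supp(v)\cup\supp(v')$ may be everything, so work piece by piece: on a sufficiently small clopen $Q_k\ni\pi(P_k)$, the $10$ sets (or fewer where they overlap) $V_{i,0}Q_kV_{0,i}$ and $V'_{i,0}Q_kV'_{0,i}$ are disjoint or coincide except at $i=0$. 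Using the complement of $\supp(v)$ as spare room, find $\gamma\in\sD(\cH)$ (built from $3$-cycles as in Lemma~\ref{threecycle}) that conjugates $V_{i,0}$ to $V'_{i,0}$ over $Q_k$ for all $i$ and fixes $V_{0,0}$ pointwise. Then $\pi^*(\gamma)$ conjugates $\sA(\pi^*(v)|P_k)$ to $\sA(\pi^*(v')|P_k)$. Next note that $\sA(\pi^*(v)|P_k)\subset\sA(\pi^*(v)|P)$: a $3$-cycle on the pieces of $P_k$ is the commutator of two $3$-cycles of $\sA(\pi^*(v)|P)$, one of which is cut down to $P_k$. The cut-down cycle is obtained as the commutator of an element of $\sA(\pi^*(v)|P)$ with an element of $\pi^*(\sA(v|R))$, where $R$ satisfies $\pi^{-1}(R)\cap P\supseteq P_k$; I expect this needs $P_k$ to be chosen as $P\cap\pi^{-1}(R)$. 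Finally, $\sA(\pi^*(v')|P)$ is generated by the $\sA(\pi^*(v')|P_k)$, since $3$-cycles on the full $P$ are products of the pieces. I expect the refinement/spare-room construction of $\gamma$ to be the delicate point.

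\emph{Part (3).} By (2), I may use a single $v$ for both $P$ and $P'$. To arrange this, first shrink: by (2) applied to subsets, use $v|\pi(P\cap P')$-type restrictions. For $\sigma,\tau$ $3$-cycles, $[\sigma_P,\tau_{P'}]$ equals the corresponding commutator cycle on $P\cap P'$. Since $\sA_5$ is perfect and generated by such commutators, this gives $\sA(\pi^*(v)|P\cap P')\subset H$.

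\emph{Part (4).} Extend $V$ to $\tilde\gamma$. Choose a multisection $w$ in $\cH$ extending $v$ so that $V$ carries $V_{0,0}$-pieces into $r(V)$. More simply, set $v'':=$ the multisection generated by $V_{i,0}$ composed with $V^{-1}$, i.e.\ $V''_{i,j}:=V_{i,0}\cdots$. Because $r(V)\neq\cH^{(0)}$, Lemma~\ref{elementofDG}(2) applied in $\cH$ extends $V$ (restricted suitably so that $s(V)\neq\cH^{(0)}$ too) to $\gamma\in\sD(\cH)$. Conjugating $\sA(\pi^*(v)|P)$ by $\pi^*(\gamma)\in H$ gives $\sA(\pi^*(\gamma v\gamma^{-1})|\pi^{-1}(V)P\pi^{-1}(V)^{-1})\subset H$. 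This pair may fail the support condition, but one can first shrink $P$ using (3) and reselect $v$ by (2) so that it lies in $L$.
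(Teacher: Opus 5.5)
Your (1) is the paper's argument. Your (3) and (4) follow the paper's route, with two corrections. In (3) you use Nekrashevych's identity $[\sigma_P,\tau_{P'}]=[\sigma,\tau]_{P\cap P'}$ together with perfectness of $\sA_5$, after first cutting $P,P'$ down to $\pi^{-1}(V_{0,0}\cap V'_{0,0})$. That cut-down is the same commutator trick applied against $\pi^*(\sA(v|Q))\subset H$, not an application of (2). In (4) you conjugate by $\pi^*(\theta_W)$, where $\theta_W\in\sD(\cH)$ extends $V$ restricted to $s(V)\cap V_{0,0}$. The conjugated multisection keeps proper support automatically. The real reason to shrink $P$ there is that $\pi^*(\theta_W)(P)=P'$ only once $P\subset\pi^{-1}(s(V))$. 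Both (3) and (4) use (2), so the parts are not independent, and (2) is where there is a genuine gap.

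In (2) you never construct the transporting element $\gamma$; you leave the ``spare-room'' construction from $3$-cycles as the delicate point. The obstacle you are trying to get around is not the real one. The missing idea is as follows.
\begin{itemize}
\item Put $Q:=V_{0,0}\cap V'_{0,0}$ and replace $v,v'$ by $(V_{i,0}QV_{0,j})$ and $(V'_{i,0}QV'_{0,j})$. Since $P\subset\pi^{-1}(Q)$, this changes neither $\sA(\pi^*(v)|P)$ nor $\sA(\pi^*(v')|P)$, and the supports only shrink.
\item Then
\[
W:=Q\sqcup V'_{1,0}V_{0,1}\sqcup V'_{2,0}V_{0,2}\sqcup V'_{3,0}V_{0,3}\sqcup V'_{4,0}V_{0,4}
\]
is a compact open bisection with $s(W)\subset\supp(v)\neq\cH^{(0)}$ and $r(W)\subset\supp(v')\neq\cH^{(0)}$.
\item Lemma~\ref{elementofDG}~(2) needs range and source to be proper \emph{separately}, not their union. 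So it gives a full bisection $V\supset W$ with $\theta_V\in\sD(\cH)$.
\item $\pi^*(\theta_V)$ fixes $\pi^{-1}(Q)\supset P$ and conjugates $\sA(\pi^*(v)|P)$ onto $\sA(\pi^*(v')|P)$ in one step. No partition of $P$ and no cut-down are needed.
\end{itemize}

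Your piecewise plan also has two concrete defects as written. First, $\gamma$ cannot fix all of $V_{0,0}$ pointwise, because the target pieces $V'_{i,0}Q_kV'_{0,i}$ may meet $V_{0,0}$; only $Q_k$ should be fixed. Second, the claim that the pieces $V_{i,0}Q_kV_{0,i}$ and $V'_{j,0}Q_kV'_{0,j}$ become disjoint or equal for small $Q_k$ fails in one specific situation. This is when arrows of $V_{0,i}$ and $V'_{0,j}$ with the same range have the same source but differ. That situation means nontrivial isotropy, which $\cH$ may have, since it is only assumed effective. Realizing a prescribed bisection by an element of $\sD(\cH)$ is exactly what Lemma~\ref{elementofDG}~(2) supplies, via cancellation and Lemma~\ref{localabelianization}. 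Without it or a worked-out substitute, (2) is not established, and therefore neither are (3) and (4).
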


\begin{proof}
(1)\:
The group in question is $\pi^*(\sA(v|Q))$,
which is contained in $\pi^*(\sD(\cH))\subset H$. 

(2)\:
Assume $\sA(\pi^*(v)|P)\subset H$, 
where $v=(V_{i,j})_{i,j=0}^4$ is a multisection of degree $5$ in $\cH$ 
and $\pi(P)\subset V_{0,0}$. 
Let $v'=(V'_{i,j})_{i,j=0}^4$ be another multisection of degree $5$ 
such that $\pi(P)\subset V'_{0,0}$. 
Set $Q:=V_{0,0}\cap V'_{0,0}$. 
By replacing $V_{i,j}$ with $V_{i,0}QV_{0,j}$, 
we may assume $V_{0,0}=Q$. 
In the same way, we may also assume $V'_{0,0}=Q$. 
Then 
\[
W:=Q\sqcup V'_{1,0}V_{0,1}\sqcup V'_{2,0}V_{0,2}
\sqcup V'_{3,0}V_{0,3}\sqcup V'_{4,0}V_{0,4}
\]
is a compact open bisection. 
Notice that $r(W)\subset\supp(v')\neq\cH^{(0)}$ and 
$s(W)\subset\supp(v)\neq\cH^{(0)}$. 
It follows from Lemma~\ref{elementofDG} (2) that 
there exists a full bisection $V\in\cCO(\cH)$ 
such that $W\subset V$ and $\theta_V\in\sD(\cH)$. 
Conjugation gives
\[
\pi^*(\theta_V)\sA(\pi^*(v)|P)\pi^*(\theta_V)^{-1}
=\sA(\pi^*(v')|P), 
\]
which implies $\sA(\pi^*(v')|P)\subset H$. 

(3)\:
Suppose that $P\cap P'\neq\emptyset$ and choose $(P,v),(P',v')\in L$
with $\sA(\pi^*(v)|P),\sA(\pi^*(v')|P')\subset H$.
Write $v=(V_{i,j})_{i,j=0}^4$ and $v'=(V'_{i,j})_{i,j=0}^4$, and put
\[
Q:=V_{0,0}\cap V'_{0,0},\qquad
P_1:=P\cap\pi^{-1}(Q),\qquad P_2:=P'\cap\pi^{-1}(Q).
\]
By (1), $\pi^{-1}(Q)\in\mathcal{P}$ and
$\sA(\pi^*(v)|\pi^{-1}(Q))\subset H$.
Applying \cite[Lemma 3.4]{Ne19ETDS} to this group and
$\sA(\pi^*(v)|P)$ gives $\sA(\pi^*(v)|P_1)\subset H$.
Similarly, $\sA(\pi^*(v')|P_2)\subset H$, and (2) then gives
$\sA(\pi^*(v)|P_2)\subset H$.
A second application of \cite[Lemma 3.4]{Ne19ETDS}, using
$P_1\cap P_2=P\cap P'$, yields
$\sA(\pi^*(v)|(P\cap P'))\subset H$.
Thus $P\cap P'\in\mathcal{P}$.

(4)\:
Put $P':=\pi^{-1}(V)P\pi^{-1}(V^{-1})$ and suppose that $P'\neq\emptyset$.
Choose $(P,v)\in L$ with $\sA(\pi^*(v)|P)\subset H$,
and write $v=(V_{i,j})_{i,j=0}^4$.
Set $Q:=s(V)\cap V_{0,0}$.
By (1) and (3), the nonempty set $P\cap\pi^{-1}(Q)$ belongs to $\mathcal{P}$.
Replacing $V$, $P$ and $v$ by $VQ$, $P\cap\pi^{-1}(Q)$ and $v|Q$,
respectively, leaves $P'$ unchanged.
By (2), we may therefore assume $s(V)=V_{0,0}$ and
$\sA(\pi^*(v)|P)\subset H$. 
Since $r(V)\neq\cH^{(0)}$, by Lemma~\ref{elementofDG}, 
we can find a full bisection $W\in\cCO(\cH)$ 
such that $V\subset W$ and $\theta_W\in\sD(\cH)$. 
Then $v':=(WV_{i,j}W^{-1})_{i,j}$ is a multisection of degree $5$ 
with $\supp(v')\neq\cH^{(0)}$, 
and
\[
\pi^*(\theta_W)\sA(\pi^*(v)|P)\pi^*(\theta_W)^{-1}
=\sA(\pi^*(v')|P').
\] 
Hence $\sA(\pi^*(v')|P')$ is contained in $H$. 
Thus $P'$ belongs to $\mathcal{P}$. 
\end{proof}

\begin{lemma}\label{IfHcontainsA0}
Suppose that $\pi:\cG\to\cH$ is prime. 
Let $H\subset\sF(\cG)$ be a subgroup containing $\pi^*(\sD(\cH))$. 
If there exists $(P_0,v)\in L$ such that $P_0\neq \pi^{-1}(\pi(P_0))$ 
and $\sA(\pi^*(v)|P_0)\subset H$, then 
\[
\mathcal{P}:=\{P\in\cCO(\cG^{(0)})\mid
\sA(\pi^*(v)|P)\subset H\text{ for some }(P,v)\in L\}
\]
is a base of $\cG^{(0)}$. 
\end{lemma}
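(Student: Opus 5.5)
The plan is to reduce to Lemma~\ref{subbase} and then use the closure properties in Lemma~\ref{propertyofP}. Write $v=(V_{i,j})_{i,j=0}^4$ and $Q:=\pi(P_0)\subset V_{0,0}$. This set is clopen, being the image of a compact set under the open map underlying the factor, or directly because $\pi^{-1}(Q)$ is saturated clopen. Put $P'_0:=\pi^{-1}(Q)\setminus P_0$.

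Then $P_0\cap P'_0=\emptyset$ and $P_0\cup P'_0=\pi^{-1}(Q)$ is saturated. The hypothesis $P_0\neq\pi^{-1}(\pi(P_0))$ says exactly that $P'_0$ is nonempty.

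The first step is to show $P'_0\in\mathcal{P}$. By Lemma~\ref{propertyofP}~(1), $\sA(\pi^*(v)|\pi^{-1}(Q))\subset H$. By hypothesis $\sA(\pi^*(v)|P_0)\subset H$. I would use the standard relative-complement trick for degree five, as in \cite[Lemma 3.4]{Ne19ETDS}: conjugating elements of $\sA(\pi^*(v)|\pi^{-1}(Q))$ by elements of $\sA(\pi^*(v)|P_0)$ and taking commutators should produce $\sA(\pi^*(v)|P'_0)$. The reason is that each element of $\sA(\pi^*(v)|\pi^{-1}(Q))$ factors as a commuting product of its restrictions to the $P_0$ and $P'_0$ parts. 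So for $a,b$ in this group, writing $a=a_0a'_0$ with $a_0\in\sA(\pi^*(v)|P_0)$ and $a'_0\in\sA(\pi^*(v)|P'_0)$, we get $a_0^{-1}a=a'_0$, and this lies in $H$. Hence $P'_0\in\mathcal{P}$. The same factoring argument should underlie the intersection statement in Lemma~\ref{propertyofP}~(3).

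The second step is to apply Lemma~\ref{subbase} to $(P_0,P'_0)$. Primeness gives that the family of nonempty sets $\pi^{-1}(V)P\pi^{-1}(V)^{-1}$, with $P\in\{P_0,P'_0\}$ and $V\in\cCO(\cH)$ a bisection, is a subbase of $\cG^{(0)}$. Each such set lies in $\mathcal{P}$ by Lemma~\ref{propertyofP}~(4), provided $r(V)\neq\cH^{(0)}$.

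This proviso is the main technical point. We may shrink $V$ to $V\pi(P_0\cup P'_0)=VQ$ without changing the set. Then $r(VQ)\neq\cH^{(0)}$ as soon as $Q\neq\cH^{(0)}$, which holds because $Q\subset V_{0,0}\subsetneq\supp(v)\subset\cH^{(0)}$. Since the degree is $5$, the disjoint sets $V_{i,i}$ are nonempty, so $V_{0,0}$ is a proper subset.

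Finally, by Lemma~\ref{propertyofP}~(3), finite nonempty intersections of elements of $\mathcal{P}$ stay in $\mathcal{P}$. So $\mathcal{P}$ contains all nonempty finite intersections of subbase elements, which form a base. Since $\mathcal{P}\subset\cCO(\cG^{(0)})$ consists of open sets and contains a base, $\mathcal{P}$ itself is a base.
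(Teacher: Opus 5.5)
Your overall structure (a complementary set $P'_0$, then Lemma~\ref{subbase}, then Lemma~\ref{propertyofP}~(4) and (3)) is the paper's. Your factoring argument $a'_0=a_0^{-1}a$ is exactly the inclusion $\sA(\pi^*(v)|P'_0)\subset\langle\sA(\pi^*(v)|P_0),\sA(\pi^*(v))\rangle$ used there. However, two steps fail as written.

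First, $Q=\pi(P_0)$ need not be clopen. The map $\pi^{(0)}$ is proper, hence closed, but it is not open in general; the paper gets openness only for coverings. Your alternative justification is circular, since $\pi^{-1}(Q)$ is open if and only if $Q$ is. For the almost one-to-one extensions of Theorem~\ref{GPS}, openness fails precisely under the lemma's hypothesis. There the points with singleton fibers form a nonempty invariant set, hence a dense one. If $\pi(P_0)$ were open, the nonempty open set $\pi^{-1}(\pi(P_0))\setminus P_0$ would contain such a point, and its image would lie in $\pi(P_0)$, which is absurd. Consequently $P'_0=\pi^{-1}(Q)\setminus P_0$ is not open, and the restriction $\pi^*(v)|\pi^{-1}(Q)$ is undefined. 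So neither Lemma~\ref{propertyofP}~(1) nor Lemma~\ref{subbase} applies. The repair is to take $P'_0:=\pi^{-1}(V_{0,0})\setminus P_0$. Then $P_0\cup P'_0=\pi^{-1}(V_{0,0})$ is saturated clopen, and you can run your factoring argument with $\sA(\pi^*(v))=\pi^*(\sA(v))\subset H$.

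Second, $Q\neq\cH^{(0)}$ does not imply $r(VQ)\neq\cH^{(0)}$. The implication holds when $\cH$ is almost finite, via invariant measures. A purely infinite $\cH$, however, has compact open bisections with proper source and full range. Take the full two-shift, i.e. Example~\ref{SFTonevertex} with $(r,s)=(1,1)$, where $H_0(\cH)=0$. By cancellation (Theorem~\ref{niceproperties}~(3)), for any nonempty clopen $Q$ there is a bisection $V$ with $s(V)=Q$ and $r(V)=\cH^{(0)}$, so shrinking by $Q$ changes nothing. Thus you have not shown that every subbase element lies in $\mathcal{P}$, and (4) cannot give you that.

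What suffices is local refinement. For $x\in\pi^{-1}(V)P\pi^{-1}(V)^{-1}$, choose a clopen $R$ with $\pi(x)\in R\subset r(V)$ and $R\neq\cH^{(0)}$, and put $W:=RV$. Then $r(W)\neq\cH^{(0)}$, and $x\in\pi^{-1}(W)P\pi^{-1}(W)^{-1}=\pi^{-1}(R)\cap\pi^{-1}(V)P\pi^{-1}(V)^{-1}$, which belongs to $\mathcal{P}$ by (4). Hence each subbase element is a union of members of $\mathcal{P}$, so finite intersections of members of $\mathcal{P}$ form a base, and (3) finishes. This is how the paper argues.
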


\begin{proof}
We have $P_0\in\mathcal{P}$. 
Let $v=(V_{i,j})_{i,j=0}^4$ be a multisection 
such that $\sA(\pi^*(v)|P_0)\subset H$. 
Put $P'_0:=\pi^{-1}(V_{0,0})\setminus P_0$. 
Then $(P'_0,v)\in L$ and 
\[
\sA(\pi^*(v)|P'_0)
\subset\langle \sA(\pi^*(v)|P_0),\sA(\pi^*(v))\rangle\subset H, 
\]
which implies $P'_0\in\mathcal{P}$. 
To apply Lemma~\ref{subbase}, we note that 
the restriction $r(V)\neq\cH^{(0)}$  in Lemma~\ref{propertyofP} (4) 
causes no problem. 
Indeed, let $P\in\{P_0,P'_0\}$, let $V\in\cCO(\cH)$ be a bisection, 
and take 
\[
x\in\pi^{-1}(V)P\pi^{-1}(V^{-1}). 
\]
Since $\cH^{(0)}$ is a Cantor set, 
there exists $R\in\cCO(\cH^{(0)})$ such that 
$\pi(x)\in R\subset r(V)$ and $R\neq\cH^{(0)}$. 
Put $W:=RV$. 
Then $r(W)=R\neq\cH^{(0)}$ and 
\[
x\in\pi^{-1}(W)P\pi^{-1}(W^{-1})
=\pi^{-1}(R)\cap\pi^{-1}(V)P\pi^{-1}(V^{-1}). 
\]
Since $P\in\mathcal{P}$, Lemma~\ref{propertyofP} (4) implies that 
$\pi^{-1}(W)P\pi^{-1}(W^{-1})$ belongs to $\mathcal{P}$. 
Thus $\mathcal{P}$ locally refines 
the subbase given by Lemma~\ref{subbase}. 
Consequently, the finite intersections of elements of $\mathcal{P}$ 
form a basis for the topology of $\cG^{(0)}$, 
so $\mathcal{P}$ is a subbase. 
Then, by Lemma~\ref{propertyofP} (3), 
$\mathcal{P}$ is a base of $\cG^{(0)}$. 
\end{proof}

To pass from these local groups to arbitrary multisections, 
we will use the following elementary fact about alternating groups. 

\begin{lemma}\label{alternating9}
Let $\sA_9$ be the alternating group acting on $X=\{0,1,2,\dots,8\}$. 
For a subset $E\subset X$, 
let $\sA_9(E)$ denote the alternating group on $E$, 
regarded as a subgroup of $\sA_9$ 
by letting it act trivially on $X\setminus E$. 
Then
\[
\left\langle \sA_9(\{k,5,6,7,8\})\mid k=0,1,\dots,4\right\rangle=\sA_9. 
\]
\end{lemma}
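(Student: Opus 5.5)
Let $G$ denote the subgroup generated by the five groups $\sA_9(\{k,5,6,7,8\})$, $k=0,\dots,4$. We will show $G=\sA_9$ in three steps: first that $G$ is transitive, then that it is $2$-transitive, and finally that it contains a $3$-cycle. Jordan's classical fact then finishes: a primitive permutation group containing a $3$-cycle contains the alternating group.

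For transitivity, note that each generator acts as $\sA_5$ on a $5$-element set containing the common block $B=\{5,6,7,8\}$. Every $k\in\{0,\dots,4\}$ is therefore moved into $B$ by $\sA_9(\{k,5,6,7,8\})$. Since each of these groups is transitive on its support, $G$ is transitive on $X$.

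For $2$-transitivity, look at the stabilizer of the point $8$. It contains the groups $\sA_4$ on $\{k,5,6,7\}$ for $k=0,\dots,4$. The same argument as above, applied to the common block $\{5,6,7\}$, shows that this stabilizer is transitive on $X\setminus\{8\}$. Hence $G$ is $2$-transitive, and in particular primitive.

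For the $3$-cycle, the generator $\sA_9(\{0,5,6,7,8\})$ already contains the $3$-cycle $(5\,6\,7)$. So $G$ is a primitive group containing a $3$-cycle, and Jordan's theorem gives $G\supset\sA_9$. Since $G\subset\sA_9$, we get $G=\sA_9$.

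If we prefer to avoid citing Jordan's theorem, there is an elementary alternative. The $3$-cycles of $\sA_9$ generate it, so it suffices to show every $3$-cycle $(a\,b\,c)$ lies in $G$. All $3$-cycles supported in some $\{k\}\cup B$ are in $G$ directly. A general $3$-cycle can be conjugated into such a set by an element of $G$, since $G$ is transitive on ordered pairs and more: one moves the points of $\{a,b,c\}\cap\{0,\dots,4\}$ into $B$ one at a time, using generators that fix the other points involved. Each generator $\sA_5$ is $3$-transitive on its support, so such moves exist as long as $B$ still has a free point. It does, because $|B|=4\geq 3$.

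The only mildly delicate step is this last one, either checking primitivity or controlling the conjugation. The Jordan-theorem route disposes of it cleanly, so I would present that route.
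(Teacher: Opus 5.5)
Your proof is correct, but it takes a different and heavier route than the paper. The paper gives a one-line argument. Each $3$-cycle $(5\ 6\ i)$ with $i\in X\setminus\{5,6\}$ already lies in one of the five subgroups: in $\sA_9(\{i,5,6,7,8\})$ for $i\le 4$, and in any of them for $i\in\{7,8\}$. The paper then uses the standard fact that the $3$-cycles through two fixed points generate the alternating group.

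You instead prove that the generated group is $2$-transitive. Transitivity holds because every generator meets the block $\{5,6,7,8\}$. Transitivity of the stabilizer of $8$ holds because that stabilizer contains the copies of $\sA_4$ on $\{k,5,6,7\}$. You then invoke Jordan's theorem that a primitive group containing a $3$-cycle contains the full alternating group. All of these steps check out.

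Your approach is more conceptual: it would apply unchanged to other overlapping configurations without searching for an explicit generating set. The cost is citing Jordan's theorem where a direct generation check suffices.

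Your elementary alternative is also valid. It conjugates an arbitrary $3$-cycle into some $\{k\}\cup\{5,6,7,8\}$, using that each $\sA_5$ is $3$-transitive on its support and that the block has at least three points. This is essentially a hands-on version of the paper's generation argument. The phrase ``transitive on ordered pairs and more'' is not needed there, since the point-by-point moves you describe already do the work.
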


\begin{proof}
Each $3$-cycle $(5\ 6\ i)$, where $i\in X\setminus\{5,6\}$, 
belongs to one of the indicated subgroups, and these cycles generate $\sA_9$. 
\end{proof}

\begin{lemma}\label{IfHcontainsA}
Suppose that $\pi:\cG\to\cH$ is prime. 
Let $H\subset\sF(\cG)$ be a subgroup containing $\pi^*(\sD(\cH))$. 
If there exists $(P_0,v)\in L$ such that $P_0\neq \pi^{-1}(\pi(P_0))$ 
and $\sA(\pi^*(v)|P_0)\subset H$, then one has $\sD(\cG)\subset H$. 
\end{lemma}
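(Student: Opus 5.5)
By Theorem~\ref{A=D} with $d=9$, it suffices to show $\sA(u)\subset H$ for every multisection $u=(U_{i,j})_{i,j=0}^{8}$ of degree $9$ in $\cG$. Lemma~\ref{IfHcontainsA0} gives that $\mathcal{P}$ is a base of $\cG^{(0)}$. The strategy is to cut $u$ into small pieces, embed each $5$-element slice of a piece into a pulled-back alternating group lying in $H$, and then reassemble with Lemma~\ref{alternating9}.

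\textbf{Localization.} First, by \cite[Lemma 3.4]{Ne19ETDS} (or simply by writing $\sA(u)$ as generated by the $\sA(u|R)$ for $R$ ranging over a clopen partition of $U_{0,0}$), it is enough to treat $u|R$ for $R$ in an arbitrarily fine partition of $U_{0,0}$. Given $x\in U_{0,0}$, the nine points $x_i:=r(U_{i,0}x)$ are distinct. Shrinking $R\ni x$, we may assume that the sets $\pi(r(U_{i,0}R))$ are either pairwise disjoint or suitably nested, and that each $U_{i,j}R$-piece sits inside a single bisection of the form $\pi^{-1}(V)\cdot$(clopen set). Concretely, using $\pi$'s bijectivity on range fibres, the arrow of $U_{i,0}$ at $x$ is the unique lift of an arrow $h_i\in\cH$ with $s(h_i)=\pi(x)$, so $U_{i,0}R=\pi^{-1}(V_i)R$ for a small bisection $V_i\ni h_i$.

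\textbf{Building a degree-$5$ multisection.} The key step is, for each $k\in\{0,\dots,4\}$, to produce $(P,v)\in L$ with $\sA(\pi^*(v)|P)\subset H$ whose five levels are exactly $r(U_{i,0}R)$ for $i\in\{k,5,6,7,8\}$. This requires the five points $\pi(x_i)$ to be distinct. If some $\pi(x_i)$ coincide, I would first conjugate $u$ by elements of $\pi^*(\sD(\cH))$, and by elements already known to lie in $H$, so that the points $\pi(x_i)$ become distinct. Alternatively, since $\mathcal{P}$ is a base, I can choose $P\in\mathcal{P}$ with $x\in P\subset R$ together with a witness $v$. By Lemma~\ref{propertyofP}(2) the witness $v$ may be replaced by any $v'$ with $\pi(P)\subset V'_{0,0}$. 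I would then take $v'$ generated by $V_i$, $i\in\{k,5,6,7,8\}$, shrunk so that $\supp(v')\neq\cH^{(0)}$, and then $\pi^*(v')|P=u_{\{k,5,6,7,8\}}|P$. Repeating this over the whole base $\mathcal{P}$, and using compactness to cover $U_{0,0}$ by such $P$, gives $\sA(u_E|P)\subset H$ for each $E=\{k,5,6,7,8\}$ and each piece $P$ of a clopen partition. Lemma~\ref{alternating9} then yields $\sA(u|P)\subset H$, and hence $\sA(u)\subset H$.

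\textbf{Main obstacle.} The hard part is the coincidence case: when some $\pi(x_i)=\pi(x_j)$ with $i\neq j$, the arrow from $x_j$ to $x_i$ projects to an isotropy element of $\cH$, or to a unit if $\pi$ identifies $x_i$ and $x_j$. In that case no multisection in $\cH$ can pull back to the given levels. I would handle this by exploiting the freedom in $\sA_9$: since the conclusion only needs the generated group, I would choose a different "hub" set $\{5,6,7,8\}$ by relabelling so that the levels have distinct images. Where this is impossible, I would first move one level by an element $\gamma\in H$ that is known locally (built from a pulled-back $\sA$ on a $P\in\mathcal{P}$ separating $x_i$ from $x_j$), and then replace $u$ by $\gamma u\gamma^{-1}$. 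Making this reduction precise is where I expect the real work to lie.
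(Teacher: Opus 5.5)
\textbf{There is a genuine gap: the case of coinciding $\pi$-images is left open, and that case is the substance of the lemma.} If $\pi(x_i)=\pi(x_j)$ with $i\neq j$, the arrow of $U_{i,j}$ from $x_j$ to $x_i$ maps to a \emph{nontrivial} isotropy element of $\cH$. It never maps to a unit, since $\pi^{-1}(\cH^{(0)})=\cG^{(0)}$. So coincidences cannot occur when $\cH$ is principal, but they do occur in general, e.g.\ for the SFT coverings treated in Proposition~\ref{prop:factor}~(1).

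Your first two remedies do not work, and the third is not carried out:
\begin{enumerate}
\item Relabelling fails. In Lemma~\ref{alternating9}, each of the four hub indices must have a $\pi$-image shared by no other $x_i$. Nothing prevents four coincident pairs among your nine points. Since only primeness is assumed, all nine points may even lie in one fibre.
\item Conjugating by $\pi^*(\sD(\cH))$ cannot help, because such elements map fibres to fibres and so never separate $x_i$ from $x_j$.
\item ``Move one level by some $\gamma\in H$'' is exactly the step you leave unproved.
\end{enumerate}
There is also a smaller flaw in your main construction. For $k\neq0$, your $P\subset U_{0,0}$ is not a level of $u_{\{k,5,6,7,8\}}$. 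Moreover, the multisection generated by your $V_i$, which share their source near $\pi(x)$, has degree $6$. You must restrict at level $k$ and know that $U_{k,0}PU_{0,k}\in\mathcal P$. This follows from Lemma~\ref{propertyofP}~(4) once $U_{k,0}P$ is written as $\pi^{-1}(V)P$ for small $P$.

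\textbf{How the paper closes the gap.} It never takes the hub from $u$. It treats multisections $u$ of degree $5$, which suffices by Theorem~\ref{A=D}. At $x\in U_{0,0}$ it adjoins four auxiliary arrows $f_1,\dots,f_4$ with range $x$. Because the $\cH$-orbit of $\pi(x)$ is infinite, these can be chosen so that for every $k$ the points $s(g_k),s(f_1),\dots,s(f_4)$ have five distinct $\pi$-images. Then:
\begin{enumerate}
\item each multisection $z_k$ generated by the $U_{k,0}W_i$ agrees locally with a pullback $\pi^*(v^k)$;
\item Lemma~\ref{propertyofP}~(4) gives $U_{k,0}P_xU_{0,k}\in\mathcal P$;
\item Lemma~\ref{alternating9}, applied to the degree-$9$ multisection built from $u|P_x$ and the four auxiliary levels, yields $\sA(u|P_x)\subset H$.
\end{enumerate}

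\textbf{Your third idea can also be completed, giving a different route.} Work in degree $5$. For each $j\neq0$ with $\pi(x_j)=\pi(x_i)$ for some $i<j$:
\begin{enumerate}
\item pick $P_j\in\mathcal P$, a small neighbourhood of $x_j$ avoiding the other $x_i$;
\item pick a degree-$5$ multisection $v^j$ in $\cH$ with $\pi(P_j)\subset V^j_{0,0}$, whose other levels are small neighbourhoods of fresh points in the orbit of $\pi(x_j)$.
\end{enumerate}
By Lemma~\ref{propertyofP}~(2), a $3$-cycle $\gamma_j\in\sA(\pi^*(v^j)|P_j)$ lies in $H$. It moves $x_j$ to a point with a fresh $\pi$-image and fixes neighbourhoods of the other $x_i$. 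Choose these with pairwise disjoint supports and put $\gamma:=\prod_j\gamma_j$; it fixes a neighbourhood of $x$. Let $\Gamma$ be the full bisection of $\gamma$ and $u':=(\Gamma U_{i,j}\Gamma^{-1})_{i,j}$. Then $u'$ has pairwise distinct images at $x$, so $u'|P=\pi^*(v)|P$ for suitable $v$ and small $P\in\mathcal P$ containing $x$. Hence $\sA(u|P)=\gamma^{-1}\sA(u'|P)\gamma\subset H$ whenever $\gamma$ fixes $P$ pointwise. This trades Lemma~\ref{alternating9} for local conjugations by elements of $H$. In both versions the essential point is the same: points with fresh $\pi$-images must be brought in, not found among the levels of $u$.
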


\begin{proof}
Let $\mathcal{P}$ be the base of $\cG^{(0)}$ 
obtained in Lemma~\ref{IfHcontainsA0}. 
Let $u=(U_{i,j})_{i,j=0}^4$ be a multisection of degree $5$ in $\cG$. 
It suffices to show that $\sA(u)\subset H$. 
Take $x\in U_{0,0}$. 
We claim that there exists $P_x\in\mathcal{P}$ such that 
$x\in P_x\subset U_{0,0}$ and $\sA(u|P_x)\subset H$. 
Once this is done, by the compactness of $U_{0,0}$, 
we may find finitely many $x_1,x_2,\dots,x_n\in U_{0,0}$ 
such that 
\[
U_{0,0}=\bigcup_{k=1}^nP_{x_k}. 
\]
It follows from \cite[Proposition 3.3]{Ne19ETDS} that 
\[
\sA(u)\subset\left\langle\sA(u|P_{x_k})\mid 1\leq k\leq n\right\rangle
\subset H
\]
as desired. 

For each $i=1,2,3,4$, 
let $g_i\in U_{0,i}$ be the unique arrow with $r(g_i)=x$. 
Put $g_0:=x$. 
First suppose that 
$\pi(x),\pi(s(g_1)),\dots,\pi(s(g_4))$ are all distinct. 
Then, one can find a multisection $v=(V_{i,j})_{i,j=0}^4$ in $\cH$ 
satisfying $\supp(v)\neq\cH^{(0)}$ and 
$\pi(g_i)\in V_{0,i}$ for every $i=1,2,3,4$. 
Since $\pi$ is continuous and $\mathcal{P}$ is a base, 
there exists $P_x\in\mathcal{P}$ 
such that $x\in P_x\subset U_{0,0}$ and 
$P_xU_{0,i}\subset\pi^{-1}(V_{0,i})$, 
which implies $P_xU_{0,i}=P_x\pi^{-1}(V_{0,i})$. 
Therefore we obtain $u|P_x=\pi^*(v)|P_x$, and so 
$\sA(u|P_x)=\sA(\pi^*(v)|P_x)\subset H$ by Lemma~\ref{propertyofP} (2). 

In the general case, we introduce four auxiliary components
whose images avoid those of the original five components. 
Since $\cH$ is minimal, we can find $f_1,\dots,f_4\in\cG$ 
such that the following hold. 
\begin{itemize}
\item $r(f_i)=x$ for all $i=1,2,3,4$. 
\item $\#\{\pi(s(g_k)),\pi(s(f_1)),\dots,\pi(s(f_4))\}=5$ 
for all $k=0,1,2,3,4$. 
\end{itemize}
Choose bisections $W_i\in\cCO(\cG)$, $i=1,2,3,4$, so that the following hold. 
\begin{itemize}
\item $f_i\in W_i$ for all $i=1,2,3,4$. 
\item $A:=r(W_1)=r(W_2)=r(W_3)=r(W_4)\subset U_{0,0}$. 
\item $s(W_1),s(W_2),s(W_3),s(W_4)$ and $\supp(u|A)$ are 
mutually disjoint. 
\end{itemize}
Fix $k\in\{0,1,\dots,4\}$. 
Let $z_k$ be the multisection 
generated by $U_{k,0}W_i$, $i=1,2,3,4$. 
One can find a multisection $v^k=(V^k_{i,j})_{i,j=0}^4$ in $\cH$ 
satisfying $\supp(v^k)\neq\cH^{(0)}$ and 
$\pi(g_k^{-1}f_i)\in V^k_{0,i}$ for every $i=1,2,3,4$. 
Since $\pi$ is continuous, there exists $P_k\in\cCO(\cG^{(0)})$ 
such that $x\in P_k\subset A$ and 
$U_{k,0}P_kW_i\subset\pi^{-1}(V^k_{0,i})$. 
As $\mathcal{P}$ is a base, 
we can take $P_x\in\mathcal{P}$ so that $x\in P_x$ and 
$P_x\subset P_k$ for all $k=0,1,\dots,4$. 
By two applications of Lemma~\ref{propertyofP}~(4), we have
\[
U_{k,0}P_xU_{0,k}
=\pi^{-1}(V^k_{0,1})\left(\pi^{-1}(V^0_{1,0})P_x
\pi^{-1}(V^0_{1,0})^{-1}\right)\pi^{-1}(V^k_{0,1})^{-1}
\in\mathcal{P}. 
\]
Hence Lemma~\ref{propertyofP}~(2) gives
\[
\sA(z_k|U_{k,0}P_xU_{0,k})
=\sA(\pi^*(v^k)|U_{k,0}P_xU_{0,k})
\subset H. 
\]
Let $w$ be the multisection of degree $9$ generated by
\[
P_xU_{0,k}\quad (k=1,2,3,4)
\quad\text{and}\quad
P_xW_i\quad (i=1,2,3,4).
\]
Therefore $u|P_x$ is the sub-multisection of $w$ 
on its first five components, 
while, for every $k=0,1,\dots,4$, 
$z_k|U_{k,0}P_xU_{0,k}$ is the sub-multisection 
on the $k$-th component and the last four components. 
Then Lemma~\ref{alternating9} implies 
\[
\sA(u|P_x)\subset
\left\langle\sA(z_k|U_{k,0}P_xU_{0,k})\mid k=0,1,\dots,4\right\rangle
\subset H. 
\]
Consequently $\sA(u)$ is contained in $H$. 
It follows from Theorem~\ref{A=D} that $H$ contains $\sD(\cG)$. 
\end{proof}

It remains to construct 
a local alternating group on a nonsaturated clopen set. 
The next lemma does this near a two-point fiber with trivial isotropy. 
The key calculation uses a commutator with a lifted transposition. 

\begin{lemma}\label{technical}
Suppose that 
a full bisection $U\subset\cG$ and a point $y\in\cH^{(0)}$ 
satisfy the following. 
\begin{itemize}
\item The isotropy group of $y$ is trivial and 
$\#\pi^{-1}(y)=2$. 
\item Letting $(r|U)^{-1}(\pi^{-1}(y))=\{g,f\}$, 
one has $\pi(g)\neq\pi(f)$. 
\end{itemize}
Let $H\subset\sF(\cG)$ be the subgroup generated 
by $\pi^*(\sD(\cH))$ and $\theta_U$. 
Then, 
there exists $(P_0,v)\in L$ such that $P_0\neq \pi^{-1}(\pi(P_0))$ 
and $\sA(\pi^*(v)|P_0)\subset H$. 
\end{lemma}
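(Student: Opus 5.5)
The plan is to realize $\sA(\pi^*(v)|P_0)$ as a group generated by commutators. Each commutator pairs an element of $\pi^*(\sD(\cH))$ with a conjugate, by $\alpha:=\theta_U$, of another such element. The conjugate acts like a pulled-back alternating permutation near only one of the two points of the fiber over $y$.

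Write $\pi^{-1}(y)=\{x_1,x_2\}$ with $x_1=r(g)$ and $x_2=r(f)$. Since $\cH_y$ is trivial and $r(\pi(g))=r(\pi(f))=y$, the condition $\pi(g)\neq\pi(f)$ means $y_1:=\pi(s(g))\neq y_2:=\pi(s(f))$. First I choose a small $Q\in\cCO(\cH^{(0)})$ containing $y$. By properness and the two-point fiber, $\pi^{-1}(Q)=A_1\sqcup A_2$ with $x_k\in A_k$ clopen, and $A_1$ is not saturated. Shrinking $Q$, I arrange that $\alpha^{-1}(A_1)$ lies in $\pi^{-1}(Q_1)$ and $\alpha^{-1}(A_2)$ lies in $\pi^{-1}(Q_2)$, with $Q_1\ni y_1$ and $Q_2\ni y_2$ disjoint. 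I also arrange that $U$ restricted over $A_1$ is contained in $\pi^{-1}(V)$ for a compact open bisection $V\subset\cH$ with $\pi(g)\in V$.

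Next, using minimality and the Cantor property of $\cH^{(0)}$, I pick a degree-$5$ multisection $w=(W_{i,j})$ in $\cH$ with $W_{0,0}\subset Q_1$ containing $y_1$ and $\supp(w)\neq\cH^{(0)}$. Its components are chosen disjoint from $Q$, $Q_2$ and their relevant translates. I then transport $w$ through $V$ on the $0$-th component only, obtaining a multisection $v$ with $V_{0,0}=r(VW_{0,0})\subset Q$. For this I follow the pattern in the proof of Lemma~\ref{propertyofP}~(4): extend $V$ to a full bisection $W'$ with $\theta_{W'}\in\sD(\cH)$ by Lemma~\ref{elementofDG}~(2), and set $v:=W'wW'^{-1}$.

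For $b\in\sA(w)$, the element $\delta_b:=\alpha\pi^*(b)\alpha^{-1}\in H$ has a controlled restriction. On the $x_1$-part of $\pi^{-1}(V_{0,0})$ and the corresponding parts of the other components, it agrees with $\pi^*(b')$, where $b'\in\sA(v)$ is the element corresponding to $b$. On the $x_2$-side near $y$ it is the identity, since $\alpha^{-1}(A_2)$ lies over $Q_2$, which misses $\supp(w)$. Its remaining support lies where $\alpha$ sends the other pieces of $\pi^{-1}(\supp w)$. Arranging that this leftover support is disjoint from $\pi^{-1}(\supp v)$ is the delicate point. Granting it, for $a\in\sA(v)$ the commutator $[\pi^*(a),\delta_b]$ equals the element of $\sA(\pi^*(v)|P_0)$ given by $[a,b']$ with $P_0:=A_1\cap\pi^{-1}(V_{0,0})$, and is trivial elsewhere. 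Since $\sA_5$ is perfect, and in fact every element of $\sA_5$ is a commutator, these elements exhaust $\sA(\pi^*(v)|P_0)$, which is therefore contained in $H$. The set $P_0$ contains $x_1$ but not $x_2$, so $P_0\neq\pi^{-1}(\pi(P_0))$, and $(P_0,v)\in L$.

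The main obstacle is the disjointness bookkeeping just described. The full preimage $\pi^{-1}(\supp w)$ contains points other than those near $s(g)$, and $\alpha$ may send them into $\pi^{-1}(\supp v)$, spoiling the commutator identity. I would handle this by shrinking all components around finitely many fiber points. The fibers over $\supp w$ have at most two points near $y_1$; in general I would use finiteness of fibers near the chosen points, or reduce to a local statement using properness. I would then choose the components of $w$ from distinct, well-separated $\cH$-orbit points, chosen via minimality, so that the finitely many $\alpha$-images avoid $\pi^{-1}(\supp v)$. If this fails globally, the fallback is to work with the restriction of the commutator to $P_0$ and its translates, extract the local alternating group there, and invoke Lemma~\ref{propertyofP}~(2) to pass to the desired multisection.
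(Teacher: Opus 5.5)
There is a genuine gap. It is not the disjointness bookkeeping you flag, but the agreement claim just before it: that $\delta_b=\alpha\pi^*(b)\alpha^{-1}$ coincides with $\pi^*(b')$ on $P_0$ and on the corresponding parts $D_i:=\pi^{-1}(V_{i,0})P_0\pi^{-1}(V_{0,i})$ of the other components.

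You control $\alpha=\theta_U$ only near $s(g)$ and $s(f)$. Even for $p\in P_0$, computing $\delta_b(p)$ means applying $\alpha$ on another component of $\pi^*(w)$, where nothing has been arranged.

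The deeper problem is structural. Your commutator identity forces $\delta_b$ to move $D_i$ and to fix $\pi^{-1}(V_{i,i})\setminus D_i$ for every component $i$ moved by $b'$. Since $\supp\pi^*(b)=\pi^{-1}(\supp b)$ is saturated and $D_i$ is not, $\alpha^{-1}$ must then send the two points of some fibre over $V_{i,i}$ into two different fibres. A nontrivial $b'$ moves at least three components, so $\alpha$ would have to split fibres over at least three disjoint clopen sets. The hypotheses only give the fibre over $y$.

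Multiplying $\alpha$ on either side by elements of $\pi^*(\sD(\cH))$ does not help. It merely moves the set of split fibres by a homeomorphism of $\cH^{(0)}$, and that set can consist of just two points. For instance, let $\pi$ be the natural map from a Denjoy system cut along two rotation orbits onto the one cut along only the first. Let $\theta_U$ exchange, by a power of the rotation, two disjoint intervals, each with one endpoint on the collapsed orbit and the other on the orbit that stays cut. This $U$ satisfies the hypotheses of the lemma, yet no choice of $w$, $v$ and $b$ makes your identity hold. Your suggested fallback does not address this.

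The paper's argument needs only the one split fibre, and uses a different commutator.

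1. It replaces $U$ by $\pi^{-1}(W)U$ with $\theta_W\in\sD(\cH)$, which leaves $H$ unchanged. This arranges that $\theta_U$ fixes the lift $z$ over $y$ other than $r(g)$. It also arranges that $\theta_U$ fixes the analogous lift $z'$ over a second orbit point $y'$, chosen with $y$, $\pi(s(g))$ and $y'$ mutually distinct.
2. It takes a single transposition $\tau_{V_2}\in\sD(\cH)$ exchanging neighbourhoods of $y$ and $y'$, and forms $\gamma=\theta_U^{-1}\pi^*(\tau_{V_2})\theta_U\pi^*(\tau_{V_2})\in H$.
3. Because $\theta_U$ fixes the $z$-sheets, their contributions cancel. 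Hence $\gamma$ is a $3$-cycle or a double transposition in $\sA(\pi^*(v)|P_0)$; there are three cases according to $\theta_U^{-1}(x')$. In Case 1 the fibre over $y'$ is not even split.
4. Taking the normal closure under $\sA(\pi^*(v))\subset\pi^*(\sD(\cH))$ and using simplicity of the alternating group of degree five gives $\sA(\pi^*(v)|P_0)\subset H$.
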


\begin{proof}
Let $g,f\in U$ be as in the statement. 
Choose a bisection $V\in\cCO(\cH)$ so that 
$r(V)\neq\cH^{(0)}$, $s(V)\neq\cH^{(0)}$ and $\pi(f)^{-1}\in V$. 
By Lemma~\ref{elementofDG}, 
there exists a full bisection $W\subset\cH$ 
such that $V\subset W$ and $\theta_W\in\sD(\cH)$. 
Put $\bar U:=\pi^{-1}(W)U$ and $\bar y:=\theta_W(y)$. 
Then the arrow of $\bar U$ arising from $f$ is the unit $s(f)$. 
The point $\bar y$ is in the orbit of $y$, so its isotropy group is trivial. 
Moreover, the lifting property of $\pi$ gives $\#\pi^{-1}(\bar y)=2$, 
and the two arrows over this fiber still have distinct images under $\pi$. 
Replacing $U$ by $\bar U$ does not change the subgroup $H$, 
because 
\[
\theta_{\bar U}=\pi^*(\theta_W)\theta_U
\quad\text{and}\quad
\pi^*(\theta_W)\in\pi^*(\sD(\cH)). 
\]
Thus, after replacing $U$ and $y$ with $\bar U$ and $\bar y$ 
and relabelling, we may assume that $f=s(f)$ is in $\cG^{(0)}$. 
The new $g$ is not in $\cG^{(0)}$, and $r(g)\neq s(g)$, 
because the isotropy group of $y$ is trivial. 

Put $x:=r(g)$ and $z:=f=r(f)=s(f)$. 
Thus $\pi^{-1}(y)=\{x,z\}$, $\theta_U(z)=z$ and 
$\theta_U^{-1}(x)=s(g)$. 
Notice that $\pi(s(g))\neq y$. 
Since the isotropy group of $y$ is trivial, 
$\pi$ restricts to a bijection 
from each of the $\cG$-orbits of $x$ and $z$ onto the $\cH$-orbit of $y$. 
For $y_0'$ in the orbit of $y$, write 
\[
\pi^{-1}(y_0')=\{x_0',z_0'\},
\]
where $x_0'$ is in the orbit of $x$ and $z_0'$ is in the orbit of $z$. 
The map $y_0'\mapsto \pi\left(\theta_U^{-1}(z_0')\right)$ is 
a bijection of the orbit of $y$. 
Since this orbit is infinite, we can choose $y_0'$ so that 
\[
y_0'\notin\{y,\pi(s(g))\}
\quad\text{and}\quad
\pi\left(\theta_U^{-1}(z_0')\right)
\notin\{y,\pi(s(g))\}. 
\]
Let $g_0',f_0'\in U$ be the arrows satisfying 
$r(g_0')=x_0'$ and $r(f_0')=z_0'$. 
The three arrows 
\[
y,\quad \pi(s(g)),\quad \pi(f_0')^{-1}
\]
have mutually distinct sources and mutually distinct ranges. 
We can therefore find a bisection $V\in\cCO(\cH)$ satisfying 
$r(V)\neq\cH^{(0)}$, $s(V)\neq\cH^{(0)}$ and 
\[
y,\pi(s(g)),\pi(f_0')^{-1}\in V.
\]
By Lemma~\ref{elementofDG}, 
there exists a full bisection $W\subset\cH$ 
such that $V\subset W$ and $\theta_W\in\sD(\cH)$. 
Replacing $U$ by $\pi^{-1}(W)U$ again does not change $H$. 
Since $W$ contains the units $y$ and $\pi(s(g))$, 
the arrows $g$ and $f$ remain unchanged. 
On the other hand, the arrow arising from $f_0'$ becomes the unit $s(f_0')$. 
Put 
\[
y':=\theta_W(y_0')=\pi(s(f_0')),\quad
x':=\pi^*(\theta_W)(x_0'),\quad
z':=s(f_0'),
\]
and let $g'\in U$ be the arrow arising from $g_0'$. 
Then, after relabelling, our setup is as follows: 
\begin{itemize}
\item $\pi^{-1}(y)=\{x,z\}$ and $\pi^{-1}(y')=\{x',z'\}$. 
\item $x$ and $x'$ are in the same orbit, and 
$z$ and $z'$ are in the same orbit. 
\item $\theta_U(z)=z$ and $\theta_U(z')=z'$. 
\item $g,g'\in U$, $r(g)=x$, $r(g')=x'$ and $s(g)\neq x$. 
\item $y$, $\pi(s(g))$ and $y'$ are mutually distinct. 
\end{itemize}
The isotropy group of $y'$ is also trivial. 
Hence there exists a unique $h\in\cH$ such that $r(h)=y$ and $s(h)=y'$. 
Figure~\ref{fig:case3} illustrates the configuration in Case 3 below. 

\begin{figure}[ht]
\begin{tikzpicture}[
  x=1cm,y=1cm,
  point/.style={circle,fill=black,inner sep=1.45pt,outer sep=0pt},
  arrow/.style={-{Stealth[length=2mm,width=1.4mm]},line width=.55pt},
  fiber/.style={densely dotted,black!45,line width=.5pt},
  every node/.style={font=\normalsize}
]
  \draw[fill=black!4,draw=black!60,line width=.5pt]
    (5.95,3.10) ellipse [x radius=5.25,y radius=2.00];
  \node at (1.65,3.75) {$\cG^{(0)}$};
  \node at (1.65,0) {$\cH^{(0)}$};

  \draw[fiber] (3,3.75) -- (3,0);
  \draw[fiber] (7.3,3.75) -- (7.3,0);
  \draw[arrow] (.45,1.65) -- node[left] {$\pi$} (.45,.2);

  \node[point,label=below left:{$x$}] (x) at (3,3.75) {};
  \node[point,label=below right:{$s(g)$}] (sg) at (4.8,3.75) {};
  \node[point,label=below left:{$x'$}] (xp) at (7.3,3.75) {};
  \node[point,label=below right:{$s(g')$}] (sgp) at (9.1,3.75) {};
  \draw[arrow] (sg) to[bend right=40] node[above] {$g$} (x);
  \draw[arrow] (sgp) to[bend right=40] node[above] {$g'$} (xp);

  \node[point,label=left:{$z$}] (z) at (3,2.25) {};
  \node[point,label=left:{$z'$}] (zp) at (7.3,2.25) {};
  \node[anchor=west] at (3.25,2.25) {$\theta_U(z)=z$};
  \node[anchor=west] at (7.55,2.25) {$\theta_U(z')=z'$};

  \node[point,label=above left:{$y$}] (y) at (3,0) {};
  \node[point,label=above right:{$y'$}] (yp) at (7.3,0) {};
  \draw[arrow] (yp) to[bend left=25] node[below] {$h$} (y);
\end{tikzpicture}
\caption{The configuration in Case 3 of Lemma~\ref{technical}. 
The dotted lines indicate the fibers over $y$ and $y'$. 
In Cases 1 and 2, $g'=x'$ and $s(g')=x$, respectively. }
\label{fig:case3}
\end{figure}

We will repeatedly use the following consequence of properness: 
if an open set contains a fiber $\pi^{-1}(t)$, 
then it contains $\pi^{-1}(Q)$ for some clopen neighborhood $Q$ of $t$. 
This allows conditions 
on the two lifts to be imposed on saturated neighborhoods. 
We distinguish three cases according to $\theta_U^{-1}(x')$. 
In each case, 
a commutator produces a nontrivial even permutation 
on a restricted multisection of degree five. 
Simplicity of the alternating group then gives the required inclusion. 

\medskip
\noindent
\textbf{Case 1.}
Assume $\theta_U(x')=x'$, or equivalently $g'=x'$. 
By taking sufficiently small bisections around $\pi(g)$ and $h$, 
using Lemma~\ref{elementofDG}~(1) for the bisection containing $h$, 
and then choosing two further bisections, 
we can find $V_1,V_2,V_3,V_4\in\cCO(\cH)$ such that 
\begin{itemize}
\item $\pi(g)\in V_1$, $h\in V_2$ and $\tau_{V_2}\in\sD(\cH)$.
\item $r(V_1)=r(V_2)=r(V_3)=r(V_4)$.
\item $s(V_1)$, $s(V_2)$, $s(V_3)$, $s(V_4)$ and $r(V_1)$ 
are mutually disjoint.
\item $r(V_1)\cup s(V_1)\cup s(V_2)\cup s(V_3)\cup s(V_4)$ 
is a proper subset of $\cH^{(0)}$.
\item $\pi^{-1}(r(V_2))U\subset\cG^{(0)}\sqcup\pi^{-1}(V_1)$.
\item $\pi^{-1}(s(V_2))U\subset\cG^{(0)}$.
\end{itemize}
Let $v=(V_{i,j})_{i,j=0}^4$ be the multisection 
generated by $V_1,V_2,V_3,V_4$. 
Set 
\[
R:=\pi^{-1}(r(V_2)),\quad S:=\pi^{-1}(s(V_2)),
\quad F_R:=U\cap R, 
\]
and 
\[
P_0:=R\setminus F_R
=\left((\pi^{-1}(r(V_2))U)\setminus\cG^{(0)}\right)U^{-1}. 
\]
Then $P_0$ is a nonempty clopen subset of $R$. 
Also put 
\[
P_1:=\pi^{-1}(V_1^{-1})P_0\pi^{-1}(V_1),\quad 
P_2:=\pi^{-1}(V_2^{-1})P_0\pi^{-1}(V_2).
\]
These are the first three diagonal components of $\pi^*(v)|P_0$. 
The last two inclusions above give 
\[
\theta_U^{-1}(R)=F_R\sqcup P_1,\quad 
\theta_U^{-1}(S)=S.
\]
In particular, $\theta_U(P_1)=P_0$, $\theta_U$ fixes $S$ pointwise, 
and $\theta_U(P_0)\cap(R\cup S)=\emptyset$. 
A direct computation now shows that 
\[
\gamma:=\theta_U^{-1}\pi^*(\tau_{V_2})
\theta_U\pi^*(\tau_{V_2})
\]
satisfies 
\[
\gamma(P_0)=P_1,\quad
\gamma(P_1)=P_2,\quad
\gamma(P_2)=P_0, 
\]
and fixes the complement of $P_0\sqcup P_1\sqcup P_2$. 
Thus $\gamma$ is the element corresponding to the $3$-cycle 
$(0\ 1\ 2)$ in $\sA(\pi^*(v)|P_0)$. 
In particular, 
$\gamma$ has order three and cyclically permutes $x,s(g),x'$. 
Since $\tau_{V_2}\in\sD(\cH)$, we have $\gamma\in H$. 

The set $P_0$ contains $x$ and does not contain $z$. 
Hence $P_0\neq\pi^{-1}(\pi(P_0))$. 
Moreover, $(P_0,v)\in L$ by our choice of the bisections. 
The group $\sA(\pi^*(v))$ normalizes 
$\sA(\pi^*(v)|P_0)$, and its conjugation action on this group 
is the usual conjugation action of the alternating group of degree five 
on itself. 
Hence the subgroup generated by the 
$\sA(\pi^*(v))$-conjugates of $\gamma$ is a non-trivial normal subgroup 
of the simple group $\sA(\pi^*(v)|P_0)$. 
As 
\[
\sA(\pi^*(v))\subset\pi^*(\sD(\cH))\subset H,
\]
we conclude that $\sA(\pi^*(v)|P_0)\subset H$. 

\medskip
\noindent
\textbf{Case 2.}
Assume $\theta_U^{-1}(x')=x$, or equivalently $\pi(g')=h^{-1}$. 
At the fibers over $y$ and $y'$, respectively, 
the points $x,x'$ are not fixed by $\theta_U$, whereas $z,z'$ are fixed. 
Since $U\cap\cG^{(0)}$ is clopen, 
we can choose $V_1,V_2,V_3,V_4\in\cCO(\cH)$ so that 
\begin{itemize}
\item $\pi(g)\in V_1$, $h\in V_2$ and $\tau_{V_2}\in\sD(\cH)$. 
\item $r(V_1)=r(V_2)=r(V_3)=r(V_4)$. 
\item $s(V_1)$, $s(V_2)$, $s(V_3)$, $s(V_4)$ and $r(V_1)$ 
are mutually disjoint. 
\item $r(V_1)\cup s(V_1)\cup s(V_2)\cup s(V_3)\cup s(V_4)$ 
is a proper subset of $\cH^{(0)}$. 
\item $\pi^{-1}(r(V_2))U\subset\cG^{(0)}\sqcup\pi^{-1}(V_1)$. 
\item $\pi^{-1}(s(V_2))U\subset\cG^{(0)}\sqcup\pi^{-1}(V_2^{-1})$. 
\item $\pi^*(\tau_{V_2})\left(U\cap\pi^{-1}(r(V_2))\right)
=U\cap\pi^{-1}(s(V_2))$. 
\end{itemize}
The last equality is a local condition around $h$. 
Indeed, $\pi^*(\tau_{V_2})$ sends $x,z$ to $x',z'$, respectively, 
and the unit part $U\cap\cG^{(0)}$ is clopen. 
Thus the equality holds after restricting an initial bisection around $h$, 
and the construction in Lemma~\ref{elementofDG}~(1) can be carried out 
inside this restriction. 
Let $v=(V_{i,j})_{i,j=0}^4$ be the multisection 
generated by $V_1,V_2,V_3,V_4$. 
Set 
\[
R:=\pi^{-1}(r(V_2)),\quad S:=\pi^{-1}(s(V_2)),
\quad F_R:=U\cap R,\quad F_S:=U\cap S, 
\]
and 
\[
P_0:=R\setminus F_R
=\left((\pi^{-1}(r(V_2))U)\setminus\cG^{(0)}\right)U^{-1}. 
\]
Then $P_0$ is a nonempty clopen subset of $R$. 
Put 
\[
P_1:=\pi^{-1}(V_1^{-1})P_0\pi^{-1}(V_1),\quad
P_2:=\pi^{-1}(V_2^{-1})P_0\pi^{-1}(V_2). 
\]
The matching condition gives 
\[
P_2=\pi^*(\tau_{V_2})(P_0)=S\setminus F_S. 
\]
The two preceding inclusions concerning $U$ now give 
\[
\theta_U^{-1}(R)=F_R\sqcup P_1,\quad
\theta_U^{-1}(S)=F_S\sqcup P_0. 
\]
Thus $\theta_U(P_1)=P_0$, $\theta_U(P_0)=P_2$, 
and $\theta_U(P_2)\cap(R\cup S)=\emptyset$. 
A direct computation shows that 
\[
\gamma:=\theta_U^{-1}\pi^*(\tau_{V_2})\theta_U\pi^*(\tau_{V_2})
\]
satisfies 
\[
\gamma(P_0)=P_2,\quad 
\gamma(P_2)=P_1,\quad 
\gamma(P_1)=P_0,
\]
and fixes the complement of $P_0\sqcup P_1\sqcup P_2$. 
Hence $\gamma$ is the element corresponding to the $3$-cycle 
$(0\ 2\ 1)$ in $\sA(\pi^*(v)|P_0)$. 
In particular, 
it has order three and cyclically permutes $x,x',s(g)$. 
Since $\tau_{V_2}\in\sD(\cH)$, we have $\gamma\in H$. 

Again, $P_0$ contains $x$ and does not contain $z$, 
so $P_0\neq\pi^{-1}(\pi(P_0))$, and $(P_0,v)\in L$. 
The same simplicity and normalization argument as in Case~1 yields 
$\sA(\pi^*(v)|P_0)\subset H$. 

\medskip
\noindent
\textbf{Case 3.}
Assume $\theta_U^{-1}(x')\notin\{x,x'\}$. 
We first observe that 
\[
y,\quad \pi(s(g)),\quad y',\quad \pi(s(g'))
\]
are mutually distinct. 
Indeed, 
if $\pi(s(g'))=y'$, then $\pi(g')$ is an isotropy element at $y'$. 
It is therefore the unit at $y'$, 
and the lifting property of $\pi$ gives $g'=x'$, a contradiction. 
If $\pi(s(g'))=y$, then 
the triviality of isotropy gives $\pi(g')=h^{-1}$, 
and the lifting property gives $s(g')=x$, again a contradiction. 
Finally, suppose that $\pi(s(g'))=\pi(s(g))$. 
Let $\tilde h\in\cG$ be the unique lift of $h$ with $r(\tilde h)=x$. 
Then $s(\tilde h)=x'$, and 
\[
\pi(g')=h^{-1}\pi(g)=\pi(\tilde h^{-1}g).
\]
Since both $g'$ and $\tilde h^{-1}g$ have range $x'$, 
the lifting property implies $g'=\tilde h^{-1}g$. 
Thus $s(g')=s(g)$, contradicting the injectivity of $s|U$. 

At both fibers over $y$ and $y'$, 
the points $x,x'$ are not fixed by $\theta_U$, 
whereas $z,z'$ are fixed. 
We can therefore choose $V_1,V_2,V_3,V_4\in\cCO(\cH)$ so that 
\begin{itemize}
\item $\pi(g)\in V_1$, $h\in V_2$, $\pi(g')\in V_3$ and 
$\tau_{V_2}\in\sD(\cH)$.
\item $r(V_1)=r(V_2)=r(V_4)$ and $r(V_3)=s(V_2)$.
\item $s(V_1)$, $s(V_2)$, $s(V_3)$, $s(V_4)$ and $r(V_1)$ 
are mutually disjoint.
\item $r(V_1)\cup s(V_1)\cup s(V_2)\cup s(V_3)\cup s(V_4)$ 
is a proper subset of $\cH^{(0)}$.
\item $\pi^{-1}(r(V_2))U\subset\cG^{(0)}\sqcup\pi^{-1}(V_1)$.
\item $\pi^{-1}(s(V_2))U\subset\cG^{(0)}\sqcup\pi^{-1}(V_3)$.
\item $\pi^*(\tau_{V_2})
\left(U\cap\pi^{-1}(r(V_2))\right)
=U\cap\pi^{-1}(s(V_2))$.
\end{itemize}
As in Case~2, the last equality holds locally around $h$.
We apply Lemma~\ref{elementofDG} (1) inside a restriction
on which it holds. 
Let $v=(V_{i,j})_{i,j=0}^4$ be the multisection 
generated by $V_1,V_2,V_2V_3,V_4$. 
Set 
\[
\begin{aligned}
R&:=\pi^{-1}(r(V_2)), & S&:=\pi^{-1}(s(V_2)),\\
F_R&:=U\cap R, & F_S&:=U\cap S,
\end{aligned}
\]
and 
\[
P_0:=R\setminus F_R
=\left((\pi^{-1}(r(V_2))U)\setminus\cG^{(0)}\right)U^{-1}. 
\]
Then $P_0$ is a nonempty clopen subset of $R$. 
Put 
\[
P_1:=\pi^{-1}(V_1^{-1})P_0\pi^{-1}(V_1),\quad
P_2:=\pi^{-1}(V_2^{-1})P_0\pi^{-1}(V_2), 
\]
and 
\[
P_3:=\pi^{-1}(V_3^{-1})P_2\pi^{-1}(V_3)
=\pi^{-1}(V_3^{-1}V_2^{-1})P_0\pi^{-1}(V_2V_3).
\]
These are the first four diagonal components of $\pi^*(v)|P_0$. 
The matching condition gives 
\[
P_2=\pi^*(\tau_{V_2})(P_0)=S\setminus F_S.
\]
The two inclusions concerning $U$ give 
\[
\theta_U^{-1}(R)=F_R\sqcup P_1,\quad 
\theta_U^{-1}(S)=F_S\sqcup P_3. 
\]
Thus $\theta_U(P_1)=P_0$, $\theta_U(P_3)=P_2$, and 
\[
\theta_U(P_0)\cap(R\cup S)=\emptyset,\quad 
\theta_U(P_2)\cap(R\cup S)=\emptyset. 
\]
A direct computation shows that 
\[
\gamma:=\theta_U^{-1}\pi^*(\tau_{V_2})\theta_U\pi^*(\tau_{V_2})
\]
satisfies 
\[
\gamma(P_0)=P_2,\quad
\gamma(P_2)=P_0,\quad
\gamma(P_1)=P_3,\quad
\gamma(P_3)=P_1, 
\]
and fixes the complement of $P_0\sqcup P_1\sqcup P_2\sqcup P_3$. 
Hence $\gamma$ is the element corresponding to 
$(0\ 2)(1\ 3)$ in $\sA(\pi^*(v)|P_0)$. 
In particular, 
it has order two and interchanges $x$ with $x'$ and $s(g)$ with $s(g')$. 
Since $\tau_{V_2}\in\sD(\cH)$, we have $\gamma\in H$. 

Once again, $P_0$ contains $x$ and does not contain $z$, 
so $P_0\neq\pi^{-1}(\pi(P_0))$, and $(P_0,v)\in L$. 
The same simplicity and normalization argument as in Case~1 
gives $\sA(\pi^*(v)|P_0)\subset H$. 
This completes the proof.
\end{proof}

We now apply the local construction to two classes of factor maps. 
We say that $\pi$ is a covering map of degree $d$ 
if its restriction $\pi^{(0)}:\cG^{(0)}\to\cH^{(0)}$ is 
a covering map of degree $d$. 
Equivalently, $\pi:\cG\to\cH$ itself is a covering map of degree $d$. 

We say that $\pi$ is at most two-to-one 
if $\#(\pi^{(0)})^{-1}(y)\leq2$ for any $y\in\cH^{(0)}$. 

\begin{proposition}\label{prop:factor}
Let $\pi:\cG\to\cH$ be as in Setting~\ref{factorsetting}. 
Suppose that either of the following holds. 
\begin{enumerate}
\item $\pi:\cG\to\cH$ is a covering map of degree two. 
\item $\pi:\cG\to\cH$ is prime, at most two-to-one and $\cH$ is principal. 
\end{enumerate}
Then, for any $\gamma\in\sF(\cG)\setminus\pi^*(\sF(\cH))$, 
the group generated by $\gamma$ and $\pi^*(\sD(\cH))$ contains $\sD(\cG)$. 
\end{proposition}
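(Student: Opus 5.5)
The plan is to reduce everything to Lemma~\ref{IfHcontainsA} via Lemma~\ref{technical}. Let $H$ be the group generated by $\gamma$ and $\pi^*(\sD(\cH))$, and let $U$ be the full bisection with $\gamma=\theta_U$. Lemma~\ref{IfHcontainsA} needs $\pi$ to be prime. In case (2) this is assumed. In case (1) I would first show that a degree-two covering is prime. Any intermediate factor $\cK$ gives a factorization of the unit-space covering $\cG^{(0)}\to\cK^{(0)}\to\cH^{(0)}$, and fiber cardinalities multiply pointwise. So over each point of $\cH^{(0)}$ one of the two maps is bijective. The set of points of $\cK^{(0)}$ where $\pi_1$ is injective is preserved by the $\cK$-action, because lifting along orbits is bijective. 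By minimality and a clopen argument this forces one of the maps to be injective everywhere, hence an isomorphism.

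It then suffices to produce a point $y\in\cH^{(0)}$ satisfying the hypotheses of Lemma~\ref{technical} for this $U$. Since $\gamma\notin\pi^*(\sF(\cH))$, Lemma~\ref{saturatedness}~(2) shows that $U$ is not saturated. Hence there are a point $y_1$ with fiber $\{x_1,z_1\}$ and the arrows $g_1,f_1\in U$ with ranges $x_1,z_1$ such that $\pi(g_1)\neq\pi(f_1)$. Indeed, if $\pi(g_1)=\pi(f_1)$ held on every two-point fiber, then $U$ would contain every $\pi$-preimage of each of its arrows. The set of $y$ whose fiber has two points and where the two lifts in $U$ have distinct images is open in the two-point locus. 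In case (1) it is simply open, because the sheets vary continuously. In case (2) I would use properness: $\pi$ and $U$ are continuous, and the fiber near a two-point fiber splits into pieces near each point, so the condition persists on two-point fibers nearby.

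The next step is to obtain trivial isotropy. In case (2), $\cH$ is principal, so every $y$ works, and we only need $\#\pi^{-1}(y)=2$, which the choice of $y_1$ already gives. In case (1) every fiber has two points. The set of condition-satisfying $y$ is a nonempty open set, and since $\cH$ is effective and second countable, units with trivial isotropy are dense (\cite[Proposition 3.6]{Re08Irish}). So we can pick such a $y$ in that open set.

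Lemma~\ref{technical} then yields $(P_0,v)\in L$ with $P_0$ nonsaturated and $\sA(\pi^*(v)|P_0)\subset H$, and Lemma~\ref{IfHcontainsA} gives $\sD(\cG)\subset H$. I expect the main obstacle to be the openness and perturbation step in case (2). There fibers of cardinality one and two coexist, so I must make sure that the nonsaturation of $U$ is detected at a genuine two-point fiber. I would argue this directly: at a one-point fiber, the arrow of $U$ is automatically saturated, so any nonsaturation must occur over a two-point fiber.
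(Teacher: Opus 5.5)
Your proposal is correct and follows essentially the same route as the paper. Non-saturation of $U$ produces a two-point fiber over which the two lifts in $U$ have distinct images; effectiveness (case (1), on an open set of such $y$) or principality (case (2)) then supplies trivial isotropy, and Lemma~\ref{technical} followed by Lemma~\ref{IfHcontainsA} finishes the proof. The only variations are minor: you prove primeness of the degree-two covering directly, by counting fibers and using minimality, instead of identifying $\cG$ with $\cH\times_\xi\Z/2$ and invoking Lemma~\ref{pto1}; and in case (2) your perturbation/properness remark is unnecessary, since $y_1$ itself already works. That remark is also not justified by properness alone, because two-point fibers near $y_1$ need not split with one point near each point of $\pi^{-1}(y_1)$, as in Denjoy-type extensions.
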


\begin{proof}
Take a full bisection $U\in\cCO(\cG)$ 
such that $\gamma=\theta_U$. 
Let $H$ be the subgroup generated by $\theta_U$ and $\pi^*(\sD(\cH))$. 
We must show that $\sD(\cG)\subset H$. 
As $\gamma$ is not in $\pi^*(\sF(\cH))$, 
Lemma~\ref{saturatedness}~(2) implies that $\pi^{-1}(\pi(U))\neq U$. 
Therefore $P:=r(\pi^{-1}(\pi(U))\setminus U)\subset\cG^{(0)}$ is 
a nonempty compact subset. 
Notice that, for any $y\in\pi(P)$, 
there exist $g,f\in U$ such that 
$y=\pi(r(g))=\pi(r(f))$ and $\pi(g)\neq\pi(f)$. 

(1)\:
Choose a clopen trivialization of the two-sheeted covering $\pi^{(0)}$. 
Lifting arrows of $\cH$ gives permutations of the two sheets, 
and hence a continuous homomorphism $\xi:\cH\to\Z/2$. 
This identifies $\cG$ with $\cH\times_\xi\Z/2$ over $\cH$. 
In particular, $\pi$ is open. 
Since $U$ is compact open, $\pi(U)$ is compact open, and hence 
$\pi^{-1}(\pi(U))\setminus U$ is open. 
Since both $r$ and $\pi|{\cG^{(0)}}$ are open, 
$P$ and $\pi(P)$ are nonempty open subsets of 
$\cG^{(0)}$ and $\cH^{(0)}$, respectively. 
It follows from Lemma~\ref{pto1} that $\pi$ is prime. 
Since $\cH$ is effective, 
there exists $y\in\pi(P)$ whose isotropy group is trivial. 
The fiber $\pi^{-1}(y)$ has two points. 
It follows from Lemma~\ref{technical} that 
there exists $(P_0,v)\in L$ such that $P_0\neq \pi^{-1}(\pi(P_0))$ 
and $\sA(\pi^*(v)|P_0)\subset H$. 
Hence, by Lemma~\ref{IfHcontainsA}, we get $\sD(\cG)\subset H$. 

(2)\:
Take any $y\in\pi(P)$. 
The isotropy group of $y$ is trivial, because $\cH$ is principal. 
By the observation above, there exist $g,f\in U$ such that 
\[
y=\pi(r(g))=\pi(r(f))\quad\text{and}\quad\pi(g)\neq\pi(f).
\]
Since $U$ is a bisection, $r(g)\neq r(f)$. 
Thus $\pi^{-1}(y)$ contains at least two points, 
and hence $\#\pi^{-1}(y)=2$ by the at most two-to-one assumption. 
Therefore, Lemma~\ref{technical} and Lemma~\ref{IfHcontainsA} apply 
in the same way as above, and we get the conclusion. 
\end{proof}

The remaining obstruction to maximality lies in the abelianization. 
Combining Proposition~\ref{prop:factor} with the commutative diagram 
\[
\xymatrix@M=8pt{
1 \ar[r] & \sD(\cG) \ar[r] & \sF(\cG) \ar[r] & 
H_1(\sF(\cG)) \ar[r] & 1 \\
1 \ar[r] & \sD(\cH) \ar[r] \ar[u]_-{\pi^*} & 
\sF(\cH) \ar[r] \ar[u]_-{\pi^*} & 
H_1(\sF(\cH)) \ar[r] \ar[u]_-{H_1(\pi^*)} & 1, \\
}
\]
we obtain the following two theorems. 

\begin{theorem}\label{thm1:factor}
Let $\pi:\cG\to\cH$ be as in Setting~\ref{factorsetting}. 
Suppose that 
either of the two conditions of Proposition~\ref{prop:factor} holds. 
Then $\pi^*(\sF(\cH))$ is a maximal subgroup of $\sF(\cG)$ 
if and only if 
the induced map $H_1(\pi^*):H_1(\sF(\cH))\to H_1(\sF(\cG))$ 
is surjective. 
\end{theorem}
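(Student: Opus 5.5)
The argument combines Proposition~\ref{prop:factor} with the commutative diagram above. The key intermediate claim is that every subgroup $H$ with $\pi^*(\sF(\cH))\subsetneq H\subset\sF(\cG)$ contains $\sD(\cG)$. Given such $H$, pick $\gamma\in H\setminus\pi^*(\sF(\cH))$. Since $H$ contains both $\gamma$ and $\pi^*(\sD(\cH))$, Proposition~\ref{prop:factor} gives $\sD(\cG)\subset H$. Consequently the intermediate subgroups between $\pi^*(\sF(\cH))$ and $\sF(\cG)$ are exactly $\pi^*(\sF(\cH))$ itself together with the subgroups containing $\pi^*(\sF(\cH))\sD(\cG)$.

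By the correspondence theorem for the quotient map $q:\sF(\cG)\to H_1(\sF(\cG))$, subgroups containing $\sD(\cG)$ correspond to subgroups of $H_1(\sF(\cG))$. The image $q(\pi^*(\sF(\cH)))$ equals $\Ima H_1(\pi^*)$, by commutativity of the diagram. Hence $\pi^*(\sF(\cH))\sD(\cG)=q^{-1}(\Ima H_1(\pi^*))$.

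For the ``if'' direction, suppose $H_1(\pi^*)$ is surjective. Then $\pi^*(\sF(\cH))\sD(\cG)=\sF(\cG)$, so any $H$ strictly containing $\pi^*(\sF(\cH))$ equals $\sF(\cG)$. Properness of $\pi^*(\sF(\cH))$ follows from Lemma~\ref{strictness}, since $\pi$ is not an isomorphism. Therefore $\pi^*(\sF(\cH))$ is maximal.

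For the ``only if'' direction, suppose $H_1(\pi^*)$ is not surjective. Then $K:=\pi^*(\sF(\cH))\sD(\cG)=q^{-1}(\Ima H_1(\pi^*))$ is a proper subgroup of $\sF(\cG)$. It also strictly contains $\pi^*(\sF(\cH))$, because Lemma~\ref{strictness} shows $\sD(\cG)\not\subset\pi^*(\sF(\cH))$. So $K$ is a proper intermediate subgroup and maximality fails.

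There is no real obstacle here: all the substantial work sits in Proposition~\ref{prop:factor}. The one point to check is that Lemma~\ref{strictness} applies, which needs $\cG$ minimal and $\pi$ not an isomorphism; both are part of Setting~\ref{factorsetting}.
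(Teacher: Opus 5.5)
Your proof is correct and follows essentially the same route as the paper. Proposition~\ref{prop:factor} forces every subgroup strictly containing $\pi^*(\sF(\cH))$ to contain $\sD(\cG)$; when $H_1(\pi^*)$ is not surjective, Lemma~\ref{strictness} yields the proper intermediate subgroup $\pi^*(\sF(\cH))\sD(\cG)$ (the paper phrases this as adjoining one element of $\sD(\cG)\setminus\pi^*(\sF(\cH))$), and your explicit use of Lemma~\ref{strictness} for properness in the ``if'' direction makes precise a point the paper leaves implicit.
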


\begin{proof}
Suppose that 
$H_1(\pi^*):H_1(\sF(\cH))\to H_1(\sF(\cG))$ is surjective. 
Take $\gamma\in\sF(\cG)\setminus\pi^*(\sF(\cH))$ 
and let $H$ be the group generated by $\gamma$ and $\pi^*(\sF(\cH))$. 
It follows from Proposition~\ref{prop:factor} that 
$H$ contains $\sD(\cG)$. 
This, together with the surjectivity of $H_1(\pi^*)$, 
implies that $H$ equals $\sF(\cG)$. 
Therefore, 
$\pi^*(\sF(\cH))$ is a maximal subgroup of $\sF(\cG)$. 

Suppose that $H_1(\pi^*)$ is not surjective. 
By Lemma~\ref{strictness}, 
we can take $\gamma\in\sD(\cG)\setminus\pi^*(\sF(\cH))$. 
Then the group generated by $\gamma$ and $\pi^*(\sF(\cH))$ 
is a proper subgroup of $\sF(\cG)$. 
\end{proof}

\begin{theorem}\label{thm2:factor}
Let $\pi:\cG\to\cH$ be as in Setting~\ref{factorsetting}. 
Suppose that 
either of the two conditions of Proposition~\ref{prop:factor} holds. 
Then $\pi^*(\sD(\cH))$ is a maximal subgroup of $\sD(\cG)$ 
if and only if 
the induced map $H_1(\pi^*):H_1(\sF(\cH))\to H_1(\sF(\cG))$ 
is injective. 
\end{theorem}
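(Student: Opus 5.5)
The plan is to prove both implications by comparing $\pi^*(\sD(\cH))$ with $M:=\pi^*(\sF(\cH))\cap\sD(\cG)$, using Proposition~\ref{prop:factor} together with the commutative diagram preceding Theorem~\ref{thm1:factor}. The first step is to identify $\Ker H_1(\pi^*)$ with $M/\pi^*(\sD(\cH))$. An element $\alpha\in\sF(\cH)$ has class in $\Ker H_1(\pi^*)$ exactly when $\pi^*(\alpha)\in\sD(\cG)$. Since $\pi^*$ is injective and maps $\sD(\cH)$ into $\sD(\cG)$, sending $\alpha\,\sD(\cH)\mapsto\pi^*(\alpha)\,\pi^*(\sD(\cH))$ gives an isomorphism
\[
\Ker H_1(\pi^*)\cong M/\pi^*(\sD(\cH)).
\]
So $H_1(\pi^*)$ is injective if and only if $M=\pi^*(\sD(\cH))$.

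Suppose first that $H_1(\pi^*)$ is not injective. Then $\pi^*(\sD(\cH))\subsetneq M$. By Lemma~\ref{strictness} there is an element of $\sD(\cG)$ outside $\pi^*(\sF(\cH))$, so $M\subsetneq\sD(\cG)$. Hence $M$ is a proper intermediate subgroup and $\pi^*(\sD(\cH))$ is not maximal in $\sD(\cG)$.

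Suppose now that $H_1(\pi^*)$ is injective, so $M=\pi^*(\sD(\cH))$. The group $\pi^*(\sD(\cH))$ is proper in $\sD(\cG)$ by Lemma~\ref{strictness}. Let $K$ be a subgroup with $\pi^*(\sD(\cH))\subset K\subset\sD(\cG)$ and $K\neq\pi^*(\sD(\cH))$, and pick $\gamma\in K\setminus\pi^*(\sD(\cH))$. Since $\gamma\in\sD(\cG)$ and $\gamma\notin M$, we get $\gamma\notin\pi^*(\sF(\cH))$. Proposition~\ref{prop:factor} then says that $\langle\gamma,\pi^*(\sD(\cH))\rangle$ contains $\sD(\cG)$, so $K=\sD(\cG)$. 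This proves maximality.

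There is no real obstacle beyond Proposition~\ref{prop:factor}. The one point to get right is where the injectivity hypothesis enters. It is exactly what guarantees that any $\gamma\in\sD(\cG)\setminus\pi^*(\sD(\cH))$ lies outside the full factor group $\pi^*(\sF(\cH))$, which is the form in which Proposition~\ref{prop:factor} can be applied.
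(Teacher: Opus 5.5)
Your proof is correct and follows essentially the same route as the paper. Injectivity shows that any $\gamma\in\sD(\cG)\setminus\pi^*(\sD(\cH))$ lies outside $\pi^*(\sF(\cH))$, so Proposition~\ref{prop:factor} applies; non-injectivity gives an intermediate subgroup inside $\pi^*(\sF(\cH))$ that is proper in $\sD(\cG)$ by Lemma~\ref{strictness}. The only difference is packaging: you run both directions through $M=\pi^*(\sF(\cH))\cap\sD(\cG)$ and the isomorphism $\Ker H_1(\pi^*)\cong M/\pi^*(\sD(\cH))$, which the paper records later in Corollary~\ref{cor:factornormalizer}, whereas the paper uses $\langle\pi^*(\gamma),\pi^*(\sD(\cH))\rangle$ for a single nontrivial kernel element.
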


\begin{proof}
Suppose that 
$H_1(\pi^*):H_1(\sF(\cH))\to H_1(\sF(\cG))$ is injective. 
Take $\gamma\in\sD(\cG)\setminus\pi^*(\sD(\cH))$ 
and let $H$ be the group generated by $\gamma$ and $\pi^*(\sD(\cH))$. 
If there exists $\gamma_0\in\sF(\cH)$ such that $\pi^*(\gamma_0)=\gamma$, 
then the image of $\gamma_0$ in $H_1(\sF(\cH))$ is 
in the kernel of $H_1(\pi^*)$. 
Hence $\gamma_0$ is in $\sD(\cH)$, which is a contradiction. 
Thus $\gamma$ is in $\sD(\cG)\setminus\pi^*(\sF(\cH))$. 
Proposition~\ref{prop:factor} now gives $H=\sD(\cG)$. 

When $H_1(\pi^*)$ is not injective, 
there exists $\gamma\in\sF(\cH)\setminus\sD(\cH)$ such that 
$\pi^*(\gamma)\in\sD(\cG)$. 
Then the group generated by $\pi^*(\gamma)$ and $\pi^*(\sD(\cH))$ 
is contained in $\pi^*(\sF(\cH))$. 
In particular, Lemma~\ref{strictness} tells us that 
it is not equal to $\sD(\cG)$. 
\end{proof}

The same generation argument also identifies 
the normalizer and the unique maximal subgroup 
containing the factor commutator subgroup. 

\begin{corollary}\label{cor:factornormalizer}
Under the hypotheses of Theorem~\ref{thm2:factor}, one has 
\[
\Nor(\sF(\cG),\pi^*(\sD(\cH)))=\pi^*(\sF(\cH)). 
\]
Moreover, 
\[
M:=\Nor(\sD(\cG),\pi^*(\sD(\cH)))=\pi^*(\sF(\cH))\cap\sD(\cG)
\]
is the unique maximal subgroup of $\sD(\cG)$ containing $\pi^*(\sD(\cH))$, 
and $M/\pi^*(\sD(\cH))\cong\Ker H_1(\pi^*)$. 
\end{corollary}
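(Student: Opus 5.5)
First I show that $\Nor(\sF(\cG),\pi^*(\sD(\cH)))=\pi^*(\sF(\cH))$. The inclusion $\supset$ holds because $\sD(\cH)$ is normal in $\sF(\cH)$ and $\pi^*$ is an injective homomorphism. For $\subset$, take $\gamma$ in the normalizer and suppose $\gamma\notin\pi^*(\sF(\cH))$. By Proposition~\ref{prop:factor}, the group $H$ generated by $\gamma$ and $\pi^*(\sD(\cH))$ contains $\sD(\cG)$. Since $\gamma$ normalizes $\pi^*(\sD(\cH))$, we have $H=\pi^*(\sD(\cH))\langle\gamma\rangle$, which has $\pi^*(\sD(\cH))$ as a normal subgroup. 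Thus $\pi^*(\sD(\cH))\cap\sD(\cG)=\pi^*(\sD(\cH))$ is a normal subgroup of $\sD(\cG)$. It is nontrivial, because $\sD(\cH)$ is nontrivial (it is simple and infinite). By simplicity of $\sD(\cG)$ (Theorem~\ref{niceproperties}~(1)), it equals $\sD(\cG)$. This means $\sD(\cG)\subset\pi^*(\sF(\cH))$, contradicting Lemma~\ref{strictness}. Intersecting with $\sD(\cG)$ then gives $M=\pi^*(\sF(\cH))\cap\sD(\cG)$.

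Next I show that $M$ is the unique maximal subgroup of $\sD(\cG)$ containing $\pi^*(\sD(\cH))$. Let $K$ be a subgroup with $\pi^*(\sD(\cH))\subset K\subset\sD(\cG)$ and $K\neq\sD(\cG)$. If $K$ contained some $\gamma\notin\pi^*(\sF(\cH))$, then Proposition~\ref{prop:factor} would force $K\supset\sD(\cG)$. So $K\subset\pi^*(\sF(\cH))\cap\sD(\cG)=M$. Also $M\neq\sD(\cG)$ by Lemma~\ref{strictness}. Hence every proper subgroup of $\sD(\cG)$ containing $\pi^*(\sD(\cH))$ lies in the proper subgroup $M$. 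Therefore $M$ is maximal, and it is the unique maximal subgroup with this property.

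Finally, I compute $M/\pi^*(\sD(\cH))$. Since $\pi^*$ is injective, $M=\pi^*(\{\alpha\in\sF(\cH)\mid\pi^*(\alpha)\in\sD(\cG)\})$. The subgroup in braces is the preimage of $\Ker H_1(\pi^*)$ under the quotient map $\sF(\cH)\to H_1(\sF(\cH))$; this uses the commutative diagram preceding Theorem~\ref{thm1:factor}. Taking the quotient by $\sD(\cH)$ gives $M/\pi^*(\sD(\cH))\cong\Ker H_1(\pi^*)$.

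No step is expected to be a real obstacle, since all the heavy lifting is done by Proposition~\ref{prop:factor}. The one point needing care is the normalizer argument in the first paragraph: we must note that $H=\pi^*(\sD(\cH))\langle\gamma\rangle$, and that normality of $\pi^*(\sD(\cH))$ in $H$ passes to the subgroup $\sD(\cG)$, so that simplicity can be applied.
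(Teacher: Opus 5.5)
Your proof is correct and follows the paper's argument essentially step for step. You identify the normalizer via Proposition~\ref{prop:factor}, simplicity of $\sD(\cG)$ and Lemma~\ref{strictness}; you get maximality and uniqueness of $M$ from the same generation dichotomy; and you read off $M/\pi^*(\sD(\cH))\cong\Ker H_1(\pi^*)$ from the identification of $\pi^*(\sF(\cH))/\pi^*(\sD(\cH))$ with $H_1(\sF(\cH))$.
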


\begin{proof}
Put $P:=\pi^*(\sF(\cH))$ and $A:=\pi^*(\sD(\cH))$. 
Since $\sD(\cH)$ is characteristic in $\sF(\cH)$, $P$ normalizes $A$. 
If $\gamma\in\Nor(\sF(\cG),A)\setminus P$, then 
Proposition~\ref{prop:factor} gives 
$\sD(\cG)\subset\langle A,\gamma\rangle$. 
Since $A$ is normal in $\langle A,\gamma\rangle$, 
it is a non-trivial normal subgroup of $\sD(\cG)$. 
The simplicity of $\sD(\cG)$ (Theorem~\ref{niceproperties}~(1)) 
would imply $A=\sD(\cG)$, contrary to Lemma~\ref{strictness}. 
Thus $\Nor(\sF(\cG),A)=P$, and intersecting with $\sD(\cG)$ 
gives the formula for $M$. 

By Lemma~\ref{strictness}, $M$ is a proper subgroup of $\sD(\cG)$. 
For every $\gamma\in\sD(\cG)\setminus M$, 
Proposition~\ref{prop:factor} gives $\langle A,\gamma\rangle=\sD(\cG)$. 
Hence every proper subgroup of $\sD(\cG)$ containing $A$ 
is contained in $M$, proving the asserted uniqueness and maximality. 
Finally, under the identification $P/A\cong H_1(\sF(\cH))$ 
induced by $\pi^*$, the subgroup $M/A$ corresponds to $\Ker H_1(\pi^*)$. 
\end{proof}

\begin{remark}\label{rem:Li-naturality}
Li's exact sequence (Theorem~\ref{Li}) helps us to determine 
when the map $H_1(\pi^*):H_1(\sF(\cH))\to H_1(\sF(\cG))$ 
is surjective or injective. 
More precisely, the following diagram is commutative: 
\[
\xymatrix@M=8pt{
H_0(\cG,\Z/2) \ar[r]^-{\zeta_\cG} & 
H_1(\sF(\cG)) \ar[r]^-{\eta_\cG} & H_1(\cG) \ar[r] & 0 \\
H_0(\cH,\Z/2) \ar[r]^-{\zeta_\cH} \ar[u]_-{H_0^*(\pi)} & 
H_1(\sF(\cH)) \ar[r]^-{\eta_\cH} \ar[u]_-{H_1(\pi^*)} & 
H_1(\cH) \ar[r] \ar[u]_-{H_1^*(\pi)} & 0, }
\]
where $H_i^*(\pi)$ are the homomorphisms of Lemma~\ref{lem:pullback}.
The right square commutes by the definition of the index map.
The left square commutes by the description of $\zeta$
in terms of transpositions, since
$\pi^*(\tau_U)=\tau_{\pi^{-1}(U)}$ for every compact open bisection
$U\subset\cH$ with $r(U)\cap s(U)=\emptyset$.

For example, if $\zeta_\cG$ is zero and $H_1^*(\pi)$ is surjective, 
then $H_1(\pi^*)$ is surjective. 
If $\zeta_\cH$ is zero and $H_1^*(\pi)$ is injective, 
then $H_1(\pi^*)$ is injective. 
\end{remark}

\begin{remark}
The author does not know 
if Proposition~\ref{prop:factor} is valid 
for skew product extensions by $\Z/p$ with $p\geq3$ prime. 
Lemma~\ref{pto1} ensures primeness, 
but the construction in Lemma~\ref{technical} uses 
the two-point fiber assumption. 
\end{remark}

\section{Examples I}\label{sec:examplesI}

We apply the factor-map criteria of Section~\ref{sec:factor} 
to minimal $\Z$ actions and SFT groupoids. 
For almost one-to-one extensions, 
the torsion-free quotient of the dimension groups controls maximality. 
For two-to-one coverings, we compute the pullback maps explicitly, 
first for minimal $\Z$ actions 
and then for SFT groupoids in terms of adjacency matrices.

\subsection{Almost one-to-one factors of minimal $\Z$ actions}

The realization theorem of Giordano, Putnam and Skau 
provides almost one-to-one extensions with prescribed dimension groups. 
We show that their construction also yields a prime factor map, 
so that Theorems~\ref{thm1:factor} and \ref{thm2:factor} apply. 

\begin{theorem}[{\cite[Theorem 4.1, Theorem 4.8]{GPS01MathScand}}]
\label{GPS}
Let $\psi:\Z\curvearrowright Y$ be 
a minimal $\Z$ action on a Cantor set $Y$ 
and suppose 
\[
\xymatrix@M=8pt{
0 \ar[r] & 
H_0(Y\rtimes_\psi\Z) \ar[r]^-i & G \ar[r]^-{q} & Q \ar[r] & 0 
}
\]
is a short exact sequence of abelian groups, 
with $Q\neq0$ countable and torsion-free. 
Also suppose that $G$ is a simple dimension group, 
that $i$ is an order embedding and 
that $i(H_0(Y\rtimes_\psi\Z))$ is order dense in $G$. 
Then there exists a minimal $\Z$ action $\phi:\Z\curvearrowright X$ 
on a Cantor set $X$, 
an almost one-to-one equivariant map $\pi:X\to Y$ 
and an order isomorphism $\alpha:H_0(X\rtimes_\phi\Z)\to G$ 
such that $\alpha\circ H_0^*(\pi)=i$ and 
the induced factor map 
$\pi:X\rtimes_\phi\Z\to Y\rtimes_\psi\Z$ 
is prime and at most two-to-one. 
\end{theorem}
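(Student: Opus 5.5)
The existence of $\phi$, $\pi$ and $\alpha$ with $\alpha\circ H_0^*(\pi)=i$ and $\pi$ almost one-to-one is exactly \cite[Theorem 4.1, Theorem 4.8]{GPS01MathScand}. So I will only check the two additional claims, that $\pi$ is at most two-to-one and that the induced factor map is prime. I will recall the construction: $X$ is obtained from $Y$ by "cutting" $Y$ along finitely or countably many orbits. One picks points $y_n$ in distinct $\psi$-orbits, each with clopen data encoding a generator of $Q$. Each point $\psi^k(y_n)$ is replaced by two points, a left limit and a right limit, and every other point keeps a single preimage. This gives the at most two-to-one property directly from the construction. I will also use that the doubled fibers are exactly those over the orbits of the $y_n$, and that a continuous equivariant map out of $X$ is determined by which of these pairs it separates.

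For primeness, suppose $\pi=\pi_2\circ\pi_1$ with intermediate factor maps through $\cK$. By minimality of $\cG=X\rtimes_\phi\Z$, Lemma~\ref{Boolean} and Stone duality, $\cK$ is again a transformation groupoid $Z\rtimes\Z$ with $Z$ a Cantor set and $\pi_1:X\to Z$ equivariant. Since fibers of $\pi$ have at most two points, each doubled fiber $\{x_+,x_-\}$ over a point of the orbit of $y_n$ is either separated or collapsed by $\pi_1$, and by equivariance this choice depends only on $n$. Let $N$ be the set of $n$ whose pairs are separated. Then the intermediate system $Z$ is the cut of $Y$ along the orbits $\{y_n\}_{n\in N}$. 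By the dimension group bookkeeping in \cite{GPS01MathScand}, $H_0(Z\rtimes\Z)$ corresponds to a subgroup $G_N$ with $i(H_0(Y\rtimes_\psi\Z))\subset G_N\subset G$, and $G_N/i(H_0(Y\rtimes_\psi\Z))$ is the span of the generators indexed by $N$.

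The key point is that primeness cannot hold for an arbitrary $Q$ with independent generators. I therefore expect the actual argument to rely on how GPS build $X$: it is a single cut along one orbit, or equivalently on a single clopen "cut" sequence, with $Q$ realized through an inductive limit along one orbit. In that case every doubled fiber lies in one $\phi$-orbit pair. Then $N$ is either empty, giving $\pi_1\cong\pi$ composed with an isomorphism so that $\pi_2$ is an isomorphism, or everything, giving $\pi_1$ an isomorphism. The minimality of $\cG$ and the density of the doubled orbit justify the equivariance reduction: if $\pi_1$ identifies the pair over one point, it identifies it over the whole orbit. I would verify this single-orbit structure from \cite[Section 4]{GPS01MathScand}.

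The main obstacle is exactly this step: extracting from the GPS construction that all non-injectivity of $\pi$ lives on a single $\Z$-orbit, or at least that intermediate quotients cannot split it. The at most two-to-one claim and the reduction of intermediate factors to equivariant quotients of $X$ are routine by comparison.
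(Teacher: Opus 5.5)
There is a genuine gap, located exactly at the step you single out as the main obstacle, and the route you propose there cannot work. The construction in \cite[Section 4]{GPS01MathScand} does not cut $Y$ along separately chosen orbits $\psi^k(y_n)$, one per generator of $Q$. Instead, $Q$ is encoded by a simple subdiagram $D_{\mathcal{Q}}$. The space $X$ is the spectrum of the completion of the algebra $\mathcal{B}$ generated by $\mathcal{A}$ and the functions $1_{G(p)}$, where $p$ runs over finite paths in $D_{\mathcal{Q}}$. By \cite[Theorem 4.8 (ii)]{GPS01MathScand}, the two-point fibers are those over the orbits of points of the path space $Z$ of $D_{\mathcal{Q}}$. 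In general $Z$ is a Cantor set, so the doubled fibers lie over uncountably many distinct $\psi$-orbits. Hence the single-orbit structure you intend to extract from the construction does not exist, and your bookkeeping with index sets $N$ and subgroups $G_N$ has no counterpart. Your observation that independent orbit-cuts would produce intermediate factors is correct for that naive model. However, it is not the model GPS use, and their extension is prime for every $Q$.

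What is missing is an argument that transports an identification from one $\phi$-orbit to a different one. Equivariance, which you use correctly, only propagates it along a single orbit. Continuity alone is not enough either, since translates of the two points of one doubled fiber could converge to the same point of another doubled fiber. The paper resolves this with labels.

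For $y=(e_1,e_2,\dots)\in Z$ and $p_n:=(e_1,\dots,e_n)$, \cite[Lemma 4.5 (ii), (iii)]{GPS01MathScand} together with $1_{C(p_n)}(y)=0$ gives $a(1_{G(p_n)})=a(1_{G(p_{n+1})})$ for every character $a$ over $y$. By \cite[Lemma 4.7]{GPS01MathScand}, the two characters $a_0,a_1$ over $y$ carry the constant labels $0$ and $1$.

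If $\pi_1$ identifies one two-point fiber, first move it into $Z$ by orbit-invariance. Now take any other $z=(f_1,f_2,\dots)\in Z$ with two-point fiber $\{b_0,b_1\}$, and put $q_m:=(f_1,\dots,f_m)$. Simplicity of $D_{\mathcal{Q}}$ gives $n>m$, a path $r$ from $r(q_m)$ to $r(p_n)$, and $k_m$ such that $\psi^{k_m}$ replaces the prefix $p_n$ by $q_mr$. By \cite[Lemma 4.5 (i)]{GPS01MathScand}, $\psi^{k_m}(G(p_n))=G(q_mr)$. Using the constancy of labels along $Z$, this yields $\phi^{k_m}(a_j)(1_{G(q_m)})=j$. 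Since $\psi^{k_m}(y)\to z$, it follows that $\phi^{k_m}(a_j)\to b_j$ separately for $j=0,1$. Continuity of $\pi_1$ then gives $\pi_1(b_0)=\pi_1(b_1)$.

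Hence $\pi_1$ collapses every two-point fiber, so $\pi_2$ is injective on units and is an isomorphism. This all-or-nothing conclusion is obtained across the whole of $Z$ at once, not orbit by orbit.
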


\begin{proof}
Only primeness remains to be proved. 
We use the notation of \cite[Section 4]{GPS01MathScand}: 
$Z$ is the path space of the simple subdiagram $D_{\mathcal{Q}}$, 
and $X$ is the spectrum of the completion of $\mathcal{B}$. 
We show that identifying one two-point fiber forces 
every two-point fiber to be identified. 
Let $a$ be a nonzero multiplicative linear functional 
on the completion of $\mathcal{B}$ and 
let $y=(e_1,e_2,\dots)\in Z$ be such that 
\[
a(f)=f(y)\quad\forall f\in\mathcal{A}. 
\]
Put $p_n:=(e_1,e_2,\dots,e_n)$. 
Lemma 4.5 (iii) of \cite{GPS01MathScand} implies 
\[
a\left(1_{G(p_n)}\right)
=a\left(1_{C(p_n)}\right)+\sum_e a\left(1_{G(p_ne)}\right), 
\]
where the sum is over $e$ in $E(D_{\mathcal{Q}})$ 
with $r(p_n)=s(e)$. 
By the construction of $C(p_n)$ (see \cite[Lemma 4.4]{GPS01MathScand}), 
we have 
\[
a\left(1_{C(p_n)}\right)=1_{C(p_n)}(y)=0. 
\]
When $e\neq e_{n+1}$, Lemma 4.5 (ii) of \cite{GPS01MathScand} implies 
\[
a\left(1_{G(p_ne)}\right)
\leq a\left(1_{U(p_ne)}\right)=1_{U(p_ne)}(y)=0. 
\]
Thus, $a(1_{G(p_n)})=a(1_{G(p_{n+1})})$ for all $n\in\N$. 
Each character over $y$ therefore has one of two possible labels: 
either $a(1_{G(p_n)})=0$ for all $n$ or 
$a(1_{G(p_n)})=1$ for all $n$. 

Suppose that 
$\pi:X\rtimes_\phi\Z\to Y\rtimes_\psi\Z$ 
factors as $\pi=\pi_2\circ\pi_1$ through an ample groupoid $\cK$. 
Assume that $\pi_1$ is not an isomorphism. 
One can find $y=(e_1,e_2,\dots)\in Y$ such that 
$\#\pi^{-1}(y)=2$ and $\#\pi_2^{-1}(y)=1$. 
The equivalence relation on $X$ defined by $\pi_1$ is invariant under $\phi$. 
Indeed, the images under $\pi_1$ of the arrows 
from two identified points to their $\phi^k$-translates 
have the same source and the same image under $\pi_2$, 
so they coincide by injectivity on source fibers. 
By \cite[Theorem 4.8 (ii)]{GPS01MathScand}, 
we may therefore move $y$ along its orbit and assume $y\in Z$. 
By \cite[Lemma 4.7]{GPS01MathScand}, 
the two characters over $y$ can be labelled $a_0,a_1$ so that 
\[
a_0(f)=a_1(f)=f(y)\quad\forall f\in\mathcal{A}
\]
and 
\[
a_0\left(1_{G(p_n)}\right)=0,\ 
a_1\left(1_{G(p_n)}\right)=1\quad\forall n, 
\]
where $p_n:=(e_1,e_2,\dots,e_n)$. 
We have $\pi_1(\phi^k(a_0))=\pi_1(\phi^k(a_1))$ for any $k\in\Z$, 
where characters are identified with points in $X$. 
Take any $z=(f_1,f_2,\dots)\in Z$ such that $\#\pi^{-1}(z)=2$. 
Put $q_m:=(f_1,f_2,\dots,f_m)$. 
Label the two characters over $z$ as $b_0,b_1$ in the same way. 
For every $m\in\N$, simplicity gives $n>m$ and 
a path $r=(g_{m+1},\dots,g_n)$ in $D_{\mathcal{Q}}$ 
from $r(q_m)$ to $r(p_n)$. 
There is an integer $k_m$ such that $\psi^{k_m}$ 
replaces the prefix $p_n$ by $q_mr$. 
In particular, 
\[
\psi^{k_m}(y)=(f_1,\dots,f_m,g_{m+1},\dots,g_n,e_{n+1},\dots)\in Z. 
\]
By \cite[Lemma 4.5 (i)]{GPS01MathScand}, 
$\psi^{k_m}(G(p_n))=G(q_mr)$. 
Using the constant labels along paths in $Z$, we obtain 
\[
\phi^{k_m}(a_j)\left(1_{G(q_m)}\right)
=\phi^{k_m}(a_j)\left(1_{G(q_mr)}\right)
=a_j\left(1_{G(q_mr)}\circ\psi^{k_m}\right)
=a_j\left(1_{G(p_n)}\right)=j
\]
for $j=0,1$. 
Since $\psi^{k_m}(y)\to z$ and $\mathcal{B}$ is generated by 
$\mathcal{A}$ and the functions $1_{G(p)}$, 
these equalities imply $\phi^{k_m}(a_j)\to b_j$ as $m\to\infty$ 
for $j=0,1$. 
This, together with 
$\pi_1(\phi^{k_m}(a_0))=\pi_1(\phi^{k_m}(a_1))$, 
implies $\pi_1(b_0)=\pi_1(b_1)$. 
Since $z\in Z$ was arbitrary, 
by \cite[Theorem 4.8 (ii)]{GPS01MathScand}, 
every two-point fiber is identified by $\pi_1$. 
Thus $\pi_2$ is one-to-one on the unit space and hence is an isomorphism. 
This proves primeness. 
\end{proof}

\begin{theorem}\label{almost1to1:Z}
Let $\pi:X\rtimes_\phi\Z\to Y\rtimes_\psi\Z$ be 
the factor map as in Theorem~\ref{GPS}. 
\begin{enumerate}
\item The group $\pi^*(\sF(Y\rtimes_\psi\Z))$ is 
maximal in $\sF(X\rtimes_\phi\Z)$ 
if and only if $Q\otimes\Z/2=0$. 
\item The group $\pi^*(\sD(Y\rtimes_\psi\Z))$ is 
maximal in $\sD(X\rtimes_\phi\Z)$. 
\end{enumerate}
\end{theorem}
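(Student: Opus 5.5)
By Theorem~\ref{GPS}, the factor map $\pi$ is prime and at most two-to-one, and $Y\rtimes_\psi\Z$ is principal: a minimal $\Z$ action on an infinite space is free. Both groupoids are minimal and almost finite, since they come from Cantor minimal $\Z$ systems. Hence condition (2) of Proposition~\ref{prop:factor} holds, and Theorems~\ref{thm1:factor} and \ref{thm2:factor} reduce the two assertions to the following. For (1) we must show that $H_1(\pi^*)$ is surjective if and only if $Q\otimes\Z/2=0$. For (2) we must show that $H_1(\pi^*)$ is always injective.

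To compute $H_1(\pi^*)$ I use Remark~\ref{rem:Li-naturality}, with the explicit structure of Li's sequence for $\Z$ actions.

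For a Cantor minimal $\Z$ system, $H_1(\cG)\cong\Z$ and $H_2(\cG)=0$. Theorem~\ref{Li} then gives a short exact sequence
\[
0\to H_0(\cG,\Z/2)\to H_1(\sF(\cG))\to\Z\to0 .
\]
It splits, since $\Z$ is free, and a section is given by the class of the homeomorphism $\phi$ itself. This recovers the known description $H_1(\sF)\cong (H_0\otimes\Z/2)\oplus\Z$.

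The map $H_1^*(\pi)$ on $H_1\cong\Z$ is an isomorphism, because the factor map $\pi^*$ sends $\psi$ to $\phi$ and the index classes $[1_U]$ correspond. Since $\pi^*(\psi)=\phi$, the splittings are compatible. Hence $H_1(\pi^*)$ is the direct sum of the identity on $\Z$ and
\[
H_0^*(\pi)\otimes\Z/2:H_0(\cH)\otimes\Z/2\to H_0(\cG)\otimes\Z/2,
\]
where I use $H_0(\cdot,\Z/2)=H_0(\cdot)\otimes\Z/2$, valid because $H_{-1}=0$.

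Under the isomorphism $\alpha$ of Theorem~\ref{GPS}, the map $H_0^*(\pi)$ becomes $i$. Tensoring the short exact sequence $0\to H_0(\cH)\to G\to Q\to0$ with $\Z/2$ gives
\[
\mathrm{Tor}(Q,\Z/2)\to H_0(\cH)\otimes\Z/2\to G\otimes\Z/2\to Q\otimes\Z/2\to0 .
\]
Since $Q$ is torsion-free, $\mathrm{Tor}(Q,\Z/2)=0$, so $i\otimes\Z/2$ is always injective. Its cokernel is $Q\otimes\Z/2$, so it is surjective exactly when $Q\otimes\Z/2=0$.

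Combining the two summands, $H_1(\pi^*)$ is always injective, which proves (2) via Theorem~\ref{thm2:factor}. It is surjective if and only if $Q\otimes\Z/2=0$, which proves (1) via Theorem~\ref{thm1:factor}.

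The main point needing care is the identification of the maps in Li's sequence, in particular that $\zeta$ is injective here. This follows from $H_2=0$ for $\Z$ actions, because the term preceding $\zeta$ in Theorem~\ref{Li} is $H_2(\cG)$. One must also check that $H_1^*(\pi)$ is the identity on $\Z$ compatibly with the chosen splittings. I would verify this with the index map applied to $\phi=\pi^*(\psi)$, whose index generates $H_1$ on both sides.

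If one prefers to avoid the splitting, a five-lemma-type diagram chase in the diagram of Remark~\ref{rem:Li-naturality} suffices. The outer maps are injective (respectively an isomorphism), and $\zeta_\cH$, $\zeta_\cG$ are injective. This yields injectivity of $H_1(\pi^*)$, and shows that its cokernel equals $\Coker(i\otimes\Z/2)\cong Q\otimes\Z/2$.
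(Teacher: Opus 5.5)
Your proposal is correct and follows the paper's proof. You reduce via Proposition~\ref{prop:factor}(2) and Theorems~\ref{thm1:factor} and \ref{thm2:factor} to properties of $H_1(\pi^*)$, use the naturality of Li's sequence to isolate the $\Z$ summand on which $H_1^*(\pi)$ is an isomorphism, and identify the remaining map with $i\otimes\Z/2$, which is injective with cokernel $Q\otimes\Z/2$ because $Q$ is torsion-free. The only minor difference is that you derive the injectivity of $\zeta$ and the splitting from $H_2=0$ and the class of $\phi$, whereas the paper cites the known description $H_1(\sF)\cong H_0(\cdot,\Z/2)\oplus\Z$ for Cantor minimal systems.
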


\begin{proof}
Both $X\rtimes_\phi\Z$ and $Y\rtimes_\psi\Z$ are principal 
and almost finite. 
By Theorem~\ref{GPS}, 
the factor map $\pi$ is prime and at most two-to-one. 
Moreover, if $\cK$ is the transformation groupoid of 
a minimal $\Z$ action, then $H_1(\cK)\cong\Z$ and 
$H_1(\sF(\cK))\cong H_0(\cK,\Z/2)\oplus\Z$ 
(see \cite[Section 4]{Ma06IJM}). 
The summand $H_0(\cK,\Z/2)$ is the image of 
the injective map $\zeta_\cK$ in Remark~\ref{rem:Li-naturality}. 
Under the canonical identifications 
$H_1(Y\rtimes_\psi\Z)\cong\Z\cong H_1(X\rtimes_\phi\Z)$, 
the homomorphism $H_1^*(\pi)$ is the identity map, 
since $\pi$ is induced by an equivariant map. 
The naturality diagram in Remark~\ref{rem:Li-naturality} therefore 
identifies the kernel and cokernel of $H_1(\pi^*)$ 
with those of $H_0^*(\pi)$ with $\Z/2$ coefficients. 

(1)\: 
By Theorem~\ref{thm1:factor} and the observation above, 
$\pi^*(\sF(Y\rtimes_\psi\Z))$ is 
a maximal subgroup of $\sF(X\rtimes_\phi\Z)$ 
if and only if 
\[
H_0^*(\pi):H_0(Y\rtimes_\psi\Z,\Z/2)\to 
H_0(X\rtimes_\phi\Z,\Z/2)\cong G\otimes\Z/2
\]
is surjective. 
As $Q$ is torsion-free, 
\[
\xymatrix@M=8pt{
0 \ar[r] & 
H_0(Y\rtimes_\psi\Z)\otimes\Z/2 \ar[r]^-i & 
G\otimes\Z/2 \ar[r]^-{q} & Q\otimes\Z/2 \ar[r] & 0 
}
\]
is exact, and so we get the conclusion. 

(2)\:
By Theorem~\ref{thm2:factor} and the observation above, 
$\pi^*(\sD(Y\rtimes_\psi\Z))$ is 
a maximal subgroup of $\sD(X\rtimes_\phi\Z)$ 
if and only if 
\[
H_0^*(\pi):H_0(Y\rtimes_\psi\Z,\Z/2)\to 
H_0(X\rtimes_\phi\Z,\Z/2)\cong G\otimes\Z/2
\]
is injective. 
Since $i:H_0(Y\rtimes_\psi\Z)\to G$ is injective and 
$Q$ is torsion-free, this is always true. 
\end{proof}

Writing $\cG:=X\rtimes_\phi\Z$ and $\cH:=Y\rtimes_\psi\Z$, 
the injectivity above and Corollary~\ref{cor:factornormalizer} give 
\[
\Nor(\sD(\cG),\pi^*(\sD(\cH)))=\pi^*(\sD(\cH)),
\qquad
\Nor(\sF(\cG),\pi^*(\sD(\cH)))=\pi^*(\sF(\cH)). 
\]
Thus the factor commutator subgroup is self-normalizing in $\sD(\cG)$, 
while its normalizer in $\sF(\cG)$ is the factor full group.

\subsection{Two-to-one covering maps of minimal $\Z$ actions}

For two-to-one coverings of minimal $\Z$ actions, 
neither of the two factor subgroups is maximal. 
Corollary~\ref{cor:factornormalizer} nevertheless gives 
a maximal subgroup of the ambient commutator subgroup, 
which we describe as a split extension of the factor commutator subgroup. 

Let $\phi:\Z\curvearrowright X$ and $\psi:\Z\curvearrowright Y$ 
be minimal actions on Cantor sets, and let $\pi:X\to Y$ 
be an equivariant two-to-one covering map. 
Write $\cG:=X\rtimes_\phi\Z$, $\cH:=Y\rtimes_\psi\Z$ 
and also denote the induced factor map by $\pi:\cG\to\cH$. 
Both groupoids are principal and almost finite, and $\pi$ is a covering map 
of degree two. Thus Setting~\ref{factorsetting} and 
the hypothesis of Proposition~\ref{prop:factor} (1) are satisfied. 
Choose a clopen trivialization $X\cong Y\times\Z/2$ and write 
\[
\phi(y,a)=(\psi(y),a+c(y))
\]
for a continuous function $c:Y\to\Z/2$. 
For a transformation groupoid $\cK$ of a minimal $\Z$ action, 
put $\sF_0(\cK):=\Ker I_\cK$ and write 
\[
\operatorname{sgn}_\cK:\sF_0(\cK)\longrightarrow H_0(\cK,\Z/2)
\]
for the signature homomorphism. 
It is surjective with kernel $\sD(\cK)$ 
by \cite[Theorem 4.8]{Ma06IJM}, 
and sends a transposition $\tau_U$ to $[1_{r(U)}]$. 

\begin{proposition}\label{prop:Zcovering}
In the setting above, the following hold. 
\begin{enumerate}
\item The pullback map 
$H_0^*(\pi):H_0(\cH,\Z/2)\to H_0(\cG,\Z/2)$ satisfies 
\[
\Ker H_0^*(\pi)=\langle[c]\rangle\cong\Z/2,\qquad
\Coker H_0^*(\pi)\cong H_0(\cH,\Z/2)\neq0. 
\]
Consequently, $\pi^*(\sF(\cH))$ is not maximal in $\sF(\cG)$ 
and $\pi^*(\sD(\cH))$ is not maximal in $\sD(\cG)$. 
\item The subgroup 
\[
M:=\pi^*\left(\operatorname{sgn}_\cH^{-1}(\langle[c]\rangle)\right)
=\Nor(\sD(\cG),\pi^*(\sD(\cH)))
\]
is the unique maximal subgroup of $\sD(\cG)$ 
containing $\pi^*(\sD(\cH))$. 
Moreover, the exact sequence 
\[
\xymatrix@M=8pt{
1 \ar[r] & \pi^*(\sD(\cH)) \ar[r] & M \ar[r] & \Z/2 \ar[r] & 1
}
\]
splits. 
\end{enumerate}
\end{proposition}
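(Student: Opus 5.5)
Part (1) is a homology computation. I identify $H_0(\cG,\Z/2)$ with $C(X,\Z/2)$ modulo $\{f-f\circ\phi^{-1}\}$, and similarly for $\cH$. Under the trivialization $X\cong Y\times\Z/2$, the pullback sends $[f]$ to $[f\circ\pi]$. The transfer (pushforward along the two-sheeted covering $\pi^{(0)}$) sends $[F]$ to $[\pi_*F]$, where $\pi_*F(y)=F(y,0)+F(y,1)$; it is well defined because $\pi$ intertwines $\phi$ and $\psi$. Moreover $\pi_*\circ\pi^\#=2=0$ with $\Z/2$ coefficients.

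For the cokernel, I write a function on $X$ as $F(y,a)=f_0(y)+a f_1(y)$. Then $F-F\circ\phi^{-1}$ equals a coboundary in $f_0$ plus a term $a(f_1-f_1\circ\psi^{-1})+c(\psi^{-1}y)f_1(\psi^{-1}y)$. The plan is to show that $[F]\mapsto[f_1]$ (this is exactly $\pi_*$) induces an isomorphism $\Coker H_0^*(\pi)\cong H_0(\cH,\Z/2)$. This needs surjectivity of $\pi_*$, which is clear since $F=a f_1$ maps to $f_1$. It also needs $\Ker\pi_*=\Ima H_0^*(\pi)$: if $[f_1]=0$, write $f_1=k-k\circ\psi^{-1}$ and subtract the coboundary of $a\,k$ to kill the $a$-term.

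For the kernel, $[f\circ\pi]=0$ means $f\circ\pi=G-G\circ\phi^{-1}$. Decomposing $G=g_0+a g_1$ forces $g_1-g_1\circ\psi^{-1}=0$, so $g_1$ is constant by minimality. If $g_1=0$, then $[f]=0$. If $g_1=1$, then $f=g_0-g_0\circ\psi^{-1}+c\circ\psi^{-1}$, so $[f]=[c]$. Conversely, $[c]$ pulls back to zero via $G=a$. Finally $[c]\neq0$: otherwise a coboundary change of trivialization would make $c=0$, and $X$ would then be disconnected as a dynamical system, contradicting minimality of $\phi$. This gives $\Ker=\langle[c]\rangle\cong\Z/2$.

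$H_0(\cH,\Z/2)\neq0$ because $H_0(\cH)$ is a dimension group with unit order $\geq$ divisible parts, and $H_0\otimes\Z/2$ of a nonzero ordered group with Cantor-type clopens is nonzero; more simply, $H_0(\cG,\Z/2)$ surjects onto it. Nonmaximality then follows from Theorems~\ref{thm1:factor} and \ref{thm2:factor} via the diagram in Remark~\ref{rem:Li-naturality}. For minimal $\Z$ actions, $\zeta$ is injective and $H_1^*(\pi)$ is the identity on $\Z$, so $\Ker$ and $\Coker$ of $H_1(\pi^*)$ equal those of $H_0^*(\pi)$.

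For (2), I apply Corollary~\ref{cor:factornormalizer}, which gives $M=\pi^*(\sF(\cH))\cap\sD(\cG)$, unique maximal, with $M/\pi^*(\sD(\cH))\cong\Ker H_1(\pi^*)\cong\Z/2$. To identify $M$ explicitly, $\pi^*(\gamma)\in\sD(\cG)$ requires $I_\cG(\pi^*\gamma)=I_\cH(\gamma)=0$ and $\operatorname{sgn}_\cG(\pi^*\gamma)=H_0^*(\pi)(\operatorname{sgn}_\cH\gamma)=0$. This compatibility is checked on transpositions, using $\pi^*\tau_U=\tau_{\pi^{-1}U}$ and the fact that these generate modulo $\sD$. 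So $\gamma\in\operatorname{sgn}_\cH^{-1}(\langle[c]\rangle)$.

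The main obstacle is the splitting. I need an involution in $\operatorname{sgn}_\cH^{-1}([c])$. Choose a clopen $B\subset Y$ with $[1_B]=[c]$ in $H_0(\cH,\Z/2)$, which is possible by \cite[Lemma 3.5]{Ma16Adv}, taking $B$ inside $s(U)$ for a bisection $U$ with $r(U)\cap s(U)=\emptyset$. Then $\tau_{UB}\in\sF_0(\cH)$ has order two and signature $[1_{r(UB)}]=[1_B]=[c]$. Its pullback is an involution in $M\setminus\pi^*(\sD(\cH))$, which provides a section of $M\to\Z/2$, and so the sequence splits.
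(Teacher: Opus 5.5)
Your proposal follows the paper's proof. The decomposition $F=f_0+af_1$ is the snake lemma for
$0\to C(Y,\Z/2)\xrightarrow{\pi^*}C(X,\Z/2)\xrightarrow{T}C(Y,\Z/2)\to0$
carried out by hand: your transfer $\pi_*$ is the paper's $T$, and your computation with $G=a$ is the connecting map $\delta(1)=[c]$. Part (2) is the paper's argument verbatim: Corollary~\ref{cor:factornormalizer}, naturality of the signature on transpositions, and the involution $\pi^*(\tau_{UB})$ for the splitting.

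One step is mis-justified: your proof that $H_0(\cH,\Z/2)\neq0$.
\begin{itemize}
\item The dimension-group claim is false in general. For the dyadic odometer, $H_0=\Z[1/2]$ and $H_0\otimes\Z/2=0$.
\item A surjection from $H_0(\cG,\Z/2)$ onto $H_0(\cH,\Z/2)$ says nothing about whether the target is nonzero.
\end{itemize}
The correct reason is already in your proposal. You proved that $[c]$ is a nonzero element of $H_0(\cH,\Z/2)$ via the change-of-trivialization argument. The paper reaches the same fact from injectivity of $\delta$, and that is what makes the cokernel nonzero.
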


The fact that $\Ker H_0^*(\pi)\cong\Z/2$ also follows from 
the dimension group computation in \cite[Lemma 3.6]{Ma02JMSJ}. 

\begin{proof}
(1)\:
Define $T:C(X,\Z/2)\to C(Y,\Z/2)$ by 
$T(f)(y):=f(y,0)+f(y,1)$. 
Then 
\[
\xymatrix@M=8pt@C=18pt{
0 \ar[r] & C(Y,\Z/2) \ar[r]^-{\pi^*} &
C(X,\Z/2) \ar[r]^-T & C(Y,\Z/2) \ar[r] & 0
}
\]
is exact, and its maps commute with the coboundary operators 
$b\mapsto b\circ\psi-b$ and $f\mapsto f\circ\phi-f$. 
Their kernels consist of constant functions by minimality, 
and their cokernels are $H_0(\cH,\Z/2)$ and $H_0(\cG,\Z/2)$, 
respectively. 
On constants, $\pi^*$ is the identity and $T$ is zero. 
The snake lemma therefore gives an exact sequence 
\[
\xymatrix@M=8pt@C=16pt{
0 \ar[r] & \Z/2 \ar[r]^-\delta &
H_0(\cH,\Z/2) \ar[r]^-{H_0^*(\pi)} &
H_0(\cG,\Z/2) \ar[r]^-T & H_0(\cH,\Z/2) \ar[r] & 0.
}
\]
For $f(y,a):=a$, we have $T(f)=1$ and 
$f\circ\phi-f=c\circ\pi$, so $\delta(1)=[c]$. 
This proves the assertions about the kernel and cokernel; 
in particular, $[c]\neq0$. 
As in the proof of Theorem~\ref{almost1to1:Z}, 
the kernel and cokernel of $H_1(\pi^*)$ agree with those of 
$H_0^*(\pi)$ with $\Z/2$ coefficients. 
Theorems~\ref{thm1:factor} and \ref{thm2:factor} 
now give the two nonmaximality assertions. 

(2)\:
Since $H_1^*(\pi):H_1(\cH)\to H_1(\cG)$ is the identity on $\Z$, 
the pullback preserves the index. 
Naturality of the signature gives 
\[
\pi^*(\sF(\cH))\cap\sD(\cG)
=\pi^*\left(\operatorname{sgn}_\cH^{-1}(\langle[c]\rangle)\right). 
\]
Corollary~\ref{cor:factornormalizer} and (1) therefore 
identify this group with the normalizer $M$, 
establish its uniqueness and maximality, and give 
$M/\pi^*(\sD(\cH))\cong\Z/2$. 

To prove splitting, choose a compact open bisection $U\subset\cH$ 
with nonempty, disjoint source and range. 
By \cite[Lemma 3.5]{Ma16Adv}, there is a clopen subset $B\subset s(U)$ 
such that $[1_B]=[c]$ in $H_0(\cH,\Z/2)$. 
The transposition $\tau:=\tau_{UB}$ has signature $[c]$, 
so $\pi^*(\tau)$ is an element of order two in 
$M\setminus\pi^*(\sD(\cH))$. 
Thus $M=\pi^*(\sD(\cH))\rtimes\langle\pi^*(\tau)\rangle$. 
\end{proof}

\begin{example}[Thue--Morse and Toeplitz subshifts]\label{ex:ThueMorse}
Let $\cG=X\rtimes_\phi\Z$ arise from the minimal subshift 
associated with the square of the Thue--Morse substitution, 
\[
\xi(0):=0110,\qquad\xi(1):=1001. 
\]
Exchanging $0$ and $1$ induces an automorphism $\gamma$ of order two. 
The quotient $\cH:=\cG/\langle\gamma\rangle$ is the transformation 
groupoid of the Toeplitz subshift associated with 
\[
\zeta(0):=0101,\qquad\zeta(1):=0111, 
\]
and the quotient map $\pi:\cG\to\cH$ is a two-to-one covering. 
See \cite[Section 4 (1)]{Ma02JMSJ} for this construction 
and \cite{DHS99ETDS} for background on substitution subshifts. 
There are identifications 
\[
H_0(\cH)\cong\Z[1/2]\oplus\Z\cong H_0(\cG), 
\]
with units $(1,0)$ and $(2,0)$, respectively, under which 
$H_0^*(\pi)$ sends $(r,m)$ to $(2r,2m)$ 
(see \cite[Section 4 (1)]{Ma02JMSJ} 
or \cite[Section 7 (4)]{Ma01CJM}). 
Thus the pullback with $\Z/2$ coefficients is the zero map 
from $\Z/2$ to $\Z/2$. 
In Proposition~\ref{prop:Zcovering}, 
we therefore have $\langle[c]\rangle=H_0(\cH,\Z/2)$ and 
\[
M=\pi^*(\sF_0(\cH)). 
\]
Consequently, in the chain 
\[
\pi^*(\sD(\cH))\subsetneq\pi^*(\sF_0(\cH))\subsetneq\sD(\cG), 
\]
the first inclusion has index two and the second is maximal. 
Both $\sD(\cH)$ and $\sD(\cG)$ are finitely generated simple groups 
by \cite[Theorems 4.9 and 5.4]{Ma06IJM}. 
Hence $M$ is a finitely generated, nonsimple maximal subgroup 
of the finitely generated simple group $\sD(\cG)$. 
We also remark that the three groups are amenable 
thanks to \cite{JM13Annals}. 
\end{example}

\begin{example}[Denjoy system]\label{Denjoy1}
Let $\psi:\Z\curvearrowright Y$ be the Denjoy system of type $(\alpha;0)$, 
where $0<\alpha<1/2$ is irrational 
(see \cite{PSS86JOP} and \cite[Section 7 (2)]{Ma01CJM}). 
For $\cH:=Y\rtimes_\psi\Z$, we have 
$H_0(\cH)=\Z e_0\oplus\Z e_1$, where 
$e_0=[1_Y]$ and $e_1=[1_{[0,\alpha)}]$. 
Here and below, intervals are understood as clopen subsets 
of the corresponding Denjoy Cantor set. 
The two-to-one extension associated with $e_1$ modulo two 
is the Denjoy system $(X,\phi)$ of type $(\alpha/2;0,1/2)$. 
Put $\cG:=X\rtimes_\phi\Z$. 
The covering $\pi:X\to Y$ doubles the circle coordinate. 
With respect to the basis of $H_0(\cG)$ given by the classes of 
$1_X$, $1_{[0,\alpha/2)}$ and $1_{[0,1/2)}$, its pullback is 
\[
H_0^*(\pi):\Z^2\longrightarrow\Z^3,\qquad
(m_0,m_1)\longmapsto(m_0,2m_1,0). 
\]
Indeed, the two lifts of $[0,\alpha)$ have the same homology class, 
as their characteristic functions differ by a coboundary of $1_{[0,1/2)}$. 

Write the signature as 
\[
\operatorname{sgn}_\cH(g)=\ep_0(g)e_0+\ep_1(g)e_1
\quad\text{in }H_0(\cH,\Z/2),\qquad g\in\sF_0(\cH). 
\]
Proposition~\ref{prop:Zcovering} shows that 
$M=\pi^*(\Ker\ep_0)$ is maximal in $\sD(\cG)$. 
It has index two in $\pi^*(\sF_0(\cH))$. 
The splitting can be made explicit: 
let $\tau$ exchange $[0,\alpha)$ and $[\alpha,2\alpha)$ 
by $\psi$ and $\psi^{-1}$, respectively, and fix their complement. 
Then $\operatorname{sgn}_\cH(\tau)=e_1$ modulo two, and 
\[
M=\pi^*(\sD(\cH))\rtimes\langle\pi^*(\tau)\rangle. 
\]
\end{example}

\subsection{Two-to-one covering maps for subshifts of finite type}
\label{subsec:SFTcovering}

For SFT groupoids, the maximality criteria can be expressed 
in terms of finite matrices. 
We construct a two-to-one covering from a decomposition of the edge set 
and compute the induced maps using the snake lemma. 

Let $(V,E)$ be a finite directed graph 
as in Section~\ref{subsec:graphgroupoid}, 
and let $A$ be its adjacency matrix. 
We assume that $A$ is irreducible and is not a permutation matrix. 
We write $\cH:=\cG_{(V,E)}$ for the SFT groupoid of $(V,E)$, 
which is minimal and purely infinite. 

We construct a homomorphism $\xi:\cH\to\Z/2$ as follows. 
Let $E_1\subset E$ be a nonempty subset 
such that the subgraph $(V,E\setminus E_1)$ is still irreducible. 
Set $E_0:=E\setminus E_1$. 
Let $A_i$ be the adjacency matrix of $(V,E_i)$ for each $i=0,1$. 
One has $A=A_0+A_1$. 
For $(x,n,y)\in\cH$, choose $m\geq\max\{0,n\}$ such that 
$\sigma^m(x)=\sigma^{m-n}(y)$. 
The integer 
\[
c:=\#\left\{1\leq j\leq m\mid x_j\in E_1\right\}
-\#\left\{1\leq j\leq m{-}n\mid y_j\in E_1\right\}
\]
is independent of the choice of such $m$, since the tails agree. 
Define $\xi(x,n,y):=c\bmod 2$. 
Additivity of these counts shows that $\xi$ is a homomorphism; 
it is continuous because it is constant on each basic bisection 
$Z(\nu,\mu)$. 
Let $\cG:=\cH\times_\xi\Z/2$ be the skew product 
and let $\pi:\cG\to\cH$ be the factor map, 
which is a covering map of degree two. 
The nontrivial element of $\Z/2$ acts on $\cG$ as 
the deck transformation of this covering. 

The covering groupoid $\cG$ has the following graph model. 
Define a finite directed graph $(W,F)$ 
by $W:=V\times\Z/2$, $F:=E\times\Z/2$, 
\[
i(e,a):=(i(e),a)\quad\text{and}\quad 
t(e,a):=\begin{cases}(t(e),a) & e\in E_0 \\ 
(t(e),a{+}1) & e\in E_1. \end{cases}
\]
The adjacency matrix of the graph $(W,F)$ is 
\[
\begin{bmatrix}A_0&A_1\\A_1&A_0\end{bmatrix}, 
\]
which is irreducible 
because $A_0$ is irreducible and $A_1$ is not zero. 
It is not a permutation matrix, since some row of $A=A_0+A_1$ 
has sum at least two. 
One can define the graph homomorphism $\rho:(W,F)\to(V,E)$ 
by letting $\rho(u,a):=u$ and $\rho(e,a):=e$. 
$\rho$ is right-covering in the sense of Section~\ref{subsec:graphgroupoid}. 
Then, the map $h:X_{(W,F)}\to X_{(V,E)}\times\Z/2$ defined by 
\[
h((x_j,a_j)_j):=\left((x_j)_j,a_1\right)
\]
is a homeomorphism. 
By sending 
\[
\left((x_j,a_j)_j,n,(y_j,b_j)_j\right)
\]
to 
\[
\left(((x_j)_j,n,(y_j)_j),a_1\right)\in\cH\times_\xi\Z/2=\cG, 
\]
the homeomorphism $h$ extends to an isomorphism 
from the SFT groupoid of $(W,F)$ to $\cG=\cH\times_\xi\Z/2$. 
It follows that $\cG$ is also minimal and purely infinite 
by \cite[Lemma 6.1]{Ma15crelle}. 

To compute $H_i^*(\pi):H_i(\cH)\to H_i(\cG)$, 
we compare the standard two-term complexes for these graph groupoids. 
Put 
\[
L:=A^t-I,\quad
M:=\begin{bmatrix}A_0^t-I&A_1^t\\A_1^t&A_0^t-I\end{bmatrix},
\quad
N:=A_0^t-A_1^t-I. 
\]
Define homomorphisms 
\[
\Delta:\Z^V\to\Z^V\oplus\Z^V,\quad 
\Delta(x):=\begin{bmatrix}x\\x\end{bmatrix}, 
\]
and
\[
q:\Z^V\oplus\Z^V\to\Z^V,\quad 
q\begin{bmatrix}x\\y\end{bmatrix}:=x-y. 
\]
The following diagram is commutative and has exact rows: 
\[
\xymatrix@M=8pt{
0 \ar[r] & \Z^V \ar[r]^-\Delta \ar[d]_-L &
\Z^V\oplus\Z^V \ar[r]^-q \ar[d]_-M &
\Z^V \ar[r] \ar[d]_-N & 0 \\
0 \ar[r] & \Z^V \ar[r]^-\Delta &
\Z^V\oplus\Z^V \ar[r]^-q &
\Z^V \ar[r] & 0. 
}
\]
Indeed, one has $M\circ\Delta=\Delta\circ L$ and $q\circ M=N\circ q$. 
The snake lemma therefore gives the exact sequence 
\[
\xymatrix@M=8pt@C=14pt{
0 \ar[r] & \Ker L \ar[r]^-\Delta & \Ker M \ar[r]^-q &
\Ker N \ar[r]^-\partial & \Coker L \ar[r]^-\Delta &
\Coker M \ar[r]^-q & \Coker N \ar[r] & 0. 
}
\]
The connecting homomorphism $\partial$ is given by 
\[
\partial(x)=[A_1^t x]\in\Coker L\quad\forall x\in\Ker N. 
\]
In fact, if $x\in\Ker N$, then 
\[
M\begin{bmatrix}x\\0\end{bmatrix}
=\begin{bmatrix}A_1^t x\\A_1^t x\end{bmatrix}
=\Delta(A_1^t x). 
\]
By \cite[Theorem 4.14]{Ma12PLMS}, 
applied to the SFT groupoids $\cH$ and $\cG$, 
we have the standard identifications 
\[
H_1(\cH)=\Ker L,\quad H_0(\cH)=\Coker L,\qquad 
H_1(\cG)=\Ker M,\quad H_0(\cG)=\Coker M. 
\]
Under these identifications, 
a direct computation of the pullback maps shows that 
the homomorphisms labelled $\Delta$ above are 
precisely $H_i^*(\pi):H_i(\cH)\to H_i(\cG)$ for $i=0,1$. 
In particular, $H_1^*(\pi)$ is injective.

By \cite[Corollary 6.24 (1)]{Ma15crelle} 
(see also \cite[Section 5.5]{Ma16Adv}), 
the homomorphisms $\zeta_\cG$ and $\zeta_\cH$ 
in Theorem~\ref{Li} are injective. 
Hence the naturality of Li's exact sequence gives 
the following commutative diagram with exact rows: 
\[
\xymatrix@M=8pt{
0 \ar[r] & H_0(\cG,\Z/2) \ar[r]^-{\zeta_\cG} &
H_1(\sF(\cG)) \ar[r]^-{\eta_\cG} & H_1(\cG) \ar[r] & 0 \\
0 \ar[r] & H_0(\cH,\Z/2) \ar[r]^-{\zeta_\cH} \ar[u]_-{H_0^*(\pi)} &
H_1(\sF(\cH)) \ar[r]^-{\eta_\cH} \ar[u]_-{H_1(\pi^*)} &
H_1(\cH) \ar[r] \ar[u]_-{H_1^*(\pi)} & 0. 
}
\]
Since $H_1^*(\pi)$ is injective, 
the snake lemma shows $\Ker H_1(\pi^*)\cong\Ker H_0^*(\pi)$
and 
\[
\xymatrix@M=8pt{
0 \ar[r] & \Coker H_0^*(\pi) \ar[r] &
\Coker H_1(\pi^*) \ar[r] &
\Coker H_1^*(\pi) \ar[r] & 0. 
}
\]
Consequently, $H_1(\pi^*)$ is injective if and only if
$H_0^*(\pi):H_0(\cH,\Z/2)\to H_0(\cG,\Z/2)$ is injective, 
and $H_1(\pi^*)$ is surjective if and only if 
both $H_0^*(\pi)$ and $H_1^*(\pi)$ are surjective. 

It remains to express these conditions over $\Z/2$. 
Let $\overline L$ and $\overline M$ denote 
the reductions of $L$ and $M$ modulo two. 
Since the reduction of $N$ modulo two is $\overline L$, 
applying the snake lemma to the same diagram over $\Z/2$ 
gives the exact sequence 
\[
\xymatrix@M=8pt{
\Ker\overline L \ar[r]^-{\partial_2} &
\Coker\overline L \ar[r]^-{H_0^*(\pi)} &
\Coker\overline M \ar[r] &
\Coker\overline L \ar[r] & 0, 
}
\]
where 
\[
\partial_2:\Ker\overline L\to\Coker\overline L,\qquad 
\partial_2(x):=[A_1^t x].
\]
Thus, $H_0^*(\pi)$ is injective if and only if $\partial_2=0$, 
and it is surjective if and only if $\Coker\overline L=0$, 
or equivalently, if and only if $\det(I-A)$ is odd. 

\begin{proposition}\label{SFTcovering}
In the setting above, the following hold. 
\begin{enumerate}
\item The group $\pi^*(\sF(\cH))$ is a maximal subgroup of $\sF(\cG)$ 
if and only if $\det(I-A)$ is odd. 
\item The group $\pi^*(\sD(\cH))$ is a maximal subgroup of $\sD(\cG)$ 
if and only if $\partial_2=0$. 
\end{enumerate}
In particular, if $\pi^*(\sF(\cH))$ is maximal in $\sF(\cG)$, 
then $\pi^*(\sD(\cH))$ is maximal in $\sD(\cG)$.
\end{proposition}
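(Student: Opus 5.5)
The plan is to combine Theorems~\ref{thm1:factor} and \ref{thm2:factor} with the homological computations made just before the statement. First I check that the hypotheses hold. $\cH$ is the SFT groupoid of an irreducible non-permutation graph, so it is minimal and purely infinite. The covering groupoid $\cG=\cH\times_\xi\Z/2$ is isomorphic to the SFT groupoid of $(W,F)$, whose adjacency matrix is irreducible and not a permutation matrix, so $\cG$ is also minimal and purely infinite. Both unit spaces are Cantor sets, and $\pi$ is a covering map of degree two. Hence Setting~\ref{factorsetting} and condition (1) of Proposition~\ref{prop:factor} hold. Theorems~\ref{thm1:factor} and \ref{thm2:factor} then reduce (1) and (2) to surjectivity and injectivity of $H_1(\pi^*)$.

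For (2), the preceding discussion shows that injectivity of $\zeta_\cG,\zeta_\cH$ and of $H_1^*(\pi)=\Delta|_{\Ker L}$ give $\Ker H_1(\pi^*)\cong\Ker H_0^*(\pi)$, with $\Z/2$ coefficients. The mod-two snake sequence identifies $\Ker H_0^*(\pi)$ with $\Ima\partial_2$. So injectivity holds if and only if $\partial_2=0$. One point to verify is that $H_0(\cH,\Z/2)=\Coker\overline L$ and $H_0(\cG,\Z/2)=\Coker\overline M$, compatibly with $\Delta$. This follows from right exactness of $\otimes\Z/2$ applied to $H_0=\Coker$.

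For (1), surjectivity of $H_1(\pi^*)$ is equivalent to surjectivity of both $H_0^*(\pi)$ (mod two) and $H_1^*(\pi)$, by the cokernel exact sequence. The mod-two snake sequence shows that $H_0^*(\pi)$ is surjective if and only if $\Coker\overline L=0$, that is, $\det(I-A)$ is odd. The remaining step, which is the main point, is to show that $H_1^*(\pi)$ is surjective whenever $\det(I-A)$ is odd.

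From the integral snake sequence, $\Coker H_1^*(\pi)=\Coker(\Delta:\Ker L\to\Ker M)\cong\Ima(q)\subset\Ker N$. Now $N\equiv L$ modulo two, so $\det N\equiv\det(A^t-I)$ is odd, and in particular nonzero. Therefore $\Ker N=0$ and $H_1^*(\pi)$ is surjective. Conversely, if $\det(I-A)$ is even, then $H_0^*(\pi)$ is not surjective, so maximality fails. This proves (1).

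The final assertion follows from the same parity argument. If $\det(I-A)$ is odd, then $\overline L$ is invertible, so $\Ker\overline L=0$ and $\partial_2=0$; by (2), $\pi^*(\sD(\cH))$ is maximal in $\sD(\cG)$.
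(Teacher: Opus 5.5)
Your proof is correct and follows the paper's route: Theorems~\ref{thm1:factor} and \ref{thm2:factor}, the snake-lemma comparison of Li's sequences, and then reading off the conditions from the mod-two snake sequence. The one difference is how you show $H_1^*(\pi)$ is surjective when $\det(I-A)$ is odd. You use exactness of $0\to\Ker L\to\Ker M\to\Ker N$ together with $\Ker N=0$, whereas the paper uses $\det M=\det L\det N$ to get $\Ker L=\Ker M=0$; both arguments work.
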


\begin{proof}
(1)\:
By Theorem~\ref{thm1:factor} and the observations above, 
$\pi^*(\sF(\cH))$ is maximal in $\sF(\cG)$ if and only if 
both
\[
H_0^*(\pi):H_0(\cH,\Z/2)\to H_0(\cG,\Z/2)
\]
and
\[
H_1^*(\pi):H_1(\cH)\to H_1(\cG)
\]
are surjective. 
The surjectivity of the first map implies that $\det(I-A)$ is odd. 
Conversely, suppose that $\det(I-A)$ is odd. 
Then $\det L$ is odd, and so is $\det N$, 
because $N$ and $L$ have the same reduction modulo two. 
The matrix $M$ acts as $L$ on the diagonal subspace of 
$\Q^V\oplus\Q^V$ and as $N$ on the anti-diagonal subspace. 
Hence $\det M=\det L\det N$. 
It follows that $L$ and $M$ are non-singular over $\Q$, and therefore 
$H_1(\cH)=\Ker L=0$ and $H_1(\cG)=\Ker M=0$. 
Thus $H_1^*(\pi)$ is surjective, 
while the surjectivity of $H_0^*(\pi)$ follows 
from the observation above. 
Theorem~\ref{thm1:factor} now gives the conclusion. 

(2)\:
By Theorem~\ref{thm2:factor} and the observations above, 
$\pi^*(\sD(\cH))$ is maximal in $\sD(\cG)$ if and only if 
$H_0^*(\pi):H_0(\cH,\Z/2)\to H_0(\cG,\Z/2)$ is injective, 
which is equivalent to $\partial_2=0$. 

Finally, if $\det(I-A)$ is odd, then $\overline L$ is invertible. 
In this case both the domain and the codomain of $\partial_2$ are zero, 
and hence $\partial_2=0$. 
\end{proof}

\begin{example}[One-vertex graphs]\label{SFTonevertex}
Suppose that $V$ consists of one vertex, 
that $E_0$ consists of $r$ loops and that $E_1$ consists of $s$ loops, 
where $r,s\geq1$. 
Put $n:=r{+}s$.  
Then $A_0=[r]$, $A_1=[s]$, $A=[n]$ and 
Proposition~\ref{SFTcovering} gives 
\[
\pi^*(\sF(\cH))\text{ is maximal in }\sF(\cG)
\iff n\text{ is even}, 
\]
whereas 
\[
\pi^*(\sD(\cH))\text{ is maximal in }\sD(\cG)
\iff n\text{ is even or }s\text{ is even}. 
\]
Indeed, when $n$ is odd, 
the homomorphism $\partial_2$ is multiplication by $s$ on $\Z/2$. 

The three parity cases also describe how the factor full group 
meets the ambient commutator subgroup. 
The groupoid $\cH$ is the SFT groupoid of the full $n$-shift, 
and $H_0(\cH)\cong\Z/(n{-}1)$, $H_1(\cH)=0$. 
It follows from \cite[Corollary 6.24 (1)]{Ma15crelle} that 
\[
\sF(\cH)/\sD(\cH)\cong H_0(\cH,\Z/2)
\cong\begin{cases}0 & n\text{ is even},\\
\Z/2 & n\text{ is odd}. \end{cases}
\]

Suppose first that $n$ is even.  
Then $r$ and $s$ have the same parity. 
The reduction modulo two of $M$ is either the identity matrix 
or $\begin{bmatrix}0&1\\1&0\end{bmatrix}$, 
and hence $H_0(\cG,\Z/2)=0$. 
Moreover, $\det M=(n{-}1)(r{-}s{-}1)\neq0$, 
because $r{-}s$ is even. 
Thus $H_1(\cG)=0$, and consequently 
\[
\sF(\cH)=\sD(\cH)\quad\text{and}\quad\sF(\cG)=\sD(\cG). 
\]
In this case, 
the two subgroups in Proposition~\ref{SFTcovering} coincide and 
are maximal in the common group $\sF(\cG)=\sD(\cG)$. 

Suppose next that $n$ is odd and $s$ is even, so that $r$ is odd. 
Then the reduction of $M$ modulo two is zero, and
\[
H_0^*(\pi):\Z/2\longrightarrow(\Z/2)^2,
\qquad
x\mapsto\begin{bmatrix}x\\x\end{bmatrix}, 
\]
is injective. 
Thus the induced map on abelianizations is injective. 
Hence $\pi^*(\sF(\cH))\cap\sD(\cG)=\pi^*(\sD(\cH))$. 
In this case, $\pi^*(\sD(\cH))$ is maximal in $\sD(\cG)$, 
whereas $\pi^*(\sF(\cH))$ is not maximal in $\sF(\cG)$. 

Finally, suppose that $n$ is odd and $s$ is odd, so that $r$ is even. 
The reduction of $M$ modulo two is $\begin{bmatrix}1&1\\1&1\end{bmatrix}$. 
Since $\begin{bmatrix}1\\1\end{bmatrix}$ belongs to its image, 
the homomorphism $H_0^*(\pi):H_0(\cH,\Z/2)\to H_0(\cG,\Z/2)$ is zero. 
It follows that the induced map on abelianizations is zero, 
and hence $\pi^*(\sF(\cH))\subset\sD(\cG)$. 
Since $\sF(\cH)/\sD(\cH)\cong\Z/2$ and $\pi^*$ is injective, 
Lemma~\ref{strictness} gives the strict inclusions 
\[
\pi^*(\sD(\cH))\subsetneq\pi^*(\sF(\cH))\subsetneq\sD(\cG). 
\]
Thus $\pi^*(\sD(\cH))$ is not maximal in $\sD(\cG)$. 
On the other hand, Proposition~\ref{prop:factor} implies that 
$\pi^*(\sF(\cH))$ is maximal in $\sD(\cG)$: 
for every $\gamma\in\sD(\cG)\setminus\pi^*(\sF(\cH))$, 
the group generated by $\gamma$ and $\pi^*(\sD(\cH))$, 
and hence also the group generated by $\gamma$ and $\pi^*(\sF(\cH))$, 
is equal to $\sD(\cG)$. 
Nevertheless, $\pi^*(\sF(\cH))$ is not maximal in $\sF(\cG)$. 

For example, when $(r,s)=(1,1)$, one has 
\[
\pi^*(\sF(\cH))=\pi^*(\sD(\cH)),
\qquad
\sF(\cG)=\sD(\cG), 
\]
and the common subgroup is maximal.  
When $(r,s)=(1,2)$, 
$\pi^*(\sD(\cH))$ is maximal in $\sD(\cG)$, 
but $\pi^*(\sF(\cH))$ is not maximal in $\sF(\cG)$. 
When $(r,s)=(2,1)$, one has 
\[
\pi^*(\sD(\cH))\subsetneq\pi^*(\sF(\cH))\subsetneq\sD(\cG), 
\]
and the middle subgroup is maximal in $\sD(\cG)$. 
Thus the maximality of $\pi^*(\sF(\cH))$ in $\sF(\cG)$ 
depends only on $A=[n]$, 
whereas that of $\pi^*(\sD(\cH))$ in $\sD(\cG)$ 
also detects the decomposition $E=E_0\sqcup E_1$. 
\end{example}

\section{Maximal subgroups arising from subgroupoids}
\label{sec:subgroupoid}

In this section, 
we investigate maximal subgroups of topological full groups 
arising from wide open subgroupoids. 

\begin{definition}\label{def:prime}
Let $\cH$ be an ample groupoid and 
let $\cG\subset\cH$ be a wide open subgroupoid. 
We say that $\cG\subset\cH$ is prime 
if there is no proper intermediate wide open subgroupoid. 
More precisely, 
if $\cK\subset\cH$ is a wide open subgroupoid containing $\cG$, 
then $\cK$ is equal to either $\cH$ or $\cG$. 
\end{definition}

When $\cG\subset\cH$ is a wide open subgroupoid of an ample groupoid $\cH$ 
with compact unit space, 
the topological full group $\sF(\cG)$ (resp. $\sD(\cG)$) is 
a subgroup of $\sF(\cH)$ (resp. $\sD(\cH)$). 
Primeness of $\cG\subset\cH$ alone does not ensure that 
these subgroups are maximal. 
In the setting that we study below, 
$\cH$ is the semidirect product of $\cG$ 
by an action of a cyclic group of prime order $p$, 
and the subgroupoid $\cG\subset\cH$ is prime 
(Lemma~\ref{primesemidirectproduct}), 
whereas $\sF(\cG)$ is not maximal in $\sF(\cH)$ and 
$\sD(\cG)$ is not maximal in $\sD(\cH)$. 

The relevant subgroups are instead the normalizers of $\sD(\cG)$. 
Lemma~\ref{normalizer=semidirectproduct} identifies 
its normalizer in $\sF(\cH)$ with $\sF(\cG)\rtimes_\lambda\Z/p$. 
Its normalizer in $\sD(\cH)$ is the unique maximal subgroup 
of $\sD(\cH)$ containing $\sD(\cG)$ 
(Corollary~\ref{cor:normalizermaximal}), 
while maximality of the former is characterized 
by a homological condition in Theorem~\ref{thm1:subgroupoid}. 
The key step is Proposition~\ref{prop:subgroupoid}, 
which gives the following dichotomy: 
a subgroup of $\sF(\cH)$ containing $\sD(\cG)$ 
either normalizes $\sD(\cG)$ or contains the whole of $\sD(\cH)$. 

\begin{setting}\label{subgroupoidsetting}
Let $p\in\N$ be a prime number. 
In the rest of this section, 
we let $\lambda:\Z/p\curvearrowright\cG$ be an action 
on an ample groupoid $\cG$ and 
let $\cH:=\cG\rtimes_\lambda\Z/p$ be the semidirect product. 
We assume the following conditions. 
\begin{itemize}
\item The unit space $\cG^{(0)}$ is a Cantor set. 
\item $\cH$ and $\cG$ are effective, minimal and 
either almost finite or purely infinite. 
\item $\lambda$ induces a free action on $\cG^{(0)}$. 
\end{itemize}
We remark that if $\cH$ is effective, then so is $\cG$; 
if $\cG$ is minimal, then so is $\cH$; 
if $\cG$ is purely infinite, then so is $\cH$. 
\end{setting}

In what follows, we identify $\cG$ with $\cG\times\{0\}\subset\cH$. 
In the skew-product examples considered later, 
the automorphisms $\lambda_a$ are the deck transformations 
of the corresponding cyclic covering. 

\begin{lemma}\label{primesemidirectproduct}
The wide open subgroupoid $\cG=\cG\times\{0\}\subset\cH$ is prime. 
\end{lemma}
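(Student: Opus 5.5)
Let $\cK\subset\cH$ be a wide open subgroupoid containing $\cG=\cG\times\{0\}$. The plan is to decompose $\cK$ according to the $\Z/p$-coordinate and use minimality of $\cG$ to show that each layer is empty or full. For $a\in\Z/p$ put $\cK_a:=\cK\cap(\cG\times\{a\})$; this is an open subset of the clopen set $\cG\times\{a\}$, and $\cK_0=\cG\times\{0\}$. I will show that $\cK_a=\emptyset$ or $\cK_a=\cG\times\{a\}$ for each $a$. Granting this, the set $S:=\{a\mid\cK_a\neq\emptyset\}$ is a subgroup of $\Z/p$, because $\cK$ is closed under multiplication and inverses, and the product of $(g,a)$ and $(h,b)$ lies in layer $a+b$. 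Since $p$ is prime, $S=\{0\}$ or $S=\Z/p$, so $\cK=\cG$ or $\cK=\cH$.

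To prove the dichotomy, fix $a$ with $\cK_a\neq\emptyset$. Because $\cG\times\{0\}\subset\cK$, we have $(g,0)(k,a)(h,0)\in\cK$ whenever the product is defined. Using $(g,0)(k,a)=(gk,a)$ and $(k,a)(h,0)=(k\lambda_a(h),a)$, the set
\[
E_a:=\{x\in\cG^{(0)}\mid (x,a)\in\cK\}
\]
determines $\cK_a$ completely. Indeed, any $(k,a)$ factors as $(k,0)(s(k),a)$ when we regard $(s(k),a)$ as an arrow with range $s(k)$. So $(k,a)\in\cK$ if and only if $s(k)\in E_a$, where $s(k)$ denotes the source in $\cG$. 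The set $E_a$ is open, since $\cK$ is open, and nonempty.

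Next I show that $E_a$ is invariant under $\cG$. Take $x\in E_a$ and $g\in\cG$ with $s(g)=x$. Then
\[
(g,0)(x,a)(\lambda_a^{-1}(g^{-1}),0)=(g\lambda_a(\lambda_a^{-1}(g^{-1})),a)=(r(g),a)\in\cK,
\]
after checking composability: the source of $(x,a)$ is $\lambda_a^{-1}(x)$, which is the range of $\lambda_a^{-1}(g^{-1})$. Hence $r(g)\in E_a$. The set $E_a$ is therefore a nonempty open $\cG$-invariant subset of $\cG^{(0)}$.

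The step I expect to need the most care is concluding that $E_a=\cG^{(0)}$. Minimality only says orbits are dense, so I will argue through the complement. The complement of $E_a$ is a closed $\cG$-invariant set, since the argument above applies equally to points outside $E_a$ (use $g^{-1}$). If it were nonempty, it would contain a full orbit, and that orbit would be dense, forcing the closed complement to be all of $\cG^{(0)}$. This contradicts $E_a\neq\emptyset$. Hence $E_a=\cG^{(0)}$ and $\cK_a=\cG\times\{a\}$, which completes the dichotomy. Freeness of $\lambda$ on the unit space is not needed for this lemma; it is used later.
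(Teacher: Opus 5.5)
Your proof is correct and follows essentially the same route as the paper. Both arguments use two-sided multiplication by arrows of $\cG\times\{0\}$, together with minimality of $\cG$, to show that a nonzero layer meeting $\cK$ lies entirely in $\cK$, and then use that $p$ is prime to fill every layer. The paper transports a single bisection $V\times\{c\}\subset\cK$ onto an arbitrary $(g,c)$, while you route the same idea through the open $\cG$-invariant set $E_a$; this is only a difference in packaging.
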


\begin{proof}
Suppose that $\cK\subset\cH$ is a wide open subgroupoid 
containing $\cG\times\{0\}$ properly. 
There exists a bisection $V\in\cCO(\cG)$ and $c\neq0$ 
such that $V\times\{c\}\subset\cK$. 
Since $\cG$ is minimal, for any $g\in\cG$, 
we can find $g_1,g_2\in\cG$ such that $g_1gg_2\in V$. 
Then 
\[
(g_1^{-1},0)\cdot(g_1gg_2,c)\cdot(\lambda_{-c}(g_2^{-1}),0)
=(g,c)
\]
belongs to $\cK$. 
Thus $\cG\times\{c\}\subset\cK$. 
The element $c$ generates $\Z/p$, because $p$ is prime. 
Therefore we obtain $\cH=\cK$. 
\end{proof}

The restriction of each $\lambda_a$ to $\cG^{(0)}$ belongs to $\sF(\cH)$ 
and acts on $\sF(\cG)$ by conjugation. 
We use the same notation $\lambda$ for these induced actions. 
Since the action of $\Z/p$ on the Cantor set $\cG^{(0)}$ is free, 
one can find a clopen fundamental domain $F\subset\cG^{(0)}$, i.e. 
\[
\cG^{(0)}=\bigsqcup_{a\in\Z/p}\lambda_a(F). 
\]
The compact open bisections 
\[
U_{a,b}:=\lambda_a(F)\times\{a{-}b\}\subset\cH,
\qquad a,b\in\Z/p, 
\]
form a multisection $u$ of degree $p$ in $\cH$ with $\supp(u)=\cH^{(0)}$. 
For $c\in\Z/p$ with $c\neq0$, 
the cyclic permutation $\pi:b\mapsto b{+}c$ satisfies 
\[
U_\pi=\bigsqcup_{b\in\Z/p}\lambda_{b+c}(F)\times\{c\}
=\cG^{(0)}\times\{c\}, 
\]
and hence $\theta_{U_\pi}=\lambda_c$. 
In particular, $\lambda_c$ is a product of $p{-}1$ transpositions 
of the form $\tau_{U_{a,b}}$. 
When $p$ is an odd prime number, 
$\pi$ is an even permutation, and so 
$\lambda_c$ belongs to $\sA(u)\subset\sA_p(\cH)\subset\sD(\cH)$. 

Since $\cH$ is effective, 
a full bisection implementing an element of $\sF(\cH)$ is unique, 
and hence $\lambda_a$ is not in $\sF(\cG)$ for $a\neq0$. 
Thus the subgroup of $\sF(\cH)$ generated 
by $\sF(\cG)$ and $\{\lambda_a\mid a\in\Z/p\}$ is 
the internal semidirect product $\sF(\cG)\rtimes_\lambda\Z/p$. 

\begin{lemma}\label{normalizer=semidirectproduct}
We have 
$\sF(\cG)\rtimes_\lambda\Z/p
=\Nor(\sF(\cH),\sF(\cG))=\Nor(\sF(\cH),\sD(\cG))$. 
This group is also the stabilizer in $\sF(\cH)$ 
of the partition of $\cG^{(0)}$ into $\cG$-orbits. 
\end{lemma}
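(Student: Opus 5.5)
We prove the chain of inclusions
\[
\sF(\cG)\rtimes_\lambda\Z/p\subset\Nor(\sF(\cH),\sF(\cG))\subset\Nor(\sF(\cH),\sD(\cG))\subset\mathrm{St}\subset\sF(\cG)\rtimes_\lambda\Z/p,
\]
where $\mathrm{St}$ denotes the stabilizer in $\sF(\cH)$ of the partition into $\cG$-orbits.

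\textbf{Inclusions 1 and 2.} Since $\lambda_a$ is an automorphism of $\cG$, conjugation by $\lambda_a$ sends full bisections of $\cG$ to full bisections of $\cG$. Hence $\lambda_a$ normalizes $\sF(\cG)$, which gives the first inclusion. The second is immediate, because $\sD(\cG)$ is characteristic in $\sF(\cG)$.

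\textbf{Inclusion 3.} Let $\alpha=\theta_V\in\sF(\cH)$ normalize $\sD(\cG)$. We must show that $\alpha$ maps each $\cG$-orbit into a $\cG$-orbit.
\begin{itemize}
\item The $\cG$-orbit of a point $x$ equals its $\sD(\cG)$-orbit. Indeed, given $g\in\cG$ with $s(g)=x\ne r(g)$, Lemma~\ref{elementofDG}~(1) provides $\tau_U\in\sD(\cG)$ with $g\in U$, and $\tau_U(x)=r(g)$.
\item Then $\alpha(\sD(\cG)x)=\alpha\sD(\cG)\alpha^{-1}(\alpha x)=\sD(\cG)\alpha(x)$, so $\alpha$ carries $\cG$-orbits onto $\cG$-orbits.
\end{itemize}
The $\cH$-orbits are unions of at most $p$ $\cG$-orbits, permuted by the $\lambda_a$. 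So $\alpha$ preserves the partition, and $\mathrm{St}$ contains the normalizer.

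\textbf{Inclusion 4 (the main step).} Let $\alpha=\theta_V\in\mathrm{St}$ with $V\subset\cH$ a full bisection. Decompose
\[
V=\bigsqcup_{c\in\Z/p}V_c\times\{c\},\qquad V_c\subset\cG,
\]
so the unit space splits into clopen sets $A_c=s(V_c\times\{c\})$. On $A_c$ the point $x$ is moved into the $\cG$-orbit of $\lambda_c(x)$.

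We next show that the label $c$ is locally constant modulo the orbit structure.
\begin{itemize}
\item For $g\in\cG$ with $s(g)=x$, the points $\alpha(x)$ and $\alpha(r(g))$ must lie in the same $\cG$-orbit.
\item If $x\in A_c$ and $r(g)\in A_d$, then $\alpha(x)\sim_\cG\lambda_c(x)$ and $\alpha(r(g))\sim_\cG\lambda_d(x)$. Hence $\lambda_c(x)$ and $\lambda_d(x)$ lie in the same $\cG$-orbit.
\end{itemize}

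This is where we need that distinct $\lambda_a$-translates lie in distinct $\cG$-orbits, at least for points $x$ in a dense set. Freeness on $\cG^{(0)}$ alone does not exclude $\lambda_a(x)=r(g)$ for some $g\in\cG$. However, such $g$ would give the arrow $(g,-a)$, which is an isotropy element of $\cH$ at $x$ that is not in $\cG$. Effectiveness of $\cH$ makes the points with trivial $\cH$-isotropy dense (and comeager), and for those points the $\lambda_a$-translates lie in distinct $\cG$-orbits.

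For such generic $x$, the orbit condition then forces $c=d$ whenever $x$ and $r(g)$ are $\cG$-equivalent. Thus the label map $x\mapsto c$ is constant on every $\cG$-orbit of a generic point. Minimality of $\cG$ makes these orbits dense, and the label map is continuous because the $A_c$ are clopen. Therefore it is constant, say equal to $c$, on all of $\cG^{(0)}$.

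Consequently $V\subset\cG\times\{c\}$, and $\lambda_c^{-1}\alpha$ is implemented by a full bisection of $\cG$. Hence $\alpha\in\sF(\cG)\lambda_c$, which gives the last inclusion and closes the chain.

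The main obstacle is the genericity argument in this last step. Using $\cH$-effectiveness, one has to ensure that distinct $\lambda$-translates fall into distinct $\cG$-orbits on a dense set of points, and then apply the continuity/density argument cleanly.
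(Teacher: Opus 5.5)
Your proposal is correct and follows essentially the same route as the paper. Both arguments use the same chain of inclusions, identify $\sD(\cG)$-orbits with $\cG$-orbits via Lemma~\ref{elementofDG}~(1), and finish by combining a point with trivial $\cH$-isotropy, minimality, and the clopen decomposition of the implementing bisection. The paper phrases that last step as a contradiction with $y\in A_b$ in the orbit of $x\in A_a$, while you use a locally constant label map.

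One harmless slip: with the paper's conventions, an arrow $g\in\cG$ with $s(g)=x$ and $r(g)=\lambda_a(x)$ gives the isotropy element $(g^{-1},a)\in\cH_x$. Your element $(g,-a)$ lies in $\cH_{\lambda_a(x)}$ instead, which does not matter because triviality of isotropy is constant along $\cH$-orbits.
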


\begin{proof}
The inclusions 
\[
\sF(\cG)\rtimes_\lambda\Z/p\subset\Nor(\sF(\cH),\sF(\cG))
\quad\text{and}\quad 
\Nor(\sF(\cH),\sF(\cG))\subset\Nor(\sF(\cH),\sD(\cG))
\]
follow because each $\lambda_a$ preserves $\cG$ and 
$\sD(\cG)$ is characteristic in $\sF(\cG)$. 
It remains to prove 
$\Nor(\sF(\cH),\sD(\cG))\subset\sF(\cG)\rtimes_\lambda\Z/p$. 

By Lemma~\ref{elementofDG}~(1), 
any two distinct points in a $\cG$-orbit are exchanged 
by a transposition in $\sD(\cG)$. 
Thus the $\sD(\cG)$-orbits coincide with the $\cG$-orbits. 

Let $\alpha\in\Nor(\sF(\cH),\sD(\cG))$. 
For every $x\in\cG^{(0)}$ we have 
\[
\alpha\left(\sD(\cG)x\right)
=\left(\alpha\sD(\cG)\alpha^{-1}\right)\alpha(x)
=\sD(\cG)\alpha(x), 
\]
and hence $\alpha$ maps each $\cG$-orbit onto a $\cG$-orbit. 
Conversely, suppose that $\alpha\in\sF(\cH)$ 
maps each $\cG$-orbit onto a $\cG$-orbit. 
We show that $\alpha\in\sF(\cG)\rtimes_\lambda\Z/p$. 
Let $U\in\cCO(\cH)$ be the full bisection with $\alpha=\theta_U$ and write 
\[
U=\bigsqcup_{a\in\Z/p}U_a\times\{a\}, 
\]
where each $U_a\subset\cG$ is a compact open bisection, possibly empty. 
Since $U$ is a full bisection, the clopen sets 
\[
A_a:=s\left(U_a\times\{a\}\right)=\lambda_{-a}(s(U_a)),
\qquad a\in\Z/p, 
\]
form a partition of $\cG^{(0)}$. 
For $x\in A_a$, 
the unique $g\in U_a$ with $s(g)=\lambda_a(x)$ satisfies $\alpha(x)=r(g)$, 
and so $\alpha(x)$ is in the $\cG$-orbit of $\lambda_a(x)$. 

Assume that $A_a$ and $A_b$ are both nonempty for some $a\neq b$. 
Since $\cH$ is effective, 
the points with trivial isotropy in $\cH$ are dense in $\cG^{(0)}$. 
We may therefore choose $x\in A_a$ with $\cH_x=\{x\}$. 
By minimality, choose $y\in A_b$ in the $\cG$-orbit of $x$. 
As $\lambda_b$ is an automorphism of $\cG$, 
the point $\lambda_b(y)$ is in the $\cG$-orbit of $\lambda_b(x)$. 
Also, $\alpha(x)$ and $\alpha(y)$ are in the same $\cG$-orbit, 
because so are $x$ and $y$. 
It follows that 
$\lambda_a(x)$ and $\lambda_b(x)$ are in the same $\cG$-orbit, 
i.e.\ there exists $g\in\cG$ such that 
$r(g)=\lambda_a(x)$ and $s(g)=\lambda_b(x)$. 
Then 
\[
\left(\lambda_{-b}(g^{-1}),a-b\right)\in\cH_x
\]
is a nontrivial isotropy element, because $a\neq b$. 
This contradicts the choice of $x$. 

Therefore there exists a unique $a\in\Z/p$ with $A_a=\cG^{(0)}$, 
that is, $U=U_a\times\{a\}$. 
Then $U_a$ is a full bisection of $\cG$ and 
$\alpha=\theta_{U_a}\lambda_a$ is in $\sF(\cG)\rtimes_\lambda\Z/p$. 
\end{proof}

Following \cite[Section 1]{GV26ETDS}, 
an action on a Cantor set $X$ is said to be topologically primitive 
if it is minimal and preserves no partition of $X$ into closed sets 
other than $\{X\}$ and the partition into singletons. 
These two partitions are called trivial. 
Preserving a partition means permuting its classes. 

\begin{proposition}\label{prop:topologicalprimitivity}
In Setting~\ref{subgroupoidsetting}, 
the groups $\sF(\cG)\rtimes_\lambda\Z/p$ and $\Nor(\sD(\cH),\sD(\cG))$ 
act topologically primitively on $\cG^{(0)}$. 
In fact, $\sD(\cG)$ itself acts topologically primitively. 
\end{proposition}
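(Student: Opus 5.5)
Since $\sD(\cG)\subset\Nor(\sD(\cH),\sD(\cG))$ and $\sD(\cG)\subset\sF(\cG)\rtimes_\lambda\Z/p$, and topological primitivity passes to overgroups (a partition preserved by the larger group is also preserved by $\sD(\cG)$), it suffices to prove that $\sD(\cG)$ acts topologically primitively on $\cG^{(0)}$. Minimality holds because, as noted in the proof of Lemma~\ref{normalizer=semidirectproduct}, Lemma~\ref{elementofDG}~(1) shows that $\sD(\cG)$-orbits coincide with $\cG$-orbits, which are dense by minimality of $\cG$.

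Now let $\mathcal{Q}$ be a nontrivial partition of $\cG^{(0)}$ into closed sets that is permuted by $\sD(\cG)$. First I would show that the class $C$ of a point $x$ is stabilized by every element of $\sD(\cG)$ supported away from $x$: if $\alpha\in\sD(\cG)$ fixes a neighborhood of $x$, then $\alpha(C)$ is a class containing $\alpha(x)=x$, so $\alpha(C)=C$. Since $\mathcal{Q}$ is not the partition into singletons, pick a class $C$ with two distinct points $x\neq z$. Since $\mathcal{Q}\neq\{\cG^{(0)}\}$, $C\neq\cG^{(0)}$, so pick $w\notin C$. The goal is to derive a contradiction by producing $\alpha\in\sD(\cG)$ fixing a neighborhood of $x$ with $\alpha(z)=w$; then $w\in\alpha(C)=C$.

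To construct $\alpha$: by minimality choose $w'$ in the $\cG$-orbit of $z$ arbitrarily close to $w$, lying in $\cG^{(0)}\setminus C$ (open) and avoiding $x$ and $z$. Then choose $g\in\cG$ with $s(g)=z$ and $r(g)=w'$, and apply Lemma~\ref{elementofDG}~(1) with a small clopen neighborhood $A$ of $w'$ disjoint from $C$ and from $x$, shrinking the bisection so that its source also avoids a neighborhood of $x$. This yields a transposition $\tau_U\in\sD(\cG)$ with $g\in U$, and supported away from $x$. Then $\tau_U(z)=w'\notin C$ while $\tau_U(C)=C$, which is the contradiction. (So in fact $w$ itself is unnecessary: any $w'$ outside $C$ in the orbit of $z$ suffices, and such a point exists since $C$ is a proper closed set and the orbit is dense.)

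The main obstacle is the small point of ensuring that $x\notin\supp(\tau_U)$: the support is $r(U)\cup s(U)$, so I need the bisection's source neighborhood of $z$ and range neighborhood of $w'$ both to miss $x$. This follows since $x\neq z$ and $x\neq w'$ and we may shrink $U$ (restricting $U$ to a smaller clopen source keeps $\tau\in\sD(\cG)$ only after re-applying Lemma~\ref{elementofDG}~(1), so I would first pick $A$ and a small initial bisection avoiding $x$, then invoke the lemma, which only shrinks further). Everything else is formal.
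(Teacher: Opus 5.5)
Your proof is correct and is essentially the paper's argument. You reduce to $\sD(\cG)$, get minimality from Lemma~\ref{elementofDG}~(1), and use an element of $\sD(\cG)$ that fixes one point of a nontrivial class $C$ while moving another point of $C$ along its dense orbit; you move that point out of $C$ to get a contradiction, whereas the paper shows the whole orbit lies in $C$. The only other difference is that the paper uses the $3$-cycle of Lemma~\ref{threecycle}, which is a commutator for arbitrarily small bisections, so it does not need your step of going into the proof of Lemma~\ref{elementofDG}~(1) to keep the source of the transposition away from $x$.
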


\begin{proof}
Both groups contain $\sD(\cG)$, 
so it suffices to prove that $\sD(\cG)$ acts topologically primitively. 
By Lemma~\ref{elementofDG}~(1), 
its orbits coincide with the $\cG$-orbits, and hence its action is minimal. 

Put $X:=\cG^{(0)}$ and let $\mathcal P$ be a partition of $X$ 
into closed sets preserved by $\sD(\cG)$. 
If every class is a singleton, then $\mathcal P$ is trivial. 
Otherwise, choose a class $C\in\mathcal P$ containing distinct points $x,y$. 
For any point $z$ in the $\cG$-orbit of $x$ 
with $z\notin\{x,y\}$, choose a further point $w$ 
in that orbit outside $\{x,y,z\}$. 
This is possible because every $\cG$-orbit is infinite. 
Choose compact open bisections $U,V\subset\cG$ containing arrows 
from $x$ to $z$ and from $x$ to $w$, respectively, 
with $s(U)=s(V)$ and with $s(U)$, $r(U)$ and $r(V)$ 
mutually disjoint and avoiding $y$. 
Lemma~\ref{threecycle} gives an element $\alpha\in\sD(\cG)$ 
with $\alpha(x)=z$ and $\alpha(y)=y$. 
Since $\alpha(C)$ and $C$ are classes containing $y$, 
they coincide, and hence $z\in C$. 
Thus $C$ contains the $\cG$-orbit of $x$. 
This orbit is dense, so closedness gives $C=X$. 
Therefore $\mathcal P=\{X\}$ is also trivial. 
\end{proof}

\begin{lemma}\label{strictness:subgroupoid}
We have $\sD(\cH)\not\subset\sF(\cG)\rtimes_\lambda\Z/p$. 
\end{lemma}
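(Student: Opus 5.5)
The plan is to exhibit an explicit element of $\sD(\cH)$ that fails to map $\cG$-orbits onto $\cG$-orbits. By Lemma~\ref{normalizer=semidirectproduct}, such an element cannot lie in $\sF(\cG)\rtimes_\lambda\Z/p$, since that group is exactly the stabilizer of the partition into $\cG$-orbits. As in the proof of Lemma~\ref{strictness}, the natural witnesses are $3$-cycles: Lemma~\ref{threecycle} produces them inside $\sD(\cH)$ with prescribed local behaviour and small support.

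First, pick a point $x\in\cG^{(0)}$ whose isotropy in $\cH$ is trivial; such points are dense because $\cH$ is effective. Freeness of $\lambda$ on $\cG^{(0)}$ makes $\lambda_1(x)\neq x$. The arrow $(x,1)\in\cG\times\{1\}\subset\cH$ has range $x$ and source $\lambda_{-1}(x)$, so it joins two distinct points. Take a compact open bisection $U$ around the inverse of this arrow, of the form $U_0\times\{c\}$ with $c\neq0$, and shrink it so that $s(U)$ and $r(U)$ are disjoint. Next pick a second bisection $V\subset\cG\times\{0\}$ with $s(V)=s(U)$ whose range is disjoint from $s(U)\cup r(U)$. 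This is possible by minimality of $\cG$: the $\cG$-orbit of $x$ is infinite and dense, so it contains a point outside the two small clopen sets, and we shrink around an arrow reaching it. Lemma~\ref{threecycle} then gives $\beta:=\tau_V\tau_U\in\sD(\cH)$ with $\supp(\beta)=s(U)\cup r(U)\cup r(V)$, and $\beta$ agrees with $\theta_U$ on $s(U)$.

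Second, check that $\beta$ does not preserve the partition into $\cG$-orbits. Take a point $w\in s(U)$ with trivial $\cH$-isotropy. Then $\beta(w)=\theta_U(w)=\lambda_c(w)$ up to $\cG$-equivalence. Choose a second point $w'$ in the $\cG$-orbit of $w$ that lies outside $\supp(\beta)$; this exists because the orbit is dense and $\supp(\beta)$ is a proper clopen set once $U,V$ are taken small. Then $\beta(w')=w'$, which is $\cG$-equivalent to $w$. If $\beta$ mapped $\cG$-orbits to $\cG$-orbits, then $\beta(w)$ and $\beta(w')$ would lie in a common $\cG$-orbit. That would make $\lambda_c(w)$ $\cG$-equivalent to $w$, which yields a nontrivial isotropy element $(g,c)\in\cH_w$ exactly as in the proof of Lemma~\ref{normalizer=semidirectproduct}, a contradiction.

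The main obstacle is the bookkeeping needed to make the supports small and disjoint while still containing the prescribed arrows. Ensuring that $\supp(\beta)\neq\cG^{(0)}$ requires shrinking $U$ and $V$ to small clopen neighbourhoods, which is easy in a Cantor set. A second route would be to show that $\lambda_1$ lies in $\sD(\cH)$ when $p$ is odd, but that fails to give strictness, so the orbit argument above is the one to carry out.
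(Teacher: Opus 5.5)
Your proof is correct, and your witness is essentially the paper's. The paper also takes the degree-$3$ multisection generated by a bisection $V_1=A\times\{b\}$ in a nonzero layer and a bisection $V_2\subset\cG$ in layer $0$, with support not all of $\cG^{(0)}$. It then picks a nontrivial element of $\sA(u)\subset\sD(\cH)$, which is your $\tau_V\tau_U$ up to relabelling.

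The difference is in how non-membership is certified. The paper reads it off the implementing full bisection. That bisection contains the units $\cG^{(0)}\setminus\supp(u)\subset\cG\times\{0\}$, and also $V_1$ or $V_1^{-1}$, which lie in $\cG\times\{\pm b\}$. By contrast, each element $\theta_{W}\lambda_a$ of $\sF(\cG)\rtimes_\lambda\Z/p$ is implemented by the single-layer bisection $W\times\{a\}$, and effectiveness makes implementing bisections unique.

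You instead use only the easy half of Lemma~\ref{normalizer=semidirectproduct}: elements of the semidirect product permute $\cG$-orbits. You then show that $\tau_V\tau_U$ sends two $\cG$-equivalent points $w,w'$ into different $\cG$-orbits, using a point of trivial $\cH$-isotropy exactly as in the proof of that lemma.

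The paper's check is shorter and needs no auxiliary points. Yours costs a little more bookkeeping: a trivial-isotropy point in $s(U)$, and a second point of its dense $\cG$-orbit outside the proper support. In return, it shows directly that the witness fails to preserve the partition into $\cG$-orbits, which matches the stabilizer description of the normalizer.
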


\begin{proof}
Take $b\in\Z/p$ with $b\neq0$. 
Since $\lambda$ is free and $\cG^{(0)}$ is a Cantor set, 
we can choose $A\in\cCO(\cG^{(0)})$ with $A\cap\lambda_{-b}(A)=\emptyset$. 
Put $V_1:=A\times\{b\}$, so that $r(V_1)=A$ and $s(V_1)=\lambda_{-b}(A)$. 
Since $\cG$ is minimal, 
shrinking $A$ if necessary, 
we can find a bisection $V_2\in\cCO(\cG)$ such that $r(V_2)=A$ and 
the three sets $A$, $\lambda_{-b}(A)$, $s(V_2)$ are mutually disjoint and 
do not cover $\cG^{(0)}$. 
Let $u$ be the multisection of degree $3$ in $\cH$ 
generated by $V_1$ and $V_2$, and 
let $\alpha\in\sA(u)$ be a non-trivial element. 
The full bisection implementing $\alpha$ contains 
the nonempty unit set $\cG^{(0)}\setminus\supp(u)\subset\cG\times\{0\}$ 
as well as $V_1$ or $V_1^{-1}$, 
which is contained in $\cG\times\{b\}$ or $\cG\times\{-b\}$. 
Hence it is not contained in $\cG\times\{a\}$ for any single $a\in\Z/p$, 
and so $\alpha\in\sA_3(\cH)\subset\sD(\cH)$ is 
not in $\sF(\cG)\rtimes_\lambda\Z/p$. 
\end{proof}

The next three lemmas show how $\sD(\cG)$, together with 
one alternating group linking two translates of a small multisection, 
generates $\sD(\cH)$. 
We first introduce the required multisections. 
A multisection $v=(V_{i,j})_{i,j=0}^2$ of degree $3$ in $\cG$ 
is said to be small 
if the clopen sets 
\[
s(V_{0,j}\times\{a\})=\lambda_{-a}(s(V_{0,j}))\times\{0\}\qquad
j=0,1,2,\ a\in\Z/p
\]
are mutually disjoint. 
When $v$ is a small multisection in $\cG$, 
the compact open bisections $V_{0,j}\times\{a\}$ generate 
a multisection $\bar v$ of degree $3p$ in $\cH$. 
Namely, the index set of $\bar v$ is $\{0,1,2\}\times\Z/p$ and 
each element in $\bar v$ is of the form 
\[
(V_{0,i}\times\{a\})^{-1}(V_{0,j}\times\{b\})
=\lambda_{-a}(V_{i,j})\times\{-a{+}b\}. 
\]
We call $\bar v$ the equivariant extension of $v$. 
Its sub-multisection on $\{0,1,2\}\times\{a\}$ is $\lambda_{-a}(v)$, 
which we call the $a$-th layer. 

For $b\in\Z/p\setminus\{0\}$, 
we write $\bar v^{(b)}$ for the sub-multisection of 
$\bar v$ on $\{0,1,2\}\times\{0,b\}$, 
with distinguished component $(0,0)$. 

The first lemma transports the alternating group on two layers 
and then joins all the layers using the primality of $p$. 

\begin{lemma}\label{AbaruPinH}
Let $H\subset\sF(\cH)$ be a subgroup containing $\sD(\cG)$. 
Suppose that $\sA(\bar v^{(b)})\subset H$ for some small multisection 
$v=(V_{i,j})_{i,j=0}^2$ in $\cG$ and some $b\neq0$. 
Then, for any small multisection $u=(U_{i,j})_{i,j=0}^2$ in $\cG$ 
and any $x\in U_{0,0}$, 
there exists $P\in\cCO(\cG^{(0)})$ such that $x\in P\subset U_{0,0}$ 
and $\sA(\bar u|Q)\subset H$ holds for any nonempty clopen $Q\subset P$. 
\end{lemma}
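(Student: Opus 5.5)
We prove the lemma in three stages. First we move from $v$ to $u$ locally, using conjugation by elements of $\sD(\cG)$. Then we enlarge from two layers to all $p$ layers. Finally we pass to subsets $Q$ using Nekrashevych's restriction lemmas.

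\emph{Step 1: transport from $v$ to $u$ near $x$.} Fix $x\in U_{0,0}$, and let $g_j\in U_{0,j}$ be the arrows with $r(g_j)=x$, $g_0=x$. By minimality of $\cG$, choose $k_0\in\cG$ with $r(k_0)\in V_{0,0}$ and $s(k_0)=x$. Put $k_j:=f_j k_0 g_j$, where $f_j\in V_{0,j}^{-1}$ has range $r(f_j)=s$ of the corresponding arrow in $V_{0,j}$. This gives arrows from $s(g_j)$ to the $j$-th diagonal component of $v$.

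Consider the $3p$ points $\lambda_{-a}(s(g_j))$ and their targets $\lambda_{-a}(r(k_j))$. Smallness of $u$ and $v$ makes the points distinct and the targets distinct. Choose a compact open bisection $W\subset\cG$ containing $\lambda_{-a}(k_j)$ for all $j,a$ with $j\in\{0,1,2\}$, $a\in\{0,b\}$. Shrink it so that $r(W)$ and $s(W)$ do not exhaust $\cG^{(0)}$. Lemma~\ref{elementofDG}~(2) then extends $W$ to a full bisection whose element $\beta$ lies in $\sD(\cG)\subset H$.

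Conjugating $\sA(\bar v^{(b)})$, restricted to a small clopen set, by $\beta$ gives $\sA(\bar u^{(b)}|P_1)\subset H$ for some clopen $P_1\ni x$. This needs care, because $\beta$ is equivariant only on the pieces we prescribed. We therefore restrict $v$ to $v|R$ with $R\subset V_{0,0}$ a small clopen neighbourhood of $r(k_0)$, using Nekrashevych's lemma \cite[Lemma 3.4]{Ne19ETDS} to pass $\sA(\bar v^{(b)})\subset H$ to restrictions. The equivariant extension commutes with $\lambda$, so the same choice works simultaneously for layers $0$ and $b$ once $W$ is built $\lambda$-symmetrically on those pieces.

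\emph{Step 2: from two layers to all layers.} The layers $\{0\}$ and $\{b\}$ in $\bar u$ are now joined. Apply $\lambda_c$-translated versions: the sub-multisection on layers $\{c,c+b\}$ is $\lambda_{-c}$ of $\bar u^{(b)}$ for $\lambda_{-c}(u)$. This is handled by repeating Step 1 with the small multisection $\lambda_{-c}(u)$, which is again small, and conjugating back. Since $p$ is prime, the chain $0,b,2b,\dots$ runs through all of $\Z/p$.

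An analogue of Lemma~\ref{alternating9} then finishes this step. Alternating groups on overlapping blocks of size $6$ that connect all indices generate the full alternating group on $3p$ indices; a connected family of alternating groups of degree $\ge3$ on overlapping sets generates the alternating group of the union. Thus $\sA(\bar u|P)\subset H$ for some clopen $P\ni x$, after intersecting the finitely many neighbourhoods obtained.

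\emph{Step 3: passage to $Q\subset P$.} Since $\sA(\bar u|P)\subset H$ and the degree is at least $5$, restriction to any nonempty clopen $Q\subset P$ is contained in $\langle\sA(\bar u|P)\rangle$. This follows from Nekrashevych's lemma, or directly, since $\sA(\bar u|Q)$ is generated by commutators of elements of $\sA(\bar u|P)$ supported appropriately.

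The main obstacle is Step 1: building a single $\beta\in\sD(\cG)$ that matches $v$ to $u$ on both layers $0$ and $b$ simultaneously, while staying inside $\cG$ and not exhausting the unit space. If $\lambda$-compatibility of $W$ fails, the fallback is to transport each layer separately by elements of $\sD(\cG)$. Elements of $\sF(\cG)$ commute with the layer-shifting pieces $\cG^{(0)}\times\{b\}$ only up to $\lambda$. We would then need to check that conjugation by an element of $\sF(\cG)$ sends the equivariant extension $\bar v$ to the multisection generated by transported bisections $V'_{0,j}\times\{a\}$ with $V'_{0,j}=\beta V_{0,j}\lambda_{-a}(\beta)^{-1}$. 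So one aims for $\beta$ with $\beta\approx\lambda_b(\beta)$ near the relevant points, which is exactly what the symmetric choice of $W$ provides.
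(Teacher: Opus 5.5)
Your overall architecture matches the paper's: transport by an element of $\sD(\cG)$, then join the layers using that $p$ is prime. However, there is a genuine gap in how you obtain \emph{restricted} alternating groups, and that is the real content of this lemma.

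\textbf{Step~3 is false as stated.} You assert that $\sA(\bar u|Q)$ lies in the group generated by $\sA(\bar u|P)$ for $Q\subset P$. But $\sA(\bar u|P)$ is a finite group isomorphic to the alternating group of degree $3p$. Each of its nontrivial elements moves some whole component of $\bar u|P$. In contrast, elements of $\sA(\bar u|Q)$ with $Q\subsetneq P$ fix everything over $P\setminus Q$. Commutators of elements of $\sA(\bar u|P)$ stay in $\sA(\bar u|P)$, so they cannot help.

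\textbf{The same problem is hidden in Step~1.} Nekrashevych's Lemma~3.4, as used in Lemma~\ref{propertyofP}~(3), produces $\sA(w|P_1\cap P_2)$ from two groups $\sA(w|P_1)$ and $\sA(w|P_2)$ already in $H$. It therefore cannot extract $\sA(\bar v^{(b)}|R)$ from the single finite group $\sA(\bar v^{(b)})$. Even granting Step~1, it gives $\sA(\bar u^{(b)}|P_1)\subset H$ for one set $P_1$ only. Step~2 needs this on all smaller sets, in order to intersect the neighbourhoods coming from the different translates $\lambda_{-c}(u)$. So the argument does not close. Passing to restrictions is exactly where the hypothesis $\sD(\cG)\subset H$ must be used, and you never use it for this.

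\textbf{The missing idea.} For every nonempty clopen $Q'\subset V_{0,0}$, the layer-$0$ group $\sA(v|Q')$ lies in $\sD(\cG)\subset H$. Its conjugates by elements of $\sA(\bar v^{(b)})$ contain all $3$-cycles on the six components over $Q'$. Hence $\sA(\bar v^{(b)}|Q')\subset H$ for every such $Q'$. Now fix one transporting element $\beta$, built from a single small bisection $K_0\ni k_0$. Conjugation then gives $\sA(\bar u^{(b)}|Q)\subset H$ for all nonempty clopen $Q\subset P:=s(K_0)$ at once, with $Q'=K_0QK_0^{-1}$. The same trick, conjugating $\sA(u|Q)\subset\sD(\cG)$ by $\sA(\bar u|P)$, would also repair Step~3.

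\textbf{Equivariance is not the main obstacle.} Take
$W_1:=\bigsqcup_{j=0}^{2}\bigsqcup_{a\in\{0,b\}}\lambda_{-a}(V_{j,0}K_0U_{0,j})$.
This lies in $\cG$, has disjoint pieces by smallness of $u$ and $v$, and has proper range and source after shrinking $K_0$. Extend it by Lemma~\ref{elementofDG}~(2). Merely containing the points $\lambda_{-a}(k_j)$ is not enough, because conjugation must match the two multisections piece by piece.

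\textbf{Step~2 needs no conjugation by $\lambda_c$.} This matters because $\lambda_c$ need not lie in $H$. After relabelling, $\overline{\lambda_{-c}(u)}^{(b)}|\lambda_{-c}(Q)$ is literally the sub-multisection of $\bar u|Q$ on $\{0,1,2\}\times\{c,c+b\}$.
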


\begin{proof}
We first prove the conclusion with $\bar u^{(b)}$ in place of $\bar u$. 
Since $\cG$ is minimal, one can find a bisection $W_0\in\cCO(\cG)$ 
such that $x\in r(W_0)\subsetneq U_{0,0}$ and $s(W_0)\subsetneq V_{0,0}$. 
Then 
\[
W_1:=\bigsqcup_{j=0}^2\bigsqcup_{a\in\{0,b\}}
(U_{0,j}\times\{a\})^{-1}(W_0\times\{0\})(V_{0,j}\times\{a\})
\]
is a compact open bisection in $\cG$: 
each summand lies in $\cG\times\{0\}$, and smallness ensures 
that their sources are pairwise disjoint, as are their ranges. 
Both $r(W_1)$ and $s(W_1)$ are proper subsets of $\cG^{(0)}$. 
By Lemma~\ref{elementofDG}~(2), there exists a full bisection 
$W\subset\cG$ such that $W_1\subset W$ and $\theta_W\in\sD(\cG)$. 
Put $P:=r(W_0)$ and take a nonempty clopen set $Q\subset P$. 
Set $Q':=W_0^{-1}QW_0$. 
The conjugates of $\sA(v|Q')\subset\sD(\cG)$ 
by elements of $\sA(\bar v^{(b)})$ generate $\sA(\bar v^{(b)}|Q')$, 
since these conjugates contain all the $3$-cycles on the six components. 
Therefore $\sA(\bar v^{(b)}|Q')\subset H$, and hence 
\[
\sA(\bar u^{(b)}|Q)
=\theta_W\sA(\bar v^{(b)}|Q')\theta_W^{-1}\subset H. 
\]

For each $a\in\Z/p$, apply this conclusion to the small multisection 
$\lambda_{-a}(u)$ at $\lambda_{-a}(x)$. 
We obtain a clopen neighborhood $P_a\subset\lambda_{-a}(U_{0,0})$ 
of $\lambda_{-a}(x)$ such that 
$\sA(\overline{\lambda_{-a}(u)}^{(b)}|R)\subset H$ 
for every nonempty clopen $R\subset P_a$. 
Put $P:=\bigcap_{a\in\Z/p}\lambda_a(P_a)$. 
For a nonempty clopen $Q\subset P$, take $R=\lambda_{-a}(Q)$. 
After relabelling, the multisection 
$\overline{\lambda_{-a}(u)}^{(b)}|R$ is the sub-multisection of 
$\bar u|Q$ on $\{0,1,2\}\times\{a,a+b\}$. 
Thus $H$ contains the alternating group on each of these six-component sets. 
The graph on $\Z/p$ with edges $\{a,a+b\}$ is connected, 
because $p$ is prime and $b\neq0$. 
Alternating groups on finite sets of at least three elements 
with at least two common elements 
generate the alternating group on their union. 
Applying this observation successively along the graph, 
we obtain $\sA(\bar u|Q)\subset H$. 
\end{proof}

We next pass to arbitrary multisections of degree $3$ in $\cH$, 
using two auxiliary components when some of the chosen points 
lie in the same $\Z/p$-orbit. 

\begin{lemma}\label{AuPinH}
Let $H\subset\sF(\cH)$ be a subgroup 
satisfying the conditions in Lemma~\ref{AbaruPinH}. 
Suppose that $u=(U_{i,j})_{i,j=0}^2$ is a multisection in $\cH$ 
and $x\in U_{0,0}$. 
Then there exists $P\in\cCO(\cG^{(0)})$ such that 
$x\in P\subset U_{0,0}$ and 
$\sA(u|Q)\subset H$ holds for any nonempty clopen $Q\subset P$. 
\end{lemma}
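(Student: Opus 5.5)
Write $u=(U_{i,j})_{i,j=0}^2$ and put $x_0:=x$. For $j=1,2$ let $h_j\in U_{0,j}$ be the arrow with $r(h_j)=x$, and set $x_j:=s(h_j)$. Each $h_j$ has the form $(g_j,a_j)$ with $g_j\in\cG$ and $a_j\in\Z/p$. We would then proceed as in Lemma~\ref{IfHcontainsA}: first treat the case where $u$ is locally an equivariant extension, and then reduce the general case to it using auxiliary components and Lemma~\ref{alternating9}-type generation.

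\emph{Case 1: the points $x_0,x_1,x_2$ lie in distinct $\Z/p$-orbits.} Translate each $x_j$ back by the layer coordinate, setting $y_j:=\lambda_{a_j}(x_j)$, so that $g_j$ is an arrow in $\cG$ from $y_j$ to $x$. By freeness, the $3p$ points $\lambda_{-a}(y_j)$ with $j=0,1,2$ and $a\in\Z/p$ are distinct, where $y_0:=x$. Choose compact open bisections $V_{0,j}\ni g_j$ in $\cG$ with common range a small clopen neighbourhood of $x$, so that the sources $\lambda_{-a}(s(V_{0,j}))$ are mutually disjoint. The multisection $v$ generated by $V_{0,1},V_{0,2}$ is then small. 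Near $x$, the multisection $u$ coincides with the sub-multisection of $\bar v$ on the components $(0,0),(1,a_1),(2,a_2)$. More precisely, there is a clopen $P\ni x$ with $P U_{0,j}=P(V_{0,j}\times\{a_j\})$. Shrinking $P$ to the neighbourhood given by Lemma~\ref{AbaruPinH}, we get $\sA(\bar v|Q)\subset H$ for every nonempty clopen $Q\subset P$. Since $\sA(u|Q)$ is the alternating group on three of the components of $\bar v|Q$, it is contained in $\sA(\bar v|Q)\subset H$.

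\emph{Case 2: some of $x_0,x_1,x_2$ lie in a common $\Z/p$-orbit.} By minimality of $\cG$, pick two auxiliary arrows $f_1,f_2\in\cG\times\{0\}$ with range $x$ whose sources lie in $\Z/p$-orbits that are distinct from each other and from those of $x_0,x_1,x_2$. This is possible because orbits are infinite and there are only finitely many $\Z/p$-orbits to avoid. Take bisections $W_1,W_2$ around $f_1,f_2$ with range $A\subset U_{0,0}$ and sources disjoint from each other and from $\supp(u|A)$. For each $k\in\{0,1,2\}$, let $z_k$ be the degree-$3$ multisection generated by $U_{k,0}W_1$ and $U_{k,0}W_2$, based at the $k$-th component of $u$. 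Its three points are in distinct $\Z/p$-orbits, so Case 1, applied at the point $x_k$, gives a neighbourhood on which $\sA(z_k|\cdot)\subset H$. Pulling these back by $U_{0,k}$ and intersecting yields a single clopen $P\ni x$ that works for all $k$ and all $Q\subset P$. Now form the degree-$5$ multisection $w$ generated by $QU_{0,1},QU_{0,2},QW_1,QW_2$. The groups $\sA(z_k|U_{k,0}QU_{0,k})$ are the alternating groups on the components $\{k,3,4\}$ of $w$. Every $3$-cycle $(3\,4\,i)$ lies in one of them, and these generate $\sA_5$, which contains $\sA(u|Q)$. This is the degree-$5$ analogue of Lemma~\ref{alternating9}.

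The main obstacle is Case 1. There one must verify carefully that the ``layer'' coordinates $a_j$ match the equivariant extension: the arrow $(g_j,a_j)$ must be exactly the element $(V_{0,0}\times\{0\})^{-1}(V_{0,j}\times\{a_j\})$ of $\bar v$. One must also ensure that smallness can be achieved simultaneously with the bisections lying inside $U_{0,j}$. Both points should follow from continuity and the freeness of $\lambda$ by shrinking neighbourhoods, but the bookkeeping of the $\lambda_{-a}$ twists in the semidirect product multiplication is where errors are likely.
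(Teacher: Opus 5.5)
Your proposal is correct and follows the paper's proof step for step. The distinct-$\Z/p$-orbit case is handled the same way: you build a small multisection in $\cG$ whose equivariant extension contains $u$ near $x$ on the components $(0,0),(1,a_1),(2,a_2)$, and then apply Lemma~\ref{AbaruPinH}. The general case also matches the paper: two auxiliary arrows in $\cG$, the multisections $z_k$ at the points $x_k$, intersection of the pulled-back neighbourhoods, and generation of $\sA(w)$ in degree five from the alternating groups on $\{k,3,4\}$. Your explicit use of the $3$-cycles $(3\ 4\ i)$ simply spells out the paper's remark that alternating groups sharing two components generate the alternating group on their union.
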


\begin{proof}
For $j=1,2$, 
let $h_j=(g_j,a_j)\in U_{0,j}$ be the unique element such that $r(h_j)=x$. 
Put $h_0:=x$ and $x_j:=s(h_j)$ for $j=0,1,2$. 

First suppose that $x_0,x_1,x_2$ belong to distinct $\Z/p$-orbits. 
Since $s(g_j)=\lambda_{a_j}(x_j)$ and 
$\lambda$ is free on $\cG^{(0)}$, the $3p$ points 
\[
\lambda_a(x),\quad \lambda_a(s(g_1)),\quad \lambda_a(s(g_2))
\qquad(a\in\Z/p)
\]
are all distinct. 
By taking sufficiently small compact open bisections 
containing $g_1$ and $g_2$, 
we can find a small multisection $w=(W_{i,j})_{i,j=0}^2$ in $\cG$ 
such that $x\in W_{0,0}\subset U_{0,0}$ and 
\[
W_{0,j}\times\{a_j\}=W_{0,0}U_{0,j}
\quad\text{for $j=1,2$}. 
\]
By Lemma~\ref{AbaruPinH}, 
there exists $P\in\cCO(\cG^{(0)})$ such that $x\in P\subset W_{0,0}$ 
and $\sA(\bar w|Q)\subset H$ for every nonempty clopen $Q\subset P$. 
For each such $Q$, 
the multisection $u|Q$ is, after relabelling, 
the sub-multisection of $\bar w|Q$ on the components $(0,0),(1,a_1),(2,a_2)$. 
Hence 
\[
\sA(u|Q)\subset\sA(\bar w|Q)\subset H. 
\]

We now consider the general case. 
Since $\cG$ is minimal and $\cG^{(0)}$ is a Cantor set, 
the $\cG$-orbit of $x$ is infinite. 
We may therefore choose $f_3,f_4\in\cG$ with $r(f_3)=r(f_4)=x$ such that 
$s(f_3)$ and $s(f_4)$ belong to distinct $\Z/p$-orbits, 
neither of which contains any of $x_0,x_1,x_2$. 
Indeed, only finitely many points need to be avoided. 
Choose bisections $W_3,W_4\in\cCO(\cG)$ containing $f_3,f_4$, respectively, 
such that 
\[
A:=r(W_3)=r(W_4)\subset U_{0,0}
\]
and $s(W_3)$, $s(W_4)$ and $\supp(u|A)$ are mutually disjoint. 

For each $k=0,1,2$, let $z_k$ be the multisection of degree $3$ 
generated by $U_{k,0}W_3$ and $U_{k,0}W_4$. 
The elements $h_k^{-1}f_3$ and $h_k^{-1}f_4$ have common range $x_k$ and 
sources $s(f_3)$ and $s(f_4)$, respectively. 
These three points belong to distinct $\Z/p$-orbits. 
Applying the preceding argument to $z_k$ at $x_k$, 
we obtain $P_k\in\cCO(\cG^{(0)})$ such that 
\[
x_k\in P_k\subset U_{k,0}AU_{0,k}
\]
and $\sA(z_k|R)\subset H$ for every nonempty clopen $R\subset P_k$. 
Put 
\[
P:=\bigcap_{k=0}^2U_{0,k}P_kU_{k,0}. 
\]
Then $P\in\cCO(\cG^{(0)})$ and $x\in P\subset A$. 

Let $Q\subset P$ be a nonempty clopen set. 
For every $k=0,1,2$, we have 
$U_{k,0}QU_{0,k}\subset P_k$, and hence 
\[
\sA(z_k|U_{k,0}QU_{0,k})\subset H.
\]
Let $w$ be the multisection of degree $5$ 
generated by $QU_{0,1},QU_{0,2},QW_3,QW_4$. 
Then $u|Q$ is the sub-multisection of $w$ on the components $\{0,1,2\}$, 
while $z_k|U_{k,0}QU_{0,k}$ is, after relabelling, 
the sub-multisection on the components $\{k,3,4\}$. 
Since the sets $\{k,3,4\}$ share two components, 
their alternating groups generate $\sA(w)$, 
as in the proof of Lemma~\ref{AbaruPinH}. 
Hence $\sA(u|Q)\subset\sA(w)\subset H$. 
\end{proof}

\begin{lemma}\label{AuinH}
Let $H\subset\sF(\cH)$ be a subgroup 
satisfying the conditions in Lemma~\ref{AbaruPinH}. 
Then $\sD(\cH)\subset H$. 
\end{lemma}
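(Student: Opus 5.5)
By Theorem~\ref{A=D} applied to $\cH$ with $d=3$, we have $\sD(\cH)=\sA_3(\cH)$. It therefore suffices to show that $\sA(u)\subset H$ for every multisection $u=(U_{i,j})_{i,j=0}^2$ of degree $3$ in $\cH$. This reduces the lemma to a local-to-global covering argument, mirroring the first paragraph of the proof of Lemma~\ref{IfHcontainsA}.

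Fix such a multisection $u$. For each $x\in U_{0,0}$, Lemma~\ref{AuPinH} provides a clopen neighborhood $P_x\subset U_{0,0}$ of $x$ such that $\sA(u|Q)\subset H$ for every nonempty clopen $Q\subset P_x$. By compactness of $U_{0,0}$ we can pick finitely many points $x_1,\dots,x_n$ with $U_{0,0}=\bigcup_k P_{x_k}$. We then refine this cover to a clopen partition $U_{0,0}=\bigsqcup_{m} Q_m$, where each $Q_m$ is nonempty and contained in some $P_{x_k}$. For instance, take $Q_m$ to be the nonempty sets among $P_{x_k}\setminus\bigcup_{l<k}P_{x_l}$. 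Each $Q_m$ is clopen, so the choice of $P_{x_k}$ gives $\sA(u|Q_m)\subset H$ for all $m$.

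It remains to show that $\sA(u)$ is generated by the groups $\sA(u|Q_m)$. One route is to cite \cite[Proposition 3.3]{Ne19ETDS}, exactly as in Lemma~\ref{IfHcontainsA}. A direct check is also easy with the partition. The multisections $u|Q_m$ have pairwise disjoint supports, so their alternating groups commute. Moreover, for each $\sigma\in\mathsf{A}_3$ the element $\theta_{U_\sigma}$ of $\sA(u)$ is the product over $m$ of the corresponding elements of $\sA(u|Q_m)$, because the bisection $U_\sigma$ decomposes along the $Q_m$. Hence $\sA(u)\subset\langle\sA(u|Q_m)\mid m\rangle\subset H$.

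Since $u$ was arbitrary, $H$ contains $\sA_3(\cH)=\sD(\cH)$. The substantive work has already been done in Lemma~\ref{AuPinH}, so no step here should be a serious obstacle. The one point to watch is the hereditary form of that lemma: it applies to every clopen $Q\subset P$, and this is what allows passing to a disjoint refinement without needing the intersection arguments of Lemma~\ref{propertyofP}.
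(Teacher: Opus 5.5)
Your proposal is correct and follows the paper's proof essentially step for step. Like the paper, you reduce to $\sA(u)\subset H$ via Theorem~\ref{A=D}, use the hereditary neighborhoods from Lemma~\ref{AuPinH} and compactness to obtain a clopen partition $U_{0,0}=\bigsqcup_m Q_m$ with $\sA(u|Q_m)\subset H$, and write each element of $\sA(u)$ as the product of its restrictions to the disjoint supports $\supp(u|Q_m)$.
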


\begin{proof}
Suppose that $u=(U_{i,j})_{i,j=0}^2$ is a multisection in $\cH$. 
By Theorem~\ref{A=D}, it suffices to show $\sA(u)\subset H$. 
For any $x\in U_{0,0}$, 
it follows from Lemma~\ref{AuPinH} that 
there exists $P\in\cCO(\cG^{(0)})$ such that $x\in P\subset U_{0,0}$ 
and $\sA(u|Q)\subset H$ holds for any nonempty clopen $Q\subset P$. 
By the compactness of $U_{0,0}$, we can find a partition 
\[
U_{0,0}=\bigsqcup_{i=1}^nQ_i
\]
into nonempty clopen sets $Q_i$ such that $\sA(u|Q_i)\subset H$. 
Each element of $\sA(u)$ is the product of its restrictions 
to the disjoint sets $\supp(u|Q_i)$, so $\sA(u)\subset H$. 
\end{proof}

We can now prove the dichotomy announced above. 
The main task is to obtain an alternating group on two layers 
from an element outside the normalizer. 

\begin{proposition}\label{prop:subgroupoid}
Let $\cG\subset\cH=\cG\rtimes_\lambda\Z/p$ be 
as in Setting~\ref{subgroupoidsetting}. 
Let $H\subset\sF(\cH)$ be a subgroup containing $\sD(\cG)$. 
Then exactly one of the following holds. 
\begin{enumerate}
\item $H$ is contained in $\Nor(\sF(\cH),\sD(\cG))$, that is, 
$H$ normalizes $\sD(\cG)$. 
\item $H$ contains $\sD(\cH)$. 
\end{enumerate}
\end{proposition}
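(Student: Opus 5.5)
The two alternatives are mutually exclusive. If $H\supset\sD(\cH)$ and $H$ normalizes $\sD(\cG)$, then $\sD(\cH)\subset\Nor(\sF(\cH),\sD(\cG))=\sF(\cG)\rtimes_\lambda\Z/p$ by Lemma~\ref{normalizer=semidirectproduct}. This contradicts Lemma~\ref{strictness:subgroupoid}.

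For the dichotomy itself, suppose $H$ does not normalize $\sD(\cG)$. Then there is $\alpha=\theta_U\in H$ outside $\sF(\cG)\rtimes_\lambda\Z/p$. By Lemma~\ref{normalizer=semidirectproduct}, $\alpha$ does not preserve the partition into $\cG$-orbits. By Lemma~\ref{AuinH}, it suffices to produce a small multisection $v$ in $\cG$ and some $b\neq0$ with $\sA(\bar v^{(b)})\subset H$.

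Write $U=\bigsqcup_a U_a\times\{a\}$ and set $A_a:=\lambda_{-a}(s(U_a))$. As in the proof of Lemma~\ref{normalizer=semidirectproduct}, at least two of the $A_a$ are nonempty. Choose $a\neq b'$ and a point $x\in A_a$ with trivial $\cH$-isotropy, together with a point $y\in A_{b'}$ in the $\cG$-orbit of $x$. Composing $\alpha$ on both sides with suitable elements of $\sD(\cG)$ does not change $H$, and it lets us arrange the following normal form. Near $x$ the element $\alpha$ acts by a $\cG$-arrow (layer $0$), and near $y$ it acts by an arrow in layer $b:=b'-a\neq0$. To get this, first multiply by an element of $\sD(\cG)$ moving $y$ next to $x$; Lemma~\ref{elementofDG}~(2) supplies such an element with prescribed local germs. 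Then multiply by $\lambda$-twisted corrections implemented by elements of $\sD(\cG)$. Using Lemma~\ref{elementofDG}~(1), we may moreover assume $\alpha$ fixes $x$, and that it maps a point $y$ of the $\cG$-orbit of $x$ to $\lambda_b$ of a nearby point. This mirrors the reduction at the start of Lemma~\ref{technical}.

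Next we form a commutator $\gamma:=\alpha^{-1}\tau\alpha\tau'$, with $\tau,\tau'\in\sD(\cG)$ transpositions or $3$-cycles from Lemmas~\ref{elementofDG} and \ref{threecycle}, supported in small clopen neighbourhoods of $x$, $y$ and their images. A local computation, modelled on Cases~1--3 of Lemma~\ref{technical}, should show that $\gamma$ is a $3$-cycle, or a double transposition, in $\sA(\bar v^{(b)})$ for a small multisection $v$ built from the chosen bisections. The key feature is that $\gamma$ moves some component of layer $0$ into layer $b$. Since $\sA(v)$ and $\sA(\lambda_{-b}(v))$ are contained in $\sD(\cG)\subset H$, the element $\gamma$ together with these two groups generates a subgroup of $\sA(\bar v^{(b)})\cong\sA_6$. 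This subgroup acts transitively and contains a $3$-cycle, or an element linking the layers, and it contains the alternating groups on each layer. It is therefore all of $\sA(\bar v^{(b)})$: alternating groups on overlapping sets generate the alternating group on their union, and $\sA_6$ is simple. Lemma~\ref{AuinH} then gives $\sD(\cH)\subset H$.

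The main obstacle is this middle step. We must choose the local data so that the commutator is genuinely supported on $\supp(\bar v^{(b)})$, with disjointness of all $\lambda$-translates of the supports, which is smallness. We must also handle separately the cases according to where $\alpha^{-1}$ sends the relevant points, exactly as in the three cases of Lemma~\ref{technical}. Freeness of $\lambda$ and triviality of isotropy at $x$ guarantee the needed distinctness of the finitely many points involved.
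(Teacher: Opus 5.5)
Your overall architecture matches the paper's: mutual exclusivity from Lemmas~\ref{normalizer=semidirectproduct} and \ref{strictness:subgroupoid}, reduction via Lemma~\ref{AuinH} to finding a small $v$ and $b\neq0$ with $\sA(\bar v^{(b)})\subset H$, normalization by Lemma~\ref{elementofDG}, and a commutator with a transposition from $\sD(\cG)$. However, your normalization step fails as stated.

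Multiplying $\alpha$ on either side by elements of $\sF(\cG)$ never changes which layers occur in its full bisection, since $(w_1,0)(u,a)(w_2,0)=(w_1u\lambda_a(w_2),a)$. Suppose the full bisection of $\alpha$ has empty layer-$0$ part. This happens for $p\geq3$, e.g.\ for $\lambda_c\beta_0$ with $\beta_0$ using layers $0,b$ and $c\notin\{0,-b\}$. Then no such composition gives a layer-$0$ germ at $x$, and ``$\lambda$-twisted corrections implemented by elements of $\sD(\cG)$'' are layer-preserving as well.

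The missing idea is the paper's first step. Pick $z\in A_a$ with trivial $\cH$-isotropy and $z'\in A_c$ in its $\cG$-orbit. By Lemma~\ref{threecycle}, take $\delta\in\sD(\cG)$ with $\delta(z)=z'$ and $\supp(\delta)\neq\cG^{(0)}$, and pass to $\beta:=\alpha\delta\alpha^{-1}\in H$. It has proper support, hence a nonempty layer-$0$ part. It also maps $\alpha(z)$ to $\alpha(z')$, which lie in different $\cG$-orbits, so it has a layer-$b$ part with $b\neq0$. Alternatively, note that $\alpha^{-1}\sD(\cG)\alpha$ is unchanged when $\alpha$ is replaced by $\lambda_c\alpha$. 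Relabelling is then harmless provided you only ever use conjugates of $\sD(\cG)$ by $\alpha$, but this must be said and respected.

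Second, the crux is only asserted (``should show''): producing an element of $\sA(\bar v^{(b)})$ that links the layers. You also anticipate a three-case analysis as in Lemma~\ref{technical}, but none is needed once the normalization is done properly.

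Choose $g\in U_b$ and $g'\in U_0$ whose sources $x,x'$ lie in one $\cG$-orbit but in distinct $\Z/p$-orbits. Use Lemma~\ref{elementofDG}~(2) to replace $\beta$ by $\theta_V^{-1}\beta$ with $\theta_V\in\sD(\cG)$. Now $\beta$ is the identity near $x'$ and agrees with $\lambda_b$ on $\lambda_{-b}$ of a neighbourhood of $x$. Hence $\beta^{-1}$ is explicit at both ends of a transposition $\tau_{W_1}\in\sD(\cG)$ between small neighbourhoods of $x$ and $x'$ (Lemma~\ref{elementofDG}~(1)). The element $\tau_{W_1}\beta^{-1}\tau_{W_1}\beta$ is the product of the exchanges $r(W_1)\leftrightarrow s(W_1)$ and $s(W_1)\leftrightarrow\lambda_{-b}(r(W_1))$. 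That is the $3$-cycle on the components $(0,0),(1,0),(0,b)$ of $\bar v^{(b)}$, where $v$ is generated by $W_1$ and a second bisection $W_2$ with $r(W_2)=r(W_1)$ and source near a point outside the $\Z/p$-orbits of $x$ and $x'$.

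The cases in Lemma~\ref{technical} arise only because $\theta_U^{-1}(x')$ is uncontrolled there. Mind the orientation, though. With your normal form ($\alpha$ fixing $x$ and sending $y$ to $\lambda_b(y)$), the element $\alpha^{-1}\tau\alpha$ would need $\tau\in\sD(\cG)$ to exchange $x$ and $\lambda_b(y)$, which lie in different $\cG$-orbits. You would have to use $\alpha\tau\alpha^{-1}$ instead. After these repairs your generation argument for $\sA(\bar v^{(b)})$ goes through.
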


\begin{proof}
By Lemmas~\ref{normalizer=semidirectproduct} and 
\ref{strictness:subgroupoid}, the two alternatives are mutually exclusive. 
Suppose that $H$ is not contained in $\Nor(\sF(\cH),\sD(\cG))$, 
and choose $\gamma\in H\setminus(\sF(\cG)\rtimes_\lambda\Z/p)$. 

We first find an element of $H\setminus\sF(\cG)$ with proper support. 
Write the full bisection implementing $\gamma$ as 
$\bigsqcup_a T_a\times\{a\}$ and put 
$A_a:=\lambda_{-a}(s(T_a))$. 
There exist distinct $a,c\in\Z/p$ with $A_a,A_c\neq\emptyset$. 
Choose $z\in A_a$ with trivial isotropy in $\cH$ and 
$z'\in A_c$ in the $\cG$-orbit of $z$. 
As in the proof of Lemma~\ref{normalizer=semidirectproduct}, 
$\lambda_a(z)$ and $\lambda_c(z)$ lie in different $\cG$-orbits. 
Hence so do $\gamma(z)$ and $\gamma(z')$. 
By Lemma~\ref{threecycle}, a sufficiently small multisection of degree $3$ 
in $\cG$ gives 
$\delta\in\sD(\cG)$ with $\delta(z)=z'$ and 
$\supp(\delta)\neq\cG^{(0)}$. 
Then $\beta:=\gamma\delta\gamma^{-1}\in H$ has proper support and 
does not belong to $\sF(\cG)$. 
Write its full bisection as 
\[
U=\bigsqcup_{a\in\Z/p}U_a\times\{a\}. 
\]
Since $\beta$ fixes a nonempty clopen set, $U_0\neq\emptyset$; 
since $\beta\notin\sF(\cG)$, also $U_b\neq\emptyset$ for some $b\neq0$. 

By minimality, choose $g\in U_b$ and $g'\in U_0$ such that 
$x:=s(g)$ and $x':=s(g')$ lie in the same $\cG$-orbit 
but in distinct $\Z/p$-orbits. 
Their ranges are distinct, since the sets $r(U_a)$ are disjoint. 
Small compact open neighborhoods of $g$ and $g'$ form a bisection 
with proper source and range. 
By Lemma~\ref{elementofDG}~(2), this bisection extends to a full 
bisection $V\subset\cG$ with $\theta_V\in\sD(\cG)$. 
Replacing $U$ by $V^{-1}U$ and $\beta$ by $\theta_V^{-1}\beta$, 
we may therefore assume that $x\in U_b$ and $x'\in U_0$ are units. 

Choose $g_1,g_2\in\cG$ with common range $x$ such that 
$s(g_1)=x'$ and $s(g_2)$ belongs to neither 
the $\Z/p$-orbit of $x$ nor that of $x'$. 
Choose sufficiently small compact open bisections $B_1,B_2\subset\cG$ 
containing $g_1,g_2$, respectively, with 
$r(B_1)=r(B_2)\subset U_b$ and $s(B_1)\subset U_0$, 
so that the multisection they generate is small. 
Applying the construction in the proof of Lemma~\ref{elementofDG}~(1) 
inside $B_1$, choose a compact open bisection $W_1\subset B_1$ 
containing $g_1$ with $\tau_{W_1}\in\sD(\cG)$, and put 
$W_2:=r(W_1)B_2$. 
Let $v$ be the small multisection generated by $W_1,W_2$. 
The element 
\[
\alpha:=\tau_{W_1}\beta^{-1}\tau_{W_1}\beta\in H 
\]
is the $3$-cycle on the components $(0,0)$, $(1,0)$ and $(0,b)$ 
of $\bar v^{(b)}$. 
Indeed, $\beta$ agrees with $\lambda_b$ on $\lambda_{-b}(r(W_1))$ 
and with the identity on $s(W_1)$. 
Thus $\beta^{-1}\tau_{W_1}\beta$ exchanges 
$s(W_1)$ and $\lambda_{-b}(r(W_1))$, 
while $\tau_{W_1}$ exchanges $r(W_1)$ and $s(W_1)$. 

The alternating groups on each of the two layers are 
$\sA(v)$ and $\sA(\lambda_{-b}(v))$, both contained in $\sD(\cG)$. 
Together with $\alpha$ they generate $\sA(\bar v^{(b)})$. 
To see this, label the layers by $\{0,1,2\}$ and $\{3,4,5\}$. 
The cycles $(0\ 1\ 2)$ and $(0\ 1\ 3)$ generate 
the alternating group on $\{0,1,2,3\}$; its conjugates by 
$(3\ 4\ 5)$ then generate the alternating group on all six points. 
Thus $\sA(\bar v^{(b)})\subset H$, and Lemma~\ref{AuinH} 
gives $\sD(\cH)\subset H$. 
\end{proof}

\begin{corollary}\label{cor:normalizermaximal}
The normalizer $\Nor(\sD(\cH),\sD(\cG))$ is 
the unique maximal subgroup of $\sD(\cH)$ containing $\sD(\cG)$. 
\end{corollary}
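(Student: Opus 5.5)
The plan is to mirror the proof of Corollary~\ref{cor:factornormalizer}, with Proposition~\ref{prop:subgroupoid} supplying the dichotomy. Put $M:=\Nor(\sD(\cH),\sD(\cG))$. Since $\sD(\cG)\subset\sD(\cH)$, we have
\[
M=\Nor(\sF(\cH),\sD(\cG))\cap\sD(\cH).
\]
By Lemma~\ref{normalizer=semidirectproduct} this equals $(\sF(\cG)\rtimes_\lambda\Z/p)\cap\sD(\cH)$. In particular $M$ contains $\sD(\cG)$.

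The first step is to show that $M$ is a proper subgroup of $\sD(\cH)$. This follows directly from Lemma~\ref{strictness:subgroupoid}: that lemma gives $\sD(\cH)\not\subset\sF(\cG)\rtimes_\lambda\Z/p$, and hence $\sD(\cH)\not\subset M$.

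The second step is to show that every proper subgroup of $\sD(\cH)$ containing $\sD(\cG)$ lies in $M$. Let $K$ be a subgroup with $\sD(\cG)\subset K\subsetneq\sD(\cH)$. Apply Proposition~\ref{prop:subgroupoid} with $H:=K$. Alternative (2) would give $\sD(\cH)\subset K$, contradicting properness, so alternative (1) must hold. Then $K$ normalizes $\sD(\cG)$, and since $K\subset\sD(\cH)$ we get $K\subset M$.

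Combining the two steps settles the claim. Take any subgroup $K'$ with $M\subsetneq K'\subset\sD(\cH)$. Then $K'$ contains $\sD(\cG)$ but is not contained in $M$, so by the second step it cannot be proper; that is, $K'=\sD(\cH)$. Hence $M$ is maximal in $\sD(\cH)$. Moreover, any maximal subgroup of $\sD(\cH)$ containing $\sD(\cG)$ is proper, so by the second step it lies in $M$, and by maximality it equals $M$. This gives uniqueness.

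I do not expect any real obstacle here, since all the substantive work is done in Proposition~\ref{prop:subgroupoid}. The only point to be careful about is that the proposition is stated for subgroups of $\sF(\cH)$, while here we apply it to subgroups of $\sD(\cH)$. This is harmless, because $\sD(\cH)\subset\sF(\cH)$, and the normalizer in $\sD(\cH)$ is simply the normalizer in $\sF(\cH)$ intersected with $\sD(\cH)$.
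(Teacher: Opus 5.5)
Your proposal is correct and follows essentially the same route as the paper: properness of the normalizer from Lemmas~\ref{normalizer=semidirectproduct} and~\ref{strictness:subgroupoid}, then the dichotomy of Proposition~\ref{prop:subgroupoid} to show every proper subgroup of $\sD(\cH)$ containing $\sD(\cG)$ lies in the normalizer. Your extra spelling-out of why this yields both maximality and uniqueness, and the remark that the proposition applies because $\sD(\cH)\subset\sF(\cH)$, are fine.
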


\begin{proof}
By Lemmas~\ref{normalizer=semidirectproduct} and 
\ref{strictness:subgroupoid}, 
$\Nor(\sD(\cH),\sD(\cG))$ is a proper subgroup of $\sD(\cH)$. 
By Proposition~\ref{prop:subgroupoid}, every proper subgroup 
of $\sD(\cH)$ containing $\sD(\cG)$ is contained in this normalizer. 
This proves both uniqueness and maximality. 
\end{proof}

We now determine when the normalizer in $\sF(\cH)$ is maximal, 
and describe the maximal subgroup 
in Corollary~\ref{cor:normalizermaximal} more explicitly. 
Let $\iota:\cG\to\cH$ and $\iota_*:\sF(\cG)\to\sF(\cH)$ denote 
the inclusion maps. 
Applying Theorem~\ref{Li} to $\cG$ and to $\cH$, 
we obtain the exact rows of the following diagram: 
\[
\xymatrix@M=8pt{
H_0(\cH,\Z/2) \ar[r]^-{\zeta_\cH} &
H_1(\sF(\cH)) \ar[r]^-{\eta_\cH} & H_1(\cH) \ar[r] & 0 \\
H_0(\cG,\Z/2) \ar[r]^-{\zeta_\cG} \ar[u]_-{H_0(\iota)} &
H_1(\sF(\cG)) \ar[r]^-{\eta_\cG} \ar[u]_-{H_1(\iota_*)} &
H_1(\cG) \ar[r] \ar[u]_-{H_1(\iota)} & 0.
}
\]
Since $\cG$ and $\cH$ have the same unit space, 
the homomorphism $H_0(\iota)$ is surjective. 
The diagram is commutative by the same argument 
as in Remark~\ref{rem:Li-naturality}. 
A diagram chase therefore shows that 
$H_1(\iota_*)$ is surjective if and only if $H_1(\iota)$ is surjective. 

As observed above,
each $\lambda_a$ is a product of transpositions $\tau_U$ with $U\in\cCO(\cH)$, 
and the class of such $\tau_U$ in $H_1(\sF(\cH))$ is $\zeta_\cH([1_{r(U)}])$. 
Hence $[\lambda_a]$ belongs to $\Ima\zeta_\cH$. 
Since $H_0(\iota)$ is surjective and the left square commutes, 
$\Ima\zeta_\cH$ is contained in $\Ima H_1(\iota_*)$, and we obtain 
\begin{equation}\label{eq:lambdaabelianization:subgroupoid}
[\lambda_a]\in\Ima H_1(\iota_*)
\qquad(a\in\Z/p). 
\end{equation}
In particular, $\lambda_a\in\sF(\cG)\sD(\cH)$ for every $a\in\Z/p$. 

\begin{theorem}\label{thm1:subgroupoid}
Let $\cG\subset\cH=\cG\rtimes_\lambda\Z/p$ be 
as in Setting~\ref{subgroupoidsetting}. 
The following conditions are equivalent. 
\begin{enumerate}
\item $\Nor(\sF(\cH),\sD(\cG))$ $(=\sF(\cG)\rtimes_\lambda\Z/p)$ 
is a maximal subgroup of $\sF(\cH)$. 
\item $H_1(\iota_*):H_1(\sF(\cG))\to H_1(\sF(\cH))$ is surjective. 
\item $H_1(\iota):H_1(\cG)\to H_1(\cH)$ is surjective. 
\end{enumerate}
\end{theorem}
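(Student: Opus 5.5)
The equivalence (2)$\Leftrightarrow$(3) was already established just before the statement by the diagram chase in Li's exact sequence, using that $H_0(\iota)$ is surjective. It therefore remains to prove (1)$\Leftrightarrow$(2). Throughout, put $N:=\sF(\cG)\rtimes_\lambda\Z/p=\Nor(\sF(\cH),\sD(\cG))$ (Lemma~\ref{normalizer=semidirectproduct}).

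For (2)$\Rightarrow$(1), take $\gamma\in\sF(\cH)\setminus N$ and let $H:=\langle N,\gamma\rangle$. Since $H$ contains $\sD(\cG)$ and does not normalize it, Proposition~\ref{prop:subgroupoid} gives $\sD(\cH)\subset H$. Surjectivity of $H_1(\iota_*)$ means $\sF(\cH)=\sF(\cG)\sD(\cH)$. As $\sF(\cG)\subset N\subset H$, we get $H=\sF(\cH)$. Since $N\neq\sF(\cH)$ by Lemma~\ref{strictness:subgroupoid}, $N$ is maximal.

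For (1)$\Rightarrow$(2), I argue by contraposition. Suppose $H_1(\iota_*)$ is not surjective and put $K:=\sF(\cG)\sD(\cH)$, the preimage of $\Ima H_1(\iota_*)$ in $\sF(\cH)$. This is a proper subgroup, normal in $\sF(\cH)$. By \eqref{eq:lambdaabelianization:subgroupoid}, every $\lambda_a$ lies in $K$, so $N\subset K$. By Lemma~\ref{strictness:subgroupoid}, $\sD(\cH)\not\subset N$, so $N\subsetneq K\subsetneq\sF(\cH)$ and $N$ is not maximal.

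The only nonformal input is Proposition~\ref{prop:subgroupoid}, which is already available. The one point to check is that $\lambda_a\in K$; this is exactly the consequence of \eqref{eq:lambdaabelianization:subgroupoid} recorded before the statement. So no step here is a serious obstacle.
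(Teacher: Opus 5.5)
Your proposal is correct and follows essentially the same route as the paper. Both proofs get (2)$\Leftrightarrow$(3) from the diagram chase. Both get (2)$\Rightarrow$(1) from Proposition~\ref{prop:subgroupoid} together with $\sF(\cH)=\sF(\cG)\sD(\cH)$. Both get (1)$\Rightarrow$(2) by placing the normalizer strictly inside $\sF(\cG)\sD(\cH)$, using \eqref{eq:lambdaabelianization:subgroupoid} and Lemma~\ref{strictness:subgroupoid}.
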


\begin{proof}
The equivalence of (2) and (3) follows from the diagram above. 

(1)$\implies$(2)\:
By Lemma~\ref{normalizer=semidirectproduct} and 
\eqref{eq:lambdaabelianization:subgroupoid}, 
\[
\Nor(\sF(\cH),\sD(\cG))
\subset\sF(\cG)\sD(\cH)\subset\sF(\cH). 
\]
The first inclusion is strict by Lemma~\ref{strictness:subgroupoid}. 
Thus (1) implies $\sF(\cG)\sD(\cH)=\sF(\cH)$, which is equivalent to (2). 

(2)$\implies$(1)\:
Let $H\subset\sF(\cH)$ be a subgroup 
properly containing $\Nor(\sF(\cH),\sD(\cG))$. 
By Proposition~\ref{prop:subgroupoid}, $H$ contains $\sD(\cH)$. 
Since $\sF(\cG)\subset H$ and $H_1(\iota_*)$ is surjective, 
we obtain $H=\sF(\cH)$.
The normalizer is a proper subgroup 
by Lemmas~\ref{normalizer=semidirectproduct} and
\ref{strictness:subgroupoid}, and so it is maximal. 
\end{proof}

By Corollary~\ref{cor:normalizermaximal}, 
$\Nor(\sD(\cH),\sD(\cG))$ is maximal 
without any additional homological assumption. 
We next identify this normalizer in terms of $\sD(\cG)$, 
using the canonical isomorphism 
\begin{equation}\label{eq:abelianizationkernel:subgroupoid}
\left(\sF(\cG)\cap\sD(\cH)\right)/\sD(\cG)\cong\Ker H_1(\iota_*). 
\end{equation}

\begin{theorem}\label{thm2:subgroupoid}
Let $\cG\subset\cH=\cG\rtimes_\lambda\Z/p$ be 
as in Setting~\ref{subgroupoidsetting}. 
Suppose that $\lambda_a\in\sD(\cH)$ for every $a\in\Z/p$ 
(this holds automatically when $p\geq3$). 
Then 
\[
\Nor(\sD(\cH),\sD(\cG))
=\left(\sF(\cG)\cap\sD(\cH)\right)\rtimes_\lambda\Z/p. 
\]
Moreover, the following conditions are equivalent. 
\begin{enumerate}
\item $H_1(\iota_*):H_1(\sF(\cG))\to H_1(\sF(\cH))$ is injective. 
\item $\Nor(\sD(\cH),\sD(\cG))=\sD(\cG)\rtimes_\lambda\Z/p$. 
\item $\sD(\cG)\rtimes_\lambda\Z/p$ is a maximal subgroup of $\sD(\cH)$. 
\end{enumerate}
\end{theorem}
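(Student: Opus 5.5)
First I identify the normalizer. By Lemma~\ref{normalizer=semidirectproduct}, $\Nor(\sF(\cH),\sD(\cG))=\sF(\cG)\rtimes_\lambda\Z/p$. Intersecting with $\sD(\cH)$ gives $\Nor(\sD(\cH),\sD(\cG))=(\sF(\cG)\rtimes_\lambda\Z/p)\cap\sD(\cH)$. Each element of the semidirect product can be written uniquely as $\alpha\lambda_a$ with $\alpha\in\sF(\cG)$. Since $\lambda_a\in\sD(\cH)$ by hypothesis, $\alpha\lambda_a\in\sD(\cH)$ if and only if $\alpha\in\sF(\cG)\cap\sD(\cH)$. Conjugation by $\lambda_a$ preserves both $\sF(\cG)$ and $\sD(\cH)$, so $\sF(\cG)\cap\sD(\cH)$ is $\lambda$-invariant. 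This gives the internal semidirect product decomposition $(\sF(\cG)\cap\sD(\cH))\rtimes_\lambda\Z/p$.

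For the automatic case $p\geq3$, the paper already observed that the cyclic shift $b\mapsto b+c$ is an even permutation when $p$ is odd. Hence $\lambda_c\in\sA(u)\subset\sD(\cH)$, so nothing more is needed there.

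Next come the equivalences, with \eqref{eq:abelianizationkernel:subgroupoid} as the key input.

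\textbf{(1)$\iff$(2).} Condition (1) says $\Ker H_1(\iota_*)=0$, which means $\sF(\cG)\cap\sD(\cH)=\sD(\cG)$. By the decomposition just proved, this holds if and only if the normalizer equals $\sD(\cG)\rtimes_\lambda\Z/p$. For the converse direction I need uniqueness of the factor: if the two semidirect products coincide, then comparing elements $\alpha\lambda_0$ with $\alpha\in\sF(\cG)$ gives $\sF(\cG)\cap\sD(\cH)=\sD(\cG)$. This uses that $\lambda_a\notin\sF(\cG)$ for $a\neq0$, which holds by effectiveness.

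\textbf{(2)$\implies$(3).} This is immediate from Corollary~\ref{cor:normalizermaximal}.

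\textbf{(3)$\implies$(2).} Suppose $\sD(\cG)\rtimes_\lambda\Z/p$ is maximal in $\sD(\cH)$. It is contained in $\Nor(\sD(\cH),\sD(\cG))$, because $\lambda_a\in\sD(\cH)$ normalizes $\sD(\cG)$. The normalizer is a proper subgroup by Corollary~\ref{cor:normalizermaximal}, so maximality forces equality.

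The only mildly delicate point is the bookkeeping in the semidirect decomposition: checking that the intersection with $\sD(\cH)$ splits coordinatewise, and that the uniqueness of the $\lambda_a$-component holds. Both rely only on $\lambda_a\in\sD(\cH)$ and on effectiveness. I do not expect a genuine obstacle, since the heavy lifting is in Proposition~\ref{prop:subgroupoid} and Corollary~\ref{cor:normalizermaximal}.
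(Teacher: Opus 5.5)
Your proposal is correct and follows the paper's proof essentially step by step. You intersect Lemma~\ref{normalizer=semidirectproduct} with $\sD(\cH)$ using $\lambda_a\in\sD(\cH)$, derive (1)$\iff$(2) from \eqref{eq:abelianizationkernel:subgroupoid}, and obtain (2)$\iff$(3) from Corollary~\ref{cor:normalizermaximal} together with the properness of the normalizer.
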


\begin{proof}
The displayed identity follows 
from Lemma~\ref{normalizer=semidirectproduct}, 
since $\gamma\lambda_a\in\sD(\cH)$ if and only if 
$\gamma\in\sD(\cH)$ for $\gamma\in\sF(\cG)$. 
Together with \eqref{eq:abelianizationkernel:subgroupoid}, 
it gives the equivalence of (1) and (2). 
Corollary~\ref{cor:normalizermaximal} gives (2)$\implies$(3). 
Conversely, if (3) holds, then 
\[
\sD(\cG)\rtimes_\lambda\Z/p
\subset\Nor(\sD(\cH),\sD(\cG))\subsetneq\sD(\cH)
\]
forces equality in the first inclusion, proving (2). 
\end{proof}

It remains to describe the normalizer when $p=2$ and 
the nontrivial element of the action does not belong to $\sD(\cH)$. 
In this case, it can be corrected by an element of $\sF(\cG)$. 

\begin{theorem}\label{thm3:subgroupoid}
Let $\cG\subset\cH=\cG\rtimes_\lambda\Z/2$ be 
as in Setting~\ref{subgroupoidsetting}, 
and let $a\in\Z/2$ be the nontrivial element. 
Suppose that $\lambda_a\notin\sD(\cH)$. 
Then there exists $\beta\in\sF(\cG)$ such that $\beta\lambda_a\in\sD(\cH)$. 
If
\[
H_1(\iota_*):H_1(\sF(\cG))\longrightarrow H_1(\sF(\cH))
\]
is injective, then, for any such $\beta$, the set 
\[
M:=\sD(\cG)\sqcup\sD(\cG)\beta\lambda_a
\]
coincides with $\Nor(\sD(\cH),\sD(\cG))$. 
In particular, $M$ is independent of the choice of $\beta$, 
is a maximal subgroup of $\sD(\cH)$, 
and contains $\sD(\cG)$ as a normal subgroup of index $2$. 
\end{theorem}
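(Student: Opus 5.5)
The plan has three steps: find $\beta$, identify the normalizer with $M$, and read off maximality and index from Corollary~\ref{cor:normalizermaximal}.

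\textbf{Existence of $\beta$.} By \eqref{eq:lambdaabelianization:subgroupoid}, the class $[\lambda_a]$ lies in $\Ima H_1(\iota_*)$, so $\lambda_a\in\sF(\cG)\sD(\cH)$. Write $\lambda_a=\beta_0\delta$ with $\beta_0\in\sF(\cG)$ and $\delta\in\sD(\cH)$. Then $\beta:=\beta_0^{-1}$ satisfies $\beta\lambda_a=\delta\in\sD(\cH)$. This uses no injectivity.

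\textbf{The normalizer equals $M$.} Lemma~\ref{normalizer=semidirectproduct} gives
\[
\Nor(\sD(\cH),\sD(\cG))=\left(\sF(\cG)\rtimes_\lambda\Z/2\right)\cap\sD(\cH).
\]
Every element of $\sF(\cG)\rtimes_\lambda\Z/2$ is uniquely either $\gamma$ or $\gamma\beta\lambda_a$ with $\gamma\in\sF(\cG)$: write $\gamma'\lambda_a=(\gamma'\beta^{-1})\beta\lambda_a$. Since $\beta\lambda_a\in\sD(\cH)$, both forms lie in $\sD(\cH)$ exactly when $\gamma\in\sF(\cG)\cap\sD(\cH)$. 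So the normalizer is
\[
\left(\sF(\cG)\cap\sD(\cH)\right)\sqcup\left(\sF(\cG)\cap\sD(\cH)\right)\beta\lambda_a .
\]
By \eqref{eq:abelianizationkernel:subgroupoid}, injectivity of $H_1(\iota_*)$ gives $\sF(\cG)\cap\sD(\cH)=\sD(\cG)$. Hence the normalizer equals $M$, and $M$ is independent of $\beta$. The union is disjoint because $\lambda_a\notin\sF(\cG)$, as noted before Lemma~\ref{normalizer=semidirectproduct}.

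\textbf{Remaining claims.} Maximality of $M$ in $\sD(\cH)$ follows from Corollary~\ref{cor:normalizermaximal}. $M$ is a group because it is a normalizer. $\sD(\cG)$ is normal in $M$ by definition of the normalizer, and it has index $2$ because the union above is disjoint.

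The only subtle point is the existence of $\beta$. The case distinction $\lambda_a\notin\sD(\cH)$ is not actually needed for existence; it only excludes the choice $\beta=1$ and ensures $\beta\notin\sD(\cG)$. No further obstacle is expected.
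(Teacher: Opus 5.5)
Your proposal is correct and follows essentially the same route as the paper: you obtain $\beta$ from \eqref{eq:lambdaabelianization:subgroupoid}, intersect the description of Lemma~\ref{normalizer=semidirectproduct} with $\sD(\cH)$, use \eqref{eq:abelianizationkernel:subgroupoid} and injectivity to get $\sF(\cG)\cap\sD(\cH)=\sD(\cG)$, and read off maximality from Corollary~\ref{cor:normalizermaximal}. Your side remark is also accurate: the hypothesis $\lambda_a\notin\sD(\cH)$ is not needed for the existence of $\beta$, and it only forces $\beta\notin\sD(\cG)$.
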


\begin{proof}
By \eqref{eq:lambdaabelianization:subgroupoid}, 
we may choose $\beta\in\sF(\cG)$ 
whose class in $H_1(\sF(\cH))$ is $-[\lambda_a]$. 
Then $\beta\lambda_a\in\sD(\cH)$. 

Assume that $H_1(\iota_*)$ is injective. 
By \eqref{eq:abelianizationkernel:subgroupoid}, 
$\sF(\cG)\cap\sD(\cH)=\sD(\cG)$. 
For $\gamma\in\sF(\cG)$, we have 
\[
\gamma\lambda_a\in\sD(\cH)
\iff\gamma\beta^{-1}\in\sD(\cH)
\iff\gamma\beta^{-1}\in\sD(\cG). 
\]
Taking the intersection of the equality 
in Lemma~\ref{normalizer=semidirectproduct} with $\sD(\cH)$ 
therefore gives 
\[
\Nor(\sD(\cH),\sD(\cG))=\sD(\cG)\sqcup\sD(\cG)\beta\lambda_a=M. 
\]
This also proves that $M$ is independent of $\beta$ 
and that $\sD(\cG)$ is normal in $M$. 
The two cosets are distinct, 
since $\beta\lambda_a\notin\sF(\cG)$, so the index is $2$. 
Finally, maximality follows from Corollary~\ref{cor:normalizermaximal}. 
\end{proof}

\begin{remark}\label{rem:injectivitycriterion}
Suppose that $\zeta_\cG$ is zero and 
$H_1(\iota):H_1(\cG)\to H_1(\cH)$ is injective. 
The commutative diagram above shows that $H_1(\iota_*)$ is injective. 
Moreover, surjectivity of $H_0(\iota)$ gives $\zeta_\cH=0$, 
so $\lambda_a\in\sD(\cH)$ for every $a\in\Z/p$. 
Thus Theorem~\ref{thm2:subgroupoid} applies, even when $p=2$, and 
the normalizer is the maximal subgroup $\sD(\cG)\rtimes_\lambda\Z/p$. 
\end{remark}

\section{Examples II}\label{sec:examplesII}

We first apply Section~\ref{sec:subgroupoid} to minimal $\Z$ actions, 
where maximal subgroups of derived groups can arise from nonsplit extensions. 
For SFT groupoids, we express the criteria in terms of finite matrices. 
We conclude with an orbit-breaking example 
outside the semidirect product setting.

\subsection{Finite order automorphisms of minimal $\Z$ actions}

The following realization theorem provides finite order automorphisms 
that act trivially on zeroth homology \cite[Theorem 3.7]{Ma02JMSJ}. 

\begin{theorem}
Let $(D,D^+,u)$ be a countable unital simple dimension group 
with $D\not\cong\Z$, and let $p\geq2$ be an integer. 
Suppose that $D/pD$ is isomorphic to $\Z/p$ as an abelian group 
and $u$ is divisible by $p$. 
There exist an ample groupoid $\cG:=X\rtimes_\phi\Z$ of 
a minimal $\Z$ action on a Cantor set 
and an action $\lambda:\Z/p\curvearrowright\cG$ 
such that the following hold. 
\begin{enumerate}
\item $H_0(\cG)$ is isomorphic to $D$ as a unital dimension group. 
\item $\lambda$ arises from 
a homeomorphism on $\cG^{(0)}=X$ of order $p$ commuting with $\phi$. 
\item The induced action 
$H_0(\lambda):\Z/p\curvearrowright H_0(\cG)$ is trivial. 
\end{enumerate}
\end{theorem}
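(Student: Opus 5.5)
The strategy is to realize $\lambda$ as the deck transformation of a $\Z/p$-skew-product extension of an auxiliary Cantor minimal system, and to choose that system and its cocycle so that the extension has dimension group $D$. Concretely, I would build a Cantor minimal system $\psi:\Z\curvearrowright Y$ and a continuous cocycle $c:Y\to\Z/p$. Then $X:=Y\times\Z/p$ with $\phi(y,a):=(\psi(y),a+c(y))$, and $\lambda_b(y,a):=(y,a+b)$. The map $\lambda_b$ is a homeomorphism of order $p$ commuting with $\phi$, which is assertion (2). Its induced action on $\cG=X\rtimes_\phi\Z$ is by groupoid automorphisms, and it is free on $X$. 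Minimality of $\phi$ amounts to $c$ not being cohomologous to a cocycle with values in a proper subgroup of $\Z/p$. Since $p$ is prime, it suffices that $c$ is not a coboundary.

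The first step is to produce $(Y,\psi,c)$ from $(D,D^+,u)$, using Bratteli--Vershik models.
\begin{itemize}
\item Write $D=\varinjlim(\Z^{k_n},A_n)$ with strictly positive matrices, and telescope so that every $A_n$ reduces modulo $p$ to a rank-one matrix. This is possible because $D/pD\cong\Z/p$.
\item Since $u\in pD$, the heights of the Kakutani--Rokhlin towers can be taken divisible by $p$ from some level on.
\item Build a properly ordered Bratteli diagram in which, over each vertex, the incoming edges are grouped into $p$ consecutive blocks of equal pattern, except for a small block carrying a ``twist''. Let $c$ record which block is traversed modulo $p$ when the Vershik map passes a block boundary.
\item Arrange the twist so that the $p$-fold cover of each tower decomposes into towers whose incidence data reproduce $A_n$ itself, with the base levels remaining divisible by $p$.
\end{itemize}
Equivalently, I would construct the diagram for $X$ directly as a $\Z/p$-symmetric diagram: vertex set $V_n\times\Z/p$, with the cyclic relabelling of the second coordinate commuting with the ordering. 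Its quotient then gives $Y$.

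The second step computes $H_0(\cG)$. The dimension group of the $\Z/p$-symmetric diagram is the inductive limit of the block-circulant matrices lifting $A_n$. As in the snake-lemma computation for Proposition~\ref{prop:Zcovering}, but now over $\Z[\Z/p]$, this group splits into a trivial-character part and the parts corresponding to nontrivial characters. I would choose the twisted blocks so that the matrices on the nontrivial-character part are nilpotent modulo the limit. That part then vanishes, and the trivial-character part is $\varinjlim(\Z^{k_n},A_n)\cong D$ with the correct unit, giving (1). Assertion (3) follows from the same computation: $\lambda$ acts on the diagram by cyclically permuting the copies $V_n\times\{a\}$. Once the nontrivial-character part is killed, the classes $[1_{E\times\{a\}}]$ are independent of $a$, so $H_0(\lambda)$ is the identity.

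The main obstacle is the simultaneous requirement in the first step: the cycle structure must keep $\phi$ minimal, with a properly ordered diagram and unique minimal and maximal paths invariant under $\lambda$ up to the $\Z/p$ labels, while the nontrivial isotypic components of $H_0$ die. Two hypotheses are what make this possible. The divisibility $u\in pD$ allows a $p$-fold splitting of the towers, and $D/pD\cong\Z/p$ forces the rank-one reduction needed to absorb the twist. The hypothesis $D\not\cong\Z$ excludes the odometer-free degenerate case, where the towers cannot be refined. I would first treat the case $D$ a rank-two group, with explicit $2\times2$ matrices, to check that the twisted blocks can be chosen. I would then handle the general case by telescoping and inserting one such twisted block at each level.
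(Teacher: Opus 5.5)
The paper gives no proof of this statement; it is quoted from \cite[Theorem 3.7]{Ma02JMSJ}. Your skew-product framing is harmless: every free $\lambda$ commuting with $\phi$ arises this way, and (2) is then immediate. The gap is in your second step, and the target you set there cannot be reached under the hypotheses. Write the connecting maps of your $\Z/p$-symmetric diagram as $\tilde A_n=\sum_{a\in\Z/p}A_{n,a}\otimes P^a$, where $P$ is the cyclic permutation matrix and $A_n=\sum_aA_{n,a}$. Let $\Delta$ be their inductive limit. If the nontrivial-character part of $\Delta$ vanished, $\lambda$ would act trivially on $\Delta$. Since each level is finitely generated, after telescoping you would get $\tilde A_n(1\otimes P-1)=0$. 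This forces $A_{n,a}=B_n$ for all $a$, so $\tilde A_n=B_n\otimes\sum_aP^a$ and $A_n=pB_n$. Then every element of $\varinjlim(\Z^{V_n},A_n)$ is divisible by $p$, contradicting $D/pD\cong\Z/p$. So under the hypotheses, $\lambda$ must act \emph{nontrivially} on the dimension group of any such diagram. Your identification of $H_0(\cG)$ with ``the trivial-character part'' therefore cannot be arranged.

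This does not contradict the theorem, because $\Delta$ is not $H_0(\cG)$. Since $\lambda$ preserves the order and acts freely on vertices, it permutes the maximal paths, and likewise the minimal paths, freely. So the symmetric diagram has at least $p$ of each and is never properly ordered. By the argument given right after the statement, these paths lie in distinct $\phi$-orbits. Hence $\Delta$ is $H_0$ of the tail-equivalence subgroupoid, which cuts each of these orbits once, as in Section~\ref{subsec:orbitbreaking}. It maps onto $H_0(\cG)$ with kernel generated by the classes $[1_E-1_{\phi(E)}]$, for small clopen neighbourhoods $E$ of the maximal paths. Triviality of $H_0(\lambda)$ requires $(\lambda-1)\Delta$ to be nonzero but contained in this kernel. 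Arranging this, together with minimality and continuity of $c$ at the maximal path of $Y$, is the real content of the theorem, and your ``twisted blocks'' are never specified so as to achieve it.

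Intrinsically, take $H_0(Y\rtimes_\psi\Z)\cong(D,D^+,u/p)$, and let $q_*$ and $q^*$ be the fibre-sum and pullback maps in degree $0$ for $q\colon X\to Y$. Then $q_*q^*=p$, $q^*q_*=\sum_aH_0(\lambda_a)$ and $q_*\circ H_0(\lambda_a)=q_*$. Also $q_*$ maps positive elements onto positive elements and $q_*[1_X]=p[1_Y]$. Since $H_0(\cG)$ is torsion-free, (3) is equivalent to injectivity of $q_*$, and injectivity then yields (1). What is missing is a cocycle $c$ with minimal skew product and injective $q_*$; this is exactly where $D/pD\cong\Z/p$ must be used.

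Finally, $p$ is not assumed prime. For composite $p$, minimality requires that $c$ not be cohomologous to a cocycle with values in any proper subgroup of $\Z/p$, not merely that $c$ not be a coboundary.
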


For every nonzero $a\in\Z/p$ and every $x\in X$, 
the point $\lambda_a(x)$ does not belong to the $\phi$-orbit of $x$. 
Indeed, if $\lambda_a(x)=\phi^n(x)$ for some $n\in\Z$, then 
$\phi^{-n}\circ\lambda_a$ has a fixed point. 
Since it commutes with $\phi$, 
its fixed point set is a nonempty closed $\phi$-invariant subset of $X$, 
and hence equals $X$ by minimality. 
Thus $\lambda_a=\phi^n$. 
Since $\lambda_a$ is nontrivial, we have $n\neq0$. 
The finite order of $\lambda_a$ would then imply $\phi^{np}=\id$, 
which contradicts the minimality of $\phi$ on the Cantor set $X$. 
In particular, the action of $\Z/p$ on $X$ is free. 

Let $\lambda:\Z/p\curvearrowright\cG$ be as above 
and let $\cH:=\cG\rtimes_\lambda\Z/p$. 
Let $Y:=X/(\Z/p)$ and let $q:X\to Y$ be the quotient map. 
Let $\psi:\Z\curvearrowright Y$ be the minimal $\Z$ action induced by $\phi$. 
The maps $\lambda_a$ act as deck transformations of the covering $q:X\to Y$. 
Choose a clopen fundamental domain $A\subset X$ for the action of $\Z/p$. 
Then $\cH|A$ is naturally isomorphic to $Y\rtimes_\psi\Z$, 
and hence $\cH$ is Kakutani equivalent to the quotient system. 
To see directly that $\cH$ itself is a groupoid of a minimal $\Z$ action, 
identify $A$ with $Y$ via $q|A$ and let 
$\bar\phi:A\to A$ be the homeomorphism corresponding to $\psi$. 
Writing 
\[
X=\bigsqcup_{j=0}^{p-1}\lambda_j(A), 
\]
define a homeomorphism $T:X\to X$ by 
\[
T(\lambda_j(x))=\lambda_{j+1}(x)\quad (0\leq j<p{-}1),
\qquad T(\lambda_{p-1}(x))=\bar\phi(x) 
\]
for $x\in A$. 
Then $T$ is minimal and one checks that 
$X\rtimes_T\Z$ is naturally isomorphic to $\cH$. 
Thus both $\cG$ and $\cH$ are principal and almost finite. 

The group $H_0(\cH)$ is obtained from $H_0(\cG)$ by imposing 
the relations $v=H_0(\lambda_a)(v)$. 
Since $H_0(\lambda)$ is trivial, 
$H_0(\iota)$ is an isomorphism, both over $\Z$ and over $\Z/2$. 
The quotient homomorphism $\cH\to Y\rtimes_\psi\Z$ 
induces the Kakutani isomorphism on homology and maps 
the full bisection implementing $\phi$ $p$-to-one 
onto that implementing $\psi$. 
Thus $H_1(\iota)$ sends $1\in\Z\cong H_1(\cG)$ 
to $p\in\Z\cong H_1(\cH)$. 

For groupoids of minimal $\Z$ actions, the map $\zeta$ 
in Li's exact sequence is injective by \cite[Theorem 4.8]{Ma06IJM}. 
In the commutative diagram preceding Theorem~\ref{thm1:subgroupoid}, 
$H_0(\iota)$ is an isomorphism and $H_1(\iota)$ is injective. 
A diagram chase therefore shows that $H_1(\iota_*)$ is injective. 

Suppose now that $p=2$, and let $a\in\Z/2$ be the nontrivial element. 
Retain the clopen fundamental domain $A\subset X$ chosen above, 
so that $X=A\sqcup\lambda_a(A)$. 
Let $U\in\cCO(\cH)$ be the bisection implementing $\lambda_a$ 
from $\lambda_a(A)$ onto $A$. 
Then $\lambda_a=\tau_U$. 
Under the canonical identification of the subgroup generated by 
the classes of transpositions with $H_0(\cH,\Z/2)$, 
the class of $\lambda_a$ corresponds to $[1_A]$. 
On the other hand, under the identification $H_0(\cH)\cong D$ above, 
we have $2[1_A]=[1_X]=u$. 
Since the dimension group $D$ is torsion-free, it follows that 
\[
\lambda_a\in\sD(\cH)
\iff [1_A]=0\text{ in }H_0(\cH,\Z/2)
\iff u\in4D. 
\]

When $p$ is prime, $\cG\subset\cH$ satisfies Setting 
\ref{subgroupoidsetting}. 
Since $H_1(\iota)$ is multiplication by $p$, it is not surjective. 
Thus Theorem~\ref{thm1:subgroupoid} shows that 
$\sF(\cG)\rtimes_\lambda\Z/p$ is not maximal in $\sF(\cH)$. 
Together with Theorems~\ref{thm2:subgroupoid} and~\ref{thm3:subgroupoid}, 
the preceding computations give the following. 

\begin{corollary}\label{cor:finiteorderZ}
Let $p$ be prime and let $\cG\subset\cH=\cG\rtimes_\lambda\Z/p$ be as above. 
\begin{enumerate}
\item The group $\sF(\cG)\rtimes_\lambda\Z/p$ is not 
a maximal subgroup of $\sF(\cH)$. 
\item When $p$ is an odd prime number, 
$\sD(\cG)\rtimes_\lambda\Z/p$ is a maximal subgroup of $\sD(\cH)$. 
\item When $p=2$ and $u\in D$ is divisible by $4$, 
$\sD(\cG)\rtimes_\lambda\Z/p$ is a maximal subgroup of $\sD(\cH)$. 
\item When $p=2$ and $u\in D$ is not divisible by $4$, 
the group $M$ in Theorem~\ref{thm3:subgroupoid} is 
a maximal subgroup of $\sD(\cH)$. 
\end{enumerate}
\end{corollary}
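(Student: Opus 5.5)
The plan is to verify that the hypotheses of Setting~\ref{subgroupoidsetting} hold, and then read off each item from Theorems~\ref{thm1:subgroupoid}, \ref{thm2:subgroupoid} and \ref{thm3:subgroupoid} using the homological computations made just before the statement.

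\textbf{The setting.} $\cG=X\rtimes_\phi\Z$ and $\cH\cong X\rtimes_T\Z$ are groupoids of minimal $\Z$ actions on the Cantor set $X$. Hence both are principal, and in particular effective; they are minimal and almost finite. The action $\lambda$ is free on $X$, as shown above. So Setting~\ref{subgroupoidsetting} applies for every prime $p$.

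\textbf{Item (1).} We have identified $H_1(\iota)$ with multiplication by $p$ on $\Z$, which is not surjective. Theorem~\ref{thm1:subgroupoid}, (3)$\Rightarrow$(1) in contrapositive form, then shows that $\sF(\cG)\rtimes_\lambda\Z/p$ is not maximal in $\sF(\cH)$.

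\textbf{Injectivity of $H_1(\iota_*)$.} This feeds items (2)--(4). The map $\zeta_\cG$ is injective by \cite[Theorem 4.8]{Ma06IJM}. Moreover $H_0(\iota)$ is an isomorphism over $\Z/2$, since $H_0(\lambda)$ is trivial, and $H_1(\iota)$ is injective. A four-lemma chase in the diagram preceding Theorem~\ref{thm1:subgroupoid}, whose rows are left exact because $\zeta_\cG$ and $\zeta_\cH$ are injective, gives injectivity of $H_1(\iota_*)$. I would recheck this chase carefully, since it is the only point where injectivity of $\zeta_\cH$ is also needed.

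\textbf{Item (2), $p$ odd.} Every $\lambda_a$ is an even permutation in a multisection, so $\lambda_a\in\sD(\cH)$. Theorem~\ref{thm2:subgroupoid}, (1)$\Rightarrow$(3), then gives maximality of $\sD(\cG)\rtimes_\lambda\Z/p$ in $\sD(\cH)$.

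\textbf{Items (3) and (4), $p=2$.} The preceding computation shows $\lambda_a\in\sD(\cH)$ if and only if $u\in4D$.

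If $u\in4D$, Theorem~\ref{thm2:subgroupoid} applies again and yields (3).

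If $u\notin4D$, then $\lambda_a\notin\sD(\cH)$, and Theorem~\ref{thm3:subgroupoid} applies with injective $H_1(\iota_*)$. It shows that $M=\sD(\cG)\sqcup\sD(\cG)\beta\lambda_a$ is maximal in $\sD(\cH)$, which is (4).

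The main obstacle is modest. It amounts to correctly justifying the identifications used in the chase: the rows of Li's sequence are exact with $\zeta$ injective, and the diagram commutes. Both are already established above, so the proof is essentially bookkeeping.
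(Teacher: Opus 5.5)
Your proposal is correct and follows the paper's argument essentially step by step: verify Setting~\ref{subgroupoidsetting}, use that $H_1(\iota)$ is multiplication by $p$ for (1), obtain injectivity of $H_1(\iota_*)$ by a diagram chase, and use the criterion $\lambda_a\in\sD(\cH)\iff u\in4D$ to choose between Theorems~\ref{thm2:subgroupoid} and~\ref{thm3:subgroupoid}. There are two small slips. For (1) you need the contrapositive of the implication (1)$\Rightarrow$(3) of Theorem~\ref{thm1:subgroupoid}, not of (3)$\Rightarrow$(1). In the chase, what is actually used is injectivity of $\zeta_\cH$ (available because $\cH\cong X\rtimes_T\Z$), together with injectivity of $H_0(\iota)$ and $H_1(\iota)$; injectivity of $\zeta_\cG$ plays no role.
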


\begin{remark}\label{rem:nonsplit:subgroupoid}
Suppose that $p=2$ and $u\notin4D$. 
The extension 
\[
\xymatrix@M=8pt{
1 \ar[r] & \sD(\cG) \ar[r] & M \ar[r] & \Z/2 \ar[r] & 1
}
\]
in Theorem~\ref{thm3:subgroupoid} does not split. 
Indeed, every element of $M\setminus\sD(\cG)$ has the form 
$\eta\lambda_a$ with $\eta\in\sF(\cG)$, and is fixed-point-free 
since $\lambda_a(x)$ is never in the $\cG$-orbit of $x$. 
If such an element $\sigma$ had order two, it would admit a clopen 
fundamental domain $B\subset X$. 
Under the identification $H_0(\cH)\cong D$, we would have 
$2[1_B]=u$, and hence $[1_B]=u/2\notin2D$. 
Since $\sigma$ is the transposition exchanging $B$ and $\sigma(B)$, 
its class in $H_1(\sF(\cH))$ would correspond to the nonzero class 
of $[1_B]$ in $D/2D$. 
This contradicts $\sigma\in M\subset\sD(\cH)$. 
Thus the nontrivial coset contains no element of order two, 
so the extension does not split. 
\end{remark}

\subsection{Finite order automorphisms of SFT groupoids}

We next consider a canonical class of finite order automorphisms 
of SFT groupoids. 
Let $(V,E)$ be a finite directed graph 
as in Section~\ref{subsec:graphgroupoid}, 
and let $A$ be its adjacency matrix. 
Let $p$ be an odd prime number and suppose that 
\[
E=\bigsqcup_{a\in\Z/p}E_a. 
\]
We assume that the subgraph $(V,E_0)$ is irreducible and 
that $E_a$ is nonempty for some $a\neq0$. 
These assumptions imply that 
$A$ is irreducible and is not a permutation matrix. 
Write $\cK:=\cG_{(V,E)}$. 
Let $A_a$ denote the adjacency matrix of $(V,E_a)$, so that 
\[
A=\sum_{a\in\Z/p}A_a. 
\]
Define $\ell:E\to\Z/p$ by $\ell(e)=a$ for $e\in E_a$. 
For a finite path $\mu=\mu_1\cdots\mu_n$, put 
\[
\ell(\mu):=\sum_{j=1}^n\ell(\mu_j)\in\Z/p. 
\]
If $(x,n,y)\in\cK$ and $n=k-l$ with $\sigma^k(x)=\sigma^l(y)$, then 
\[
\xi(x,n,y):=\ell(x_1\cdots x_k)-\ell(y_1\cdots y_l)
\]
is independent of the choice of sufficiently large $k,l$. 
Thus $\xi:\cK\to\Z/p$ is a continuous homomorphism. 
Let 
\[
\cG:=\cK\times_\xi\Z/p. 
\]
As in Section~\ref{subsec:SFTcovering}, $\cG$ is itself an SFT groupoid. 
Indeed, define a finite directed graph $(W,F)$ by 
\[
W:=V\times\Z/p,\qquad F:=E\times\Z/p, 
\]
and 
\[
i(e,b):=(i(e),b),\qquad 
t(e,b):=(t(e),b+\ell(e)). 
\]
The SFT groupoid of $(W,F)$ is naturally isomorphic to $\cG$. 
The graph $(W,F)$ is irreducible: 
the edges in $E_0$ allow us to move between vertices in each layer, 
and an edge in some $E_a$, $a\neq0$, allows us to move between all layers, 
because $a$ generates $\Z/p$. 
It is not a permutation graph. 
Hence $\cG$ is minimal and purely infinite. 

There is a canonical action $\lambda:\Z/p\curvearrowright\cG$ by 
deck transformations, given by 
\[
\lambda_c(g,b):=(g,b+c). 
\]
Its action on the unit space is free. 
Put 
\[
\cH:=\cG\rtimes_\lambda\Z/p. 
\]
Let $\mathcal R_p$ denote the full equivalence relation on $\Z/p$. 
There exists a natural isomorphism $\cH\cong\cK\times\mathcal R_p$ 
sending $((g,b),c)$ to $(g,b,b+\xi(g)-c)$. 
In particular, the reduction of $\cH$ to 
$\cK^{(0)}\times\{0\}$ is isomorphic to $\cK$, 
and this clopen subset is full. 
Thus $\cH$ is Kakutani equivalent to $\cK$, 
and hence $\cH$ is minimal, effective and purely infinite. 
Consequently, $\cG\subset\cH$ satisfies Setting~\ref{subgroupoidsetting}. 

We now compute the homomorphisms on homology 
induced by the inclusion $\iota:\cG\to\cH$. 
Put 
\[
\Lambda:=\bigoplus_{b\in\Z/p}\Z^V,\qquad 
\Sigma:\Lambda\to\Z^V,\qquad 
\Sigma((x_b)_b):=\sum_{b\in\Z/p}x_b, 
\]
and let $\Lambda_0:=\Ker\Sigma$. 
Define 
\[
L:=A^t-I:\Z^V\to\Z^V
\]
and $M:\Lambda\to\Lambda$ by 
\[
(Mx)_b:=\sum_{a\in\Z/p}A_a^t x_{b-a}-x_b
\qquad(b\in\Z/p). 
\]
The map $M$ is $B^t-I$, where $B$ is the adjacency matrix of $(W,F)$. 
Moreover, $\Sigma\circ M=L\circ\Sigma$. 
Therefore $M$ leaves $\Lambda_0$ invariant. 
We write 
\[
N:=M|\Lambda_0:\Lambda_0\to\Lambda_0. 
\]
Notice that $N$ is an explicitly determined integer matrix of size 
$(p{-}1)\#V$ after choosing a basis of $\Lambda_0$. 

We have a commutative diagram with exact rows 
\[
\xymatrix@M=8pt{
0 \ar[r] & \Lambda_0 \ar[r] \ar[d]_-N & 
\Lambda \ar[r]^-\Sigma \ar[d]_-M &
\Z^V \ar[r] \ar[d]_-L & 0 \\
0 \ar[r] & \Lambda_0 \ar[r] &
\Lambda \ar[r]^-\Sigma &
\Z^V \ar[r] & 0. 
}
\]
The standard computation of homology for SFT groupoids, together with 
the Kakutani equivalence between $\cH$ and $\cK$, gives 
\[
H_1(\cG)=\Ker M,\quad H_0(\cG)=\Coker M,
\qquad
H_1(\cH)=\Ker L,\quad H_0(\cH)=\Coker L. 
\]
Under these identifications, the homomorphisms induced by $\iota$ 
are the maps induced by $\Sigma$. 
Indeed, composing $\iota$ with the projection 
$\cH\cong\cK\times\mathcal R_p\to\cK$ forgets the layer coordinate. 
On the standard two-term complexes for SFT homology, 
this gives $\Sigma$ in both degrees: 
the basis vector corresponding to $(v,b)$ is sent to 
that corresponding to $v$. 
Hence the snake lemma gives an exact sequence 
\[
\xymatrix@M=8pt@C=14pt{
0 \ar[r] & \Ker N \ar[r] & H_1(\cG) 
\ar[r]^-{H_1(\iota)} & H_1(\cH) \ar[r]^-\partial &
\Coker N \ar[r] & H_0(\cG) 
\ar[r]^-{H_0(\iota)} & H_0(\cH) \ar[r] & 0. 
}
\]
Thus all the homomorphisms relevant to the inclusion can be computed 
from the finite matrices $L,M,N$. 

For the maximality question, 
we only need a particularly simple consequence modulo two. 
Let $\overline L,\overline M,\overline N$ denote the reductions modulo two. 
Since $p$ is odd, the diagonal homomorphism 
\[
\Delta:(\Z/2)^V\to((\Z/2)^V)^{\Z/p},\qquad
\Delta(x):=(x)_b, 
\]
satisfies $\Sigma\circ\Delta=\id$. 
Moreover, $\overline M\circ\Delta=\Delta\circ\overline L$. 
It follows that 
\[
((\Z/2)^V)^{\Z/p}=\Delta((\Z/2)^V)\oplus(\Lambda_0\otimes\Z/2)
\]
and, with respect to this decomposition, 
\[
\overline M=\overline L\oplus\overline N. 
\]
Consequently, 
\[
H_0(\cG,\Z/2)\cong H_0(\cH,\Z/2)\oplus\Coker\overline N, 
\]
and $H_0(\iota)$ is the projection onto the first summand. 
In particular, 
\[
H_0(\iota):H_0(\cG,\Z/2)\to H_0(\cH,\Z/2)
\quad\text{is injective}
\iff\det N\text{ is odd}. 
\]

The abelianization of the topological full group of an SFT groupoid 
was computed in \cite[Corollary 6.24]{Ma15crelle}. 
By pure infiniteness, 
$\cK$ contains $p$ disjoint clopen copies of its unit space, 
each implemented by a bisection. 
Thus 
$\cH\cong\cK\times\mathcal R_p$ is isomorphic to a clopen reduction of $\cK$, 
to which the same result applies. 
We obtain short exact sequences 
\[
\xymatrix@M=8pt{
0 \ar[r] & H_0(\cH,\Z/2) \ar[r]^-{\zeta_\cH} &
H_1(\sF(\cH)) \ar[r]^-{\eta_\cH} & H_1(\cH) \ar[r] & 0 \\
0 \ar[r] & H_0(\cG,\Z/2) \ar[r]^-{\zeta_\cG} \ar[u]_-{H_0(\iota)} &
H_1(\sF(\cG)) \ar[r]^-{\eta_\cG} \ar[u]_-{H_1(\iota_*)} &
H_1(\cG) \ar[r] \ar[u]_-{H_1(\iota)} & 0. 
}
\]
The diagram commutes by the same argument 
as in Remark~\ref{rem:Li-naturality}. 
If $\det N$ is odd, then $\det N\neq0$, 
so $N$ is injective and $\Ker N=0$. 
It follows from the preceding exact sequences that 
both outer vertical maps are injective, and therefore so is $H_1(\iota_*)$. 
Conversely, if $H_1(\iota_*)$ is injective, then the injectivity of 
$\zeta_\cG$ implies that $H_0(\iota)$ is injective. 
Hence $\det N$ is odd. 

Combining these observations with Theorem~\ref{thm1:subgroupoid} 
and Theorem~\ref{thm2:subgroupoid}, we obtain the following. 

\begin{proposition}\label{SFTsemidirect}
In the setting above, the following hold. 
\begin{enumerate}
\item The group $\sF(\cG)\rtimes_\lambda\Z/p$ is 
a maximal subgroup of $\sF(\cH)$ if and only if 
$\partial:\Ker L\to\Coker N$ is the zero map. 
\item The group $\sD(\cG)\rtimes_\lambda\Z/p$ is 
a maximal subgroup of $\sD(\cH)$ if and only if $\det N$ is odd. 
\end{enumerate}
In particular, if $\det N=\pm1$, then both groups above are maximal. 
\end{proposition}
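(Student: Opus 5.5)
The plan is to reduce both parts to Theorems~\ref{thm1:subgroupoid} and \ref{thm2:subgroupoid} via the homological computations made just before the statement. Since $p$ is odd, every $\lambda_a$ belongs to $\sD(\cH)$, so Theorem~\ref{thm2:subgroupoid} applies with no extra hypothesis.

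For (1), Theorem~\ref{thm1:subgroupoid} says that $\sF(\cG)\rtimes_\lambda\Z/p$ is maximal in $\sF(\cH)$ if and only if $H_1(\iota):H_1(\cG)\to H_1(\cH)$ is surjective. Under the identifications $H_1(\cG)=\Ker M$ and $H_1(\cH)=\Ker L$, $H_1(\iota)$ is the map induced by $\Sigma$. By the snake lemma sequence
\[
\Ker M\xrightarrow{H_1(\iota)}\Ker L\xrightarrow{\partial}\Coker N,
\]
the image of $H_1(\iota)$ equals $\Ker\partial$. Hence surjectivity is equivalent to $\partial=0$, which proves (1).

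For (2), Theorem~\ref{thm2:subgroupoid} says that $\sD(\cG)\rtimes_\lambda\Z/p$ is maximal in $\sD(\cH)$ if and only if $H_1(\iota_*)$ is injective. It was just shown, using the short exact rows with injective $\zeta_\cG$ and $\zeta_\cH$ from \cite[Corollary 6.24]{Ma15crelle}, that this injectivity is equivalent to $\det N$ being odd. I would record the two directions. If $\det N$ is odd, then $\overline N$ is invertible, so $H_0(\iota)$ is injective modulo two. Also $\Ker N=0$, so $H_1(\iota)$ is injective by exactness of $0\to\Ker N\to H_1(\cG)\to H_1(\cH)$. A four-lemma chase then gives injectivity of $H_1(\iota_*)$. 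Conversely, if $H_1(\iota_*)$ is injective, commutativity of the left square and injectivity of $\zeta_\cG$ force $H_0(\iota)$ to be injective modulo two. Since $H_0(\iota)$ is the projection onto the first summand of $H_0(\cH,\Z/2)\oplus\Coker\overline N$, this means $\Coker\overline N=0$, that is, $\det N$ is odd.

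For the final claim, suppose $\det N=\pm1$. Then $\det N$ is odd, so (2) holds. Moreover $N$ is invertible over $\Z$, so $\Coker N=0$, hence $\partial=0$ and (1) holds as well.

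The only delicate point is confirming that the maps identified as induced by $\Sigma$ really are $H_i(\iota)$, together with the sign and identification conventions for $\partial$. This was asserted in the preceding discussion, so the proof itself is a bookkeeping combination of the stated results.
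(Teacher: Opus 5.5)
Your proposal is correct and follows the paper's own proof essentially verbatim: part (1) comes from Theorem~\ref{thm1:subgroupoid} together with exactness of $\Ker M\to\Ker L\xrightarrow{\partial}\Coker N$, part (2) comes from Theorem~\ref{thm2:subgroupoid} (applicable since $p$ is odd) together with the equivalence between injectivity of $H_1(\iota_*)$ and oddness of $\det N$, and the final claim follows from $\Coker N=0$. Your re-derivation of that equivalence, using the splitting $\overline M=\overline L\oplus\overline N$, injectivity of $\zeta_\cG$ and the four-lemma chase, is exactly the argument the paper gives just before the statement.
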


\begin{proof}
(1)\:
The exact sequence above shows that $H_1(\iota)$ is surjective 
if and only if $\partial=0$. 
The conclusion follows from Theorem~\ref{thm1:subgroupoid}. 

(2)\:
As observed above, $H_1(\iota_*)$ is injective 
if and only if $\det N$ is odd. 
Since $p$ is odd, every $\lambda_a$ belongs to $\sD(\cH)$, 
and hence Theorem~\ref{thm2:subgroupoid} gives the conclusion. 

Finally, if $\det N=\pm1$, then $\Coker N=0$, so $\partial=0$, 
and $\det N$ is odd. 
\end{proof}

\begin{example}[Three-fold extensions]\label{SFTsemidirect3}
Let $V$ consist of one vertex, let $p=3$, and suppose that 
$E_0$ and $E_1$ consist of one loop each and $E_2$ is empty. 
Then $\cK$ is the SFT groupoid of the full two-shift. 
In this case $\cK\cong\cK\times\mathcal R_3$. 
Indeed, the three cylinder sets corresponding to the complete prefix set 
$\{0,10,11\}$ are mutually equivalent in $\cK$, and the reduction of $\cK$ 
to any one of them is again isomorphic to $\cK$. 
Since $\cH\cong\cK\times\mathcal R_3$, 
$\cH$ is therefore also isomorphic to the SFT groupoid of the full two-shift. 
The graph $(W,F)$ has three vertices, with one loop and one edge to 
the next vertex at each vertex. 
In this case $M$ is the cyclic permutation matrix on $\Z^3$, 
and its restriction $N$ to $\Lambda_0$ has determinant one. 
Moreover, if $B$ denotes the adjacency matrix of $(W,F)$, then 
$H_0(\cG)=\Coker(I-B^t)=0$ and $\det(I-B^t)=-1$. 
Since also $H_0(\cK)=0$ and $\det(I-A^t)=-1$, 
Matsumoto's classification theorem 
\cite[Theorem 1.1]{Ma13PAMS} (see also \cite[Theorem 6.2]{Ma15crelle}) 
implies $\cG\cong\cK$. 
Consequently, $\cG\cong\cH\cong\cK$. 
Since $H_0(\cG)=0$ and $H_1(\cG)=\Ker M=0$, 
the abelianization of $\sF(\cG)$ is trivial 
by \cite[Corollary 6.24]{Ma15crelle}, and hence 
$\sF(\cG)=\sD(\cG)$. 
The same holds for $\cH$. 
As $\sF(\cK)$ is isomorphic to the Higman--Thompson group $V_2$ 
(see \cite[Remark 6.3]{Ma15crelle}), 
Proposition~\ref{SFTsemidirect} shows that 
$V_2\rtimes_\lambda\Z/3$ is a maximal subgroup of $V_2$, 
where $\lambda$ is induced by the deck transformation of order three. 

This example also arises from the type systems of 
Belk--Bleak--Quick--Skipper. 
Let $\mathcal P$ be the type system on finite binary words 
in which the type of a word is the number of occurrences of $1$ modulo $3$. 
Its child rules are $P_i0=P_i$ and $P_i1=P_{i+1}$, with indices in $\Z/3$. 
This type system is nuclear and simple. 
Under the identification $(z,i)\mapsto\rho_i z$, 
where $(\rho_0,\rho_1,\rho_2)=(0,10,11)$, 
the subgroup $\sF(\cG)$ corresponds to $\Fix_{V_2}(\mathcal P)$. 
The prefix cycle $(0\ 10\ 11)$ implements the deck transformation 
and cyclically permutes the three types. 
Since every permutation of the types preserving the child rules is 
a cyclic permutation, 
$\sF(\cG)\rtimes_\lambda\Z/3$ corresponds to $\St_{V_2}(\mathcal P)$. 
Thus the maximality of this subgroup also follows 
from \cite[Theorem 5.12]{BBQS25TAMS}. 
\end{example}

\begin{example}
Let $V$ again consist of one vertex and let $p=3$, but now suppose that 
$E_0$ consists of one loop, $E_1$ consists of two loops and $E_2$ is empty. 
Then $\cK$ is the SFT groupoid of the full three-shift. 
If $P$ denotes the cyclic permutation matrix on $\Z^3$, then, 
up to replacing $P$ by its inverse, we have $L=[2]$ and $M=2P$. 
Thus $\Ker L=0$, so the connecting homomorphism $\partial$ is zero, 
whereas the restriction $N$ of $M$ to $\Lambda_0$ has determinant $4$. 
Here $\overline M=0$, and hence 
\[
H_0(\cG,\Z/2)\cong(\Z/2)^3,
\qquad
H_0(\cH,\Z/2)\cong\Z/2. 
\]
Since $H_1(\cG)=H_1(\cH)=0$, 
the abelianizations of $\sF(\cG)$ and $\sF(\cH)$ are therefore 
isomorphic to $(\Z/2)^3$ and $\Z/2$, respectively. 
In particular, $\sD(\cG)\subsetneq\sF(\cG)$ and 
$\sD(\cH)\subsetneq\sF(\cH)$. 
Proposition~\ref{SFTsemidirect} shows that 
$\sF(\cG)\rtimes_\lambda\Z/3$ is a maximal subgroup of $\sF(\cH)$, 
while $\sD(\cG)\rtimes_\lambda\Z/3$ is not a maximal subgroup of $\sD(\cH)$. 
Thus the maximality phenomena for the full group and the derived group 
can behave differently even in this simple one-vertex situation. 
\end{example}

\begin{remark}
The construction, the integral exact sequence and 
Proposition~\ref{SFTsemidirect}~(1) remain valid for $p=2$. 
However, the diagonal map no longer splits $\Sigma$ modulo two, 
so the argument for part~(2) does not apply. 
Moreover, the deck involution need not belong to $\sD(\cH)$, 
and Theorems~\ref{thm2:subgroupoid} and~\ref{thm3:subgroupoid} 
must be distinguished. 
We do not pursue this case here. 
\end{remark}

\subsection{Orbit-breaking subgroupoids of minimal $\Z$ actions}
\label{subsec:orbitbreaking}

We conclude this section with a basic example of a prime wide open subgroupoid 
obtained by breaking a single orbit of a minimal $\Z$ action. 
This is not a direct application of Section~\ref{sec:subgroupoid}, 
since the ambient groupoid is not obtained from the subgroupoid 
by the finite cyclic semidirect product construction considered there. 
Nevertheless, primeness is immediate, 
and a direct argument gives maximal subgroups 
both in the kernel of the index map and in the derived group.

Let $\phi:X\to X$ be a minimal homeomorphism of a Cantor set, 
and let $\cG:=X\rtimes_\phi\Z$. 
Fix $x\in X$ and put 
\[
A:=\{\phi^n(x)\mid n\geq1\},\qquad 
B:=\{\phi^n(x)\mid n\leq0\}. 
\]
We let $\cK$ denote the orbit-breaking subgroupoid 
\[
\cK:=\{g\in\cG\mid r(g)\in A\iff s(g)\in A\}. 
\]
Thus $\cK$ is obtained by removing precisely the arrows between $A$ and $B$; 
our convention is to break the orbit between $x$ and $\phi(x)$. 
For each $n\in\Z$, only finitely many arrows in $X\times\{n\}$ are removed, 
so $\cK$ is a wide open subgroupoid of $\cG$. 
Its orbits are $A$, $B$ and the $\cG$-orbits other than that of $x$. 
In particular, $\cK$ is minimal, since both half-orbits are dense. 
Moreover, $\cK$ is AF (see \cite[Theorem 4.3]{GPS04ETDS}), 
and hence $\sF(\cK)$ is locally finite 
(see \cite[Proposition 3.2]{Ma06IJM}). 
Since AF groupoids are almost finite, 
Theorem~\ref{niceproperties} (1) also implies that $\sD(\cK)$ 
is simple; it is locally finite as a subgroup of $\sF(\cK)$. 
The inclusion $\cK\subset\cG$ is prime 
in the sense of Definition~\ref{def:prime}: 
any wide open subgroupoid of $\cG$ properly containing $\cK$ contains 
an arrow between $A$ and $B$, 
and multiplying it, or its inverse, on both sides by arrows of $\cK$ 
produces all the removed arrows. 

Let $I:\sF(\cG)\to H_1(\cG)\cong\Z$ be the index map. 
Since $\sF(\cK)$ is locally finite, it is contained in $\Ker I$. 
By the definition of $\cK$, we have 
\begin{equation}\label{eq:orbitbreakingstabilizer}
\sF(\cK)=\{\alpha\in\sF(\cG)\mid\alpha(A)=A\}. 
\end{equation}
Every $\alpha\in\sF(\cG)$ has 
a continuous, hence bounded, orbit cocycle $n_\alpha:X\to\Z$ 
satisfying $\alpha(y)=\phi^{n_\alpha(y)}(y)$. 
It follows that $A\setminus\alpha(A)$ and $\alpha(A)\setminus A$ are finite. 
If $\alpha\in\Ker I$, these two sets have the same cardinality 
(see \cite[Remark 5.6]{GPS99Israel} and \cite[proof of Lemma 4.1]{Ma06IJM}). 
For such an $\alpha$, define its crossing number by 
\[
d(\alpha):=\#\left(A\setminus\alpha(A)\right)
=\#\left(\alpha(A)\setminus A\right). 
\]
Then $d(\alpha)=0$ if and only if $\alpha\in\sF(\cK)$. 
We will first produce an element with crossing number one 
and then reduce arbitrary crossing numbers by induction. 

For the local constructions, we use the following consequence of 
Lemma~\ref{threecycle} (1) and (2). 
Given three distinct points in a single $\cK$-orbit, 
there exists a $3$-cycle in $\sD(\cK)$ 
which permutes these points in any prescribed cyclic order. 
The support can be chosen to avoid any prescribed finite set 
disjoint from the three points, by choosing the bisections sufficiently small. 
In particular, $\sD(\cK)$ acts transitively on $A\times B$: 
one first moves the $A$-coordinate by such a $3$-cycle 
fixing the $B$-coordinate, 
and then moves the $B$-coordinate 
while fixing the new $A$-coordinate.

\begin{lemma}\label{lem:orbitbreakingonecrossing}
For every $\gamma\in\Ker I\setminus\sF(\cK)$, 
there exists $k\in\sD(\cK)$ such that $d(\gamma^{-1}k\gamma)=1$. 
In particular, the group $\langle\sD(\cK),\gamma\rangle$ 
contains an element of $\sD(\cG)$ with crossing number one. 
\end{lemma}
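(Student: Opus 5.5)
We will take the conjugating element $k$ to be a single $3$-cycle in $\sD(\cK)$, chosen so that it changes the set $C:=\gamma(A)$ by exactly one point. The starting point is the identity
\[
d(\gamma^{-1}k\gamma)=\#\bigl(A\setminus\gamma^{-1}k\gamma(A)\bigr)=\#\bigl(\gamma(A)\setminus k\gamma(A)\bigr)=\#\bigl(C\setminus k(C)\bigr).
\]
It holds because $\gamma$ is a bijection. Note also that $\gamma^{-1}k\gamma\in\Ker I$ automatically, since $\Ker I$ is normal and $k\in\sF(\cK)\subset\Ker I$.

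First we describe $C$. The orbit cocycle of $\gamma$ is bounded and $\gamma$ preserves the $\phi$-orbit of $x$. Hence
\[
C=(A\setminus F_1)\sqcup F_2,\qquad F_1:=A\setminus\gamma(A)\subset A,\quad F_2:=\gamma(A)\setminus A\subset B,
\]
where $F_1$ and $F_2$ are finite of common cardinality $d(\gamma)\ge1$. This uses $\gamma\notin\sF(\cK)$ and the discussion preceding the lemma. Every $k\in\sF(\cK)$ preserves $A$ and $B$ by \eqref{eq:orbitbreakingstabilizer}, and therefore
\[
k(C)=\bigl(A\setminus k(F_1)\bigr)\sqcup k(F_2).
\]

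Next we choose $k$. Pick $a\in F_1$ and two distinct points $c,c'\in A\setminus F_1$, which is possible since $A$ is infinite. By the consequence of Lemma~\ref{threecycle} recorded before the lemma, these three points lie in the single $\cK$-orbit $A$, so there is a $3$-cycle $k\in\sD(\cK)$ with
\[
c\mapsto a,\qquad a\mapsto c',\qquad c'\mapsto c.
\]
Its support can be chosen to avoid the finite set $(F_1\setminus\{a\})\cup F_2$. Then $k(F_2)=F_2$ and $k(F_1)=(F_1\setminus\{a\})\cup\{c'\}$. Substituting into the formula for $k(C)$ gives
\[
k(C)=\bigl(C\setminus\{c'\}\bigr)\cup\{a\},\qquad C\setminus k(C)=\{c'\},
\]
so $d(\gamma^{-1}k\gamma)=1$. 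For the second assertion, $\gamma^{-1}k\gamma$ lies in $\langle\sD(\cK),\gamma\rangle$, and it lies in $\sD(\cG)$ because $k\in\sD(\cK)\subset\sD(\cG)$ and $\sD(\cG)$ is normal in $\sF(\cG)$.

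The main point of care is the support condition. The $3$-cycle $k$ necessarily moves infinitely many other points of the orbit of $x$, since its support is clopen. The argument works anyway because $k$ preserves $A$ and $B$, so only its action on the finite sets $F_1$ and $F_2$ affects $C\setminus k(C)$. Avoiding those finite sets, apart from the chosen point $a$, is exactly what the small-support clause of the $3$-cycle construction provides.
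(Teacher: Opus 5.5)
Your proposal is correct and follows essentially the paper's own argument: you pick one point of $A\setminus\gamma(A)$ and two points of $A\cap\gamma(A)$, take a $3$-cycle in $\sD(\cK)$ on them whose support avoids the remaining finitely many crossing points, and show that $k\gamma(A)$ differs from $\gamma(A)$ by exactly one point. The only difference is bookkeeping: you count $d(\gamma^{-1}k\gamma)$ as $\#\bigl(\gamma(A)\setminus k\gamma(A)\bigr)$, whereas the paper writes out $h(A)$ explicitly.
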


\begin{proof}
Put $R:=A\setminus\gamma(A)$ and $E:=\gamma(A)\setminus A$. 
These are nonempty finite sets of the same cardinality. 
Since $R$ is finite, $A\cap\gamma(A)$ is infinite. 
Choose $a\in R$ and two distinct points $b,c\in A\cap\gamma(A)$. 
By the preceding observation, 
there exists $k\in\sD(\cK)$ which cyclically permutes $a,b,c$ 
in this order and fixes $(R\setminus\{a\})\cup E$ pointwise. 
We have $\gamma(A)=(A\setminus R)\cup E$, 
$k(R)=(R\setminus\{a\})\cup\{b\}$ and $k(E)=E$. 
Together with $k(A)=A$, this gives 
\[
k\gamma(A)=(A\setminus k(R))\cup E
=\left(\gamma(A)\setminus\{b\}\right)\cup\{a\}. 
\]
Consequently, for $h:=\gamma^{-1}k\gamma$, we have 
\[
h(A)=\left(A\setminus\{\gamma^{-1}(b)\}\right)\cup\{\gamma^{-1}(a)\}. 
\]
Here $\gamma^{-1}(b)\in A$ and $\gamma^{-1}(a)\in B$. 
Thus $d(h)=1$. 
Finally, $h$ belongs to $\sD(\cG)$, 
since $k\in\sD(\cK)\subset\sD(\cG)$ and $\sD(\cG)$ is normal in $\sF(\cG)$. 
\end{proof}

\begin{theorem}\label{thm:orbitbreakingmaximal}
In the setting above, the following hold. 
\begin{enumerate}
\item The group $\sF(\cK)$ is a maximal subgroup of $\Ker I$. 
\item The group $\sD(\cK)$ is a maximal subgroup of $\sD(\cG)$. 
\end{enumerate}
\end{theorem}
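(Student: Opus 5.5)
We first check that both inclusions are strict, and then show that for $\gamma$ outside the subgroup, the subgroup together with $\gamma$ generates the ambient group. By \eqref{eq:orbitbreakingstabilizer}, $\sF(\cK)\subsetneq\Ker I$, since a $3$-cycle in $\sD(\cG)$ moving a point of $A$ into $B$ exists by Lemma~\ref{threecycle}. The same element also shows $\sD(\cK)\subsetneq\sD(\cG)$.

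For (1), fix $\gamma\in\Ker I\setminus\sF(\cK)$ and put $H:=\langle\sF(\cK),\gamma\rangle$. By Lemma~\ref{lem:orbitbreakingonecrossing}, $H$ contains some $h\in\sD(\cG)$ with $d(h)=1$. Given an arbitrary $\alpha\in\Ker I$, we argue by induction on $d(\alpha)$ that $\alpha\in H$; the base case $d(\alpha)=0$ is $\alpha\in\sF(\cK)$. Suppose $d(\alpha)\geq1$. Choose $a'\in\alpha(A)\setminus A\subset B$ and $b'\in A\setminus\alpha(A)$. Write $h(A)=(A\setminus\{b_0\})\cup\{a_0\}$. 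Since $\sD(\cK)$ acts transitively on $A\times B$, there is $k\in\sD(\cK)$ with $k(b_0)=b'$ and $k(a_0)=a'$. Then $h':=khk^{-1}\in H$ satisfies $h'(A)=(A\setminus\{b'\})\cup\{a'\}$. Now $h'^{-1}\alpha(A)$ differs from $A$ by one fewer crossing: applying $h'^{-1}$ sends $a'$ back to $b'$ and leaves the other crossing points fixed as elements of $A$ or $B$, because $h'^{-1}$ preserves membership in $A$ away from $\{a',b'\}$. Also $h'^{-1}\alpha\in\Ker I$. Hence $d(h'^{-1}\alpha)=d(\alpha)-1$, and induction gives $\alpha\in H$. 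Thus $H=\Ker I$, proving maximality.

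For (2), we run the same argument inside $\sD(\cG)$. Take $\gamma\in\sD(\cG)\setminus\sD(\cK)$. The first issue is $\gamma\in\sF(\cK)\setminus\sD(\cK)$, which is possible a priori. I would handle it by showing $\sF(\cK)\cap\sD(\cG)=\sD(\cK)$ via signatures: $\sF(\cK)\subset\Ker I$, and the signature map $\operatorname{sgn}_\cG$ restricted to $\sF(\cK)$ should factor through $H_1(\sF(\cK))\to H_0(\cK,\Z/2)\to H_0(\cG,\Z/2)$. The map $H_0(\cK)\to H_0(\cG)$ is an isomorphism by \cite{GPS04ETDS} (orbit breaking at a single point leaves $K_0$ unchanged), and $\sF(\cK)/\sD(\cK)\cong H_0(\cK,\Z/2)$ for AF groupoids. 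This yields injectivity. Once $\gamma\notin\sF(\cK)$ is known, Lemma~\ref{lem:orbitbreakingonecrossing} provides $h\in\sD(\cG)\cap\langle\sD(\cK),\gamma\rangle$ with $d(h)=1$, and the induction above, using only conjugation by $\sD(\cK)$, shows that every $\alpha\in\sD(\cG)$ reduces via elements of $\langle\sD(\cK),\gamma\rangle$ to some $\alpha'\in\sF(\cK)\cap\sD(\cG)=\sD(\cK)$. Hence $\langle\sD(\cK),\gamma\rangle=\sD(\cG)$.

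The main obstacle is the identity $\sF(\cK)\cap\sD(\cG)=\sD(\cK)$, which requires compatibility of the signature of $\sF_0(\cG)$ with the AF signature on $\sF(\cK)$. The check is that both send transpositions $\tau_U$ to $[1_{r(U)}]$, that $\sF(\cK)$ is generated by such transpositions modulo $\sD(\cK)$, and that $H_0(\cK,\Z/2)\to H_0(\cG,\Z/2)$ is injective. A secondary point is that $d$ decreases exactly by one in the induction, which comes down to bookkeeping with finite sets.
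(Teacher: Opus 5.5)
Your proposal is correct and is essentially the paper's proof: the signature comparison giving $\sF(\cK)\cap\sD(\cG)=\sD(\cK)$, Lemma~\ref{lem:orbitbreakingonecrossing} to produce a crossing-number-one element, transitivity of $\sD(\cK)$ on $A\times B$ to relocate it, and induction on $d$. The differences are cosmetic. You use $khk^{-1}$ where the paper uses $kh$, which has the same effect on $A$ since $k(A)=A$, and you treat the two cases separately rather than through a common pair $(\Gamma,S)$. One wording point: $h'^{-1}$ need not send $a'$ to $b'$ literally; it only sends $a'$ into $A$ and $b'$ into $B$, and that is all your count uses.
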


\begin{proof}
We first verify the equality 
\begin{equation}\label{eq:orbitbreakingintersection}
\sF(\cK)\cap\sD(\cG)=\sD(\cK). 
\end{equation}
The inclusion $\iota:\cK\to\cG$ induces an isomorphism 
$H_0(\iota):H_0(\cK)\to H_0(\cG)$ 
(as recalled in \cite[Section 4]{Ma06IJM}). 
By \cite[Lemma 3.5 and Theorem 4.8]{Ma06IJM}, 
the signature homomorphisms 
\[
\sF(\cK)\longrightarrow H_0(\cK,\Z/2),\qquad 
\Ker I\longrightarrow H_0(\cG,\Z/2)
\]
are surjective and have kernels $\sD(\cK)$ and $\sD(\cG)$, respectively. 
Each sends a transposition $\tau_U$ to $[1_{r(U)}]$ 
in the corresponding zeroth homology group. 
Since $\cK$ is AF, $\sF(\cK)$ is generated by transpositions. 
Thus the two signature homomorphisms agree on $\sF(\cK)$ 
under $H_0(\iota)$, which is also an isomorphism with $\Z/2$ coefficients. 
Consequently, inclusion induces an injective homomorphism 
\[
\sF(\cK)/\sD(\cK)\longrightarrow\Ker I/\sD(\cG),
\]
which proves \eqref{eq:orbitbreakingintersection}. 

We now prove both assertions simultaneously. 
Let $(\Gamma,S)$ be either 
\[
(\Ker I,\sF(\cK))
\quad\text{or}\quad
(\sD(\cG),\sD(\cK)). 
\]
By \eqref{eq:orbitbreakingstabilizer} and 
\eqref{eq:orbitbreakingintersection}, 
$S$ is the setwise stabilizer of $A$ in $\Gamma$, and $\sD(\cK)\subset S$. 
Both inclusions $S\subset\Gamma$ are proper. 
In fact, 
Lemma~\ref{threecycle} (1) and (2) provides a $3$-cycle 
in $\sD(\cG)\subset\Gamma$ which cyclically permutes 
one point of $B$ and two points of $A$, and hence does not preserve $A$. 

Suppose that $S\subsetneq H\subset\Gamma$, and take $\gamma\in H\setminus S$. 
Then $\gamma\in\Ker I\setminus\sF(\cK)$, and so 
Lemma~\ref{lem:orbitbreakingonecrossing} provides $h\in H$ with $d(h)=1$. 
Write 
\[
h(A)=(A\setminus\{a_0\})\cup\{b_0\},\qquad a_0\in A,\ b_0\in B. 
\]
Since $\sD(\cK)$ acts transitively on $A\times B$, 
for any $a\in A$ and $b\in B$ there exists $k\in\sD(\cK)$ 
with $k(a_0)=a$ and $k(b_0)=b$. 
Thus $h_{a,b}:=kh\in H$ satisfies 
\[
h_{a,b}(A)=(A\setminus\{a\})\cup\{b\}. 
\]

For $\alpha\in\Gamma$, we prove $\alpha\in H$ by induction on $d(\alpha)$. 
If $d(\alpha)=0$, then $\alpha\in S\subset H$. 
Otherwise, choose $a\in A\setminus\alpha(A)$ and $b\in\alpha(A)\setminus A$. 
Then $h_{a,b}^{-1}\alpha\in\Gamma$ and 
\[
d(h_{a,b}^{-1}\alpha)
=\#\left(h_{a,b}(A)\setminus\alpha(A)\right)
=\#\left((A\setminus\alpha(A))\setminus\{a\}\right)
=d(\alpha)-1. 
\]
The induction hypothesis gives $h_{a,b}^{-1}\alpha\in H$, 
and hence $\alpha\in H$. 
Therefore $H=\Gamma$, proving both assertions. 
\end{proof}

We finish by computing the corresponding normalizers. 
Recall the notation $\Nor(\Gamma,S)$ introduced 
in Section~\ref{subsec:homology}. 

\begin{lemma}\label{lem:orbitbreakingnormalizer}
We have 
\[
\begin{aligned}
\Nor(\sF(\cG),\sF(\cK))
&=\Nor(\sF(\cG),\sD(\cK))=\sF(\cK),\\
\Nor(\sD(\cG),\sD(\cK))&=\sD(\cK). 
\end{aligned}
\]
\end{lemma}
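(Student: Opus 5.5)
Both equalities follow once we show that any element of $\sF(\cG)$ normalizing $\sD(\cK)$ preserves the half-orbit $A$, using \eqref{eq:orbitbreakingstabilizer} and \eqref{eq:orbitbreakingintersection}. The easy inclusions are clear: $\sF(\cK)$ normalizes itself, and it normalizes $\sD(\cK)$ because $\sD(\cK)$ is characteristic in $\sF(\cK)$. Hence
\[
\sF(\cK)\subset\Nor(\sF(\cG),\sF(\cK))
\quad\text{and}\quad
\sF(\cK)\subset\Nor(\sF(\cG),\sD(\cK)).
\]
It therefore suffices to prove two further inclusions. First, $\Nor(\sF(\cG),\sF(\cK))\subset\Nor(\sF(\cG),\sD(\cK))$, which again holds because $\sD(\cK)$ is characteristic in $\sF(\cK)$. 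Second, $\Nor(\sF(\cG),\sD(\cK))\subset\sF(\cK)$. Given these, intersecting with $\sD(\cG)$ and using \eqref{eq:orbitbreakingintersection} gives
\[
\Nor(\sD(\cG),\sD(\cK))=\sF(\cK)\cap\sD(\cG)=\sD(\cK).
\]

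For the key inclusion, we argue as in Lemma~\ref{normalizer=semidirectproduct}. First identify the $\sD(\cK)$-orbits with the $\cK$-orbits. By the observation preceding Lemma~\ref{lem:orbitbreakingonecrossing}, derived from Lemma~\ref{threecycle}, $\sD(\cK)$ acts transitively on each $\cK$-orbit, and it preserves $\cK$-orbits trivially. So the $\sD(\cK)$-orbits are exactly $A$, $B$ and the $\cG$-orbits other than that of $x$. Now take $\alpha\in\Nor(\sF(\cG),\sD(\cK))$. The identity $\alpha(\sD(\cK)y)=\sD(\cK)\alpha(y)$ shows that $\alpha$ permutes the $\cK$-orbits. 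Since $\alpha\in\sF(\cG)$ preserves the $\cG$-orbit of $x$, it sends $A$ either to $A$ or to $B$.

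The main point is to exclude $\alpha(A)=B$. By boundedness of the orbit cocycle $n_\alpha$, say $|n_\alpha|\le C$. Then for $n$ large the point $\phi^n(x)\in A$ is sent to $\phi^{n+n_\alpha(\phi^n(x))}(x)$ with exponent at least $n-C\ge1$, so it stays in $A$. Hence $\alpha(A)\cap A\neq\emptyset$, which forces $\alpha(A)=A$. By \eqref{eq:orbitbreakingstabilizer} we conclude $\alpha\in\sF(\cK)$. No index assumption is needed here. I expect the only delicate step to be the orbit identification, namely checking that the three-point transitivity really applies inside $A$ and inside $B$. This holds because each of $A$ and $B$ is infinite and forms a single $\cK$-orbit.
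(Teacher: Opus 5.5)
Your proposal is correct and follows essentially the same route as the paper: boundedness of $n_\alpha$ gives a point $y\in A$ with $\alpha(y)\in A$, transitivity of $\sD(\cK)$ on $A$ then gives $\alpha(A)=\sD(\cK)\alpha(y)=A$, and the remaining equalities follow from \eqref{eq:orbitbreakingstabilizer}, the fact that $\sD(\cK)$ is characteristic in $\sF(\cK)$, and \eqref{eq:orbitbreakingintersection}. The paper skips your full identification of the $\sD(\cK)$-orbits with the $\cK$-orbits and your $A$-or-$B$ dichotomy, because transitivity of $\sD(\cK)$ on $A$ alone already suffices.
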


\begin{proof}
Let $\alpha\in\Nor(\sF(\cG),\sD(\cK))$. 
Since $n_\alpha$ is bounded, there exists $y=\phi^m(x)\in A$ 
with $m$ sufficiently large that $\alpha(y)\in A$. 
The group $\sD(\cK)$ acts transitively on $A$, and hence 
\[
\alpha(A)=\alpha(\sD(\cK)y)=(\alpha\sD(\cK)\alpha^{-1})\alpha(y)
=\sD(\cK)\alpha(y)=A. 
\]
By \eqref{eq:orbitbreakingstabilizer}, $\alpha\in\sF(\cK)$. 
The reverse inclusion follows 
because $\sD(\cK)$ is the derived subgroup of $\sF(\cK)$. 
As $\sD(\cK)$ is characteristic in $\sF(\cK)$, we also have 
\[
\sF(\cK)\subset\Nor(\sF(\cG),\sF(\cK))
\subset\Nor(\sF(\cG),\sD(\cK))=\sF(\cK). 
\]
Finally, intersecting $\Nor(\sF(\cG),\sD(\cK))=\sF(\cK)$ 
with $\sD(\cG)$ and using \eqref{eq:orbitbreakingintersection} 
gives the equality on the second line. 
\end{proof}

In Section~\ref{sec:subgroupoid}, the finite cyclic action supplies 
additional normalizing elements, producing a semidirect product 
strictly larger than the full group of the subgroupoid. 
Here a normalizing element $\alpha\in\sF(\cG)$ must permute 
the two $\cK$-orbits $A$ and $B$ inside the $\cG$-orbit of $x$, 
but the boundedness of $n_\alpha$ prevents it 
from interchanging these half-orbits. 
There is consequently no analogous enlargement by a finite cyclic group. 
In particular, $\sF(\cK)$ is self-normalizing in $\sF(\cG)$, 
and $\sD(\cK)$ is self-normalizing in $\sD(\cG)$. 
Nevertheless, $\sF(\cK)$ is not a maximal subgroup of $\sF(\cG)$, 
since $\sF(\cK)\subsetneq\Ker I\subsetneq\sF(\cG)$.

\section*{Acknowledgements}
The author is grateful to K.~Matsumoto for helpful correspondence
on factor maps between shifts of finite type and for pointing out
relevant results in symbolic dynamics at an early stage of this work.
The author is grateful to T.~Takeishi for pointing out the error
in \cite{Ma12PLMS} discussed in Remark~\ref{rem:pullback-correction}.

The author used ChatGPT (OpenAI, including GPT-6 Astra)
and Claude (Anthropic, including Opus 5)
to assist with mathematical discussions, checking arguments,
literature searches, and the drafting and revision of the manuscript.
The mathematical arguments and references were independently
verified by the author, who is responsible for the final content.

\bibliography{../zzz}
\bibliographystyle{amsplain}

\end{document}